\newtheorem{thm}{Theorem}[section]
\newtheorem{cor}[thm]{Corollary}
\newtheorem{lem}[thm]{Lemma}
\newtheorem{prop}[thm]{Proposition}
\newtheorem{proposition}[thm]{Proposition}
 \newtheorem*{maintheorem*}{Main Theorem}
\theoremstyle{definition}
\newtheorem{defn}[thm]{Definition}
\newtheorem{rem}[thm]{Remark}
\numberwithin{equation}{section}
\newcommand{\norm}[1]{\Vert#1\Vert}
\newcommand{\na}{\nabla}
\newcommand{\pa}{\partial}
\newcommand{\les}{\lesssim}
\newcommand{\lec}{\lesssim}
\newcommand{\td}{\tilde}
\renewcommand{\div}{\operatorname{div}}
\newcommand{\Tau}{\mathrm{T}}
\newcommand*{\ad}{\ensuremath{\mathrm{ad\,}}}
\newcommand{\abs}[1]{\left|#1\right|}
\newcommand\al{\alpha}
\newcommand\be{\beta}
\newcommand\de{\delta}
\newcommand\De{\Delta}
\newcommand{\ga}{\gamma}
\newcommand\Ga{\Gamma}
\newcommand\e {\varepsilon}
\newcommand{\la}{\lambda}
\newcommand{\La}{\Lambda}
\newcommand\ph{\varphi}
\newcommand{\ta}{\tau}
\renewcommand{\th}{\theta}
\newcommand{\Grad}{\nabla}
\newcommand{\T}{\mathbb{T}}
\newcommand{\R}{\mathbb{R}}
\newcommand{\Z}{\mathbb{Z}}
\newcommand{\N}{\mathbb{N}}
\newcommand{\cH}{\mathcal{H}}
\newcommand{\cR}{\mathcal{R}}
\newcommand{\cRl}{\mathcal{R}^{\mathrm{loc}}}
\newcommand{\crF}{\mathscr{F}}
\newcommand{\vr}{\varrho}
\newcommand{\supp}{\operatorname{supp}}
\newcommand{\tr} {\mathop{\mathrm{tr}}}
\newcommand{\I}{\mathrm{Id}}
\newcommand{\tri}{\triangle}
\newcommand{\idv}[1]{\mathcal{R}\left( #1\right)}
\newcommand{\idvl}[1]{\mathcal{R}^{\mathrm{loc}}\left( #1\right)}
\newcommand{\as}[1]{\accentset{\Diamond}#1}
\renewcommand{\dot}[1]{\accentset{\circ}#1}
\DeclarePairedDelimiter{\ceil}{\lceil}{\rceil}
\def\dint{\,\ThisStyle{\ensurestackMath{%
  \stackinset{c}{.2\LMpt}{c}{.5\LMpt}{\SavedStyle-}{\SavedStyle\phantom{\int}}}%
  \setbox0=\hbox{$\SavedStyle\int\,$}\kern-\wd0}\int}
\renewcommand{\dot}[1]{\accentset{\circ}#1}
\newcommand{\MM}[1]{\ensuremath{\mathcal{M}}\left(#1\right)}
\newcommand{\const}{\mathcal{C}}
\newcommand{\dpot}{{\mathsf d}}
\newcommand{\Ndec}{{\mathsf{N}_{ dec}}}
\newcommand{\divH}{\mathcal{H}}
\newcommand{\divR}{\mathcal{R}^*}
\newcommand{\RR}{\overline R}
\begin{document}

\title{Non-uniqueness of H\"{o}lder continuous solutions for Inhomogeneous Incompressible Euler flows}

\author{Vikram Giri}
\author{Ujjwal Koley}
\date{\today}
\address{\parbox{\linewidth}{
Vikram Giri \\
Department of Mathematics, ETH Z\"urich\\
R\"amistrasse 101, Z\"urich 8092\\
E-mail address: vikramaditya.giri@math.ethz.ch
\\ \\
Ujjwal Koley \\
Centre for Applicable Mathematics, Tata Institute of Fundamental Research\\
P.O. Box 6503, GKVK Post Office, Bangalore 560065, India\\
E-mail address: ujjwal@math.tifrbng.res.in
}
} 

\begin{abstract} 
We consider the inhomogeneous (or density dependent) incompressible Euler equations in a three-dimensional periodic domain. We construct density $\vr$ and velocity $u$ such that, for any $\al<1/7$, both of them are $\al $-H\"older continuous and $(\vr, u)$ is a weak solution to the underlying equations. The proof is based on typical convex integration techniques using Mikado flows as building blocks. As a main novelty with respect to the related literature, our result produces a H\"older continuous density.
\end{abstract}

\maketitle

\tableofcontents

\section{Introduction} 
We are interested in the following system of equations for the inhomogeneous (or density dependent) incompressible Euler equations on the spatially periodic domain $[0,T] \times \T^3$,
\begin{equation}\label{eqn.E}
\begin{cases}
\partial_t \vr + \na \cdot (\vr u) = 0,\\
\partial_t (\vr u) +  \na \cdot (\vr u \otimes u)+  \Grad p  =  0,\\
\na \cdot  u  =0.
\end{cases}
\end{equation}
where $\T^3=[-\pi,\pi]^3$ and $0<T<\infty$. Here  $\vr:[0,T] \times \T^3\to \R^+$ denotes the density, $u:[0,T] \times \T^3\to \R^3$ represents the velocity of the fluid and $p:[0,T] \times \T^3\to \R$ denotes the pressure. This kind of system typically describes the motion of fluid that is obtained by mixing two miscible fluids (or pollutants) that are incompressible and have different densities. In fact, density fluctuations are widely present in turbulent flows. They may arise from non-uniform species concentrations, temperature variation or pressure.
In this article, we are interested in studying weak (distributional) solutions $(\vr, u)$ to \eqref{eqn.E} which are H\"older continuous in space, for instance
$$
|u(x,t) - u(y,t)| \le C |x-y|^{\gamma}, \qquad \forall x, y \in \T^3, \quad \forall t \in [0,T],
$$
where $C$ is a constant, independent of time $t$, and $\gamma \in (0,1)$ is the H\"older exponent.

Note that when $\vr$ is constant, the above system \eqref{eqn.E} is called the homogeneous incompressible Euler equations, often simply called
the incompressible Euler equations. Indeed, by setting $\vr \equiv 1$, the equations \eqref{eqn.E} can be formulated as
\begin{equation}\label{eqn.E1}
\begin{cases}
\pa_t u + \na \cdot (u\otimes u) + \na p = 0\\
\na \cdot u =0.
\end{cases}
\end{equation}
Many authors (\cite{DLSz2013,DlSzJEMS,Is2013, Bu2014,BuDLeSz2013,Bu2015,BuDLeIsSz2015,BuDLeSz2016,DaSz2016,IsOh2016,Is2016,BDLSV2020}) have made significant contributions to the research on the non-uniqueness of solutions to incompressible Euler equations.

A {\it dissipative weak solution} to \eqref{eqn.E1} is a distributional solution which belongs $L^3([0,T]\times \T^3)$ and satisfies additionally the {\it local energy inequality} 
\begin{equation}\label{eqn.EI}
\pa_t\left( \frac {|u|^2}2\right) + \na \cdot \left(\left( \frac {|u|^2}2 + p\right) v \right) \le 0
\end{equation}
in the sense of distributions. In the case of dissipative weak solutions, the pioneering work of Isett~\cite{Is2016} developed a Nash iteration in this context and produced the first examples of non-uniqueness amongst H\"older continuous solutions. In this paper, we are heavily indebted to the results and methods by De Lellis and Kwon \cite{Kwon1}, where they were able to produce infinitely many global-in-time disspative weak solutions $u$ to the incompressible Euler equations \eqref{eqn.E1} in the H\"older space $C^\gamma([0, T]\times \T^3)$, for any $0\le \gamma< \frac 17$. The methods in~\cite{Is2016, Kwon1} even produced H\"older continuous weak solutions to incompressible Euler equations \eqref{eqn.E1} which satisfy the local energy equality (i.e., \eqref{eqn.EI} with $\le$ is replaced by $=$). These techniques and results were extended by the first author and Kwon in \cite{Kwon2} to the compressible Euler equations: they have provided infinite many weak solutions, without the presence of discontinuities, to compressible Euler equations which satisfy the local entropy equation. 

In this paper, we would mainly focus on the density dependent incompressible Euler system \eqref{eqn.E}. Mathematically, having a variable density changes the regularity structure substantially, and poses additional challenges compared to the much more extensively studied homogeneous flows. We prove nonuniqueness in this context by noticing the formal similarity of the inhomogeneous eqns.~\eqref{eqn.E} with the system~\eqref{eqn.E1}+\eqref{eqn.EI}. Indeed the first equation of \eqref{eqn.E} has a quadratic non-linearity $\varrho u$ in the divergence which is analogous to the quadratic non-linearity $u\otimes u$ in the divergence in \eqref{eqn.E1}. Also, the cubic non-linearity $\varrho u\otimes u$ in the divergence in \eqref{eqn.E} is analogous to the cubic term in \eqref{eqn.E1}. Noting these formal similarities allows us to follow the work~\cite{Kwon1} and develop an analogous Nash iteration for the inhomogenous incompressible Euler equations.

We briefly summarize some existing results in the literature concerning inhomogeneous incompressible Euler flows. For local well-posedness, we refer to the works of Danchin, Fanelli, and Marsden in \cite{05,06,22}. For global-in-time existence results, we refer to the work of Gebhard et al. in \cite{gebhard}. On a different note, there are a few results concerning energy conservation for weak solutions to \eqref{eqn.E}. We highlight the work by Chen and Yu \cite{chen1}, where, employing well-known commutator estimates, the authors established energy conservation for the density-dependent Euler equations. Specifically, they identified two distinct sets of sufficient regularity conditions for energy conservation: one involving integrability of the spatial gradient of the density, and the other requiring additional time Besov regularity for the velocity field, allowing for lower regularity in the density profile.
We also mention a related result by Fei \cite{Fei}, who proved the existence of weak solutions to the three-dimensional inhomogeneous incompressible Navier–Stokes equations that satisfy a corresponding local energy inequality.

Here, we investigate the issue of non-uniqueness for Hölder continuous weak solutions to \eqref{eqn.E}. Specifically, we establish the following main theorem, which demonstrates the existence of non-unique weak solutions to \eqref{eqn.E}. Compared to the homogeneous case, the literature on the inhomogeneous incompressible Euler equations is relatively sparse. To the best of our knowledge, this is the first result concerning global-in-time Hölder continuous weak solutions to \eqref{eqn.E} (for global-in-time $L^{\infty}$ solutions, consult \cite{gebhard}). We emphasize that our analysis applies to the genuinely inhomogeneous case, where the density is non-constant. When 
$\rho \equiv 1$, the system reduces to the classical incompressible Euler equations, for which more extensive and stronger results are known.

\begin{thm}\label{thm}
For any $0\leq \al < \frac17$ there are infinitely many global-in-time weak solutions $(\vr, u)$ to the inhomogeneous incompressible Euler equation \eqref{eqn.E} in $C^\al([0, T]\times \T^3) \times C^\al([0, T]\times \T^3)$ with $\vr > 0$ and not identically constant. Moreover, these solutions share the same initial data.
\end{thm}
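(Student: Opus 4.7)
The plan is to construct the solutions as limits of smooth approximate solutions $(\vr_q, u_q, p_q)$ solving an inhomogeneous Euler--Reynolds type system
\begin{align*}
\pa_t \vr_q + \na \cdot (\vr_q u_q) &= -\na \cdot \phi_q, \\
\pa_t (\vr_q u_q) + \na \cdot (\vr_q u_q \otimes u_q) + \na p_q &= -\na \cdot \mathring R_q, \\
\na \cdot u_q &= 0,
\end{align*}
with a vector-valued density error $\phi_q$ and a symmetric traceless Reynolds stress $\mathring R_q$. The inductive estimates would be of the standard Nash iteration form: $\|u_q\|_{C^1} + \|\vr_q\|_{C^1} \les \la_q$ together with $\|\mathring R_q\|_{C^0} + \|\phi_q\|_{C^0} \les \de_{q+1}$, for frequencies $\la_q = a^{b^q}$ and amplitudes $\de_q = \la_q^{-2\beta}$ with $\beta$ arbitrarily close to $1/7$. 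Positivity of $\vr$ would be preserved by starting from $\vr_0$ bounded away from zero and guaranteeing $\sum_q \|\vr_{q+1}-\vr_q\|_{C^0} < \vr_0/2$.

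At each step I would define perturbations $w_q = u_{q+1} - u_q$ and $\th_q = \vr_{q+1} - \vr_q$ built out of Mikado flows oscillating at frequency $\la_{q+1}$, after a mollification stage regularizing $(\vr_q, u_q, \mathring R_q, \phi_q)$ on a length scale $\ell_q$ chosen to balance the loss from mollification against the gain needed for convergence. Following Isett and \cite{Kwon1}, a gluing stage should precede the perturbation: one solves exactly the inhomogeneous Euler system on a short-time partition of $[0,T]$ with matched data and splices the pieces, thereby concentrating both $\mathring R_q$ and $\phi_q$ onto small disjoint temporal regions and avoiding the transport loss that would otherwise prevent reaching the exponent $1/7$. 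The Mikado building blocks would be equipped with two amplitude profiles, a vector amplitude $a_\xi$ for the velocity and a scalar amplitude $b_\xi$ for the density, chosen so that the quadratic self-interaction $w_q \otimes w_q$ cancels the low-frequency part of $\mathring R_q$ via the classical geometric lemma and, simultaneously, the pairing $\th_q w_q$ cancels the low-frequency part of $\phi_q$ via an analogous but simpler algebraic identity.

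The new error terms $\mathring R_{q+1}$ and $\phi_{q+1}$ would then be recovered from the leftover high-frequency contributions of the quadratic, cubic $\vr w \otimes w$, and transport errors by applying a suitable inverse-divergence operator to oscillatory, mean-zero, divergence-free inputs. This produces the crucial $\la_{q+1}^{-1}$ gain on which the closure of the scheme at exponent $1/7$ depends. Once the inductive step is established, convergence of $(\vr_q, u_q)$ in $C^\al([0, T]\times \T^3) \times C^\al([0, T]\times \T^3)$ for every $\al < \beta$ would follow by a telescoping estimate and interpolation, and the limit would be a weak solution of \eqref{eqn.E}. Non-uniqueness with the same initial data would be obtained by localizing a perturbation $w_{q_0}$ through a temporal cut-off supported away from $t=0$, producing a branching family of iterations that agree at $t=0$ but diverge on $(0,T]$.

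The main obstacle I foresee is the construction of an inverse-divergence operator for the continuity-equation error that enjoys the full $\la_{q+1}^{-1}$ gain: the anti-divergence $\cR$ used in the momentum equation is tailored to symmetric traceless targets, whereas the density error $\phi_q$ is only vector-valued, so a new operator (analogous to the Bogovski\u{\i}-type operator but enforcing mean-zero and oscillation cancellation) must be designed and estimated in H\"older norms. A secondary difficulty is the gluing stage in the genuinely non-constant-density setting: one must establish short-time well-posedness and stability in H\"older norms for the full inhomogeneous system, where the transport equation for $\vr$ couples non-trivially to the velocity estimates that \cite{Kwon1} uses in the homogeneous case, and in particular one has to avoid losing regularity from the transport of $\vr$ along the rough velocity field.
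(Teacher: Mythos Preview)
Your relaxed system is too coarse, and this is a genuine gap rather than a matter of taste. In the momentum equation at step $q+1$ you will produce, among other things, the term $(\pa_t + u_q\cdot\na)(\theta w)$. Since you design $\theta w$ so that its low-frequency part equals $\phi_q$, this contributes $(\pa_t + u_q\cdot\na)\phi_q$, which is \emph{not} oscillating at frequency $\la_{q+1}$ and is \emph{not} in divergence form; applying an anti-divergence to it recovers only something of size $\de_{q+1}$, far too large for the new momentum error. The paper resolves this by building the advected density error directly into the momentum equation of the approximate system: the right-hand side carries $-(\pa_t + u\cdot\na)R - \na\cdot(R\otimes u) + \na\cdot\Phi - \na\cdot(\vr S)$, with \emph{three} errors $(R,\Phi,S)$ rather than two. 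Correspondingly there are three cancellation mechanisms ($\theta w$ kills $R$, $\theta w\otimes w$ kills $\Phi$, $w\otimes w$ kills $S$), enforced by three distinct families of Mikado directions with tailored moment conditions. Your two-error ansatz cannot close.

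There is also a mismatch at the level of strategy. The paper does \emph{not} glue; it follows the De Lellis--Kwon route (backward flow maps, mollification along trajectories, time--space cutoffs) to reach $1/7$, so your anticipated difficulty with local well-posedness of the inhomogeneous system never arises. Conversely, the obstacle you single out --- an anti-divergence for the vector-valued density error --- is the easy part (one simply takes $\De^{-1}\na$). The actual hard point, which you do not mention, is the \emph{transport current error}: one must estimate $D_{t,\ell}$ applied to quantities that already involve an inverse divergence, and the standard $\cR$ does not commute with the material derivative. The paper handles this by importing the \emph{local} inverse-divergence operator of Giri--Kwon--Novack, whose explicit tensor-potential structure allows material-derivative bounds; this in turn forces one to propagate higher-order $D_t^s u_q$ estimates inductively. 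Any successful scheme along your lines would have to confront the same issue.
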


Broadly speaking, the proof of the above theorem relies on the convex integration scheme originally developed by De Lellis and Székelyhidi for the incompressible Euler equations (see \cite{DLSz2013, DlSzJEMS}). More specifically, we employ a variant of this scheme as proposed by Isett \cite{Is2022} for the incompressible Euler equations and further extended by the first author and Kwon \cite{Kwon2} to the compressible setting. However, our approach differs significantly from that of \cite{Kwon2}, where the density is prescribed in advance and remains fixed throughout the iteration. In contrast, our construction involves a genuinely coupled convex integration scheme, in which both the density and the velocity evolve at each stage of the iteration.

To implement this, we utilize two distinct sets of Mikado building blocks \cite{DaSz2016} - one tailored for the density and the other for the velocity. The first step is to ensure that three distinct types of Reynolds errors are controlled by imposing appropriate moment conditions on the Mikado flows. The principal difficulty in our analysis arises from a wide array of error terms generated by the interactions between the evolving density and velocity profiles (see Section~\ref{errors} for details). These interaction-driven errors are absent in previous works such as \cite{Kwon2}, where the density is fixed by construction.

Among the various challenges, we highlight the treatment of the so-called transport current error term (cf. Subsection~\ref{transport error}). A natural approach would be to apply ``inverse divergence'' (Bogovskii) operator twice; however, it is well known that the Bogovskii operator does not commute with the material derivative. This introduces intricate commutator estimates that complicate the analysis significantly. To address this issue effectively, we adopt a localized inverse divergence operator introduced in recent works by the first author, Kwon, and Novack \cite{Giri02, Giri03} (see also Buckmaster et al.\ \cite{Giri01}). The key advantage of this operator lies in its explicit structure, which facilitates direct computation of material derivatives involving inverse divergence terms. However, its application requires inductive control over higher-order (material) derivatives of the velocity field throughout the iteration process.

The rest of the paper is organized as follows: In Section~\ref{pre}, we present the mathematical framework  employed throughout the paper. Moreover, we have introduced the inductive scheme for constructing inhomogeneous Euler-Reynolds flows and two main propositions which play pivotal role in the proof of the Theorem~\ref{thm}. The proof of Theorem~\ref{thm}, assuming Proposition~\ref{ind.hyp} and Proposition~\ref{p:ind_technical}, is given in Section~\ref{proof}. The construction of the density and velocity perturbations, estimates on the perturbations, the mollification process are described in Section~\ref{sec:correction.const}, while the definition of three different types of new errors are given in Section~\ref{errors}. The corresponding estimates of the errors are carried out over Section~\ref{est}. The proofs of two key inductive propositions are given Section~\ref{induc}.  Finally, four appendices collect several analytical tools and auxiliary results used throughout the paper.

\section{Preliminaries and mathematical framework}
\label{pre}
Here we briefly recall some relevant mathematical tools which are used in the subsequent analysis. To begin, we use the letter $C$ to denote various generic constants that may change from line to line along the proofs. Explicit tracking of the constants is possible but it is highly cumbersome and avoided for the sake of readability. However, if situation demands, we will explicitly mention the dependence of $C$ on the various parameters involved in our arguments. Moreover, throughout the manuscript, given any two parameter ($q$) dependent quantities $A_q$ and $B_q$ we use the following convention: $A\lec B $ implies that $A\leq CB$ for some constant $C$ which is independent of the parameter $q$. Furthermore, we shall use the following convention:
\begin{itemize}
	\item For any given function $g$ on $[0, T]\times \T^3$, we will denote by $\supp_t (g)$ its temporal support, namely
	\[
	\supp_t (g) := \{t: \exists \ x \;\mbox{with}\; g (t,x) \neq 0\}\, .
	\] 
    Also, for a set of such functions $\{g_i\}$, we denote by $\supp_t(\{g_i\})$ their joint temporal support, namely
    \[ \supp_t(\{g_i\}) := \bigcup_i \supp_t g_i\,. \]
	\item Given any interval $\cal{I} = [c,d]$, we will denote by $|\cal{I}|$ its length $(d-c)$ and $\cal{I} + \zeta$ will denote the concentric enlarged open interval $(c-\zeta, d+\zeta)$. 
\end{itemize}

We set $\N_0 = \N \cup \{0\}$. For $\alpha \in (0,1)$, $N \in \N_0$, we denote supremum norm, and H\"{o}lder semi-norm and norm by
\begin{align*}
& \|f \|_{C_t^0 C_x^0}:= \sup_{t,x} |f(t,x)|, \qquad [f]_{C_t^0 C_x^N} := \max_{|\gamma|=N} \| D^{\gamma} f\|_{C_t^0 C_x^0} \\
& [f]_{C_t^0 C_x^{N+\alpha}}:= \max_{|\gamma|=N}  \sup_{x\neq y} \frac{|D^{\gamma} f(t,x) - D^{\gamma} f(t,y)|}{|x-y|^{\alpha}} \\
& \| f \|_{C_t^0 C_x^N} :=  \sum_{j=0}^{N} [f]_{C_t^0 C_x^j}, \qquad 
\|f\|_{C_t^0 C_x^{N+\alpha}}:= \| f \|_{C_t^0 C_x^N} + [f]_{C_t^0 C_x^{N+\alpha}}.
\end{align*}

We now introduce some notations in Fourier analysis described in \cite{Gra}.
For a function $h$ in the Schwartz space $\cal{S}(\R^3)$, we can define the Fourier transform of $h$ and its inverse on $\R^3$ as 
\[
\widehat{h} (\xi) = \frac 1{(2\pi)^3}\int_{\R^3} h(x) e^{-ix \cdot \xi} dx,\quad
\widecheck{h} (x) = \int_{\R^3} h(\xi) e^{ix \cdot \xi} d\xi. 
\]   
Moreover, we can extend the Fourier transform to more general linear functionals in $\mathcal{S}' (\R^3)$ by duality. Since all the relevant functions, vectors and tensors in this paper are defined on $J\times \mathbb T^3$ for some time domain $J\subset \mathbb R$ which we also regard with a slight abuse of notation as spatially periodic functions, we always consider their Fourier transform as time-dependent elements of $\mathcal{S}' (\R^3)$. Following standard convention for Littlewood-Paley operators, we multiply the Fourier transform of a distribution $h$ by a smooth cut-off function and then apply inverse Fourier transform on the product to obtain a smooth function.
To that end, we let $m(\xi)$ be a radial smooth function such that $ \text{supp}\, m (\xi) \subset B(0,2)$ and $m\equiv 1$ on $\overline{B(0,1)}$. Then for any number $i\in \Z$ and distribution $h$ in $\R^3$, we set
\begin{align*}
\widehat{P_{\leq 2^i} h}(\xi) &:=m\left(\frac{\xi}{2^i}\right) \hat{h}(\xi),\quad
\widehat{P_{> 2^i} h}(\xi) := \left(1-m\left(\frac{\xi}{2^i}\right)\right) \hat{h}(\xi), 
\end{align*}
and for $i\in \Z$
\begin{align*}
\widehat{P_{2^i}h}(\xi) :=\left( m\left(\frac{\xi}{2^i}\right)
-m\left(\frac{\xi}{2^{i-1}}\right)\right) \hat{h}(\xi).
\end{align*}
In the subsequent analysis, for any given number $K$, we donote by $P_{\leq K} h= P_{\leq 2^I} h$ where $I= \lfloor  \log_2 K \rfloor$ is the largest integer satisfying $2^{I} \le K$. Similarly, we define $P_{> K} h= h - P_{\leq K} h$. For any spatially periodic function $\phi$ on $J\times \mathbb T^3$,  $P_{>\ell^{-1}} \phi = P_{>2^J} \phi$ can be written as the space convolution of $\phi$ with the kernel $\widecheck{m}_{\ell}(x) =2^{3J} \widecheck{m} (2^J x)$, which belongs to $\mathcal{S} (\R^3)$. An immediate application of above formula is that $\norm{|y|^k\widecheck{m} _\ell}_{L^1(\R^3)}\lec \ell^k$, for any $k\geq 0$. We also recall the celebrated Bernstein's inequality. Indeed, we have $\norm{\phi-P_{\le \ell^{-1}}\phi}_0 = \norm{P_{> \ell^{-1}}\phi}_0 \lec \ell^{i}\norm{\na^i \phi}_0$ for any $\phi\in C^i(\T^3)$. 

Finally we mention, as alluded to before, we need to use two different types of inverse divergence operators to deal with the ``transport current error'' term in Subsection~\ref{transport error}. Indeed, throughout the paper, we reserve the notation $\mathcal{R}$ to denote the (non-local) inverse divergence operator given by Definition~\ref{idv.defn}, and we use the notation $\mathcal{R}^{\mathrm{loc}}$ to denote the (local) inverse divergence operator given by the Proposition~\ref{prop:intermittent:inverse:div}.

\subsection{Iteration Scheme}

Note that, our main aim is to construct global inhomogeneous Euler flows as a limit of sequences of inhomogeneous Euler-Reynolds flows. In what follows, we begin with the definition of inhomogeneous Euler-Reynolds flows.

\begin{defn}[Inhomogeneous Euler-Reynolds flows] A tuple of smooth tensors $(\vr, u,p,R,\Phi, S)$, with $\vr \ge \varepsilon_0$ for some positive constant $\varepsilon_0$, is called an {\it inhomogeneous Euler-Reynolds flow} if the tuple solves the following inhomogeneous Euler-Reynolds system:
\begin{equation}\label{app.eq}
\begin{cases}
\partial_t {\vr} + \na \cdot ({\vr} {u})  = - \na \cdot R,\\ 
\partial_t ({\vr} {u} ) +  \na \cdot ({\vr} u \otimes u)+  \Grad p  =  -(\pa_t+ (u\cdot\na)) R - \na \cdot (R \otimes {u}) + \na \cdot \Phi + \na \cdot (\vr S),\\
\na \cdot \,{u}  =0.
\end{cases}
\end{equation}
\end{defn}

As usual in such convex integration schemes, our aim is to build a corrected inhomogeneous Euler-Reynolds flow at the level $q+1$, i.e., $(\vr_{q+1}, u_{q+1},p_{q+1},R_{q+1},\Phi_{q+1}, S_{q+1})$, assuming that we have already contructed inhomogeneous Euler-Reynolds flow
at the level $q$, i.e., $(\vr_{q}, u_{q},p_{q},R_{q},\Phi_{q}, S_{q})$. Moreover, we also want the new error $(R_{q+1},\Phi_{q+1},S_{q+1})$ at the level $q+1$ to be substantially smaller than the error $(R_{q},\Phi_{q},S_{q})$ at the level $q$. Indeed, this is typically achieved by adding a suitable correction $(\theta_{q+1}, w_{q+1},q_{q+1})$ to the density, velocity and pressure $(\vr_q, u_q, p_q)$, namely defining $\vr_{q+1} = \vr_q+\theta_{q+1}$, $u_{q+1} = u_q + w_{q+1}$, and $p_{q+1} = p_q +q_{q+1}$ (for a precise statement, consult Proposition \ref{ind.hyp}). In this way we obtain a sequence of solutions to \eqref{app.eq} which we will show converges to a solution of the inhomogeneous incompressible Euler system \eqref{eqn.E}.

The relaxation of \eqref{eqn.E} in \eqref{app.eq} is inspired from the analogous relaxation for the incompressible Euler equations with the local energy inquality as pioneered by Isett in~\cite{Is2022}. Indeed in both our and Isett's relaxations, the various new errors that appear can be written as commutators of mollifications at certain length scales of a weak solution. A difference between our relaxation and Isett's relaxation is the presence of two error terms, $R_q$ and $S_q$, which will be cancelled by quadratic non-linear interactions. Roughly speaking, $R_q$ will be cancelled by the high-high to low interactions of $\theta_{q+1}$ with $w_{q+1}$ while $S_q$ will be cancelled with the high-high to low self interactions of $w_{q+1}$.

Next, we introduce some parameters which will help us to measure the size of various approximate solutions. Let $q\in \N_0$, then we introduce the frequency $\la_q$ and the amplitude $\de_q$ of our approximate solutions as
\begin{align*}
\la_q = \ceil{\la_0 ^{(b^q)}}, \quad \de_q = \la_q^{-2\beta}\, ,
\end{align*}
where $\ceil{a}$ denotes the smallest integer $n \ge a$, $\beta$ is a positive parameter smaller than $1$, $\lambda_0$ is a very large parameter, and $b$ is  typically chosen close to $1$. Due to these choices, we can make sure that $\delta_q^{\frac{1}{2}} \lambda_q$ is a monotone increasing sequence.
 
We will assume several inductive estimates on the tuple $(\vr_q, u_q, p_q, R_q, \Phi_q, S_q)$ satisfying \eqref{app.eq}. First note that, due to truncation and smoothing in space-time variables, the domains of definition of approximates solutions going to change at each step and it is chosen at step $q$ as $[-\tau_{q-1}, T+\tau_{q-1}]\times \mathbb T^3$, where $\tau_{-1} = \infty$ and for $q\geq 0$ the parameter $\tau_q$ is defined by 
\[
\tau_q = \left(40\pi {M}\eta^{-1} \la_q^\frac12\la_{q+1}^\frac12 \de_q^\frac14\de_{q+1}^\frac14 \right)^{-1}
\] 
for some geometric constants $\eta$ (given by Proposition~\ref{p:supports}) and $M$ (given by Proposition \ref{ind.hyp}). It is clear from the definition that $\tau_q$ is decreasing in $q$. 
We are now ready to give details of inductive estimates:
\begin{align}
&\norm{\vr_q}_0 \leq 5-\de_q^\frac12, \qquad \qquad \quad \norm{u_q}_0 \leq 5-\de_q^\frac12, \label{est.vp22} \\ & 
\norm{\vr_q}_N \leq M\la_q^N \de_q^\frac 12, \qquad \qquad \norm{u_q}_N \leq M\la_q^N \de_q^\frac 12, \qquad N=1,2. \label{est.vp} 
\end{align}
Moreover,  for $ N = 0,1,2,$
\begin{align}
& \norm{D_{t} \vr_q}_{N-1} \leq M \tau_q^{-1} \la_q^{N-1} \de^{1/2}_q, \quad \norm{D_{t} u_q}_{N-1} \le M \tau_q^{-1} \la_q^{N-1} \de^{1/2}_q,  \label{e:ad-pressure} \\
&\norm{R_q}_N \leq \la_q^{N-2\ga}\de_{q+1}, \qquad \qquad \,\,\,\,\,\,\norm{D_{t} R_q}_{N-1} \leq \la_q^{N-2\ga} \de_q^\frac 12 \de_{q+1}, \label{est.R}\\
&\norm{\Phi_q}_N \leq \la_q^{N-4\ga} \de_{q+1}^\frac 32, \qquad \qquad \,\,\,\,\,\,
\norm{D_{t} \Phi_q}_{N-1} \leq \la_q^{N-4\ga}\de_q^\frac12 \de_{q+1}^\frac 32,  \label{est.ph} \\
&\norm{S_q}_N \leq \la_q^{N-2\ga}\de_{q+1}, \qquad \qquad  \,\,\,\,\,\, \, \norm{D_{t}  S_q}_{N-1} \leq \la_q^{N-2\ga} \de_q^\frac 12 \de_{q+1}. \label{est.S}
\end{align}
Furthermore, for all $0 \le s \le M_*$ and $N \le N_* $, with $N \neq 0$
\begin{align}
\norm{D^s_{t} \varrho_q}_{N} \le M \la_q^N \de^{\frac 12}_q \tau_q^{-s}, \qquad
\norm{D^s_{t} u_q}_{N} \le M \la_q^N \de^{\frac 12}_q \tau_q^{-s}, \qquad 
\| \partial_t^s u_{q}\|_N \le M \la^N_q \de_q^{1/2} \tau_q^{-s}, \label{est.new}
\end{align}
where the material derivative $D^s_{t} = (\pa_t + u_q\cdot \na)^s $, the constants $M_*$ and $N_*$ depend on $b$ and will be chosen later (see Remark~\ref{rmk01}), and the numerical constant $\ga = (b-1)^2$. Given above inductive assumptions, we are now ready to state the main inductive proposition.

\begin{rem}
Note that in the above inductive estimates, we have used the following convention: For any function $G_q$ defined on $[-\tau_{q-1}, T+\tau_{q-1}]\times \mathbb T^3$, $\norm{G_q}_N$ is the $C^0_t C^N_x$ norm of $G_q$ on its domain of definition, namely 
\[
\norm{G_q}_N := \norm{G_q}_{C^0([0, T]+ \tau_{q-1};C^N(\T^3))}\, .
\]
Moreover, for $N=0$ the advective derivative estimate $\norm{D_{t}  G}_{N-1} $ is an empty statement - we are not claiming any negative Sobolev estimate.
\end{rem}

\begin{prop}[Inductive proposition]\label{ind.hyp} 
For any $\beta \in (0, \frac{1}{7})$, there exists constants $M{>1}$ and functions $\bar{b} (\beta) >1$ and $\Lambda_0 (\beta, b,{M}) >0$ such that the following property holds. For any $b \in (1, \bar b (\beta))$ and $\la_0\geq \La_0 (\beta , b,{M})$, assume that $(\vr_q, u_q, p_q, R_q, \Phi_q, S_q)$ is a inhomogeneous Euler-Reynolds flow defined on the time interval $[0,T]+ \tau_{q-1}$ satisfying \eqref{est.vp22}-\eqref{est.new}. Then, there exists a corrected inhomogeneous Euler-Reynolds flow $( \vr_{q+1}, {u}_{q+1},  p_{q+1},  R_{q+1}, \Phi_{q+1}, S_{q+1})$ which is defined on the time interval $[0,T]+\tau_q$ satisfies \eqref{est.vp22}-\eqref{est.new} for $q+1$ and additionally
\begin{align}\label{cauchy_01}
\inf_{\T^d}{(\vr_{q+1}-\vr_q)} \ge - \frac M2 \de_{q+1}^\frac 12, 
\end{align}
\begin{align}\label{cauchy}
\norm{\vr_{q+1}-\vr_q}_0 + \frac 1{\la_{q+1}}\norm{\vr_{q+1}-\vr_q}_1 + \norm{u_{q+1}-u_q}_0 + \frac 1{\la_{q+1}}\norm{u_{q+1}-u_q}_1 \leq  M \de_{q+1}^\frac 12,
\end{align}
\begin{align}\label{cauchy_p}
p_{q+1} = p_q - \de_{q+1} \varrho_q.
\end{align}
\end{prop}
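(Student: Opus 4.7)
The plan is to carry out one step of the Nash-type convex integration iteration. The first move is to mollify the tuple $(\vr_q, u_q, R_q, \Phi_q, S_q)$ in space and time at scales $\ell$ and $\tau_q$ with $\la_q^{-1} \ll \ell \ll \la_{q+1}^{-1}$; this yields a smoothed Euler-Reynolds tuple $(\vr_\ell, u_\ell, p_\ell, R_\ell, \Phi_\ell, S_\ell)$ defined on $[0,T]+\tau_q$ for which derivative losses come only as $\ell^{-N}$, while the mollification errors added to the three error slots are tuned to be of size $\de_{q+2}\la_{q+1}^{-2\ga}$, i.e. strictly below what is needed at step $q+1$. The material-derivative bounds \eqref{est.new} on $u_q$ transfer to analogous bounds on $u_\ell$ by standard commutator arguments and will be used heavily in what follows.

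Next, the corrections $w_{q+1}$ and $\theta_{q+1}$ are built as a superposition of two independent families of Mikado flows $W_\xi$ (velocity) and $U_\xi$ (density), following \cite{DaSz2016}, concentrated at frequency $\la_{q+1}$ on disjoint finite sets of directions $\xi$. Their amplitudes $a_\xi$ and $b_\xi$ are prescribed as smooth functions of the mollified data and of the errors $R_\ell, \Phi_\ell, \vr_\ell S_\ell$ via Nash-type geometric decompositions, so that the low-frequency parts of the relevant nonlinear self-interactions of the Mikados — that is, $\theta_{q+1} w_{q+1}$ in the density equation, $\vr_\ell w_{q+1}\otimes w_{q+1}$ together with cross terms in the momentum equation — cancel $R_\ell$, $\vr_\ell S_\ell$ and $\Phi_\ell$, respectively, after taking divergence. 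Mean-zero corrections keep $\na\cdot w_{q+1}=0$ and $\theta_{q+1}$ spatial-mean free. The explicit form of $W_\xi, U_\xi$ yields the $C^0$ and $C^1$ bounds $\norm{\theta_{q+1}}_0,\norm{w_{q+1}}_0 \lec \de_{q+1}^{1/2}$ and $\norm{\theta_{q+1}}_1,\norm{w_{q+1}}_1 \lec \la_{q+1}\de_{q+1}^{1/2}$ together with the corresponding first material-derivative bound. Setting $\vr_{q+1}:=\vr_\ell+\theta_{q+1}$, $u_{q+1}:=u_\ell+w_{q+1}$ and $p_{q+1}:=p_q+\de_{q+1}\vr_q$ (which is \eqref{cauchy_p} by fiat), these pointwise bounds immediately give \eqref{cauchy}, \eqref{cauchy_01}, and — after matching the universal constant $M$ — the $C^0$ side of \eqref{est.vp22} at level $q+1$.

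Plugging $(\vr_{q+1}, u_{q+1}, p_{q+1})$ into \eqref{app.eq} one reads off, as in Section \ref{errors}, three families of new errors: the oscillation error coming from high-frequency self-interactions of the Mikados minus their designed low-frequency part; the Nash/transport error from having replaced $u_q$ by $u_\ell$ in the transport terms and from $D_{t,\ell}$ acting on inverted quantities; and the interaction errors coming from cross products between the density and velocity Mikados and from products of the Mikados with the mollified background — these last ones, absent from \cite{Kwon2}, are the main novelty. The oscillation and interaction pieces are put in divergence form with the non-local operator $\cR$ of Definition \ref{idv.defn}, which provides the decisive stationary-phase gain of $\la_{q+1}^{-1}$ per application and converts the bounds on $a_\xi, b_\xi$ into the target bound $\la_{q+1}^{-2\ga}\de_{q+2}$ on $R_{q+1}, \Phi_{q+1}, S_{q+1}$.

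The main obstacle is the transport current error of the form $D_{t,\ell}\cR(\cdot)$ in the momentum equation, for which $\cR$ is unsuitable since it does not commute with the material derivative. I would handle it with the local inverse divergence $\cRl$ of Proposition \ref{prop:intermittent:inverse:div}, as used in \cite{Giri02,Giri03}: its explicit pointwise kernel structure makes $D_{t,\ell}$-commutators directly computable, at the cost of requiring higher-order material-derivative bounds on $u_\ell$ — which is precisely why \eqref{est.new} is propagated through the induction. The material-derivative control \eqref{est.new} at level $q+1$ is then closed by applying $D_t^s$ with $u_{q+1}$ to the explicit expressions for $w_{q+1}$ using Leibniz together with the corresponding control on the amplitudes $a_\xi, b_\xi$. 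A final parameter count, using $\beta<1/7$ and choosing $b$ close enough to $1$ so that $\ga=(b-1)^2$ is small and $\lambda_0$ large, verifies \eqref{est.vp}, \eqref{e:ad-pressure}, \eqref{est.R}, \eqref{est.ph} and \eqref{est.S} at level $q+1$, and closes the induction.
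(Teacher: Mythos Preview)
Your proposal is correct and follows essentially the same route as the paper: mollification, two families of Mikado building blocks with weights determined by geometric decompositions so that $\theta_0 w_0$, $w_0\otimes w_0$, and $\theta_0 w_0\otimes w_0$ cancel $R_\ell$, $S_\ell$, and $\Phi_\ell$ respectively, divergence-free and mean-zero correctors, error analysis via $\cR$ with the transport current error handled by the local operator $\cRl$ of Proposition~\ref{prop:intermittent:inverse:div}, and propagation of the higher material-derivative bounds \eqref{est.new}. One small discrepancy: the paper sets $\vr_{q+1}=\vr_q+\theta_{q+1}$ and $u_{q+1}=u_q+w_{q+1}$ (adding to the \emph{unmollified} tuple, with the mollification mismatch absorbed into the mediation errors $\as R_M$, $\as S_M$, $\as\ph_M$), whereas you add to $(\vr_\ell,u_\ell)$; both conventions work, but the error bookkeeping in Section~\ref{errors} is organized around the former.
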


We also need a technical refinement of the above proposition following~\cite{Kwon1} to prove the Theorem \ref{thm}. In an effort to improve the readability of the paper, we state the following proposition separately.

\begin{prop}[Bifurcating inductive proposition]\label{p:ind_technical}
	Let the geometric constant $M {>1}$, the functions  $\bar b$, $\Lambda_0$, the parameters $\beta$, $b$, $\lambda_0$ and the tuple $(\vr_q, u_q,p_q,R_q,\Phi_q,S_q)$ be as in the statement of Proposition \ref{ind.hyp}. For any time interval $\cal{I}\subset (0, T)$ with $|\cal{I}|\geq 3\tau_q$
	we can produce a first tuple $(\vr_{q+1}, {u}_{q+1},  p_{q+1},  R_{q+1}, \Phi_{q+1}, S_{q+1})$ and a second one $(\td \vr_{q+1}, \td u_{q+1},  \td p_{q+1}, \td R_{q+1}, \td \Phi_{q+1}, \td S_{q+1})$ which share the same initial data, satisfy the same conclusions of Proposition \ref{ind.hyp} and additionally
	\begin{align}\label{e:distance_and_support}
	\norm{u_{q+1}-\td u_{q+1}}_{C^0([0, T];L^2(\T^3)) }\geq \de_{q+1}^\frac 12, \quad
	\supp_t(u_{q+1}-\td u_{q+1}) \subset \cal{I}. 
	\end{align}
	Furthermore, if we are given two tuples $(\vr_q, u_q,p_q,R_q,\Phi_q,S_q)$ and $(\td \vr_q, \td u_q, \td p_q, \td R_q, \td \Phi_q, \td S_q)$ satisfying \eqref{est.vp}-\eqref{est.new} and
	\[
	\supp_t(\{\vr_q - \td \vr_q, u_q-\td u_q, p_q-\td p_q, R_q-\td R_q, \Phi_q-\td \Phi_q, S_q-\td S_q\})\subset \cal{J} 
	\]
	for some interval $\cal{J}\subset (0, T)$, we can exhibit corrected counterparts $(\vr_{q+1}, {u}_{q+1},  p_{q+1},  R_{q+1}, \Phi_{q+1}, S_{q+1})$ and $(\td \vr_{q+1}, \td u_{q+1},  \td p_{q+1}, \td R_{q+1}, \td \Phi_{q+1}, \td S_{q+1})$ again satisfying the same conclusions of Proposition \ref{ind.hyp} together with the following control on the support of their difference:
	\begin{equation}\label{e:second_support}
	\supp_t(\{\vr_{q+1} - \td \vr_{q+1}, u_{q+1}-\td u_{q+1}, p_{q+1}-\td p_{q+1}, R_{q+1}-\td R_{q+1}, \Phi_{q+1}-\td \Phi_{q+1}, S_{q+1}-\td S_{q+1}\})\subset \cal{J} + (\la_q\de_q^\frac12)^{-1}.
	\end{equation}
\end{prop}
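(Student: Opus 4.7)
The plan is to adapt the bifurcation strategy of De Lellis--Kwon \cite{Kwon1} to the inhomogeneous setting, piggy-backing on the construction of Proposition \ref{ind.hyp} and introducing a localized modification of the Mikado amplitudes on a sub-interval of $\cal{I}$.

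For the first claim, I would start from the standard construction of Proposition \ref{ind.hyp}, producing perturbations $(\theta_{q+1}, w_{q+1}, q_{q+1})$ built from the two families of Mikado flows (one for density, one for velocity) whose amplitudes are determined by temporal/spatial cutoffs of the stresses $R_q$, $\Phi_q$, $S_q$. Since $|\cal{I}|\geq 3\tau_q$, one can pick a closed sub-interval $\cal{I}'\subset \cal{I}$ whose distance from $\partial\cal{I}$ is at least $\tau_q$; let $\eta(t)$ be a smooth temporal cut-off supported in $\cal{I}$, identically $1$ on $\cal{I}'$, with $|\partial_t^k \eta|\lec \tau_q^{-k}$ for all orders appearing in our estimates. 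On the region $\{\eta=1\}$, replace the Mikado building blocks by auxiliary ones whose quadratic and cubic averages coincide with those of the originals, for instance by shifting the Mikado phase by a half-period or composing with a suitable orthogonal transformation. This defines $(\td\theta_{q+1}, \td w_{q+1}, \td q_{q+1})$. Because the averaged identities used in the algebraic cancellations of Section~\ref{errors} are preserved, the error estimates established in the proof of Proposition \ref{ind.hyp} hold verbatim for the bifurcated tuple.

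For the $L^2$ lower bound: on $\cal{I}'$ one has $u_{q+1}-\td u_{q+1} = w_{q+1}-\td w_{q+1}$, a superposition of two high-frequency Mikado blocks of $L^2$ amplitude of order $M\de_{q+1}^{1/2}$ whose spatial averages are made to differ by construction. At any $t_0$ in the interior of $\cal{I}'$, stationary-phase cancellations combined with the Mikado normalization yield $\|w_{q+1}(t_0)-\td w_{q+1}(t_0)\|_{L^2}\geq \de_{q+1}^{1/2}$ once $\lambda_0$ is taken large enough, giving \eqref{e:distance_and_support}. The temporal support of $u_{q+1}-\td u_{q+1}$ is contained in $\supp\eta\subset \cal{I}$. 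For the second claim, I would observe that every operation mapping the level-$q$ tuple to the level-$(q+1)$ perturbations is local in time at scale $\lec (\la_q\de_q^{1/2})^{-1}$: spatial mollification introduces no temporal non-locality, the temporal mollification kernel has support of size of this order, the cutoffs localizing on intervals of length $\tau_q$ are built from such mollifications, and the flow map of $u_q$ used to transport the Mikado phases depends only on the values of $u_q$ on $\cal{J}+(\la_q\de_q^{1/2})^{-1}$. Hence if two level-$q$ tuples agree outside $\cal{J}$, their associated perturbations agree outside $\cal{J}+(\la_q\de_q^{1/2})^{-1}$, which is exactly \eqref{e:second_support}.

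The main obstacle will be to design the bifurcated pair of Mikado families so that the quadratic density--velocity interactions cancelling $R_q$ and the cubic density--velocity--velocity interactions cancelling $\Phi_q$ and $S_q$ are \emph{simultaneously} preserved. In the homogeneous case of \cite{Kwon1} it suffices to modify a single family of Mikado flows, whereas here the coupling between the density and velocity perturbations described in Section~\ref{errors} forces a coordinated modification of both families; one must verify that all the mixed-type algebraic identities, in particular those underlying the transport-current error of Subsection~\ref{transport error}, continue to close under this coordinated replacement. This joint preservation of identities is the most delicate point and what prevents the argument from being a straightforward quotation of \cite{Kwon1}.
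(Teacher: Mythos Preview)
Your approach is broadly in the right spirit, but it is considerably more complicated than the paper's and in one place identifies the wrong obstacle.

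The paper does not introduce an auxiliary cutoff $\eta$, nor does it replace the Mikado building blocks by shifted or rotated versions. Instead, since $|\cal I|\geq 3\tau_q$, one can find a single index $p_0$ with $\supp(\th_{p_0}(\tau_q^{-1}\cdot))\subset\cal I$; the built-in temporal cutoffs $\th_I$ already localize the construction, so no extra cutoff is needed. For those $I=(p_0,k,h)$ that belong to $\mathscr{I}_S$, the paper simply flips the sign of the amplitude, $\td a_I := -a_I$, and leaves everything else untouched. This is the entire bifurcation. Because the weights for $I\in\mathscr{I}_S$ enter the cancellation identities only through $a_I^2$ (see \eqref{eq.Ga}--\eqref{eq.Ga1}), the sign flip preserves all the algebraic cancellations automatically; the error analysis of Section~\ref{est} carries over verbatim with no new verification needed.

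Crucially, the $\mathscr{I}_S$ family carries \emph{no density weight $b_I$}: the density perturbation $\theta_0=\sum_{I\in\mathscr{I}_R\cup\mathscr{I}_\Phi} b_I\vartheta_I$ is completely unchanged, so $\td\vr_{q+1}=\vr_{q+1}$. This means your anticipated ``main obstacle'' --- having to coordinate the density and velocity modifications so that the mixed $R_q$-, $\Phi_q$- and transport-current identities still close --- simply does not arise. All of the density--velocity interaction terms are untouched, and the disjoint-support condition (Proposition~\ref{p:supports}) kills any cross terms between $\mathscr{I}_S$ and $\mathscr{I}_R\cup\mathscr{I}_\Phi$.

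Your scheme of interpolating via $\eta$ between two distinct Mikado families carries a real risk: on the transition region $0<\eta<1$, the quadratic and cubic self-interactions of the convex combination produce cross terms between the old and new families whose low-frequency parts need not satisfy the required moment identities. The paper's sign flip bypasses this completely because the transition is already handled by the even powers $\th_I^2,\th_I^6$ in the partition of unity.

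For the $L^2$ lower bound the paper then computes $|w_0-\td w_0|^2$ explicitly using the trace identity $\sum_{I\in\mathscr{I}_S}a_I^2|\tilde h_I|^2=\tr(\de_{q+1}\I-S_\ell-\widetilde{M})$, splits into mean and oscillatory parts, and bounds the oscillatory part by $(\la_{q+1}\mu_q)^{-2}$ via Lemma~\ref{phase}; this yields $\|w_0-\td w_0\|_{L^2}\geq 2\de_{q+1}^{1/2}$, from which the contribution of $w_c,\td w_c$ is subtracted. Your sketch of this step is qualitatively right. Your treatment of the second claim (support propagation) matches the paper's, which observes that the only temporal nonlocality in the map from level $q$ to level $q+1$ comes from the mollification along the flow at scale $\ell_t\leq(\la_q\de_q^{1/2})^{-1}$.
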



\section{Proof of Theorem \ref{thm}} 
\label{proof}
Here we shall prove the result stated in Theorem~\ref{thm}, assuming Proposition~\ref{ind.hyp}, and Proposition~\ref{p:ind_technical}. In what follows, let us first fix $T\ge 20$, $\alpha< \frac 17$, and $\beta\in (\al, \frac  17)$. Then choose $b$ and $\la_0$ in the range suggested in Proposition \ref{ind.hyp}. We first need to choose the starting tuple $(\vr_0, u_0,p_0,R_0,\Phi_0,S_0)$, so that they satisfy the estimates given by \eqref{app.eq}-\eqref{est.S}. For this purpose, we fix an integer parameter $\bar\lambda>0$ (which will be chosen appropriately later) and define 
\begin{align*}
\vr_0 &= \chi_0(t) + ( 1 + \frac 14 \cos(\bar\lambda x_3))(1- \chi_0(t)), 
\qquad u_0 = 0, \qquad p_0 = \Phi_0 =0,\\
R_0 & =  \partial_t \chi_0(t)  (4 \bar\lambda)^{-1}\sin(\bar\lambda x_3)e_3, \qquad 
\div (\vr_0 S_0) = -\partial_t R_0,
\end{align*}
where we choose $\chi_0(t):= (1-2 \delta_0^\frac{1}{2} + e(t))^{\frac{1}{2}}$, and $e(t) := -\de_1(1-\exp(-\de_0^\frac12 t))$. 
Note that, by assuming $\lambda_0$ large enough, $0 \leq \delta_0 - {\de_1} \leq 2\delta_0^\frac{1}{2} - {e(t)} \leq 3\delta_0^\frac{1}{2} \leq \frac{1}{2}$ 
and thus $\vr_0$ and $R_0$ are well defined and smooth, since
\begin{equation}
1 - 2\delta_0^{\frac{1}{2}} + e(t) \geq \frac{1}{2}\, .
\end{equation}
Next, notice that it is easy to check that the tuple satisfies \eqref{app.eq}, and thanks to the choice of $p_0, u_0$, and $\Phi_0$, the estimates \eqref{est.ph}, and the estimates related to $p_0$ and $u_0$ in \eqref{est.vp22}, \eqref{est.vp},\eqref{e:ad-pressure}, and \eqref{est.new} are trivially satisfied. Therefore our aim is to choose $\bar\lambda$ so that all other estimates in  \eqref{est.vp22}-\eqref{est.S} hold. 

We first make use of the fact that $e(t)\leq 0$ to estimate
$$
\|\vr_0\|_0 \leq |\chi_0| + \frac 54 |1- \chi_0| \le (1-2\delta_0^\frac{1}{2})^{1/2} + \frac 52 \le 5 - \delta_0^{1/2}, \qquad \|\vr_0\|_N \leq 2 \bar\lambda^N,
$$
Therefore above estimate and the choice of $u_0$ guarantees that estimates \eqref{est.vp22} and \eqref{est.vp} is satisfied as soon as
\begin{equation}\label{e:barlambda_1}
\bar\lambda \leq  \lambda_0\delta_0^{\frac{1}{2}}\, .
\end{equation}
As for \eqref{e:ad-pressure}, \eqref{est.R}, and \eqref{est.S} we estimate
\begin{align*}
\|D_t \vr_0\|_{N-1} & \leq C \bar \lambda^{N-1} \|e'\|_0 \\
\|R_0\|_N  &\leq C \bar \lambda^{N-1} \|e'\|_0 , \qquad 
\|D_t R_0\|_N =\|\partial_t R_0\|_N \leq C \bar \lambda^{N-1} (\|e''\|_0 + \|e'\|_0^2), \\
\|\vr_0 S_0\|_N  = \| \cal{R} \partial_t R_0 \|_N &\leq  \|\partial_t R_0\|_N, \qquad 
\|D_t (\vr_0 S_0)\|_N =\|\partial_{tt} R_0\|_N \leq C \bar \lambda^{N-1} (\|e'''\|_0 + \|e' e''\|_0 + \|e'\|_0^3),
\end{align*}
where $C$ is a geometric constant.
Notice that thanks to \eqref{e:barlambda_1}, we already have $\bar \lambda \leq \lambda_0$; by the choice of $\vr$, we have $\inf{\varrho_0} \ge 3/4$; and moreover $\|e'\|_0 \leq {\de_0^\frac12}\delta_1$, $\|e''\|_0 \leq {\de_0}\delta_1$, $\|e'''\|_0 \leq {\de_0}^{3/2}\delta_1$, by the specific choice of the function $e(t)$ (and hence
$\|e'\|_0^2 \leq {\de_0} \delta_1^2 \leq {\delta_0\delta_1}$ and so on). Therefore, to verify the estimates \eqref{e:ad-pressure}, \eqref{est.R}, and \eqref{est.S}, it suffices to impose 
\begin{equation}\label{e:barlambda_2}
\bar \lambda\geq C \lambda_0^{2\gamma} {\de_0^\frac12}\, .
\end{equation} 
Finally, we need to check that the requirements \eqref{e:barlambda_1} and \eqref{e:barlambda_2} are mutually compatible for some choice of the parameters satisfying the assumptions of Proposition \ref{ind.hyp}: taking into account that $\bar\lambda$ must be an integer, the requirement amounts to the inequality ${\delta_0^{\frac{1}{2}}( \lambda_0 - C \lambda_0^{2\gamma})} \geq 1$. 
Recall that $2\gamma = 2(b-1)^2$ and 
the restrictions on $b$ given by Proposition \ref{ind.hyp} certainly allows us to choose $b$ so that ${2}(b-1)^2<\frac{1}{2}$. The compatibility is then satisfied if {$\lambda_0 >C \lambda_0^{\frac{1}{2}} +\la_0^{\beta}$.} Since $C$ is a geometric constant, we just need a sufficiently large $\lambda_0$, which is again a restriction compatible with the requirements of Proposition \ref{ind.hyp}.

Since the sequence $\{\vr_q, u_q, p_q\}$ also satisfies \eqref{cauchy}, it is Cauchy in $C^0([0, T];C^{\al}(\T^3))$. Indeed, for any $q<q'$, we have
the following estimates
\begin{align*}
\norm{\vr_{q'} - \vr_q}_{C^0([0, T];C^\al(\T^3))}
&\leq \sum_{l=1}^{q'-q}\norm{\vr_{q+l} - \vr_{q+l-1}}_{C^0([0, T];C^\al(\T^3))}\\
&\lec \sum_{l=1}^{q'-q}
\norm{\vr_{q+l} - \vr_{q+l-1}}_{0}^{1-\al}
\norm{\vr_{q+l} - \vr_{q+l-1}}_{1}^{\al}
\lec \sum_{l=1}^{q'-q} 
\la_{q+l}^{\al} \de_{q+l}^{\frac12} = \sum_{l=1}^{q'-q} 
\la_{q+l}^{\al-\be} \to 0, 
\end{align*}
as $q$ goes to infinity because of $\al-\be<0$. Therefore, we obtain its limit $\vr$ in $ C^0([0, T];C^\al( \T^3))$. An exact similar calculation reveals that $u$ is in $ C^0([0, T];C^\al( \T^3))$. Moreover, an analogous interpolation argument can be used for $R_q$ as well to conclude the convergence of $R_q$ to $0$ in $C^0([0, T]; C^\alpha ( \T^3))$. Also, the convergence of $p_q$ can be obtained from \eqref{cauchy_p} and the bounds on $\varrho_q$. Since $(R_q, \Phi_q, \vr_q S_q)$ converges to $0$ in $C^0([0, T]\times \T^3)$, the limit $(\vr, u,p)$ solves the inhomogeneous incompressible Euler equation \eqref{eqn.E}. Estimating 
\begin{align*}
\|\partial_t \vr_q\|_0 & \leq \|u_q\|_0 \| \nabla \vr_q\|_0 + \| \nabla R_q\|_0 , \\
\|\partial_t (\vr_q u_q)\|_0 & \leq \|u_q\|_0 \|\nabla(\vr_q  u_q)\|_0 + \| \nabla p_q\|_0 + \|D_t R_q\|_0 + \|\div(R_q \otimes u_q)\|_0 \\
&\qquad\qquad  + \|\div (\vr_q S_q)\|_0
 +\|\div \Phi_q\|_0,
\end{align*}
the time regularity of $\vr$ and $\vr u$ can be concluded with an analogous interpolation argument. Hence $\vr$ and $\vr u$ both are in the space $C^\alpha ([0, T]\times \T^3)$. This also implies, as $\vr$ is bounded below by a positive constant, that $u$ belongs to the space $C^\alpha ([0, T]\times \T^3)$. Indeed, to verify this lower bound of the density, note that thanks to \eqref{cauchy_01}, we have
\begin{align*}
\inf_{\T^d} \vr \ge \inf_{\T^d} \vr_0 + \sum_{q=0}^{\infty} \inf_{\T^d} (\vr_{q+1}-\vr_q) 
\ge \frac 34 - \sum_{q=0}^{\infty} \frac M2 \de_{q+1}^{1/2} \ge 1/4,
\end{align*}
where we have chosen $\lambda_0$ large enough so that $\sum_{q=0}^{\infty} \de_{q+1}^{1/2}  \le \frac{1}{4M}$.

Finally, to generate infinitely many solutions, fix $\bar q\in \N$ satisfying $b^{\bar  q}\geq \bar q$. At the $\bar q$th step using Proposition \ref{p:ind_technical} we can produce two distinct tuples, one which we keep denoting as above and the other which we denote by $(\td \vr_q, \td u_q, \td p_q, \td R_q, \td\Phi_q, \td S_q)$ and satisfies \eqref{e:distance_and_support}, namely
\begin{align*}
\norm{\td u_{\bar q} - u_{\bar q}}_{C^0([0, T];L^2(\T^3))} \geq  \de_q^\frac 12, \quad \supp_t (u_q-\td u_q) \subset \cal I\, ,  
\end{align*}
with $\cal I  = (10, 10+3\tau_{\bar q-1})$.
Applying now the Proposition \ref{ind.hyp} iteratively, we can build a new sequence $(\td \vr_q, \td u_q, \td p_q, \td R_q, \td\Phi_q, \td S_q)$ of approximate solutions which satisfy \eqref{est.vp22}-\eqref{cauchy} and \eqref{e:second_support}, inductively. Arguing as above, this second sequence converges to a solution $(\td \vr, \td u, \td p)$ to the Euler equation. Indeed, $\td \vr$ and $ \td u$ are in $ \in C^\beta ([0, T]\times \T^3)$. We remark that for any $q\geq \bar q$,
\[
\supp_t(\vr_{q} -\td \vr_{q}) \subset \cal I + \sum_{q=\bar q}^\infty (\la_{q}\de_{q}^\frac12)^{-1} \subset[9,T], \quad \supp_t(u_{q} -\td u_{q}) \subset \cal I + \sum_{q=\bar q}^\infty (\la_{q}\de_{q}^\frac12)^{-1} \subset[9,T],
\]
(by adjusting $\la_0$ to be even larger than chosen above, if necessary), and hence $\td \vr_q, \td u_q$ share initial data with $\vr_q, u_q$, respectively, for all $q$. In particular, two solutions $\td u_q$ and $u_q$ have the same initial data.  However, the new solution $\td u$ differs from $u$ because
\begin{align*}
\norm{u - \td u}_{C^0([0, T];L^2(\T^3))}
&\geq \norm{u_{\bar q} -\td u_{\bar q}}_{C^0([0,T];L^2(\T^3))}
-\sum_{q=\bar q}^\infty \norm{u_{q+1}-u_q - (\td u_{q+1} - \td u_q)}_{C^0([0,T];L^2(\T^3))} \\
&\geq \norm{u_{\bar q} - \td u_{\bar q}}_{C^0([0,T];L^2(\T^3))}
-(2\pi)^\frac32 \sum_{q=\bar q+1}^\infty (\norm{u_{q+1}-u_q}_0 + \norm{\td u_{q+1}-\td u_q}_0)\\
&\geq   \de_{\bar q}^\frac 12  - 2(2\pi)^\frac32M \sum_{q=\bar q}^\infty\de_{q+1}^\frac12>0.
\end{align*} 
The last inequality follows from adjusting $\la_0$ to a larger one if necessary. By changing the choice of time interval $\cal I$ and the choice of $\bar{q}$, we can easily generate infinitely many solutions.

\section{Construction of density and velocity perturbations}\label{sec:correction.const}

In this section, we will introduce the building blocks used in the subsequent analysis. In fact, the most efficient one so far found in the literature are the so-called Mikado building blocks, first introduced in \cite{DaSz2016}. They are stationary solutions to incompressible Euler equations with zero pressure. 

\begin{defn}[Mikado Density and Field] 
For any given vector $h \in \Z^3$ and $s_h \in \mathbb R^3$, we refer to $U_{h, s}: \T^3 \rightarrow \R^3$ as a Mikado field and $V_{h, s}: \T^3 \rightarrow \R$ as a Mikado density, provided they satisfy
$$
U_{h, s} = \psi_h (x -s_h) h, \quad V_{h, s} = \varphi_h (x -s_h),
$$ 
where $\psi_h, \varphi_h \in C_c^{\infty}(\T^3)$ and $h \cdot \nabla \psi_h =0 = h \cdot \nabla \varphi_h $.
\end{defn}

Here the vector $h$ is called the direction of the Mikado field (they will vary in a finite fixed set of directions $\mathcal{H}$), and $s=s_h$ is called its shift. It is also easy to check that
\begin{align*}
\div U_{h,s} &= h \cdot \nabla \psi_h =0, \quad
\div(U_{h,s} \otimes U_{h,s}) = (U_{h,s} \cdot \nabla) U_{h,s} = h \psi_h (h \cdot \nabla) \psi_h =0.
\end{align*}
One of the most fundamental observation proposed in the pioneering work \cite{DaSz2016} is the following important lemma:
\begin{lem}\label{l:Mikado}
Given any $\lambda \in \N_0$, $h\in \mathcal{H}$, let $s_h \in \mathbb R^3$ and $\chi_h \in \mathbb R$. Assume that the supports of the maps $U_h (\cdot - s_h)$ are pairwise disjoint. Then
\[
\sum_{h\in \mathcal{H}} \chi_h U_h (\lambda (x-s_h))
\]
is a stationary solution of the incompressible Euler equations on $\mathbb T^3$. 
\end{lem}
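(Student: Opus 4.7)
The plan is to verify directly that $U(x) := \sum_{h \in \mathcal{H}} \chi_h U_h(\lambda(x - s_h))$, paired with pressure $p \equiv 0$, satisfies the stationary incompressible Euler system $\nabla \cdot U = 0$ and $\nabla \cdot (U \otimes U) + \nabla p = 0$. The two things that make this work are the structural conditions $h \cdot \nabla \psi_h = 0$ in the definition of a Mikado field and the disjoint-support hypothesis, which together decouple the nonlinearity.

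First I would check incompressibility. Since $U_h(y) = \psi_h(y - s_h) h$ with $h \cdot \nabla \psi_h = 0$, one has $\nabla \cdot U_h = 0$ pointwise, and by the chain rule $\nabla \cdot [U_h(\lambda(x-s_h))] = \lambda (\nabla \cdot U_h)(\lambda(x - s_h)) = 0$. Summing over $h \in \mathcal{H}$ gives $\nabla \cdot U = 0$. Next, for the nonlinear term, I would expand
\[
U \otimes U = \sum_{h, h' \in \mathcal{H}} \chi_h \chi_{h'}\, U_h(\lambda(x - s_h)) \otimes U_{h'}(\lambda(x - s_{h'})).
\]
The disjoint-support assumption is preserved (indeed strengthened) under the rescaling $x \mapsto \lambda x$ on $\T^3$, since each support and its periodic translates are merely compressed by the factor $\lambda^{-1}$; consequently every off-diagonal product vanishes pointwise. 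The remaining diagonal terms give $U \otimes U = \sum_h \chi_h^2\, \psi_h^2(\lambda(x - s_h))\, h \otimes h$, and
\[
\nabla \cdot (U \otimes U) = \sum_h \chi_h^2\, \lambda\, (h \otimes h) \nabla(\psi_h^2)(\lambda(x - s_h)) = \sum_h 2\chi_h^2\, \lambda\, \psi_h (h \cdot \nabla \psi_h)\, h = 0,
\]
again using $h \cdot \nabla \psi_h = 0$. Combined with $\nabla p = 0$, this is exactly the stationary Euler momentum equation.

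There is no real obstacle here; the argument is a direct verification. The only two points that require a moment of care are (i) confirming that the pairwise disjointness of the supports of $U_h(\cdot - s_h)$ survives the rescaling on the torus, so that cross-terms in $U \otimes U$ drop out, and (ii) invoking the Mikado constraint $h \cdot \nabla \psi_h = 0$ in both the divergence-free check and the vanishing of the single-mode self-interaction $\nabla \cdot (U_h \otimes U_h)$.
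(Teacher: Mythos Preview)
Your approach matches the paper's: it records exactly the identities $\div U_{h,s} = h \cdot \nabla \psi_h = 0$ and $\div(U_{h,s} \otimes U_{h,s}) = h \psi_h (h \cdot \nabla)\psi_h = 0$ immediately before stating the lemma (which it does not prove, citing \cite{DaSz2016}), and you fill in the cross-term argument it leaves implicit.

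One step fails as written. Your claim that disjointness of the supports of $U_h(\cdot - s_h)$ is ``preserved (indeed strengthened)'' under the torus rescaling is false in general: rescaling by $\lambda$ replaces each support by $\lambda^3$ translated copies spread over the torus, and because the shifts $s_h$ enter the argument as $\lambda s_h$ rather than $s_h$, these copies can overlap across different $h$. A one-dimensional model makes this concrete: two functions supported in $[0,\varepsilon]\subset\T$ with shifts $0$ and $\pi$ have disjoint shifted supports, but after rescaling by $\lambda=2$ the two rescaled functions coincide. What your argument actually needs is that the \emph{rescaled} maps $x \mapsto U_h(\lambda(x - s_h))$ have pairwise disjoint support. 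In the paper's applications the shifts are chosen after fixing $\lambda$ precisely to ensure this (see Proposition~\ref{p:supports}), so the lemma's hypothesis should be read at the rescaled level; once it is, the rest of your verification goes through verbatim.
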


Once we choose a finite set of fixed Mikado directions, we can construct corresponding Mikado fields and Mikado densities to construct velocity and density perturbations that reduces the errors $R_q, \Phi_q$, and $S_q$ at each stage. To successfully implement this idea, we need to recall couple of well-known geometric lemmas, proved in \cite{Kwon1}. To state them precisely, let us first denote by $\mathbb{S}$ the subset of $\mathbb R^{3\times 3}$ of all symmetric matrices, and set $|K|_\infty := \max_{l,m} |k_{lm}|$, for any $K= (k_{lm})_{l,m=1}^{3}\in \mathbb R^{3\times 3}$.

\begin{lem}[Geometric Lemma I]\label{lem:geo1} 
	Let $\cH= \{h_i\}_{i=1}^6$ be a set of vectors in $\Z^{3}$ such that
	\begin{equation}\label{con.FIR}
	\sum_{i=1}^6 h_i \otimes h_i = C\I, \quad\mbox{and}\quad
	\{h_i\otimes h_i\}_{i=1}^6 \,\,\text{ forms a basis of }\mathbb{S}\, 
	\end{equation}
	for some constant $C>0$. Then, it is possible to find a positive constant $\mathcal{M}_0 = \mathcal{M}_0(\cH)$ such that for any $N\leq \mathcal{M}_0$, we can find smooth functions $\{\Ga_{h_i}\}_{i=1}^6\subset C^\infty(S_{N}; (0,\infty))$, with domain $S_{N}:= \{\I-K: K \text{ is symmetric, } |K|_\infty\leq N\}$,
	satisfying 
	\begin{align*}
	\I -K = \sum_{i=1}^6 \Ga_{h_i}^2(\I - K) (h_i\otimes h_i), 
	\quad\forall (\I-K) \in S_{N}\, .
	\end{align*}
\end{lem}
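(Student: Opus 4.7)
The plan is to exploit the basis property of $\{h_i\otimes h_i\}_{i=1}^6$ in $\mathbb{S}$ to solve the defining equation linearly in the coefficients, and then take square roots once we have ensured positivity in a neighborhood of $\mathbb{I}$. The starting observation is that, since $\{h_i\otimes h_i\}_{i=1}^6$ is a basis of the $6$-dimensional space $\mathbb{S}$, for every $A\in \mathbb{S}$ there exist unique coefficients $\gamma_1(A),\dots,\gamma_6(A)$ such that
\begin{equation*}
A = \sum_{i=1}^6 \gamma_i(A)\,(h_i\otimes h_i).
\end{equation*}
The map $A\mapsto (\gamma_i(A))_i$ is the inverse of an invertible linear map on $\mathbb{S}$, hence each $\gamma_i$ is linear in $A$ and, in particular, smooth.

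Next I would evaluate at $A=\mathbb{I}$ to pin down the sign. By the first half of hypothesis \eqref{con.FIR}, $\mathbb{I} = \tfrac{1}{C}\sum_{i=1}^6 h_i\otimes h_i$, so by uniqueness of the coefficients one must have $\gamma_i(\mathbb{I}) = 1/C > 0$ for every $i$. Since the $\gamma_i$ are continuous (in fact linear) in $A$, there is an open neighborhood $\mathcal{U}\subset \mathbb{S}$ of $\mathbb{I}$ on which $\gamma_i(A)>0$ for $i=1,\dots,6$. Because $|\cdot|_\infty$ is a norm on $\mathbb{S}$, there exists a constant $\mathcal{M}_0 = \mathcal{M}_0(\mathcal{H})>0$ such that $S_{\mathcal{M}_0}\subset \mathcal{U}$, i.e., $\gamma_i(\mathbb{I}-K) > 0$ whenever $|K|_\infty \leq \mathcal{M}_0$.

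Finally, for any $N\leq \mathcal{M}_0$ and $(\mathbb{I}-K)\in S_N$, define
\begin{equation*}
\Gamma_{h_i}(\mathbb{I}-K) := \sqrt{\gamma_i(\mathbb{I}-K)},
\end{equation*}
which is smooth and strictly positive on $S_N$ because the argument of the square root is smooth (linear in $K$) and bounded away from $0$ on the compact set $\{|K|_\infty\leq N\}$. By construction,
\begin{equation*}
\sum_{i=1}^6 \Gamma_{h_i}^2(\mathbb{I}-K)\,(h_i\otimes h_i) = \sum_{i=1}^6 \gamma_i(\mathbb{I}-K)\,(h_i\otimes h_i) = \mathbb{I}-K,
\end{equation*}
which is the claimed identity.

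\textbf{Expected obstacle.} The only substantive point is verifying that $\gamma_i(\mathbb{I}) = 1/C$, which rests on the uniqueness of the representation in the basis $\{h_i\otimes h_i\}$ combined with the identity in \eqref{con.FIR}; once this positivity at the base point is in hand, the rest is a soft continuity/compactness argument, and the explicit construction of the set $\mathcal{H}$ (six integer vectors whose tensor squares span $\mathbb{S}$ and sum to a multiple of the identity) is not needed here since such sets are taken as given by the hypothesis. Thus no delicate calculation is required beyond linear algebra and a continuity argument.
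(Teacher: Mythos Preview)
Your argument is correct: the basis hypothesis gives unique linear coordinate functionals $\gamma_i$ on $\mathbb{S}$, the identity $\sum_i h_i\otimes h_i = C\,\I$ forces $\gamma_i(\I)=1/C>0$, and continuity then yields a neighborhood $S_{\mathcal{M}_0}$ on which all $\gamma_i$ stay positive so that $\Gamma_{h_i}:=\sqrt{\gamma_i}$ are smooth and do the job. The paper does not supply its own proof of this lemma but instead cites \cite{Kwon1}; your proof is precisely the standard linear-algebra plus continuity argument one finds there, so there is nothing to add.
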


\begin{lem}[Geometric Lemma II] \label{lem:geo2}
	Suppose that 
	\begin{equation}\label{con.FIph}
	\{h_1, h_2, h_3\}\subset \Z^3\setminus\{0\} \text { is an orthogonal frame and } 
	h_4=-(h_1+ h_2+h_3).
	\end{equation}
	Then, for any $N_0>0$, there are affine functions $\{\Ga_{h_i}\}_{1\leq i \leq 4} \subset C^\infty(\cal{W}_{N_0}; [N_0, \infty))$ with domain $\cal{W}_{N_0} := \{w\in \R^3: |w| \leq N_0\}$ such that
	\[
	w= \sum_{i=1}^4 \Ga_{h_i} (w) h_i,  \quad\forall w \in \cal{W}_{N_0}\, .
	\] 
\end{lem}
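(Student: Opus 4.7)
The plan is to exploit the null relation $h_1+h_2+h_3+h_4=0$ (which is just a rewriting of the definition of $h_4$) to introduce a free scalar parameter $\alpha$ in the decomposition of $w$, and then to fix $\alpha$ as a constant large enough to force all four coefficients into $[N_0,\infty)$.

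Concretely, since $\{h_1,h_2,h_3\}$ is an orthogonal frame and hence a basis of $\R^3$, every $w\in\R^3$ admits the unique orthogonal decomposition $w=\sum_{i=1}^{3}\tfrac{w\cdot h_i}{|h_i|^2}h_i$. Adding the identity $\alpha(h_1+h_2+h_3+h_4)=0$ for any scalar $\alpha$ yields
\[
w=\sum_{i=1}^{3}\Bigl(\tfrac{w\cdot h_i}{|h_i|^2}+\alpha\Bigr)h_i+\alpha\, h_4.
\]
This suggests the definitions $\Ga_{h_i}(w):=\tfrac{w\cdot h_i}{|h_i|^2}+\alpha$ for $i=1,2,3$ and $\Ga_{h_4}(w):=\alpha$, each of which is manifestly affine (and therefore smooth) in $w$, as required.

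It remains to choose $\alpha$ so that all four coefficients take values in $[N_0,\infty)$ on $\cal{W}_{N_0}$. For $w$ with $|w|\le N_0$, the Cauchy--Schwarz inequality gives $|w\cdot h_i|/|h_i|^2\le N_0/|h_i|$, and therefore the constant choice
\[
\alpha := N_0\Bigl(1+\max_{i=1,2,3}\tfrac{1}{|h_i|}\Bigr)
\]
(which is well-defined since each $h_i\ne 0$, and depends only on the fixed data $\cH$ and on $N_0$) ensures $\Ga_{h_i}(w)\ge \alpha - N_0/|h_i| \ge N_0$ for $i=1,2,3$, while $\Ga_{h_4}(w)=\alpha\ge N_0$ is trivial. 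I do not foresee any genuine obstacle: the argument is purely finite-dimensional linear algebra, with the constraint $h_4=-(h_1+h_2+h_3)$ playing the single role of furnishing the free parameter $\alpha$ without disturbing the identity $w=\sum_i\Ga_{h_i}(w)h_i$; orthogonality of $\{h_1,h_2,h_3\}$ is only used to obtain a clean closed form for the baseline coefficients and could be relaxed at the price of replacing $(w\cdot h_i)/|h_i|^2$ by the appropriate dual-basis pairings.
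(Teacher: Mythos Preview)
Your proof is correct. The paper does not give its own proof of this lemma but simply cites \cite{Kwon1}; your argument is the natural one (and is essentially the one in that reference): expand $w$ in the orthogonal basis $\{h_1,h_2,h_3\}$, then add a large constant multiple of the null relation $h_1+h_2+h_3+h_4=0$ to push all four coefficients above $N_0$ while preserving affinity.
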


Based on these above two lemmas, we can choose 27 pairwise disjoint families $\cH^i$ indexed by $i\in \mathbb{Z}_3^3$ such that each $\cH^i$ consists of further three (disjoint) subfamilies $\cH^{i,R}\cup\cH^{i,\Phi}\cup\cH^{i,S} $ with cardinalities $|\cH^{i,R}|=4$, $|\cH^{i,\Phi}|=6$, and $|\cH^{i,S}|=6$. Moreover, $\cH^{i,R}$ and $\cH^{i,\Phi}, \cH^{i,S}$ satisfy \eqref{con.FIph} and \eqref{con.FIR}, respectively. For example, for $i= (0,0,0)$ we can choose
\begin{align*}
\cH^{i,\Phi} = \{(1,\pm 1, 0), (1, 0, \pm 1), (0, 1, \pm1)\}, \quad
\cH^{i,R} = \{(1,2, 0), (-2, 1, 0), (0, 0, 1), (1,-3,-1)\},
\end{align*}
and similar for $\cH^{i,S}$, and then we can apply $26$ suitable rotations (and rescalings). 
Then the density and velocity perturbations will be chosen for each $h$ in three different ways, depending on whether $h\in \cH^{i,R}$, $h\in \cH^{i,\Phi}$ or $h\in \cH^{i,S}$. By making use of the shorthand notation
$\langle w \rangle = \dint_{\T^3} w (x)\, dx$, we impose the moment conditions for future purpose
\begin{equation}\label{con.psi2}
\begin{split}
\langle \psi_h \rangle = \langle \varphi_h \rangle = 0, \quad \langle \psi_h \varphi_h \rangle =1, \quad \langle \psi^2_h \varphi_h \rangle =0,  \qquad \forall h\in \cH^{i,R},\\
\langle \psi_h\rangle = 0, \quad \langle \psi_h^2 \rangle=1, \quad \varphi_h \equiv 0, 
\qquad \forall h\in \cH^{i,S}, \\
\langle \psi_h \rangle = \langle \varphi_h \rangle = 0, \quad \langle \psi_h \varphi_h \rangle =0, \quad \langle \psi^2_h \varphi_h \rangle =1,  \qquad \forall h\in \cH^{i,\Phi}.
\end{split}
\end{equation} 
Our main strategy is to use Mikado flows directed along $h\in \cH^{i,R}$ to ``cancel the error $R_q$'', while Mikado flows directed along $h\in \cH^{i, \Phi}$ will be used to ``cancel the error $\Phi_q$'', and finally Mikado flows directed along $h\in \cH^{i, S}$ will be used to ``cancel the error $S_q$''. Note that above mentioned different moment conditions \eqref{con.psi2} will play a pivotal role to employ the above strategy. 

Let us also mention the following fact which would be useful to deal with the ``transport current error'' term in Subsection~\ref{transport error}. Indeed, if $\zeta \colon \T^n \to \R$ is a zero mean scalar function, then there exists a large positive even integer $\dpot \gg 1$ and a smooth, mean-zero, adjacent-pairwise symmetric tensor potential\footnote{We use $i_j$ for $1\leq j \leq \dpot$ to denote any number in the set $\{1,\dots,n\}$.} $\vartheta^{(i_1,\dots, i_\dpot)}:\T^n \rightarrow \R^{\left(n^\dpot\right)}$ such that $\zeta(x) = \partial_{i_1}\dots\partial_{i_\dpot} \vartheta^{(i_1 \dots i_\dpot)}(x)$. For our purpose, we shall use $\zeta=\varphi_h$ or $\zeta=\psi_h$ or $\zeta=\psi_h \varphi_h $ or $\zeta=(\psi_h \varphi_h -1)$.

\subsection{Cutoffs and Mollifications}
\label{ss:regularization} 

Following the work by DeLellis-Kwon \cite{Kwon1}, we now describe the partitions of unity in both space and time variables. To do so, let us first introduce non-negative smooth functions $\{\chi_k\}_{k\in\Z^3}$ and $\{\th_p\}_{p\in \Z}$ in space $\R^3$ and in time $\R$, respectively, such that they obey the following relations:
\[
\sum_{k\in \Z^3} \chi_k^6(x) =1, \quad
\sum_{p\in \Z} \th_p^6(t) =1.   
\]
Here, $\chi_k(x) = \chi_0(x-{2}\pi k)$ where $\chi_0$ is a non-negative smooth function supported in $Q(0, {9/8}\pi)$ satisfying $\chi_0 = 1$ on $\overline{Q(0, {7/8}\pi)}$, where $Q (x, r)$ denotes the cube $\{z: |z-x|_\infty < r\}$. Similarly, $\th_p(t) = \th_0(t-p)$ where $\th_0\in C_c^\infty(\R)$ satisfies $\th_0 = 1$ on $[1/8, 7/8]$ and $\th_0 =0$ on $(-1/8,9/8)^c$. Next, we divide the integer lattice $\Z^3$ into 27 equivalent families $[k]$ with $k \in \mathbb{Z}_3^3$ via the usual equivalence relation
\[
k=(k_1,k_2,k_3) \sim \td k = (\td k_1,\td k_2,\td k_3) 
\iff k_i \equiv \td k_i\; \mod 3\quad \text{for all }i=1,2,3.
\]
We now introduce the following notation which deals with set of indices as follows:
\[
\mathscr{I} := \{(p,k, h): (p,k)\in \mathbb Z\times \mathbb Z^3, \, \mbox{and}\, h\in \mathcal{H}^{[k]}\}\, .
\]
For each $I:=(p,k, h) \in \mathscr{I}$, let $h_I$ denotes the third component of the index. Then we can further subdivide $\mathscr{I}$ into $\mathscr{I}_R\cup \mathscr{I}_\Phi \cup \mathscr{I}_S$ depending on whether $h_I\in \mathcal{H}^{[k],R}$, $h_I\in \mathcal{H}^{[k], \Phi}$ or $h_I\in \mathcal{H}^{[k], S}$. 
Moreover, we introduce a cut-off parameter $\mu = \mu_q$ with $\mu_q^{-1}\in \Z_{+}$ and recall the definition of $\tau=\tau_q$:
\begin{align}\label{mu.tau}
\mu_q^{-1} = 3 \ceil{\la_q^{\frac12}\la_{q+1}^{\frac12} \de_q^\frac14\de_{q+1}^{-\frac14} /3}, 
\quad 
\ta_q^{-1} = 40\pi {M}\eta^{-1}\cdot {\la_q^\frac12\la_{q+1}^\frac12 \de_q^\frac14\de_{q+1}^\frac14 },
\end{align} 
With this choice, we define two cut-off functions
\begin{align*}
\th_I(t) = 
\begin{cases}
\th_p^3(\tau^{-1}t), \quad I\in \mathscr{I}_R \cup \mathscr{I}_S,\\
\th_p^2(\tau^{-1}t), \quad I\in \mathscr{I}_\Phi,
\end{cases}
\quad
\chi_I(x)=
\begin{cases}
\chi_k^3(\mu^{-1}x), \quad I\in \mathscr{I}_R \cup \mathscr{I}_S,\\
\chi_k^2(\mu^{-1}x), \quad I\in \mathscr{I}_\Phi\, .
\end{cases}
\end{align*}

Next we move on to the description of the mollification process. Indeed, the loss of (spatial) derivatives is a well known problem in any convex integration scheme. To overcome this issue efficiently, one needs to mollify the inhomogeneous Euler-Reynolds equation. For this purpose, following \cite{Kwon1}, we first introduce the parameters $\ell$ and $\ell_t$, defined by
\[
\ell = \frac 1{\la_q^\frac34\la_{q+1}^\frac14} \left(\frac{\de_{q+1}}{\de_q}\right)^\frac38, \quad
\ell_t = \frac 1{\la_q^{\frac12-3\ga} \la_{q+1}^\frac12 \de_q^\frac14\de_{q+1}^\frac14 }\, .
\]
Note that $\tau_q \ll \ell_t$. Inspired by Littlewood-Paley theory, for space regularizations of $\vr_q$, $u_q$ and $p_q$, we let them go through a ``low-pass filter'' which will essentially eliminate all the waves larger than a certain scale. More precisely, we define the coarse scale density $\vr_\ell$, velocity $u_\ell$ and pressure $p_\ell$ as follows:
\begin{align}\label{def.vl}
\vr_\ell = P_{\le \ell^{-1}}\vr_{{q}}, \quad u_\ell = P_{\le \ell^{-1}}u_{{q}}, \quad p_\ell = P_{\le \ell^{-1}} p_{{q}}\, .
\end{align}
Note that $\vr_\ell, u_\ell$ and $p_\ell$ are spatially periodic functions on $J\times \mathbb T^3$, for some time domain $J \subset \R$. 

For the regularizations of the errors $R_q, \Phi_q$ and $S_q$, we need to introduce another process of regularization (namely mollify them along the flow trajectory), as depicted in \cite{Is2013}.  Indeed, such regularizations are absolutely essential, in view of sharp estimates on their advective derivatives along $u_\ell$. We recall the forward flow map $\Xi(\tau,x;t)$ with the drift velocity $u_\ell$. The flow is defined on some time interval $[a,b]$ starting at the initial time $t\in [a,b)$ and given by
\begin{align}\label{e:fflow_first_instance}
\begin{cases}
\pa_\tau \Xi(\tau, x;t)= u_\ell (\tau, \Xi(\tau, x;t))\\
\Xi(t, x;t) = x\, .
\end{cases}
\end{align}
With the help of the forward flow map, we can define the mollification along the trajectory
\begin{align*}
(\omega_\delta \ast_{\Xi} F) (t,x)
= \int_{\R} F(t+s,\Xi(t+s,x;t))  \omega_\delta (s) ds,
\end{align*}
where $\omega$ is a standard moolifier in $\R$ satisfying usual conditions $\norm{\omega}_{L^1(\R)}=1$, $\supp\omega\subset (-1,1)$, and we set $\omega_\delta (s) = \delta^{-1} \omega( \delta^{-1} s)$ for any $\de>0$. The main advantage of this type of mollification is that it satisfies 
\begin{align*}
D_{t,\ell} (\omega_\delta \ast_{\Xi} F)(t,x) & = \int_{\R} (D_{t,\ell}F)(t+s, \Xi(t+s, x;t)) \omega_\delta (s) ds \\ 
& = - \int_{\R} F(t+s, \Xi(t+s, x;t)) \omega'_\delta (s) ds\, .
\end{align*}
Here we have used the notation $D_{t, \ell} := \pa_t + u_\ell \cdot \na$.
Note that if $F$ and $u_\ell$ are well defined on some time interval $[c,d]$, then $\omega_\delta \ast_{\Xi} F$ is well defined on $[c,d]-\delta$.

Finally, we can write the regularized errors as 
\begin{align}\label{def.me}
R_\ell = \omega_{\ell_t} \ast_{\Xi} P_{\leq \ell^{-1}} R_{{q}}, \quad
\Phi_\ell = \omega_{\ell_t} \ast_{\Xi} P_{\leq \ell^{-1}}\Phi_{{q}}, \quad S_\ell = \omega_{\ell_t} \ast_{\Xi} P_{\leq \ell^{-1}} S_{{q}}
\end{align}
It is easy to check that these errors can be defined on $[0,T]+2\tau_q$, thanks to a sufficiently large choice of the parameter $\la_0$. 

We now focus on deriving estimates on the difference between the original and the regularized tuple. Moreover, we also need precise estimates on higher order derivatives of the regularized tuple. In what follows, we first set the notation: $\norm{\cdot}_N = \norm{\cdot}_{C^0([0,T]+\tau_q; C^N(\T^3))}$, and then collect several a-priori estimates on the regularized tuple.
\begin{lem}
\label{lemimp}
There exists $\bar{b}(\al)>1$ such that
for any $b\in (1,\bar b(\al))$ it is possible to find $\La(\al,b,M)$ satisfying the following property: If $\la_0\geq \La$, then the regularized tuple given in \eqref{def.vl} and \eqref{def.me} satisfies the following estimates:
\begin{align*}
&\norm{\vr_\ell}_N \lec_N \ell^{1-N} \lambda_q \de_{q}^\frac12 \, \qquad \quad \,\,\,\,\norm{u_\ell}_N \lec_N \ell^{1-N} \lambda_q \de_{q}^\frac12, \qquad \qquad \,\,\,\,\forall N\geq 1,\\
&\norm{D_{t,\ell}^s R_\ell}_0 \lec_{s} \ell_t^{-s}\la_q^{-2\ga} \de_{q+1}, \quad
\norm{D_{t,\ell}^s \Phi_\ell}_0 \lec_{s} \ell_t^{-s}  \la_q^{-4\ga} \de_{q+1}^\frac32,
\quad
\norm{D_{t,\ell}^s S_\ell}_0 \lec_{s} \ell_t^{-s}  \la_q^{-2\ga} \de_{q+1}, \quad \forall s\geq 0. 
\end{align*}
Moreover, using the fact that if $\la_0\geq \La_0$, then $|\na^{N+1} \Xi(t+s,x;t)|\lec_M  \ell^{-N}$ holds for $N\geq 0$ and $s\le M_*$, the following estimates hold:
\begin{align}
&\ell_t^s\norm{D_{t,\ell}^s R_\ell}_N {\lec_{s,N,M}} \ \ell^{-N} \la_q^{-2\ga} \de_{q+1}, \quad 
\ell_t^s\norm{D_{t,\ell}^s \Phi_\ell}_N 
{\lec_{s,N,M}} \ \ell^{-N} \la_q^{-4\ga} \de_{q+1}^\frac32
\label{est.mph}, \\
& \ell_t^s\norm{D_{t,\ell}^s S_\ell}_N {\lec_{s,N,M}} \ \ell^{-N} \la_q^{-2\ga} \de_{q+1} \label{est.mS}
\end{align}
\end{lem}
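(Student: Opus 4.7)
The plan is to decouple the spatial low-pass filter from the flow-mollification, handle each separately, and combine them at the end via the chain rule for $F \circ \Xi$. For the bounds on $\vr_\ell$, $u_\ell$ and $p_\ell$, the natural split is $N \leq 2$ versus $N \geq 3$. In the first regime, since the parameter choices force $\la_q \ell \lec 1$ (a direct consequence of $\la_{q+1}\ge \la_q$ and the definition of $\ell$), the inductive hypotheses \eqref{est.vp} immediately give, for $N\in\{1,2\}$, $\norm{\vr_q}_N \le M\la_q^N\de_q^{1/2} \lec \ell^{1-N}\la_q\de_q^{1/2}$, and similarly for $u_q$; the pressure case is $\norm{p_q}_N \le M\la_q^N\de_q \lec \ell^{-N}\de_q$. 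In the regime $N \geq 3$ I would invoke Bernstein's inequality: since $P_{\le \ell^{-1}}\vr_q$ has Fourier support in $|\xi|\lec \ell^{-1}$,
\[
\norm{\vr_\ell}_N = \norm{\na^N P_{\le \ell^{-1}} \vr_q}_0 \lec_N \ell^{-(N-2)} \norm{\vr_q}_2 \lec \ell^{-(N-2)} M\la_q^2 \de_q^{1/2} \lec \ell^{1-N} \la_q \de_q^{1/2},
\]
using $\la_q \ell \lec 1$ once more; the bounds for $u_\ell$ and $p_\ell$ follow identically.

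For the $N=0$ advective derivative bounds on $R_\ell$, $\Phi_\ell$, $S_\ell$, I would start from the commutation identity highlighted in the excerpt: iterating the relation $D_{t,\ell}(\omega_\de \ast_\Xi F) = -\omega'_\de \ast_\Xi F$ yields the representation
\[
D_{t,\ell}^s R_\ell(t,x) = (-1)^s \int_{\R} (P_{\le \ell^{-1}} R_q)\bigl(t+s', \Xi(t+s', x; t)\bigr)\, \omega_{\ell_t}^{(s)}(s')\, ds'.
\]
Taking the sup norm, then using $\norm{\omega_{\ell_t}^{(s)}}_{L^1(\R)} \lec \ell_t^{-s}$ together with $\norm{P_{\le \ell^{-1}} R_q}_0 \le \norm{R_q}_0 \le \la_q^{-2\ga}\de_{q+1}$ from \eqref{est.R}, gives the claimed $\norm{D_{t,\ell}^s R_\ell}_0 \lec \ell_t^{-s}\la_q^{-2\ga}\de_{q+1}$. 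The arguments for $\Phi_\ell$ and $S_\ell$ are identical upon invoking \eqref{est.ph} and \eqref{est.S}.

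To upgrade these to the combined estimates \eqref{est.mph}--\eqref{est.mS}, I would apply $\na^N$ to the same integral representation and expand $\na^N[F\circ \Xi]$ via the Faà di Bruno formula, schematically
\[
\na^N (F\circ \Xi)(x) = \sum_{k=1}^{N} (D^k F)(\Xi(x)) \cdot P_{k,N}\bigl(\na \Xi, \dots, \na^{N-k+1}\Xi\bigr)(x),
\]
where each $P_{k,N}$ is a sum of monomials of total order $N - k$ in the higher spatial derivatives of $\Xi$. Using the supplied flow bound $|\na^{j+1}\Xi(t+s',\cdot\,;t)| \lec_M \ell^{-j}$, each such monomial is controlled by $\ell^{-(N-k)}$, while Bernstein applied to $F = P_{\le \ell^{-1}}R_q$ yields $\norm{D^k F}_0 \lec \ell^{-k} \norm{R_q}_0$. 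Multiplying and summing in $k$ produces $\norm{\na^N(F\circ\Xi)}_0 \lec \ell^{-N} \la_q^{-2\ga}\de_{q+1}$; substituting into the representation for $D_{t,\ell}^s R_\ell$ and again using $\norm{\omega_{\ell_t}^{(s)}}_{L^1} \lec \ell_t^{-s}$ yields $\ell_t^s \norm{D_{t,\ell}^s R_\ell}_N \lec_{s,N,M} \ell^{-N}\la_q^{-2\ga}\de_{q+1}$, and the same template applies to $\Phi_\ell$ and $S_\ell$.

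The main obstacle I anticipate is not the Faà di Bruno bookkeeping itself but the careful verification that the flow estimate $|\na^{N+1}\Xi(t+s',\cdot\,;t)| \lec_M \ell^{-N}$ on intervals $|s'| \lec M_*$ is indeed uniform in $q$. This is a consequence of a Gronwall argument applied to $\pa_\tau \na \Xi = (\na u_\ell)\circ \Xi \cdot \na \Xi$ and its higher-derivative analogues, whose absorption of exponentials requires $\la_q\de_q^{1/2}\ta_q \ll 1$; that in turn forces one to keep track of how the choices in \eqref{mu.tau} and the restrictions on $b$ and $\la_0$ from Proposition \ref{ind.hyp} interact with the parameters $M_*$, $N_*$ in \eqref{est.new}. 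Once this standard but delicate transport analysis is in place, all the stated regularization bounds follow by the scheme above.
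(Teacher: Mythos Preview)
Your proposal is correct and follows precisely the standard regularization-along-the-flow argument that the paper defers to \cite[Section 18]{Is2013}: Bernstein for the spatial low-pass on $\vr_\ell,u_\ell,p_\ell$, the iterated identity $D_{t,\ell}^s(\omega_{\ell_t}\ast_\Xi F)=(-1)^s\omega_{\ell_t}^{(s)}\ast_\Xi F$ for the $C^0$ material-derivative bounds, and Fa\`a di Bruno on $F\circ\Xi$ combined with the flow estimate $|\na^{N+1}\Xi|\lec_M \ell^{-N}$ for the higher spatial norms. The flow estimate you flag as the main obstacle is indeed the one nontrivial input, and it is obtained exactly as you say, via Gronwall on $\pa_\tau\na\Xi=(\na u_\ell)\circ\Xi\cdot\na\Xi$ with $\norm{\na u_\ell}_0\,\ell_t\lec 1$; note that the lemma statement already records this fact as given, so you need not re-derive it inside the proof.
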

\begin{proof}
For the detailed computation, see \cite[Section18]{Is2013}. 
\end{proof}

Next, we collect estimates on the differences between the regularized objects $(\vr_\ell, u_\ell, p_\ell, R_\ell, \Phi_\ell, S_\ell)$ and their original counterparts. 

\begin{lem}
There exists $\bar{b}(\al)>1$ such that for any $b\in (1,\bar b(\al))$, it is possible to find $\La(\al,b,M)$ with the following property: If $\la_0\geq \La$ and $N\in \{0,1,2\}$ then following estimates on the differences hold:
	\begin{align}
	&\norm{\vr_q-\vr_\ell}_N + \norm{u_q-u_\ell}_N  
	\lec_M \ell^{2-N} \la_q^2 \de_q^\frac12, \label{est.r.dif} \\
	&\norm{D_{t,\ell}(\vr_q-\vr_\ell)}_{N-1}  
	+ \norm{D_{t,\ell}(u_q-u_\ell)}_{N-1}
	\lec_M \ell^{2-N} \la_q\de_q^{\frac12} \tau_q^{-1}, \label{est.v.dif} \\
	&\norm{R_q-R_\ell}_N+ 
	\de_{q+1}^{-\frac12}\norm{D_{t,\ell} (R_q-R_\ell)}_{N-1} 
	\lec_M
	\la_{q+1}^N {\la_q^\frac12}{\la_{q+1}^{-\frac12}}
	\de_q^\frac14 \de_{q+1}^\frac34
	\label{est.R.dif} \\
	&\norm{S_q-S_\ell}_N+ 
	\de_{q+1}^{-\frac12}\norm{D_{t,\ell} (S_q-S_\ell)}_{N-1} 
	\lec_M
	\la_{q+1}^N {\la_q^\frac12}{\la_{q+1}^{-\frac12}}
	\de_q^\frac14 \de_{q+1}^\frac34
	\label{est.S.dif} \\
	&\norm{\Phi_q-\Phi_\ell}_N+\de_{q+1}^{-\frac12}\norm{D_{t,\ell} (\Phi_q-\Phi_\ell)}_{N-1}
	\lec_M  
	\la_{q+1}^N{\la_q^\frac12}{\la_{q+1}^{-\frac12}}\de_q^\frac14 \de_{q+1}^\frac54.\label{est.ph.dif} 
	\end{align}
	Additionally, the following estimate hold for all $N \le N_*$ and $s \le M_*$:
	\begin{align}
	\norm{D^s_{t,\ell}(\varrho_q-\varrho_\ell)}_{N}  \lec_M \ell^{1-N} \la_q \de_q^\frac12 (\tau_q^{-1})^s, \quad
	\norm{D^s_{t,\ell}(u_q-u_\ell)}_{N}  \lec_M \ell^{1-N} \la_q \de_q^\frac12 (\tau_q^{-1})^s. \label{est.u.new}
	\end{align}
\end{lem}

\begin{proof} 
The proof of estimates \eqref{est.r.dif}-\eqref{est.ph.dif} follow from \cite{Kwon1}, modulo cosmetic changes. For the proof of the estimate \eqref{est.u.new} we can apply the mollification estimate \eqref{est.final01} in Appendix~\ref{apenC}, with $f=\varrho_q\, \text{and}\, u_q$, $v=u_\ell$.
\end{proof}

\subsection{Quadratic commutator} We provide a quadratic commutator estimate, which is a version of the estimate in \cite[Lemma 5.2]{Kwon1}. The commutator appears due to the convloution the density equation. Indeed, after mollifying the density equation in \eqref{app.eq}, we have
\begin{align*}
\partial_t {\vr}_\ell + \div ({\vr_\ell} {u_\ell})  &= - \div P_{\le \ell^{-1}} R_q + \, Q(\vr_q,u_q),
\end{align*}
where $Q(\vr_q,u_q) = \div ({\vr_\ell} {u_\ell} - (\vr_q u_q)_\ell)$.

\begin{lem}
\label{lem:est.Qvv} 
Assume that the parameters $\bar{b}(\al)>1$ and $\La(\al,b,M)$ be as in the previous Lemma. Then for any $b\in (1,\bar b(\al))$, and
any $N\geq 0$, $Q(\vr_q,u_q)$ satisfies 
	\begin{align}\label{est.Qvv}
	\norm{Q(\vr_q,u_q)}_N\lec_M \ell^{1-N} (\la_q\de_q^\frac12)^2, \quad
	\norm{D_{t,\ell} Q(\vr_q,u_q)}_{N} \lec_M \ell^{1-N} (\la_q\de_q^\frac12)^2 \tau_q^{-1}. 
	\end{align}
\end{lem}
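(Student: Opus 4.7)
The plan is to reduce both estimates to $C^0$ bounds via frequency localization, then apply the classical Constantin--E--Titi commutator identity. Set $F := (\vr_q u_q)_\ell - \vr_\ell u_\ell$, so that $Q(\vr_q,u_q) = -\div F$. Since $\vr_\ell$, $u_\ell$ and $P_{\le\ell^{-1}}(\vr_q u_q)$ each have spatial Fourier support in the ball of radius $\ell^{-1}$, the product $\vr_\ell u_\ell$ is supported in the ball of radius $2\ell^{-1}$, so $F$ is spatially frequency-localized at scale $\lec \ell^{-1}$. Consequently so are $Q$ and $D_{t,\ell}Q$, and Bernstein's inequality gives
\[
\|Q\|_N \lec \ell^{-N}\|Q\|_0,\qquad \|D_{t,\ell}Q\|_N \lec \ell^{-N}\|D_{t,\ell}Q\|_0.
\]
It therefore suffices to prove $\|Q\|_0 \lec_M \ell(\la_q\de_q^{1/2})^2$ and $\|D_{t,\ell}Q\|_0 \lec_M \de_q^{1/2}(\la_q\de_q^{1/2})^2$.

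For the spatial bound I would invoke the Constantin--E--Titi decomposition
\[
F = r_\ell(\vr_q,u_q) - (\vr_q-\vr_\ell)(u_q-u_\ell),\qquad r_\ell(f,g)(x):=\int_{\R^3}\widecheck{m}_\ell(y)(f(x-y)-f(x))(g(x-y)-g(x))\,dy.
\]
The Taylor estimate $|f(x-y)-f(x)|\le |y|\|\na f\|_0$, the moment bound $\||y|^2\widecheck{m}_\ell\|_{L^1(\R^3)}\lec \ell^2$ recorded in Section~\ref{pre}, and the inductive estimate \eqref{est.vp} give $\|r_\ell(\vr_q,u_q)\|_0 \lec_M \ell^2(\la_q\de_q^{1/2})^2$. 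The quadratic error $(\vr_q-\vr_\ell)(u_q-u_\ell)$ is strictly subleading by \eqref{est.r.dif}--\eqref{est.v.dif}. Taking a divergence costs one factor of $\ell^{-1}$ by Bernstein, producing $\|Q\|_0\lec \ell(\la_q\de_q^{1/2})^2$.

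For the material derivative estimate, use $D_{t,\ell}\div F = \div(D_{t,\ell}F) - (\pa_i u_\ell^j)(\pa_j F^i)$; the second term is subleading since $\|\na u_\ell\|_0\lec \la_q\de_q^{1/2}$ and $\|\na F\|_0\lec \ell\la_q^2\de_q$ by the spatial step combined with Bernstein. For $D_{t,\ell}F$, decompose $D_{t,\ell} = D_{t,q} + (u_\ell-u_q)\cdot\na$ with $D_{t,q}:=\pa_t + u_q\cdot\na$, so that the transport derivative lands directly on $\vr_q$ and $u_q$; the Euler--Reynolds equation \eqref{app.eq} combined with the sharp advective bounds \eqref{e:ad-pressure} yields $\|D_{t,q}\vr_q\|_0,\,\|D_{t,q}u_q\|_0 \lec_M \la_q\de_q$. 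Using Leibniz and the Constantin--E--Titi decomposition on the pieces $D_{t,\ell}(\vr_\ell u_\ell)$ and $D_{t,\ell}(\vr_q u_q)_\ell$, and invoking that $\pa_t$ commutes with $P_{\le\ell^{-1}}$ while $u_\ell\cdot\na$ does not, $D_{t,\ell}F$ reduces to symmetric remainders of the form $r_\ell(D_{t,q}\vr_q,u_q)$, $r_\ell(\vr_q, D_{t,q}u_q)$, plus pieces involving $(u_\ell - u_q)$ controlled by \eqref{est.v.dif}. The promised $\de_q^{1/2}$ improvement over the spatial bound is exactly the ratio $\|D_{t,q}\vr_q\|_0/\|\vr_q\|_1 \sim \de_q^{1/2}$ encoded in \eqref{e:ad-pressure}.

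The main technical difficulty is this last step: $P_{\le\ell^{-1}}$ fails to commute with $u_q\cdot\na$ and with $u_\ell\cdot\na$, so the naive manipulation produces commutators that appear to cost a full spatial derivative. The key is to reorganize each such commutator as another $r_\ell$-type remainder, so that the lost spatial derivative is absorbed by the mollifier scale $\ell$, while the single time derivative is charged to the sharp advective bound \eqref{e:ad-pressure} rather than the wasteful $\|\pa_t\vr_q\|_0$. This adaptation of the \cite[Lemma 5.2]{Kwon1} scheme is precisely what yields the advertised $\de_q^{1/2}$ gain at the level of the material derivative.
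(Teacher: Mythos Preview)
Your plan is correct and lands on the same final estimates as the paper, but the organization differs. The paper does not keep $F$ undivergenced; instead it writes
\[
Q(\vr,u) = (u_\ell - u)\cdot\na\vr_\ell + [u\cdot\na, P_{\le\ell^{-1}}]\vr
\]
directly (using $\div u_q = \div u_\ell = 0$), and for the material derivative it expands $D_{t,\ell}[u\cdot\na, P_{\le\ell^{-1}}]\vr$ by writing the mollifier as an explicit convolution and differentiating under the integral, producing four terms involving factors $(u(x)-u(x-y))$, $(u_\ell(x)-u_\ell(x-y))$, $D_{t,\ell}u$, and $D_{t,\ell}\na\vr$; each is bounded by the moment estimate $\||y|^k\widecheck m_\ell\|_{L^1}\lec \ell^k$ together with \eqref{est.vp}, \eqref{e:ad-pressure}, \eqref{est.v.dif}. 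Your Constantin--E--Titi route, keeping $F = r_\ell(\vr_q,u_q) - (\vr_q-\vr_\ell)(u_q-u_\ell)$ and computing $D_{t,\ell}r_\ell$ by Leibniz, unwinds to the \emph{same} kernel integrals once you track the commutator $(u_q(x)-u_q(x-y))\cdot\na$ that appears when $D_{t,q}$ passes through $r_\ell$. What your packaging buys is a cleaner bookkeeping principle (everything is an $r_\ell$-type bilinear remainder), at the cost of having to apply Bernstein once more to convert $\|D_{t,\ell}F\|_0$ into $\|D_{t,\ell}Q\|_0$; the paper's commutator form avoids that extra step since it already works at the level of $Q$. Either way the key input is the sharp advective bound $\|D_{t}\vr_q\|_1, \|D_{t}u_q\|_1 \lec \la_q^2\de_q$ from \eqref{e:ad-pressure}, which is exactly what delivers the $\de_q^{1/2}$ gain you identify.
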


\begin{proof} In order to simplify our notation we drop the subscript $q$ fom $\vr_q$ and $u_q$. The estimate for $\norm{Q(\vr,u)}_N$ easily follows from \eqref{est.com}. To estimate $D_{t,\ell}Q(\vr,u)$, we first decompose $Q(\vr,u)$ into
\begin{align*}
Q(\vr,u)
= (u_\ell - u) \cdot \na \vr_\ell + [u \cdot \na, P_{\le \ell^{-1}} ] \vr.
\end{align*}
Recall that $P_{\le \ell^{-1}} f(x) = \int_{\R^3} f(x-y) \widecheck{m} _\ell(y)dy$, where $\widecheck{m} _\ell(x)= 2^{3J} \widecheck{m}(2^J x)$. By making use of this formula, the advective derivative of the commutator term can be written as follows,
\begin{align*}
    & D_{t,\ell}[u\cdot \na, P_{\le \ell^{-1}}] \vr
	=  (\pa_t +u _\ell(x)\cdot \na) \int (u(x)-u(x-y))\cdot \na \vr (x-y) \widecheck{m} _\ell(y) dy\\
	&=  \int  (D_{t,\ell}u(x)-D_{t,\ell}u(x-y)) \cdot  \na \vr(x-y) \widecheck{m} _\ell(y) dy\\
	&\quad-\int (u _\ell(x)-u _\ell(x-y))_a\na_a u_b(x-y)  \na_b \vr(x-y) \widecheck{m} _\ell(y) dy
	+ \int (u(x)-u(x-y))\cdot D_{t,\ell}\na  \vr(x-y) \widecheck{m} _\ell(y) dy\\
	&\qquad + \int (u(x)-u(x-y))_a (u _\ell(x)-u _\ell(x-y))_b (\pa_{ab} \vr)(x-y) \widecheck{m} _\ell(y) dy. 
	\end{align*} 
	Based on the decompositions, we use \eqref{est.vp}, \eqref{est.v.dif}, and $\norm{|y|^n\widecheck{m} _\ell}_{L^1(\R^3)}\lec \ell^n$, $n\geq 0$, to get
	\begin{align*}
	& \norm{D_{t,\ell}Q(\vr,u)}_0
	\lec \norm{D_{t,\ell}(u-u _\ell)}_0  \norm{\na \vr_\ell}_0 + \norm{u-u _\ell}_0\norm{D_{t,\ell}\na \vr _\ell}_0 \\
	&\quad+ \ell\norm{\na D_{t,\ell} u}_0\norm{\na \vr}_0
	+\ell\norm{\na u}_0^2 \norm{\na \vr}_0+ \ell \norm{\na u}_0 \norm{D_{t,\ell} \na \vr}_0 + \ell^2 \norm{\na u}_0^2 \norm{\na^2 \vr}_0 
	\lec \ell (\la_q\de_q^\frac12)^2 \tau_q^{-1}
	\end{align*}
	Here, we use $\norm{\na D_{t,\ell} u _\ell}_0 \leq \norm{\na D_{t,\ell} (u-u _\ell)}_0+\norm{\na D_{t} u}_0 +\norm{\na (((u-u_\ell)\cdot \na) u _\ell)}_0\lec \la_q\de_q^\frac12 \tau_q^{-1} $, and
	\begin{align*}
	\norm{D_{t,\ell} \na u_\ell }_0 \leq \norm{\na D_{t,\ell} u _\ell}_0 + \norm{\na u}_0^2 \lec \la_q\de_q^\frac12 \tau_q^{-1}.
	\end{align*}
	In the case of $N\geq 1$, we remark that $D_{t,\ell}Q(\vr,u)$ has frequency localized to $\lec \ell^{-1}$, so that the remaining estimates follows from the Bernstein inequality. Similarly, we also have
	\begin{align}\label{est.Dtvl}
	\norm{D_{t,\ell} \na u_\ell }_N \lec \ell^{-N} \la_q\de_q^\frac12 \tau_q^{-1}, 
	\quad \text{and}\quad \norm{D_{t,\ell} \na \vr_\ell }_N \lec \ell^{-N} \la_q\de_q^\frac12 \tau_q^{-1}.
	\end{align} 
	This finishes the proof of the Lemma.
\end{proof}

\subsection{Higher material derivative estimates}
Since our analysis fundamentally depends on the local inverse divergence result (Proposition~\ref{prop:intermittent:inverse:div}), which in turn necessitates iterating higher-order material derivatives, we must first establish the following estimates for these higher-order material derivatives.

\begin{lem}
\label{lem:est.high} 
There exists $\bar{b}(\al)>1$ such that for any $b\in (1,\bar b(\al))$, it is possible to find $\La(\al,b,M)$ with the following property: If $\la_0\geq \La$, then the following estimate hold for all $N \le N_*$ and $s \le M_*$:
	\begin{align}
	\norm{D^s_{t,\ell} D \varrho_\ell}_{N}  \lesssim \ell^{-N} \la_q \de_q^\frac12 (\tau_q^{-1})^s, \quad
	\norm{D^s_{t,\ell} D u_\ell}_{N}  \lesssim \ell^{-N} \la_q \de_q^\frac12 (\tau_q^{-1})^s.\label{est.dtu.new}
	\end{align}
\end{lem}

\begin{proof}
We prove only the velocity estimate, as the density estimate can be treated analogously. To prove the second estimate in \eqref{est.dtu.new}, we apply mathematical induction principle on $s$. To that context, note that, we already have the estimate for $s=0$ and $s=1$, thanks to the Lemma~\ref{lemimp} and the first estimate of \eqref{est.Dtvl}. Next, we assume that the estimate is true for the exponent $s-1$ and for all $N \le N_*$. Notice that it is enough to establish the result for $N=0$. In fact, in the case of $N\geq 1$, we remark that $D^s_{t, \ell} D u_{\ell} $ has frequency localized to $\lec \ell^{-1}$, so that the remaining estimates follows from the Bernstein inequality. In what follows, we write 
$$
D^s_{t, \ell} D u_{\ell} =  D D^s_{t, \ell} u_{\ell} + [D^s_{t, \ell}, D] u_\ell.
$$
We only need to verify estimate for $N=0$, and we have
$$
\|D^s_{t, \ell} D u_{\ell}\|_0 \le  \|D^s_{t, \ell} u_{\ell}\|_1 + \|[D^s_{t, \ell}, D] u_\ell \|_0
$$
To deal with first term on the right hand side of the above expression, we make use of Remark~\ref{rem.new} to conclude
$ \|D^s_{t, \ell} u_{\ell}\|_1 \le \la_q \de_q^{1/2} (\tau_q^{-1})^{s}$. 
For the commutator estimate, we use the commutator Lemma~\ref{lem:Komatsu} in Appendix~\ref{comm01} with $n=1$, $m=s$, and 
$D_t = D_{t,\ell}$ to conclude
$$
\| [D^s_{t, \ell}, D] u_\ell\|_0 \le \| \nabla u_\ell\|_0 (\tau_q^{-1})^{s} + \| \nabla u_\ell\|_0 \| D^{s-1}_{t, \ell} D u_{\ell} \|_0
\le \la_q \de_q^{1/2} (\tau_q^{-1})^{s}.
$$
\end{proof}

\subsection{Backward flow}
Here we describe the backward flow map. For a given $I= (p,k,h) \in \mathscr{I}$, we denote $\xi_I = \xi_p$ which solves the following equation
\begin{align}\label{def.bflow}
\begin{cases}
\pa_t \xi_p + (u_\ell \cdot \na)\xi_p =0, \\
\xi_p(t_p,x) =x,
\end{cases}
\end{align}
where $t_p = \tau p$. Observe that 
$$
\Xi(t, \xi_p(t,x); t_p) =x,
$$
where $\Xi(t,x; t_p)$ is the forward flow map defined in \eqref{e:fflow_first_instance}.

In the rest of the paper $\nabla \xi_I$ will denote the Jacobi Matrix of the partial derivatives of the components of the vector map $\xi_I$ and we will use the shorthand notations $\nabla \xi_I^\top$, $\nabla \xi_I^{-1}$ and $\nabla \xi_I^{-\top}$ for, respectively, its transpose, inverse and transpose of the inverse. Moreover, for any vector $f\in \mathbb R^3$ and any matrix $A\in \mathbb R^{3\times 3}$ the notation $\nabla \xi_I f$ and $\nabla \xi_I A$ (resp. $\nabla \xi_I^{-1} f$, etc.) will be used for the usual matrix product, regarding $f$ as a column vector (i.e. a $\mathbb R^{3\times 1}$-matrix). Next, we collect some useful estimates on the backward flow map in the following lemma.

\begin{lem}
\label{lem:est.flow} 
Given any $b>1$, there exists $\La=\La(b)$ such that for all $N \le N_*$, $s \le M_*$ and $\la_0\geq \La$ the backward flow map $\xi_I$ satisfies the following estimates on $\cal{I}_p = [t_p - \frac 12 \tau_q , t_p + \frac 32\tau_q]\cap [0,T]+2\tau_q$
\begin{align}
&\norm{\I - \na \xi_I}_{0} \leq \frac 15, \,\, \norm{\I - \na \xi_I^{-1}}_{0} \leq 2M \tau_q \la_q \de_q^\frac12, \,\,
\norm{\na \xi_I}_{N} + \norm{(\na \xi_I)^{-1}}_{N} \lec_N \ell^{-N}, \label{est.flow2} \\
&\norm{D_{t,\ell}^s (\na \xi_I)^{-1}}_{N} 
\lec_{N,M} \ell^{-N} (\tau_q^{-1})^s, \qquad 
\norm{D_{t,\ell}^s \na \xi_I}_{N}\lec_{N,M} \ell^{-N} (\tau_q^{-1})^s. \label{est.flow.indM}
\end{align}
\end{lem}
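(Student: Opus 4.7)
The plan is to derive a matrix ODE (along the forward flow $\Xi$) for $\nabla\xi_I$, close a Gronwall estimate on the short time interval $\mathcal{I}_p$ of length $\lesssim\tau_q$, and then bootstrap to higher spatial and material derivatives. Differentiating the defining equation $\partial_t\xi_I+(u_\ell\cdot\nabla)\xi_I=0$ in $x$ yields, in matrix form,
\[
D_{t,\ell}(\nabla\xi_I) \;=\; -(\nabla\xi_I)(\nabla u_\ell),\qquad \nabla\xi_I|_{t=t_p}=\mathrm{Id}.
\]
Evaluating along the trajectory $\Xi(\cdot;t_p,x)$ and applying Gronwall gives $\|\nabla\xi_I-\mathrm{Id}\|_0\le \tfrac{3}{2}\tau_q\|\nabla u_\ell\|_0\,\exp\bigl(\tfrac{3}{2}\tau_q\|\nabla u_\ell\|_0\bigr)$. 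Using $\|\nabla u_\ell\|_0\le \|u_q\|_1\le M\lambda_q\delta_q^{1/2}$ from \eqref{est.vp} and the definition of $\tau_q$ in \eqref{mu.tau}, I get $\tau_q M\lambda_q\delta_q^{1/2}\le \eta/(40\pi)$, where I exploit the fact that $\lambda_q^{1/2}\delta_q^{1/4}$ is non-decreasing in $q$ (since its square $\lambda_q\delta_q^{1/2}$ is). Taking $\eta$ small and $\lambda_0$ large therefore delivers $\|\mathrm{Id}-\nabla\xi_I\|_0\le 1/5$, which via a Neumann series also yields $\|(\nabla\xi_I)^{-1}\|_0\lesssim 1$.

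For the higher-order spatial estimates $\|\nabla\xi_I\|_N+\|(\nabla\xi_I)^{-1}\|_N\lesssim_N\ell^{-N}$, I would induct on $N$ by applying $\nabla^N$ to the matrix ODE, obtaining
\[
D_{t,\ell}(\nabla^{N+1}\xi_I) \;=\; -(\nabla^{N+1}\xi_I)(\nabla u_\ell)\;+\;(\text{commutators in $\nabla^{\le N}u_\ell$ and $\nabla^{\le N}\xi_I$}),
\]
and running a further Gronwall along $\Xi$ on $\mathcal{I}_p$: the principal term is absorbed exactly as above, while the commutator source is estimated using the coarse-scale bound $\|u_\ell\|_{N+1}\lesssim_N\ell^{-N}\lambda_q\delta_q^{1/2}$ from Lemma~\ref{lemimp}. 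This produces the $\ell^{-N}$ scaling on $\nabla\xi_I$; the corresponding bound for the inverse then follows from the identity
\[
D_{t,\ell}\bigl((\nabla\xi_I)^{-1}\bigr) \;=\; (\nabla u_\ell)(\nabla\xi_I)^{-1},
\]
obtained by differentiating $(\nabla\xi_I)(\nabla\xi_I)^{-1}=\mathrm{Id}$ in time and using Step~1 together with the same inductive Gronwall scheme.

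For the sharp material-derivative bounds $\|D_{t,\ell}^s(\nabla\xi_I)^{\pm 1}\|_N\lesssim_{N,M}\ell^{-N}(\lambda_q\delta_q^{1/2})^s$, I would iterate the two ODE identities above: each application of $D_{t,\ell}$ to $\nabla\xi_I$ either hits a pre-existing $\nabla\xi_I$ factor (producing $\nabla u_\ell$ via the ODE) or hits a $\nabla u_\ell$ factor (producing $D_{t,\ell}\nabla u_\ell$). Induction gives a schematic expansion
\[
D_{t,\ell}^s\nabla\xi_I \;=\; (\nabla\xi_I)\sum_{\substack{\text{partitions}\\ \sum_j(k_j+1)=s}} c_{\vec k}\prod_j D_{t,\ell}^{k_j}\nabla u_\ell,
\]
and the claim then reduces to the sharp coarse-scale estimate $\|D_{t,\ell}^k\nabla u_\ell\|_N\lesssim_{k,N,M}\ell^{-N}(\lambda_q\delta_q^{1/2})^{k+1}$, which in turn follows from the inductive hypothesis \eqref{est.new} on $u_q$ combined with \eqref{est.u.new} and the argument displayed around \eqref{est.Dtvl}.

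The main obstacle is precisely this last point: securing the factor $(\lambda_q\delta_q^{1/2})^s$ rather than the naive $(\ell_t^{-1})^s$ that one would obtain from generic mollification theory, since the latter would be fatal for the subsequent convex integration estimates. This relies crucially on the strong inductive control \eqref{est.new} on the material derivatives of $u_q$—one of the novelties we carry through from \cite{Kwon1}—and on verifying that every commutator arising from differentiating the flow equation $s$ times retains that scaling. The parameters $N_*$ and $M_*$ enter only to ensure the induction closes uniformly for $N\le N_*$ and $s\le M_*$ once $\lambda_0\ge\Lambda(b)$ is sufficiently large.
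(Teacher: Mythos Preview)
Your proposal is correct and covers the same ground as the paper. For \eqref{est.flow2} the paper simply cites \cite{Kwon1}, and your ODE--Gronwall argument is precisely the standard proof used there. For \eqref{est.flow.indM} there is a small organizational difference worth noting: instead of iterating the ODE $D_{t,\ell}(\nabla\xi_I)=-(\nabla\xi_I)(\nabla u_\ell)$ by hand as you do, the paper observes that $D_{t,\ell}\xi_I=0$ forces $D_{t,\ell}^s\nabla\xi_I=\nabla D_{t,\ell}^s\xi_I+[D_{t,\ell}^s,\nabla]\xi_I=[D_{t,\ell}^s,\nabla]\xi_I$, and then bounds the commutator in one stroke via Lemma~\ref{lem:Komatsu} (with $n=1$) and Lemma~\ref{lem:ad:Dt:a:D}, together with the bound \eqref{eq:DDv} on $D_{t,\ell}^k D u_\ell$. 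Your schematic expansion in products of $D_{t,\ell}^{k_j}\nabla u_\ell$ is exactly what those commutator lemmas encode, so the two arguments are equivalent in content; the paper's route just packages the bookkeeping more compactly and makes the dependence on the single input $\|D_{t,\ell}^k\nabla u_\ell\|_N\lesssim\ell^{-N}(\lambda_q\delta_q^{1/2})^{k+1}$ more transparent.
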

\begin{proof} 
For a proof of the estimates in \eqref{est.flow2}, consult \cite{Kwon1}. To show other estimates in \eqref{est.flow.indM}, we make use of \eqref{est.dtu.new} and the commutator Lemma~\ref{lem:Komatsu} in Appendix~\ref{comm01} with $n=1$, $m=s$, and $D_t = D_{t,\ell}$. Indeed, we first write, thanks to \eqref{def.bflow}
$$
D_{t,\ell}^s \na \xi_I = \na D_{t,\ell}^s  \xi_I + [D_{t,\ell}^s, \na] \xi_I = [D_{t,\ell}^s, \na] \xi_I,
$$
and 
$$
\| [D^s_{t, \ell}, \na] \xi_I\|_N \le \| \nabla \xi_I\|_N (\tau_q^{-1})^s + \| \nabla \xi_I\|_N \| D^{s-1}_{t, \ell} D u_{\ell} \|_0
\le \ell^{-N}(\tau_q^{-1})^s.
$$
We can use same argument to establish the other part of estimate \eqref{est.flow.indM}. This finishes the proof.
\end{proof}

\subsection{Building Blocks}
As we mentioned before, we will make use of Mikado densities and Mikado fields to construct our building blocks for the solutions to inhomogenous incompressible Euler equations. More precisely, in order to reduce the error at the level of $q+1$, we need to add suitable perturbation to the density, velocity and pressure at the level $q$, i.e, $(\vr_q, u_q, p_q)$. Indeed, we wish to define our perturbation $u_{q+1} - u_q$, and  $\vr_{q+1} - \vr_q$, respectively as
\[
\sum_{h\in \mathcal{H}} \Gamma_h (R_q(t,x), \Phi_q (t,x), S_q (t,x)) U_h (\lambda (x-s_h)), \,\, \text{and}\,\, \sum_{h\in \mathcal{H}} \Gamma_h (R_q(t,x), \Phi_q (t,x), S_q(t,x)) V_h (\lambda (x-s_h)),
\]
where the coefficients (weights) $\Gamma_h$ are chosen smooth functions, $\lambda$ is a large parameter and the shifts $s_h$ are chosen to maintain the disjoint support condition of Lemma~\ref{l:Mikado}.

Given any $I = (p,k,h) \in \mathscr{I}$, we shall choose appropriate shift $z_I$ such that
\[
z_I= z_{p,k} + s_h \in \mathbb R^3, \quad z_{p,k} = z_{p,k'}, \quad \mbox{if ${\mu}(k-k')\in 2\pi \mathbb Z^3$.}
\] 
Note that $U_h (\cdot - z_I)$, $V_h (\cdot - z_I)$, $\varphi_h (\cdot - z_I)$ and $\psi_h (\cdot - z_I)$ are all periodic functions. To simplify our notations, we shall denote 
$\varphi_I$ for $\varphi_{h_I} (\cdot - \la z_I)$, $\psi_I$ for $\psi_{h_I} (\cdot - \la z_I)$ and $\widetilde{h_I}$ for $\nabla \xi_I^{-1} h_I$. We also assume supports of all the above mentioned periodic functions contained in a small neighborhood of 
\begin{equation}
l_h+s := \left\{x\in \mathbb T^3: \left(x -\Sigma h - s\right) \in 2\pi \mathbb Z^3, \quad \mbox{for some $\Sigma \in \mathbb R$}\right\}\, .
\end{equation} 
More precisely, we assume that
\begin{align}
\supp (\psi_I) \subset B \left(l_h, \frac{\eta}{10}\right) &
{ := \{z\in \R^3: |z-y|<\textstyle{\frac {\eta}{10}}, \text{ for some } y\in l_h\} }
\, , \label{e:eta} \\
\supp (\varphi_I) \subset B \left(l_h, \frac{\eta}{10}\right) &
{ := \{z\in \R^3: |z-y|<\textstyle{\frac {\eta}{10}}, \text{ for some } y\in l_h\} }
\, ,\label{f:eta}
\end{align}
where $\eta$ is a geometric constant to be specified later, cf. Proposition \ref{p:supports}. Following \cite{Kwon1}, we now give the details of the perturbations. We first introduce the scalar maps
\[
w_I (t,x) := \theta_I (t) \chi_I (\xi_I (t,x)) \psi_I (\lambda (\xi_I (t,x))), \quad 
\vartheta_I (t,x) := \theta_I (t) \chi_I (\xi_I (t,x)) \varphi_I (\lambda (\xi_I (t,x))). 
\]
With the help of these scalar maps, we can specify the principle parts of our perturbations $u_{q+1} - u_q$ and $\vr_{q+1} - \vr_q$, respectively, as
\begin{align}
\label{eqnew11}
w_0 :=  \sum_{I\in \mathscr{I}} a_I  \widetilde{h_I} w_I, \quad 
\theta_0 :=  \sum_{I\in \mathscr{I}} b_I    \vartheta_I,
\end{align}
where the weights $a_I, b_I$'s are appropriately chosen smooth scalar functions, to be specified in the next subsection. We set $\lambda= \lambda_{q+1}$, and introduce the main part of our perturbation
\begin{align}
W (R, \Phi, S, t,x) & := \sum_{I\in \mathscr{I}} \theta_I (t) \chi_I (\xi_I (t,x)) {a}_I (R, \Phi, S, t,x) \nabla \xi_I^{-1} (t,x) U_{h_I} (\lambda (\xi_I (t,x) - z_I)), \label{e:master}\\
\Theta (R, \Phi, S, t,x) & := \sum_{I\in \mathscr{I}} \theta_I (t) \chi_I (\xi_I (t,x)) {b}_I (R, \Phi, S, t,x)  V_{h_I} (\lambda (\xi_I (t,x) - z_I)). \label{f:master}
\end{align}
We can recast them as
\begin{equation}\label{e:master2}
W := \sum_{I\in \mathscr{I}} \theta_I \chi_I (\xi_I) {a}_I \widetilde{h_I} \psi_I (\lambda \xi_I), \quad \Theta := \sum_{I\in \mathscr{I}} \theta_I \chi_I (\xi_I) {b}_I  \varphi_I (\lambda \xi_I)
\end{equation}
Observe that $W, \Theta$ are periodic functions of $x$, thanks to the fact that $z_{p,k} = z_{p,k'}$, if $\mu(k-k')\in 2\pi \mathbb Z^3$.
	
Finally, the main part of the corrections $u_{q+1}-u_q$, and $\vr_{q+1} -\vr_q$ will take the form, respectively,
\begin{align}
w_0 (t,x) &:= W (R_\ell (t,x), \Phi_\ell (t,x), S_\ell (t,x), t,x), \,\, \text{and} \label{e:wo} \\
\theta_0 (t,x) &:= \Theta (R_\ell (t,x), \Phi_\ell (t,x), S_\ell (t,x), t,x), \label{f:wo}
\end{align}
which are well-defined on $[0,T]+ 2\tau_q$. Rest of the analysis in this section \ref{sec:correction.const} are done in the time interval $[0,T]+ 2\tau_q$, without specifically mentioning about it.
For future purpose, we also need to rewrite the functions $w_o$ and $\theta_0$ in more elementary pieces. Recall that we subdivided $\mathscr{I}$ into $\mathscr{I}_R\cup \mathscr{I}_\Phi \cup \mathscr{I}_S$ depending on whether $h_I\in \mathcal{H}^{[k],R}$, $h_I\in \mathcal{H}^{[k], \Phi}$ or $h_I\in \mathcal{H}^{[k], S}$. In a similar fashion, 
we write $\theta_0 = \Theta^R + \Theta^{\Phi}$ and $w_0 = W^R + W^S + W^{\Phi}$, where $\Theta^R$ represents the sum \eqref{f:master} over $\mathscr{I} \in \mathscr{I}_R$. Similarly, we define other elementary pieces present in $w_0$ and $\theta_0$.

Finally, we state the crucial lemma which guarantees that different Mikado densities and fields do not interact with each other. For a proof of this lemma, modulo cosmetic changes, we refer to the work by DeLellis-Kwon \cite[Proposition 3.5]{Kwon1}.
\begin{prop}\label{p:supports}
There is a constant $\eta = \eta (\mathcal{H})$ in \eqref{e:eta} such that it allows a choice of the shifts $z_I= z_{p,k} + \bar s_h$ which ensure that {for each $(\mu_q, \tau_q, \la_{q+1})$,} the conditions $\supp (w_I) \cap \supp (w_J) =\emptyset$, $ \supp (w_I) \cap \supp (\vartheta_J)=\emptyset$ for every $I\neq J$ and $z_{p,k} = z_{p,k'}$, if $\mu(k-k')\in 2\pi \mathbb Z^3$ for every $p,k$ and $k'$. 
\end{prop}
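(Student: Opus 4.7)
The plan for proving Proposition~\ref{p:supports} is to reduce the three-dimensional disjointness problem to a geometric question on a single torus and then to exploit the finite-direction structure of the family $\mathcal{H}$. First I would observe that, since $\xi_I=\xi_p$ depends only on $p$ and is a diffeomorphism of $\T^3$ for each $t\in\supp\theta_p$ by the Lipschitz bounds in~\eqref{est.flow2}, the support of $w_I$ is precisely the $\xi_p(t,\cdot)^{-1}$-image of
\[
S^{\psi}_{k,h}(t) := \bigl\{y\in\T^3 : \chi_k^3(\mu^{-1}y)\,\psi_h\bigl(\lambda(y-z_{p,k}-\bar s_h)\bigr)\neq 0\bigr\},
\]
and analogously $\supp\vartheta_I$ is the image of the corresponding set $S^{\varphi}_{k,h}$. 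Thus it suffices to exhibit pairwise disjointness of these $y$-sets on $\T^3$ for each fixed $p$, since the temporal cutoffs $\{\theta_p\}$ already separate distinct values of $p$. Because \eqref{e:eta}--\eqref{f:eta} place both $\supp\psi_h$ and $\supp\varphi_h$ in the same tube $B(l_h,\eta/10)$, the two conditions $\supp(w_I)\cap\supp(w_J)=\emptyset$ and $\supp(w_I)\cap\supp(\vartheta_J)=\emptyset$ collapse into one: disjointness of the $\eta/(10\lambda)$-tubes around $\lambda^{-1}l_h+z_{p,k}+\bar s_h$ (restricted to $\supp\chi_k^3(\mu^{-1}\cdot)$) as $(k,h)$ varies.

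The 27-fold partition $\mathcal H=\bigsqcup_{[k]\in\Z_3^3}\mathcal H^{[k]}$ is designed precisely so that two cubes $\supp\chi_k^3(\mu^{-1}\cdot)$ and $\supp\chi_{k'}^3(\mu^{-1}\cdot)$ whose overlap is nonempty (which forces $|k-k'|_\infty\le 1$, hence $[k]\neq[k']$ whenever $k\neq k'$) use disjoint direction sets $\mathcal H^{[k]}\cap\mathcal H^{[k']}=\emptyset$. Together with~\eqref{e:eta}, this rules out cross-cube intersections regardless of shifts. The periodicity requirement $z_{p,k}=z_{p,k'}$ whenever $\mu(k-k')\in 2\pi\Z^3$ (admissible thanks to $\mu^{-1}\in\Z$ from~\eqref{mu.tau}) simply means a single shift per pair $(p,[k])$ must be specified, where $[k]$ now denotes the class of $k$ modulo $2\pi\mu^{-1}\Z^3$; in fact one may simply set $z_{p,k}\equiv 0$ once the base shifts $\bar s_h$ are fixed, since all residual freedom has been absorbed into $\bar s_h$.

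It then remains to choose the base shifts $\{\bar s_h\}_{h\in\mathcal H}$ so that, within a single spatial cube and for each $[k]$, the $|\mathcal H^{[k]}|=16$ thin tubes of width $\eta/(10\lambda)$ around $\lambda^{-1}l_h+\bar s_h$ in the directions $h\in\mathcal H^{[k]}$ are pairwise disjoint. This is the heart of the argument and the step I expect to be the main obstacle: distinct Mikado lines $l_h+\bar s_h$ and $l_{h'}+\bar s_{h'}$ on $\T^3$ can a priori pass arbitrarily close, and the directions in $\mathcal H^{[k]}$ are fixed integer vectors so the lines are closed curves winding around $\T^3$. I would argue by a Baire-category or elementary volume-counting argument on the shift torus $(\T^3)^{|\mathcal H|}$ that, for a generic (open and dense) choice of $\bar s_h$, the minimal pairwise transversal distance between the line collections $\{l_h+\bar s_h\}_{h\in\mathcal H^{[k]}}$ is strictly positive, depending only on the fixed finite set of directions $\mathcal H^{[k]}$. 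Denoting this minimum by $\eta_0$, I would set $\eta(\mathcal H):=\eta_0/2$; this is the geometric constant claimed in~\eqref{e:eta}--\eqref{f:eta}. Once $\eta$ and $\{\bar s_h\}$ are fixed, rescaling by $\lambda=\lambda_{q+1}$ only shrinks the tubes further, so the desired disjointness persists for every triple $(\mu_q,\tau_q,\lambda_{q+1})$ arising in the iteration, and the same $\bar s_h$ serve at every stage.
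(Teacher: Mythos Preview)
Your reduction to a single torus via the flow $\xi_p$ is the right starting move, but the proof contains a genuine gap at the point where you handle distinct time indices. You write that ``the temporal cutoffs $\{\theta_p\}$ already separate distinct values of $p$,'' and on this basis you later set $z_{p,k}\equiv 0$. This is false: the $\theta_p$ form a sixth-power partition of unity with \emph{overlapping} supports (by construction $\theta_0$ is supported on $(-1/8,9/8)$, so $\supp\theta_p\cap\supp\theta_{p+1}\neq\emptyset$). Hence for $I=(p,k,h)$ and $J=(p+1,k,h)$ with the \emph{same} $k$ and $h$, both $w_I$ and $w_J$ live on a common time slab, and their supports must be separated in space. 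With $z_{p,k}=z_{p+1,k}=0$ the two tube systems are $\{\psi_h(\lambda(\xi_p-\bar s_h))\neq 0\}$ and $\{\psi_h(\lambda(\xi_{p+1}-\bar s_h))\neq 0\}$; since $\xi_p$ and $\xi_{p+1}$ differ by a drift of size $\|u_\ell\|_0\tau_q$, which is in general \emph{not} small relative to the tube width $\eta/\lambda_{q+1}$ (nor arranged to miss the periodic pattern), these two sets will typically intersect. The $p$-dependence of $z_{p,k}$ is therefore essential, not cosmetic.

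The argument in \cite[Proposition~3.5]{Kwon1}, to which the paper defers, handles this by choosing the family $\{z_{p,k}\}$ inductively over $(p,k)$: at each step one must place the $16$ tubes for the current $(p,k)$ so that they avoid the finitely many already-placed tubes coming from the (space-time) neighbours $(p',k')$ with $|p-p'|\le 1$ and $|k-k'|_\infty\le 1$. A measure-counting argument on the shift torus then produces an admissible $z_{p,k}$ at each step, and the constant $\eta(\mathcal H)$ is fixed once and for all by the finite geometry of $\mathcal H$. A secondary issue in your write-up is the sentence ``this rules out cross-cube intersections regardless of shifts'': disjointness of the direction sets $\mathcal H^{[k]}$ and $\mathcal H^{[k']}$ only guarantees $h_I\neq h_J$, not that the corresponding tubes miss each other; one still needs the shifts $\bar s_h$ to separate \emph{all} pairs $h\neq h'$ in the full family $\mathcal H$, not merely within a single $\mathcal H^{[k]}$. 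Your Baire/volume argument in the final paragraph would do this if applied to all of $\mathcal H$, but as stated it is applied only within each $\mathcal H^{[k]}$.
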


\subsection{Choice of the weights}\label{ss:weights} 
Here we give details about the construction of weights. Let us first briefly describe the idea for the construction. In fact, we do it in several steps.
\medskip

\noindent {\bf Step-I}. Define $\Theta^{\Phi}$ and $ W^{\Phi}$ in such a way that
$$
\Big((\theta_0 w_0 \otimes w_0) + \Phi_\ell -  \la_q^{-3\gamma}\delta_{q+1}^{3/2}   \mathrm{Id}\Big)_L \equiv 0,
$$
where for any function $h$, we denote by $(h)_L$ the low frequency part of the function $h$.

\noindent {\bf Step-II}. Next, we define $\Theta^R$ and $W^R$ so that we have the following cancellation:
$$
\Big(\theta_0 w_0 - R_\ell \Big)_L \equiv 0.
$$

\noindent {\bf Step-III}. Finally, we calculate the low-frequency parts of $(W^R \otimes W^R)$ and $(W^{\Phi} \otimes W^{\Phi})$ . Then we define $W^S$ in such a way that the following cancellation holds:
$$
\Big( w_0 \otimes w_0 + S_\ell - \delta_{q+1} \mathrm{Id} \Big)_L \equiv 0.
$$

Keeping these steps in mind, we are now ready to calculate various weights associated to the density and velocity perturbations. 

\subsubsection{Cubic non-linearity weights} 

Our aim is to find weights $a_I$ and $ b_I$ such that the low frequency part of $\theta_0 w_0 \otimes w_0$ makes a cancellation with the current $\Phi_\ell -\delta^{3/2}_{q+1} \mathrm{Id}$. Note that
\begin{align*}
\theta_0 w_0 \otimes w_0 &= \sum_{I\in \mathscr{I}} \th_I^3 \chi_I^3(\xi_I)   a^2_I b_I \langle \varphi_I \psi_I^2 \rangle (\widetilde{h_I} \otimes \widetilde{h_I}) \\
& \qquad + \sum_{I\in \mathscr{I}} \th_I^3 \chi_I^3(\xi_I)  a^2_I b_I (\varphi_I \psi_I^2(\la_{q+1}\xi_I) - \langle \varphi_I \psi_I^2 \rangle) (\widetilde{h_I} \otimes \widetilde{h_I}) := (\theta_0 w_0 \otimes w_0)_L + (\theta_0 w_0 \otimes w_0)_H.
\end{align*}
Observe that $\widetilde{h_I} \otimes \widetilde{h_I} = \nabla \xi_I^{-1} ({h_I} \otimes {h_I}) \nabla \xi_I^{-T}$. In order to find the desired $a_I, b_I$, thanks to \eqref{con.psi2}, it suffices to achieve
\begin{align}\label{W3L}
(\theta_0 w_0 \otimes w_0)_L
=\sum_{p,k} \th_p^6\left(\frac t{\tau_q}\right) \chi_k^6\left(\frac{\xi_p}{\mu_q}\right)\sum_{I \in \mathscr{I}_{\Phi}}  a_I^2 b_I \widetilde{h_I} \otimes \widetilde{h_I} \, .
\end{align}
In other words, we demand
\begin{align*}
\sum_{I \in \mathscr{I}_{\Phi}} a^2_I b_I ({h_I} \otimes {h_I}) = \nabla \xi_I ( \la_q^{-3\gamma}\delta_{q+1}^{3/2} \mathrm{Id} - \Phi_\ell  ) \nabla \xi_I^{T}
\end{align*}
To achieve this, we define $\mathcal{M}_I:=  \la_q^{-3\gamma} \delta_{q+1}^{3/2} [\nabla \xi_I  \nabla \xi_I^{T} - \mathrm{Id}] - \nabla \xi_I  \Phi_\ell \nabla \xi_I^{T}$, choose $a^2_I b_I =  \la_q^{-3\gamma} \delta_{q+1}^{3/2} \Gamma^2_I$ and impose
\begin{align*}
\sum_{I \in \mathscr{I}_{\Phi}} \Gamma^2_I ({h_I} \otimes {h_I})  = \mathrm{Id} + \la_q^{3\gamma}\delta_{q+1}^{-3/2} \mathcal{M}_I 
\end{align*}
To make sure that such a choice is possible, observe that we can make $\norm{\la_q^{3\gamma}\de_{q+1}^{-3/2}\mathcal{M}_I}_{C^0(\supp(\th_I)\times \R^3)}$ sufficiently small, provided that $\la_0$ is sufficiently large, because of Lemma~\ref{lem:est.flow}, and $\norm{\la_q^{3\gamma}\de_{q+1}^{-3/2}\Phi_\ell}_0\lec \la_q^{-\ga}$. We can thus apply Lemma \ref{lem:geo1} to $\mathcal{H}^{[k],\Phi}$ and, denoting by $\Gamma_{h_I}$ the corresponding functions, we just need to set 
\[
\Ga_I=\Ga_{h_I} ( \I + \la_q^{3\gamma}\de_{q+1}^{-3/2} \mathcal{M}_I)\, .
\] 
Therefore, for $I\in \mathscr{I}_\Phi$, we may choose $a_I$ and $b_I$ in terms of $\Gamma_I$. Indeed we choose
\begin{align}
\label{one}
a_I =  \la_q^{-\gamma}\delta_{q+1}^{1/2} \Gamma_I^{2/3}, \quad b_I =  \la_q^{-\gamma}\delta_{q+1}^{1/2} \Gamma_I^{2/3}
\end{align}
Above choice of weights give us the desired ``cancellation property'':
\begin{align*}
(\theta_0 w_0 \otimes w_0)_L
=\sum_{p,k} \th_p^6\left(\frac t{\tau_q}\right) \chi_k^6\left(\frac{\xi_p}{\mu_q}\right)(\la_q^{-3\gamma}\delta_{q+1}^{3/2} \mathrm{Id} - \Phi_\ell) = \la_q^{-3\gamma} \delta_{q+1}^{3/2} \mathrm{Id} - \Phi_\ell 
\end{align*}

\subsubsection{Transport non-linearity weights}\label{subsec:R}

Here idea is to find weights such that the low frequency part of the product $\theta_0 w_0$ makes a cancellation with the transport error term $R_\ell $. Note that
\begin{align*}
\theta_0 w_0 &= \sum_{I} \th_I^2 \chi_I^2(\xi_I)  a_I b_I  \langle \varphi_I \psi_I \rangle \widetilde{h_I} + \sum_{I} \th_I^2 \chi_I^2(\xi_I) a_I b_I (\varphi_I \psi_I - \langle \varphi_I \psi_I \rangle) \widetilde{h_I}  := (\theta_0 w_0)_L + (\theta_0 w_0)_H.
\end{align*}
Again, in view of \eqref{con.psi2}, we demand that
\begin{align*}
\sum_{I\in \mathscr{I}_{R}} a_I b_I {h_I} = \nabla \xi_I R_\ell. 
\end{align*}
Next, we choose $a_I b_I = \lambda_q^{-2\gamma}\delta_{q+1} \Gamma_I$ and impose
\begin{align*}
\sum_{I\in \mathscr{I}_{R}} \Gamma_I {h_I}  = \lambda_q^{2\gamma}\delta_{q+1}^{-1} \nabla \xi_I R_\ell  
\end{align*}
Again such a choice is possible since $\| \lambda_q^{2\gamma}\delta_{q+1}^{-1} \nabla \xi_I R_\ell \|_0 \le C$. We can thus apply Lemma~\ref{lem:geo2} to $ \mathcal{H}^{[k],R}$ and, denoting by $\Gamma_{h_I}$ the corresponding functions, we just need to set 
\[
\Ga_I(t,x)=\Ga_{h_I} ( \lambda_q^{2\gamma}\delta_{q+1}^{-1} \nabla \xi_I R_\ell  )\, .
\] 
Therefore, for $I\in \mathscr{I}_R$, we may choose $a_I$ and $b_I$ in terms of $\Gamma_I$. Indeed we choose
\begin{align}
\label{two}
a_I = \lambda_q^{-\gamma} \delta_{q+1}^{1/2} \Gamma_I^{1/2}, \quad b_I =\lambda_q^{-\gamma}  \delta_{q+1}^{1/2} \Gamma_I^{1/2}
\end{align}
Finally notice that above choice of weights give us the desired ``cancellation property'':
\begin{align*}
(\theta_0 w_0)_L =  \sum_{p,k} \th_p^6\left(\frac t{\tau_q}\right) \chi_k^6\left(\frac{\xi_p}{\mu_q}\right) R_\ell = R_\ell.
\end{align*}

\subsubsection{Quadratic non-linearity weights}

Here idea is to find weights such that the low frequency part of $w_0 \otimes w_0$ makes a cancellation with the quadratic term $S_\ell  -\delta_{q+1} \mathrm{Id}$. First we calculate
\begin{align*}
w_0 \otimes w_0 &= \sum_{I\in \mathscr{I}} \th_I^2 \chi_I^2(\xi_I) a^2_I \langle \psi_I^2 \rangle (\widetilde{h_I} \otimes \widetilde{h_I}) + \sum_{I\in \mathscr{I}} \th_I^2 \chi_I^2(\xi_I) a^2_I ( \psi_I^2 - \langle  \psi_I^2 \rangle) (\widetilde{h_I} \otimes \widetilde{h_I}) \\
&:= ( w_0 \otimes w_0)_L + ( w_0 \otimes w_0)_H.
\end{align*}
Making use of \eqref{con.psi2}, and noting  that $\widetilde{h_I} \otimes \widetilde{h_I} = \nabla \xi_I^{-1} ({h_I} \otimes {h_I}) \nabla \xi_I^{-T}$, we want that
\begin{align}
\label{eq.Ga}
\sum_{I\in \mathscr{I}_{S}} & a^2_I  ({h_I} \otimes {h_I}) = \nabla \xi_I \Big[\delta_{q+1} \mathrm{Id} - S_\ell \Big] \nabla \xi_I^{T} \\
&\qquad - \nabla \xi_I \Big[ \sum_{J\in \mathscr{I}_{R}} \th_J^2 \chi_J^2(\xi_J) a^2_J \langle \psi_J^2 \rangle (\widetilde{h_J} \otimes \widetilde{h_J}) - \sum_{J\in \mathscr{I}_{\Phi}} \th_J^2 \chi_J^2(\xi_J) a^2_J \langle \psi_J^2 \rangle (\widetilde{h_J} \otimes \widetilde{h_J})\Big] \nabla \xi_I^{T}. \nonumber
\end{align}
Next, we define 
\begin{align*}
\mathcal{N}_I:&= \delta_{q+1} [\nabla \xi_I  \nabla \xi_I^{T} - \mathrm{Id}] - \nabla \xi_I  S_\ell \nabla \xi_I^{T} \\
&\qquad - \nabla \xi_I \Big[ \sum_{J\in \mathscr{I}_{R}} \th_J^2 \chi_J^2(\xi_J) a^2_J \langle \psi_J^2 \rangle (\widetilde{h_J} \otimes \widetilde{h_J}) - \sum_{J\in \mathscr{I}_{\Phi}} \th_J^2 \chi_J^2(\xi_J) a^2_J \langle \psi_J^2 \rangle (\widetilde{h_J} \otimes \widetilde{h_J})\Big] \nabla \xi_I^{T},
\end{align*}
choose $a^2_I = \delta_{q+1} \Gamma^2_I $ and impose
\begin{align}
\label{eq.Ga1}
\sum_{I\in \mathscr{I}_{S}} \Gamma^2_I ({h_I} \otimes {h_I})  = \mathrm{Id} + \delta_{q+1}^{-1} \mathcal{N}_I 
\end{align}
Again, to make sure that such a choice is possible, notice that we can make $\norm{\de_{q+1}^{-1}\mathcal{N}_I}_{C^0(\supp(\th_I)\times \R^3)}$ sufficiently small, provided that $\la_0$ is sufficiently large since $\norm{\de_{q+1}^{-1}S_\ell}_0\lec \la_q^{-2\ga}$, and $\norm{\de_{q+1}^{-1}a_J^2}_0\le \la_q^{-2\gamma}$ when $J \in \mathscr{I}_{\Phi}$ or $\mathscr{I}_{R}$. We can thus apply Lemma \ref{lem:geo1} to $\mathcal{F}^{[k],S}$ and, denoting by $\Gamma_{h_I}$ the corresponding functions, we just need to set 
\[
\Ga_I=\Ga_{h_I} ( \I + \de_{q+1}^{-1} \mathcal{N}_I)\, .
\] 
Therefore, for $I\in \mathscr{I}_S$, we may choose $a_I$ in terms of $\Gamma_I$. Indeed we choose $ a_I = \delta_{q+1}^{1/2} \Gamma_I$. This choice of weight give us the desired ``cancellation property'':
\begin{align*}
(w_0 \otimes w_0)_L
=\sum_{p,k} \th_p^6\left(\frac t{\tau_q}\right) \chi_k^6\left(\frac{\xi_p}{\mu_q}\right)(\delta_{q+1} \mathrm{Id} - S_\ell) = \delta_{q+1} \mathrm{Id} - S_\ell 
\end{align*}

\subsection{Divergence corrector for the velocity}
\label{ss:correction} 

Note that any smooth function on $\T^3$ with mean zero can be represented as a Fourier series. Taking this advantage, we use representations of $\vr_0, w_0$, $w_0 \otimes w_0$, $ \theta_0w_0$, and $\theta_0 w_0\otimes w_0$ based on the Fourier series of $\varphi_I$, $\psi_I$, $\psi_I^2$, $\varphi_I \psi_I$ and $\varphi_I \psi_I^2$.
Indeed, since $\varphi_I, \psi_I$ are smooth functions on $\T^3$ with zero-mean, we have
\begin{equation}\label{rep.psi}
\varphi_I (x) = \sum_{m\in \Z^3\setminus \{0\}} \dot a_{I,m}   e^{ i m\cdot x}, \quad \psi_I (x) = \sum_{m\in \Z^3\setminus \{0\}} \dot{b}_{I,m}  e^{ i m\cdot x},\quad \psi_I^2 (x) = \dot c_{I,0} 
 +\sum_{m\in \Z^3\setminus \{0\}} \dot c_{I,m}   e^{ i m\cdot x},
\end{equation}
\begin{equation}\label{rep.psi1}
\varphi_I  \psi_I (x) = \dot d_{I,0}  + \sum_{m\in \Z^3\setminus \{0\}} \dot{d}_{I,m}  e^{ i m\cdot x},\quad
\varphi_I  \psi_I^2 (x) = \dot e_{I,0} 
+\sum_{m\in \Z^3\setminus \{0\}} \dot e_{I,m}   e^{ i m\cdot x},
\end{equation}
where we used the following notations:
\[
\dot c_{I,0} =\langle \psi_I^2\rangle, \quad
\dot d_{I,0} = \langle \varphi_I   \psi_I\rangle, \quad 
\dot e_{I,0} = \langle \varphi_I   \psi_I^2 \rangle.
\]
Moreover, note that as $\varphi_I$ and $\psi_I$ are in $C^\infty(\T^3)$, we have  
\begin{equation}\label{est.k}
\sum_{m\in \Z^3} |m|^{n_0+2}|\dot{a_{I,m}}|
+\sum_{m\in \Z^3} |m|^{n_0+2}|\dot{b_{I,m}}|
+\sum_{m\in \Z^3} |m|^{n_0+2}|\dot{c_{I,m}}|
+\sum_{m\in \Z^3} |m|^{n_0+2}|\dot{d_{I,m}}|
+\sum_{m\in \Z^3} |m|^{n_0+2}|\dot{e_{I,m}}|
 \lec 1,
 \end{equation} 
 \begin{equation}
\sum_{m\in\Z^3} |\dot{c}_{I,m}|^2\lec 1. 
\end{equation}
for $n_0 = \ceil{\frac{2b(2+\al)}{(b-1)(1-\al)}}$. Also,  it follows from $h_I \cdot \na \psi_I = h_I \cdot \na \psi_I^2=h_I \cdot \na \varphi_I= h_I \cdot \na (\varphi_I \psi_I^2)=0 $ that
\begin{align}\label{coe.van}
\dot b_{I,m} (h_I\cdot m)=\dot c_{I,m} (h_I\cdot m )=\dot a_{I,m} (h_I\cdot m )=\dot e_{I,m} (h_I\cdot m)=0.
\end{align}
Next, in view of the above representation, we have 
\begin{align}
&\theta_0 = \sum_{p} \sum_{m\in \Z^3\setminus \{0\}} \de_{q+1}^{1/2}a_{p,m} e^{i\la_{q+1} m\cdot \xi_I} \label{rep.theta}\\
&w_0 = \sum_{p} \sum_{m\in \Z^3\setminus \{0\}} \de_{q+1}^{1/2}b_{p,m} e^{i\la_{q+1} m\cdot \xi_I} \label{rep.W}\\
&w_0\otimes w_0
=\de_{q+1}\I -S_\ell
+\sum_{p} \sum_{m\in \Z^3\setminus \{0\}}  \de_{q+1} c_{p,m} e^{ i\la_{q+1} m\cdot \xi_I} \label{alg.eq}\\
&\theta_0 w_0 =R_\ell
+\sum_{p} \sum_{m\in \Z^3\setminus \{0\}}  \de_{q+1} d_{p,m} e^{ i\la_{q+1} m\cdot \xi_I}\label{e:rep3}\\
&\theta_0 w_0 \otimes w_0
= \la_q^{-3\gamma} \de_{q+1}^{3/2}  \I -\Phi_\ell 
+\sum_{p} \sum_{m\in \Z^3\setminus \{0\}}  \de_{q+1}^{3/2} e_{p,m} e^{ i\la_{q+1} m\cdot \xi_I}.\label{e:rep4}
\end{align}
where the relevant coefficients are defined as follows:
\begin{equation}\begin{split}\label{coef.def}
&a_{p,m} = \sum_{I: p_I=p} \th_I\chi_I(\xi_I) \de_{q+1}^{-\frac 12}b_I \dot a_{I,m},\\
&b_{p,m} = \sum_{I: p_I=p} \th_I\chi_I(\xi_I) \de_{q+1}^{-\frac 12}a_I \dot b_{I,m} \widetilde{h_I} =: \sum_{I:p_I =p} B_{I,m} \widetilde{h_I},\\
&c_{p,m}
=
\sum_{I: p_I=p} \th_I^2\chi_I^2(\xi_I)
\de_{q+1}^{-1}a_I^2 \dot c_{I,m}
\widetilde{h_I}\otimes \widetilde{h_I},\\
&d_{p,m}
= \sum_{I: p_I=p} \th_I^2 \chi_I^2(\xi_I)
\de_{q+1}^{-1}a_I b_I \dot d_{I,m} \widetilde{h_I}, \\
&e_{p,m}
= \sum_{I: p_I=p} \th_I^3\chi_I^3(\xi_I)
\de_{q+1}^{-3/2}a_I^2 b_I \dot e_{I,m} \widetilde{h_I} \otimes \widetilde{h_I}.
\end{split}\end{equation}
Notice that, if $|p-p'|> 1$, then
\begin{align*}
\supp_{t,x} (a_{p,m}) & \cap \supp_{t,x} (a_{p',m'})
=\supp_{t,x} (b_{p,m})\cap \supp_{t,x} (b_{p',m'})
=\supp_{t,x} (c_{p,m})\cap \supp_{t,x} (c_{p',m'})\\
&=\supp_{t,x} (d_{p,m})\cap \supp_{t,x} (d_{p',m'})
= \supp_{t,x} (e_{p,m})\cap \supp_{t,x} (e_{p',m'})
 = \emptyset
\end{align*}
for any $m, m'\in \Z^3\setminus\{0\}$.  

We now describe how to make an additional correctors $\theta_c$ and $w_c$. The goal of these correctors is to ensure that $\theta=\theta_0+\theta_c$ is mean-zero and that $w=w_0+w_c$ is divergence-free. Indeed, in view of \eqref{coe.van} and the identity $\na \times (\na \xi_I^{\top}U(\xi_I)) = \na \xi_I^{-1}(\na \times U)(\xi_I)$ for any smooth function $U$ (see for example \cite{DaSz2016}), we have 
\[
\frac 1{\la_{q+1}}\na \times \left( \dot{b}_{I,m}\na\xi_I^{\top} \frac{im \times h_I}{|m|^2}  e^{i\la_{q+1} m\cdot {\xi_I}}\right) =\dot{b}_{I,m} \na \xi_I^{-1}h_I e^{i\la_{q+1} m\cdot \xi_I}.
\]
Keeping above identity in mind, we can now rewrite $w_0$ as
\begin{align*}
w_0 
&= \sum_{\substack{p\in \Z\\m\in \Z^3\setminus \{0\}}}  \de_{q+1}^\frac 12 \sum_{I:p_I=p} B_{I,m} \na \xi_I^{-1}h_I e^{i\la_{q+1} m\cdot \xi_I}\\
& \qquad =  \sum_{\substack{p\in \Z\\m\in \Z^3\setminus \{0\}}}  \frac{ \de_{q+1}^\frac 12}{\la_{q+1}}\sum_{I:p_I=p}  
B_{I,m} \na \times\left( \na \xi_I^{\top}\frac{im\times h_I}{|m|^2} e^{i\la_{q+1} m\cdot \xi_I}\right).
\end{align*}
Therefore, we can now define
\begin{align}\label{def.Wc}
w_c = \frac {\de_{q+1}^\frac 12 } {\la_{q+1}} \sum_{\substack{p\in \Z\\m\in \Z^3\setminus\{0\}}} \sum_{p_I =p} \na B_{I,m} \times \left(\na\xi_I^{\top} \frac{im \times h_I}{|m|^2} \right) e^{i\la_{q+1} m\cdot \xi_I}
=: \frac {\de_{q+1}^\frac 12}{\la_{q+1}\mu_q}\sum_{p,m} g_{p,m} e^{i\la_{q+1} m\cdot \xi_I},
\end{align}
where
\begin{align}\label{def.e}
g_{p,m} 
= {\mu_q}\sum_{I:p_I=p} \na(\th_I\chi_I(\xi_I) \de_{q+1}^{-\frac 12} a_I \dot b_{I,m}) \times  \left((\na\xi_I)^{\top} \frac{im \times h_I}{|m|^2} \right).
\end{align}
Thus, the final velocity correction $u_{q+1}-u_q =: w= w_0+w_c$ can be written as
\begin{align*}
w = \na\times \left( \frac {\de_{q+1}^\frac12}{\la_{q+1}}
\sum_{I,m}  
B_{I,m} \na\xi_I^{\top} \frac{im \times h_I}{|m|^2}e^{i\la_{q+1} m\cdot \xi_I} \right),
\end{align*}
and hence it is divergence-free. For future purpose, note that if $|p-p'| {>} 1$, $\supp_{t,x} (g_{p,m})\cap \supp_{t,x} (g_{p',m'})
 = \emptyset$ holds
for any $m, m'\in \Z^3\setminus\{0\}$. Moreover, the correction $w$ has the representation
\begin{align}\label{def.w}
w= \sum_{p\in \Z} \sum_{m\in \Z^3\setminus \{0\}} \de_{q+1}^\frac 12 (b_{p,m} + (\la_{q+1}\mu_q)^{-1} g_{p,m} )e^{i\la_{q+1} m\cdot \xi_I}.
\end{align}
For mean-zero corrector, we set $\theta_c(t):= - \dint_{\T^3} \theta_0(t,x)\,dx$. With this choice, it is clear that $\theta=\theta_0+\theta_c$ has mean-zero.

\subsection{Estimates for density and velocity perturbations}

Here we give details about the estimates on the density and velocity perturbations $\theta$ and $w$ respectively. Recall that $\norm{\cdot}_N = \norm{\cdot}_{C^0([0,T]+\tau_q; C^N(\T^3))}$.

\begin{lem}
\label{p:velocity_correction_estimates} 
The following estimates hold for $\theta$, $w_o$, $w_c$, $w=w_0+w_c$, and $\theta=\theta_0+\theta_c$, for any $N\le N_*$ and $s \le M_*$:
\begin{align}
\tau_q^s \norm{D_{t,\ell}^s w_0}_{N}
&{\lec_M} \la_{q+1}^{N} \de_{q+1}^\frac12, \qquad  \qquad  \qquad 
\tau_q^s\norm{D_{t,\ell}^s w_c}_{N}
{\lec_M} \la_{q+1}^{N} \de_{q+1}^\frac12 (\la_{q+1}\mu_q)^{-1} 		\label{est.Wc} \\
\tau_q^s\norm{D_{t,\ell}^s w}_{N}
&{\lec_M} \la_{q+1}^{N}\de_{q+1}^\frac12, \qquad  \qquad  \qquad 
\tau_q^s\norm{D_{t,\ell}^s \theta_0}_{N}
{\lec_M} \la_{q+1}^{N}\de_{q+1}^\frac12, \label{est.r}\\
\tau_q^s |D_{t,\ell}^s \theta_c(t)|
&{\lec_M} \de_{q+1}^\frac12 (\la_{q+1}\mu_q)^{-1} , \qquad \qquad
\norm{w}_N \lec \la_{q+1}^N \de_{q+1}^\frac12, \qquad \quad  \qquad 
\norm{\theta}_N \lec \la_{q+1}^N \de_{q+1}^\frac12. \label{est.w.indM}
\end{align}
\end{lem}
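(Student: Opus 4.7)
The plan is to exploit the Fourier-type representations \eqref{rep.theta}, \eqref{rep.W} and \eqref{def.Wc} together with the crucial transport identity $D_{t,\ell}(e^{i\la_{q+1} m\cdot \xi_I})=0$, which is immediate from \eqref{def.bflow}. By Leibniz' rule, every $D_{t,\ell}^s$ applied to $w_0$, $\theta_0$ or $w_c$ therefore falls only on the Fourier coefficients $a_{p,m}$, $b_{p,m}$, $g_{p,m}$, while every $\na$ that hits the phase produces a factor $\la_{q+1}|m|\na\xi_I$, bounded by $O(\la_{q+1}|m|)$ via \eqref{est.flow2}.

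\textbf{Coefficient bounds.} First I would estimate $a_{p,m}$, $b_{p,m}$, $g_{p,m}$ and their material and spatial derivatives. From \eqref{coef.def} these coefficients are sums of products of $\th_I$, $\chi_I(\xi_I)$, the weights $a_I, b_I$, the Mikado Fourier coefficients $\dot a_{I,m}, \dot b_{I,m}$, and factors involving $\na\xi_I^{-1}h_I$. The weights $a_I, b_I$ themselves are built from Lemma \ref{lem:geo1}--\ref{lem:geo2} applied to smooth arguments assembled from $\na\xi_I$, $R_\ell$, $\Phi_\ell$ and $S_\ell$; the chain rule combined with Lemma \ref{lemimp} and Lemma \ref{lem:est.flow} then yields, for $N\leq N_*$ and $s\leq M_*$,
\[
\tau_q^s\|D_{t,\ell}^s a_I\|_N + \tau_q^s\|D_{t,\ell}^s b_I\|_N \lec_M \de_{q+1}^{1/2}\mu_q^{-N},
\]
and analogously for $\th_I\chi_I(\xi_I)$ after noting $D_{t,\ell}(\chi_I(\xi_I))=0$ and $|D_{t,\ell}^s\th_I|\lec \tau_q^{-s}$. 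The matching of scales $\tau_q^{-1}\leq \ell_t^{-1}$ and $\mu_q^{-1}\leq \ell^{-1}\ll\la_{q+1}$ (readable from \eqref{mu.tau}) ensures that any spatial derivative hitting the coefficients costs at most $\mu_q^{-1}\leq \la_{q+1}$, so that derivatives that fall on the phase dominate. Summing over $m\in\Z^3\setminus\{0\}$ is absolutely convergent thanks to the rapid decay \eqref{est.k}, which absorbs the polynomial factors $|m|^N$ coming from differentiating the phase, and the support disjointness of coefficients with $|p-p'|>1$ (from Proposition \ref{p:supports}) ensures only an $O(1)$ overlap in the $p$-sum. Putting this together yields the claimed bounds \eqref{est.Wc} and \eqref{est.r} for $w_0$, $w_c$ and $\theta_0$; the extra factor $(\la_{q+1}\mu_q)^{-1}$ in the $w_c$ estimate is read off directly from the prefactor in \eqref{def.Wc}, while the bounds \eqref{est.w.indM} for $\|w\|_N$ and $\|\theta\|_N$ follow by taking $s=0$ and using $w=w_0+w_c$, $\theta=\theta_0+\theta_c$ together with the $\th_c$ estimate derived below.

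\textbf{The delicate step: $\theta_c$.} The most subtle piece is the bound on $\theta_c(t) = -\dint_{\T^3}\theta_0(t,x)\,dx$. I would insert the representation \eqref{rep.theta} and, for each summand, perform the change of variables $y = \xi_I(t,x)$. In the new variables the coefficient becomes smooth at scale $\mu_q$ (because $\chi_I(\xi_I(t,x))$ transforms into $\chi_k(y/\mu_q)$) and picks up the Jacobian $|\det\na\xi_I^{-1}|$, which is controlled via \eqref{est.flow2}. A single integration by parts against the phase $e^{i\la_{q+1}m\cdot y}$ then produces the gain $(\la_{q+1}\mu_q)^{-1}$ required by \eqref{est.w.indM}. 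For the material-derivative bound, $\theta_c$ depends only on $t$, so $D_{t,\ell}^s\th_c = \pa_t^s\th_c$; I would differentiate under the integral and, inside the integral, rewrite $\pa_t$ as $D_{t,\ell}-u_\ell\cdot\na$ and integrate the spatial part by parts, reducing the bound to the same oscillatory IBP applied to $D_{t,\ell}^s\th_0$, whose coefficients are already controlled by the previous step. The main obstacle is this commutation of $\pa_t$ with the spatial average together with the need to track derivatives of $|\det\na\xi_I^{-1}|$ through \eqref{est.flow.indM} while verifying that the IBP gain $(\la_{q+1}\mu_q)^{-1}$ is preserved at every order $s\leq M_*$; both, however, are within reach of Lemma \ref{lem:est.flow} and the chain rule.
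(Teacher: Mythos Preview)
Your proposal is correct and follows essentially the same route as the paper: the paper derives the perturbation bounds from coefficient estimates on $a_{p,m}, b_{p,m}, g_{p,m}$ (its Lemma \ref{lem:est.coe}), obtained via the chain rule, Fa\`a di Bruno, and Lemmas \ref{lemimp}--\ref{lem:est.flow}, exactly as you outline, and then reads off \eqref{est.Wc}--\eqref{est.w.indM} using $D_{t,\ell}(e^{i\la_{q+1}m\cdot\xi_I})=0$ and the decay \eqref{est.k}.

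The only real difference is your handling of $\theta_c$. The paper invokes the stationary phase Lemma \ref{phase} directly to each term $\int a_{p,m}e^{i\la_{q+1}m\cdot\xi_I}\,dx$, which already encapsulates your change-of-variables/IBP argument. Two remarks on your version: first, since $u_\ell$ is divergence-free the flow $\xi_I$ is volume-preserving, so $\det\na\xi_I\equiv 1$ and the Jacobian you flag as an obstacle is trivial; second, your reduction of $\pa_t^s\theta_c$ to $-\int D_{t,\ell}^s\theta_0\,dx$ is clean and correct (precisely because $\int (u_\ell\cdot\na)F\,dx=0$), after which Lemma \ref{phase} applied to the coefficients $D_{t,\ell}^s a_{p,m}$ gives the $(\la_{q+1}\mu_q)^{-1}$ gain without any further work. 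Either way, the $\theta_c$ estimate is the least laborious part, not the most subtle.
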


In fact, the proof of the above lemma follows from the lemma below which provides estimates on the functions $a,b,c,d,e$ and $g$ defined in \eqref{coef.def} and \eqref{def.e}. Note that the estimate for $\theta_c$ follows directly from Lemma~\ref{phase}.

\begin{lem}
\label{lem:est.coe} 	
The coefficients $a_{p,m}, b_{p,m}$, $c_{p,m}$, $d_{p,m}$, $e_{p,m}$ and $g_{p,m}$ defined by \eqref{coef.def} and \eqref{def.e} satisfy, for any $N\geq 0$ and $s \le M_*$, the following:
\begin{align}
\ta_q^s\norm{D_{t,\ell}^s a_{p,m}}_N
&{\lec_{N,M}} \ \mu_q^{-N}\max_I |\dot{a}_{I,m}|, \qquad 
\ta_q^s\norm{D_{t,\ell}^s b_{p,m}}_N
{\lec_{N,M}} \ \mu_q^{-N}\max_I |\dot{b}_{I,m}|, \label{est.b}\\
\ta_q^s\norm{D_{t,\ell}^s c_{p,m}}_N
&{\lec_{N,M}} \ \mu_q^{-N}\max_I |\dot{c}_{I,m}|, \qquad 
\ta_q^s\norm{D_{t,\ell}^s d_{p,m}}_N
{\lec_{N,M}}\  \mu_q^{-N}\max_I |\dot{d}_{I,m}| \label{est.d}\\
\ta_q^s\norm{D_{t,\ell}^s e_{p,m}}_N
&{\lec_{N,M}}\  \mu_q^{-N}\max_I |\dot{e}_{I,m}|, \qquad 
\ta_q^s\norm{D_{t,\ell}^s g_{p,m}}_N
{\lec_{N,M}} \ \mu_q^{-N}\max_I |\dot{b}_{I,m}| \label{est.g}
\end{align}
Moreover, for $N=0,1,2$,
\begin{align}
\label{est.be.indM}
\norm{b_{p,m}}_N + \norm{g_{p,m}}_N
\lec \mu_q^{-N} \max_I |\dot{b}_{I,m}|.
\end{align}
\end{lem}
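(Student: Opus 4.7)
The plan is to reduce each coefficient estimate to pointwise Leibniz and chain-rule bounds on the elementary factors (cutoffs, weights, flow-related tensors) that compose it. Three structural observations make this work cleanly: (i) at every $(t,x)$ and for fixed $p$, the disjoint-support property of $\{\theta_I\chi_I(\xi_I)\}_I$ together with the bounded cardinality of $\mathcal{H}^{[k]}$ guarantees that $\sum_{I:p_I=p}$ reduces to a uniformly bounded number of nonzero terms, so we may work with a single summand; (ii) $D_{t,\ell}\xi_I=0$, so the material derivative annihilates any factor of the form $f(\xi_I)$; (iii) from the definitions \eqref{mu.tau} and $\delta_q=\lambda_q^{-2\beta}$, one has $\mu_q^{-1}\gtrsim \ell^{-1}$ and $\tau_q^{-1}\gtrsim \ell_t^{-1}$ (for $b$ close to $1$ and $\lambda_0$ large), so every $\ell^{-N}$ or $\ell_t^{-s}$ bound from Lemma~\ref{lemimp} and Lemma~\ref{lem:est.flow} is automatically at least as strong as the claimed $\mu_q^{-N}\tau_q^{-s}$.

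The first technical step I would carry out is the auxiliary estimate
\[
\tau_q^s\|D_{t,\ell}^s a_I\|_N + \tau_q^s\|D_{t,\ell}^s b_I\|_N \;\lesssim_{N,s,M}\; \delta_{q+1}^{1/2}\mu_q^{-N}
\]
for all $s\leq M_*$ and $N\leq N_*$. By the explicit formulas in Subsection~\ref{ss:weights}, each of $a_I,b_I$ equals $\delta_{q+1}^{1/2}$ (times a harmless power of $\lambda_q^{-\gamma}$) multiplied by a fixed smooth fractional power of $\Gamma_{h_I}$ evaluated at one of $\mathrm{Id}+\delta_{q+1}^{-3/2}\mathcal{M}_I$, $\lambda_q^{2\gamma}\delta_{q+1}^{-1}\nabla\xi_I R_\ell$, or $\mathrm{Id}+\delta_{q+1}^{-1}\mathcal{N}_I$; by the smallness arguments already employed there, these arguments lie in a fixed compact subset of the domain of $\Gamma_{h_I}$ where it is smooth and bounded. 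Applying the multivariate chain rule then reduces the bound to products of derivatives of $\nabla\xi_I$ and $R_\ell,\Phi_\ell,S_\ell$, which are controlled by \eqref{est.mph}, \eqref{est.mS}, and \eqref{est.flow.indM}. The analogous statement for $\tilde h_I=\nabla\xi_I^{-1}h_I$ is direct from Lemma~\ref{lem:est.flow}.

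With these building-block estimates in hand, \eqref{est.b}--\eqref{est.g} follow from the Leibniz rule applied to the representations \eqref{coef.def} and \eqref{def.e}, using $|\partial_t^j\theta_I|\lesssim \tau_q^{-j}$, $D_{t,\ell}\chi_I(\xi_I)=0$, and $\|\nabla^N\chi_I(\xi_I)\|_0\lesssim \mu_q^{-N}$ (from $\chi_I(x)=\chi_k(\mu_q^{-1}x)$ together with $\|\nabla\xi_I\|_N\lesssim \ell^{-N}\leq \mu_q^{-N}$). Each scalar $|\dot{a}_{I,m}|,\ldots,|\dot{e}_{I,m}|$ is extracted as the factor on the right-hand side. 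For $g_{p,m}$, the additional $\mu_q\nabla$ in \eqref{def.e} absorbs exactly one spatial derivative, so its bound matches that of $b_{p,m}$. The sharper low-order estimate \eqref{est.be.indM} is obtained by rerunning the same computation using only the inductive bounds \eqref{est.vp}--\eqref{est.ph} and \eqref{est.flow2}, so that the constant $M$ (which enters only through \eqref{est.new} and \eqref{est.flow.indM}) does not appear in the implicit constant.

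The principal technical obstacle is the chain-rule bookkeeping: one must verify that taking $s$ material derivatives of a composition such as $\Gamma_{h_I}(\nabla\xi_I R_\ell)$ never accumulates more than $s$ factors of $\tau_q^{-1}$. This requires a commutator estimate in the spirit of Lemma~\ref{lem:Komatsu} in Appendix~\ref{comm01}, together with the ordering $(\lambda_q\delta_q^{1/2})\tau_q\leq 1$ to ensure that material derivatives of $\nabla\xi_I$, which a priori cost $(\lambda_q\delta_q^{1/2})$ per derivative, do not dominate the $\tau_q^{-1}$ budget. Once these elementary bounds are carefully organized, all six claimed estimates follow immediately.
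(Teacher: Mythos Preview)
Your proposal is correct and follows essentially the same route as the paper's proof: both reduce to estimating the elementary factors $\theta_I$, $\chi_I(\xi_I)$, $\nabla\xi_I^{\pm1}$, and the weights $a_I,b_I$, then apply the Fa\`a di Bruno/chain rule to $\Gamma_{h_I}$ composed with $\nabla\xi_I R_\ell$, $\mathcal{M}_I$, or $\mathcal{N}_I$, and finally combine via Leibniz, using $\ell^{-1}\leq\mu_q^{-1}$ and $(\lambda_q\delta_q^{1/2})\leq\tau_q^{-1}$ to absorb all flow and mollified-error bounds into the claimed $\mu_q^{-N}\tau_q^{-s}$ scale. The paper likewise isolates the $M$-independent constants for $N=0,1,2$ exactly as you describe.
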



\begin{proof}
Observe that, for any $s\geq 0$ and $N\geq 0$, 
	\begin{equation}\begin{split}
	\label{est.th.chi}
	\norm{D_{t,\ell}^s\th_I}_{C^0(\R)} 
	&= \norm{\pa_t^s\th_I}_{C^0(\R )} \lec_s \tau_q^{-s}, \\ 
	\norm{\chi_I(\xi_I)}_{C^0(\cal{I}_p; C^N(\R^3))} 
	&\lec_N \mu_q^{-N}, \  D_{t,\ell}^s [\chi_I(\xi_I)] = 0,
	\end{split}\end{equation}
	where $\cal{I}_p = [t_p - \frac 12\tau_q, t_p + \frac 32 \tau_q]$. In fact, the estimate of $\chi_I(\xi_I)$ follows from \eqref{est.flow2}, Lemma \ref{lem:est.com}, and $\ell^{-1}\leq \mu_q^{-1}$. Note also that the implicit constants are independent of $I$.

Recall that for $I\in \mathscr{I}_R$, we have $\de_{q+1}^{-1} \la_q^{2\gamma} a_I b_I = \Ga_I = \Ga_{h_I} (\lambda_q^{2 \gamma} \de_{q+1}^{-1} \na \xi_I R_\ell)$ for a finite collection of smooth functions $h_I$ chosen through Lemma \ref{lem:geo2}. First we obtain, using \eqref{est.flow2} and \eqref{est.mph}, the estimate for $\lambda_q^{2 \gamma}\na \xi_I R_\ell$, for $N=0,1,2$:
	\begin{align*}
	&\norm{\lambda_q^{2 \gamma} \na \xi_I R_\ell}_{C^0(\cal{I}_m; C^N(\R^3))} 
	\le \sum_{N_1+N_2=N} \lambda_q^{2 \gamma} \norm{\na \xi_I}_{C^0(\cal{I}_m; C^{N_1}_x)} \norm{R_\ell}_{N_2}  \lec \de_{q+1} \mu_q^{-N} 
	\end{align*}
	Similarly, using \eqref{est.flow2} and \eqref{est.mph}, and the fact that $\lambda_q \delta_q^{1/2} \le \tau_q^{-1}$, we have 
	\begin{align*}
	\norm{D_{t,\ell}^s \lambda_q^{2 \gamma} \na \xi_I R_\ell}_N \lec_{N,M} \de_{q+1}\ta_q^{-s} \mu_q^{-N}.
	\end{align*}
Next, we make use of Faa di Bruno formula in Appendix~\ref{Faadi}. In particular, thanks to \eqref{Faadi_01}, for any smooth functions $\Ga=\Ga(x)$ and $g=g(t,x)$, we have
\begin{align*}
\norm{D_{t,\ell}^s \Ga(g)}_{C^N_x} 
\lec \sum_{N_1+N_2=N} \norm{D_{t,\ell}^s g}_{C^{N_1}_x} \norm{(\na^{s-1}\Ga)(g)}_{C^{N_2}_x}
+ \norm{D_{t,\ell} g\otimes \cdots \otimes D_{t,\ell} g}_{C^{N_1}_x} \norm{(\na^s\Ga)(g)}_{C^{N_2}_x},
\end{align*}
Therefore, when $h_I \in \cH_{I, R}$, for any $s\le M_*$ and $N\geq 0$, 
	\begin{align}\label{est.Ga.R}
	\norm{D_{t,\ell}^s \de_{q+1}^{-1} \la_q^{2 \gamma} a_I b_I}_N = \norm{D_{t,\ell}^s(\Ga_{h_I} (\la_q^{2 \gamma}\de_{q+1}^{-1}\na \xi_I R_\ell))}_N
	\lec_{N,M} \ta_q^{-s} \mu_q^{-N}.
	\end{align}
	{In particular, for $N=0,1,2$, the implicit constant can be chosen to be independent of $M$ and $N$;
		\begin{align*}
		\norm{\de_{q+1}^{-1} \la_q^{2 \gamma} a_I b_I}_N 
		\lec  \mu_q^{-N}.
		\end{align*}}
On the other hand, when $I\in \mathscr{I}_{\Phi}$, recall that $\la_q^{3\gamma}\de_{q+1}^{-\frac 32} a^2_I b_I = \Ga^2_I = \Ga^2_{h_I} (\I+\la_q^{3\gamma}\de_{q+1}^{-3/2}\mathcal{M}_I)$ for a finite collection of smooth functions $h_I$ chosen through Lemma \ref{lem:geo1}. First we obtain the estimate for $\mathcal{M}_I$, 
	\begin{align*}
	&\norm{\mathcal{M}_I}_{C^0(\cal{I}_m; C^N(\R^3))} 
	\lec_N\ \la_q^{-3\gamma} \de_{q+1}^{3/2}  \norm{(\na \xi_I)(\na \xi_I)^{\top} - \I}_{C^0(\cal{I}_m; C^N(\R^3))}  \\
	&\quad + \sum_{N_1+N_2+N_3=N}\norm{\na \xi_I}_{C^0(\cal{I}_m; C^{N_1}_x)} \norm{\Phi_\ell}_{N_2} \norm{\na \xi_I}_{C^0(\cal{I}_m; C^{N_3}_x)}
	\lec \la_q^{-3\gamma} \de_{q+1}^{3/2} \mu_q^{-N}. 
	\end{align*}
	Similarly, we have 
	\begin{align*}
	\norm{D_{t,\ell}^s \mathcal{M}_I}_N \lec_{N,M} \la_q^{-3\gamma} \de_{q+1}^{3/2}\ta_q^{-s} \mu_q^{-N},
	\end{align*}
	{but $\norm{ \mathcal{M}_I}_N \lec \la_q^{-3\gamma} \de_{q+1}^{3/2} \mu_q^{-N}$ for $N=0,1,2$.}
	Then, an application of the Faa di Bruno formula \eqref{Faadi_01} implies that when $h_I \in \cH_{I, \Phi}$, for $s\le M_*$ and $N\geq 0$, 
	\begin{align}\label{est.Ga.phi}
	\norm{D_{t,\ell}^s \la_q^{3\gamma}\de_{q+1}^{-3/2} a^2_I b_I}_N = \norm{D_{t,\ell}^s(\Ga^2_{h_I} (\I+ \la_q^{3\gamma}\de_{q+1}^{-3/2}\mathcal{M}_I))}_N
	\lec_{N,M} \ta_q^{-s} \mu_q^{-N}.
	\end{align}
	{In particular, for $N=0,1,2$, the implicit constant can be chosen to be independent of $M$ and $N$;
		\begin{align*}		\norm{\la_q^{3\gamma}\de_{q+1}^{-3/2} a^2_I b_I}_N 
		\lec  \mu_q^{-N}.
		\end{align*}}
Finally, when $I\in \mathscr{I}_S$, recall that $\de_{q+1}^{-1/2} a_I = \Ga_I = \Ga_{h_I} (\I+\de_{q+1}^{-1}\mathcal{N}_I)$ for a finite collection of smooth functions $h_I$ chosen through Lemma \ref{lem:geo1}. First we obtain the estimate for $\mathcal{N}_I$, 
	\begin{align*}
	&\norm{\mathcal{N}_I}_{C^0(\cal{I}_m; C^N(\R^3))} \\
	&\lec_N\ \de_{q+1}  \norm{(\na \xi_I)(\na \xi_I)^{\top} - \I}_{C^0(\cal{I}_m; C^N(\R^3))}  \\
	&\ + \sum_{N_1+N_2+N_3=N}\norm{\na \xi_I}_{C^0(\cal{I}_m; C^{N_1}_x)} \norm{S_\ell}_{N_2} \norm{\na \xi_I}_{C^0(\cal{I}_m; C^{N_3}_x)}\\
	&\ + \sum_{N_1+N_2+N_3+N_4=N} \sum_{{J}: f\in \cH_{{J}, R}} \norm{\na \xi_I}_{C^0(\cal{I}_m; C^{N_1}_x)} \norm{\chi_{J}^2(\xi_{J})}_{C^0(\cal{I}_m; C^{N_2}_x)} \norm{a_{J}^2}_{C^0(\cal{I}_m; C^{N_3}_x)}  \norm{\na \xi_I}_{C^0(\cal{I}_m; C^{N_4}_x)}\\
	&\ + \sum_{N_1+N_2+N_3+N_4=N} \sum_{{J}: f\in \cH_{{J}, \Phi}} \norm{\na \xi_I}_{C^0(\cal{I}_m; C^{N_1}_x)} \norm{\chi_{J}^2(\xi_{J})}_{C^0(\cal{I}_m; C^{N_2}_x)} \norm{a_{J}^2}_{C^0(\cal{I}_m; C^{N_3}_x)}  \norm{\na \xi_I}_{C^0(\cal{I}_m; C^{N_4}_x)}\\
	&\lec \de_{q+1} \mu_q^{-N}.
	\end{align*}
	Similarly, we also have 
	\begin{align*}
	\norm{D_{t,\ell}^s \mathcal{N}_I}_N \lec_{N,M} \de_{q+1}\ta_q^{-s} \mu_q^{-N},
	\end{align*}
	{but $\norm{ \mathcal{N}_I}_N \lec \de_{q+1} \mu_q^{-N}$ for $N=0,1,2$.}
	Then, an application of the Faa di Bruno formula \eqref{Faadi_01} implies that when $h_I \in \cH_{I, R}$, for $s\le M_*$ and $N\geq 0$, 
	\begin{align}\label{est.Ga.S}
	\norm{D_{t,\ell}^s \de_{q+1}^{-1/2} a_I}_N = \norm{D_{t,\ell}^s(\Ga_{h_I} (\I+\de_{q+1}^{-1}\mathcal{N}_I))}_N
	\lec_{N,M} \ta_q^{-s} \mu_q^{-N}.
	\end{align}
	{In particular, for $N=0,1,2$, the implicit constant can be chosen to be independent of $M$ and $N$;
		\begin{align*}
		\norm{\de_{q+1}^{-\frac 12} a_I}_N 
		\lec  \mu_q^{-N}.
		\end{align*}}
Finally, recalling the definition of $a_{m,k}$, $b_{m,k}$, $c_{m,k}$, $d_{m,k}$, $e_{m,k}$ and $g_{m,k}$, we see that the estimates \eqref{est.b}-\eqref{est.be.indM} follows from previous estimates \eqref{est.Ga.R}-\eqref{est.Ga.S}.
\end{proof}

%
%

\section{Definition of the new errors}
\label{errors}
Let us start with the following relaxed density and momentum equations at the $q$-th level and derive the equations for the $(q+1)$-th level. In what follows, we start with
\begin{align*} 
\partial_t {\vr}_q + \div ({\vr_q} {u_q})  &= - \div R_q,\\ 
\partial_t ({\vr_q} {u_q} ) +  \div ({\vr_q} u_q \otimes u_q)+  \Grad p_q  &=  -(\partial_t + {u_q} \cdot \nabla) R_q - \div (R_q \otimes {u_q}) + \div \Phi_q  + \div (\vr_q S_q ),\\
\div \,{u_q} & =0.
\end{align*}
Next, we write ${\vr}_{q+1} = {\vr}_q + \theta$, and $u_{q+1}= u_q + w$, and $p_{q+1} = p_q - \de_{q+1} \vr_q$, where $\theta$ and $w$ are appropriate perturbations for the density and the velocity respectively. Recall that $\div \, w =0$.

Moreover, mollified density equation reads
\begin{align*}
\partial_t {\vr}_\ell + \div ({\vr_\ell} {u_\ell})  &= - \div P_{\le \ell^{-1}} R_q + \, Q(\vr_q,u_q),
\end{align*}
where $Q(\vr_q,u_q) = \div ({\vr_\ell} {u_\ell} - (\vr_q u_q)_\ell)$ with $(\vr_q u_q)_\ell = P_{\le \ell^{-1}} (\vr_q u_q)$. Furthermore,
\begin{align*}
Q(\vr_q,u_q) = u_\ell \cdot \na \vr_\ell - (u_q \cdot \na \vr_q)_\ell
= (u_\ell - u_q) \cdot \na \vr_\ell + [u_q \cdot \na, P_{\le \ell^{-1}} ] \vr_q.
\end{align*}

We also define the notation $a \otimes_s b$ for two vectors $a, b$ to denote the symmetric $2$-tensor
\[ a \otimes_s b := a \otimes b + b \otimes a\,. \]

\subsection{New Reynolds stress}
We calculate 
\begin{align*}
&\partial_t {\vr}_{q+1} + \div ({\vr_{q+1}} {u_{q+1}})  =\underbrace{\partial_t {\vr}_q + \div ({\vr_q} {u_q})}_{= -\div R_q} + \partial_t \theta + \div(\vr_q w + u_q \theta) + \div(\theta w)\\
&= \underbrace{\div(\theta w - R_q)}_{\text{quadratic error}}+ \underbrace{\partial_t \theta + \div(\vr_q w + u_q \theta)}_{\text{linear errors}} 
=\underbrace{\na \cdot(\theta w - R_\ell)}_{=: \na \cdot \as R_O} + \underbrace{(\partial_t \theta  + (u_\ell \cdot \na) \theta)}_{=: \na \cdot \as R_T } + \underbrace{w \cdot \na \vr_\ell}_{=: \na \cdot \as R_N} \\[2pt]
& \qquad \qquad + \underbrace{\na \cdot ((u_q -u_\ell) \theta + (\vr_q -\vr_\ell) w - (R_q -R_\ell))}_{=:\na \cdot \as R_M} := - \div R_{q+1}^{\text{pre}}
\end{align*}
Note that we have used the fact that $\theta$ has zero mean in order to say that it can be written as the divergence of a vector.

\subsection{New quadratic stress and new current}
Keeping in mind the definitions of $\vr_{q+1}$, $u_{q+1}$, and $p_{q+1}$, we begin with the momentum equations
\begin{align*}
&\partial_t ({\vr_{q+1}} {u_{q+1}} ) +  \div ({\vr_{q+1}} u_{q+1} \otimes u_{q+1})+  \Grad p_{q+1} \\[2pt]
&= \underbrace{\partial_t ({\vr_q} {u_q} ) +  \div ({\vr_q} u_q \otimes u_q)+  \Grad p_q  }_{=-(\partial_t + {u_q} \cdot \nabla) R_q - \div (R_q \otimes {u_q}) + \div \Phi_q + \div (\vr_q S_q)} 
+ \partial_t (\vr_q w + u_q \theta + \theta w) - \de_{q+1} \na \vr_q\\[2pt]
&\qquad\qquad+ \div(\vr_q u_q \otimes w + \vr_q w \otimes u_q + \vr_q w \otimes w + u_q \theta \otimes u_q + u_q \theta \otimes w + \theta w \otimes u_q + \theta w \otimes w)\\[2pt]
& = (\partial_t + {u_q} \cdot \nabla) (\theta w -R_q )  +\div ((\theta w -R_q) \otimes {u_q}) + \div(\varrho_q(w\otimes w - \de_{q+1}\I + S_q))
\\[2pt]
&\qquad +\partial_t (w\varrho_{\ell}) +\pa_t( w(\varrho_q-\varrho_\ell)) + \div(u_\ell \otimes w \varrho_\ell) + \div(u_\ell \otimes w (\varrho_q-\varrho_\ell)) + \div((u_q-u_\ell) \otimes w \varrho_q)\\
&\qquad + \partial_t( \theta u_\ell) + \pa_t(\theta(u_q-u_\ell)) + \div(u_\ell \otimes u_\ell \theta) + \div(u_\ell \otimes \theta (u_q-u_\ell)) + \div((u_q-u_\ell) \otimes \theta u_q)\\
&\qquad + \div (\varrho_\ell w\otimes u_\ell) + \div(\varrho_\ell w \otimes (u_q-u_\ell)) + \div((\varrho_q-\varrho_\ell) w \otimes u_q)\\
&\qquad + \div(\theta w \otimes w + \Phi_q) \\
& = (\partial_t + {u_q} \cdot \nabla) (\theta w -R_q )  +\div ((\theta w -R_q) \otimes {u_q}) + \div(\varrho_q(w\otimes w - \de_{q+1}\I + S_q))
\\[2pt]
&\qquad +w(\partial_t \varrho_{\ell} + \na\varrho_\ell \cdot u_\ell) + u_\ell(\partial_t\theta + u_\ell\cdot\na\theta) +u_\ell(w\cdot\na\varrho_\ell)\\
&\qquad +\varrho_\ell(\pa_tw + u_\ell\cdot\na w)+\theta(\pa_t u_\ell+u_\ell\cdot\na u_\ell) +(\varrho_\ell w)\cdot\na u_\ell\\
&\qquad + \div(\theta w \otimes w + \Phi_q) \\
&\qquad + \pa_t( w(\varrho_q-\varrho_\ell))+ \div(u_\ell \otimes w (\varrho_q-\varrho_\ell)) + \div((u_q-u_\ell) \otimes w \varrho_q) \\
&\qquad + \pa_t(\theta(u_q-u_\ell)) + \div(u_\ell \otimes \theta (u_q-u_\ell)) + \div((u_q-u_\ell) \otimes \theta u_q)\\
&\qquad + \div(\varrho_\ell w \otimes (u_q-u_\ell)) + \div((\varrho_q-\varrho_\ell) w \otimes u_q)\\
& = -(\partial_t + {u_{q+1}} \cdot \nabla)R_{q+1}^{pre} - \div(R_{q+1}^{pre}\otimes u_{q+1}) +\div(\varrho_{q+1}S_{q+1}^{pre}) + \div \Phi_{q+1}
\end{align*}
Here $S_{q+1}^{pre}$ and $\Phi_{q+1}$ are defined as
\begin{align*}
    \div (\vr_{q+1} S_{q+1}^{\text{pre}}) &:= \div(\vr_q (w \otimes w + S_q - \de_{q+1} \text{Id}))
+ \theta (\partial_t u_\ell + (u_\ell \cdot \nabla) u_\ell) + \vr_\ell (\partial_t w + (u_\ell \cdot \nabla) w) + (\vr_\ell w \cdot \nabla) u_\ell\\
 &\qquad+   \div(\vr_\ell w\otimes (u_q - u_\ell) ) + \div( (\vr_q - \vr_\ell) w \otimes u_q ) +  \div((u_q - u_\ell) \otimes (w\varrho_q + \theta u_q )) \\
 &\qquad - \div( (u_q-u_\ell) \otimes \theta u_\ell) - \div( (\vr_q-\vr_\ell) w \otimes u_\ell)\\
 &= \div(\vr_q (w \otimes w + S_q - \de_{q+1} \text{Id}))
+ \theta (\partial_t u_\ell + (u_\ell \cdot \nabla) u_\ell) + \vr_\ell (\partial_t w + (u_\ell \cdot \nabla) w) + (\vr_\ell w \cdot \nabla) u_\ell\\
&\qquad + \div(\vr_\ell w\otimes (u_q - u_\ell) ) + \div( (\vr_q - \vr_\ell) w \otimes (u_q - u_\ell) ) +  \div((u_q - u_\ell) \otimes \theta (u_q - u_\ell ))\\
&\qquad + \div ((u_q-u_\ell)\otimes w\varrho_q)\\
&= \div(\vr_q (w \otimes w + S_q - \de_{q+1} \text{Id}))
+ \theta (\partial_t u_\ell + (u_\ell \cdot \nabla) u_\ell) + \vr_\ell (\partial_t w + (u_\ell \cdot \nabla) w) + (\vr_\ell w \cdot \nabla) u_\ell\\
&\qquad + \div(\vr_q w \otimes_s (u_q - u_\ell) )  +  \div(\theta (u_q - u_\ell) \otimes (u_q - u_\ell ))
\end{align*}
and so
\begin{align*}
    \div \Phi_{q+1} &:= (\partial_t + {u_q} \cdot \nabla) (\theta w -R_q + R_{q+1}^{pre}) +w\cdot\na R_{q+1}^{pre}  +\div ((\theta w -R_q +R_{q+1}^{pre}) \otimes {u_q}) + \div(R_{q+1}^{pre}\otimes w)
\\
&\qquad +w(\partial_t \varrho_{\ell} + \na\varrho_\ell \cdot u_\ell) + u_\ell(\partial_t\theta + u_\ell\cdot\na\theta) +u_\ell(w\cdot\na\varrho_\ell) + \div(\theta w \otimes w + \Phi_q) \\
&\qquad + (\pa_t + u_\ell\cdot\na)( w(\varrho_q-\varrho_\ell)) + (\pa_t + u_\ell\cdot\na)(\theta(u_q-u_\ell)) \\
&\qquad  + \div( (u_q-u_\ell) \otimes \theta u_\ell) + \div( (\vr_q-\vr_\ell) w \otimes u_\ell)\\
&=(\partial_t + {u_\ell} \cdot \nabla) (\theta w -R_q + R_{q+1}^{pre}) +w\cdot\na R_{q+1}^{pre}  +\div ((\theta w -R_q +R_{q+1}^{pre}) \otimes {u_\ell}) + \div(R_{q+1}^{pre}\otimes w)
\\
&\qquad +w(-\div P_{\le \ell^{-1}} R_q + Q(\vr_q,u_q)) + u_\ell(\div \as R_T) +u_\ell(\div \as R_N) + \div(\theta w \otimes w + \Phi_q) \\
&\qquad + (\pa_t + u_\ell\cdot\na)( w(\varrho_q-\varrho_\ell)) + (\pa_t + u_\ell\cdot\na)(\theta(u_q-u_\ell)) \\
&\qquad + \div( (u_q-u_\ell) \otimes \theta u_\ell) + \div( (\vr_q-\vr_\ell) w \otimes u_\ell) \\
&\qquad + \div((u_q-u_\ell)\otimes_s (\theta w -R_q +R_{q+1}^{pre}))\\
&=(\partial_t + {u_\ell} \cdot \nabla) (\theta w -R_q + R_{q+1}^{pre}) +w\cdot\na R_{q+1}^{pre} + \div(R_{q+1}^{pre}\otimes w)
\\
&\qquad +u_\ell \div (\theta w -R_q +R_{q+1}^{pre}+\as R_T+\as R_N) + (\theta w -R_q +R_{q+1}^{pre}) \cdot\na {u_\ell}\\
&\qquad +w(-\div P_{\le \ell^{-1}} R_q + Q(\vr_q,u_q)) + \div(\theta w \otimes w + \Phi_q) \\
&\qquad + (\pa_t + u_\ell\cdot\na)( w(\varrho_q-\varrho_\ell)) + (\pa_t + u_\ell\cdot\na)(\theta(u_q-u_\ell)) \\
&\qquad + \div( (u_q-u_\ell) \otimes \theta u_\ell) + \div( (\vr_q-\vr_\ell) w \otimes u_\ell) \\
&\qquad + \div((u_q-u_\ell)\otimes_s (\theta w -R_q +R_{q+1}^{pre})) \\
&=(\partial_t + {u_\ell} \cdot \nabla) (\theta w -R_q + R_{q+1}^{pre}) +w\cdot\na R_{q+1}^{pre} + \div(R_{q+1}^{pre}\otimes w)
\\
&\qquad -u_\ell \div ((u_q-u_\ell)\theta+(\varrho_q-\varrho_\ell)w) + (\theta w -R_q +R_{q+1}^{pre}) \cdot\na {u_\ell}\\
&\qquad +w(-\div P_{\le \ell^{-1}} R_q + Q(\vr_q,u_q)) + \div(\theta w \otimes w + \Phi_q) \\
&\qquad + (\pa_t + u_\ell\cdot\na)( w(\varrho_q-\varrho_\ell)) + (\pa_t + u_\ell\cdot\na)(\theta(u_q-u_\ell)) \\
&\qquad + \div( (u_q-u_\ell) \otimes \theta u_\ell) + \div( (\vr_q-\vr_\ell) w \otimes u_\ell) \\
&\qquad + \div((u_q-u_\ell)\otimes_s (\theta w -R_q +R_{q+1}^{pre}))\,.
\end{align*}
So, we group the terms as
\begin{multline}
\div (\vr_{q+1} S_{q+1}^{\text{pre}}) = \underbrace{\div(\vr_q (w \otimes w + S_\ell - \de_{q+1} \text{Id}))}_{=: \na \cdot \as S_{O}} \\
+ \underbrace{\theta (\partial_t u_\ell + (u_\ell \cdot \nabla) u_\ell)}_{=: \na \cdot \as S_{T1}+ \partial_t \Upsilon_4} + \underbrace{\vr_\ell (\partial_t w + (u_\ell \cdot \nabla) w)}_{=: \na \cdot \as S_{T2}+ \partial_t \Upsilon_5} + \underbrace{(\vr_\ell w \cdot \nabla) u_\ell}_{=: \na \cdot \as S_{N} + \partial_t \Upsilon_6}\\
 + \underbrace{ \div(\vr_q w \otimes_s (u_q - u_\ell) )  +  \div(\theta (u_q - u_\ell) \otimes (u_q - u_\ell )) + \div (\vr_q (S_q - S_\ell ))}_{=: \na \cdot \as S_{M}}
\end{multline}
and
\begin{multline}
\div \Phi_{q+1} = \underbrace{(\partial_t + {u_\ell} \cdot \nabla) (\theta w -R_q + R_{q+1}^{\text{pre}} + (u_q -u_\ell) \theta + (\vr_q -\vr_\ell) w)}_{=: \na \cdot \as \varphi_T+ \partial_t \Upsilon_1}+ \underbrace{\div(\theta w \otimes w + \Phi_\ell)}_{=: \na \cdot \as \varphi_O}  \\
 \quad + \underbrace{\div (R_{q+1}^{\text{pre}} \otimes_s w)}_{=: \na \cdot \as \varphi_R} -\underbrace{w \div P_{\le \ell^{-1}} R_q + w  Q(\vr_q,u_q)}_{=: \na \cdot \as \varphi_{H1} + \partial_t \Upsilon_2}  \\
+  \underbrace{(R_{q+1}^{\text{pre}} -R_q + \theta w + (u_q -u_\ell) \theta + (\vr_q -\vr_\ell) w) \cdot \nabla  u_\ell}_{=: \na \cdot \as \varphi_{H2}+ \partial_t \Upsilon_3} \\
+ \underbrace{\div((R_{q+1}^{\text{pre}} - R_q +\theta w) \otimes_s (u_q - u_\ell)) + \div(\Phi_q - \Phi_\ell)}_{=: \na \cdot \as \varphi_{M}} \label{phi_01}
\end{multline}

Note that, in order to apply inverse divergence operator, we need to make sure that the agrument has mean zero. For that purpose, we set
\begin{align}
\Upsilon_1 (t) &:= \int_0^t \langle\left(\partial_t + {u_\ell} \cdot \nabla\right) (\theta w -R_q + R_{q+1}^{\text{pre}} + (u_q -u_\ell) \theta + (\vr_q -\vr_\ell) w) \rangle (s)\, ds\label{e:define_up_2}\\
\Upsilon_2 (t) &:= \int_0^t \langle (-\div P_{\le \ell^{-1}}R_q + Q(\vr_q,u_q)) w \rangle (s)\, ds \label{e:define_up_3} \\
\Upsilon_3 (t) &:= \int_0^t \langle (R_{q+1}^{\text{pre}} -R_q + \theta w + (u_q -u_\ell) \theta + (\vr_q -\vr_\ell) w) \cdot \nabla  u_\ell \rangle (s)\, ds
\label{e:define_up_4} \\
\Upsilon_4 (t) &:= \int_0^t \langle \theta (\partial_t u_\ell + (u_\ell \cdot \nabla) u_\ell)\rangle (s)\, ds \label{e:define_up_6} \\
\Upsilon_5 (t) &:= \int_0^t \langle \vr_\ell (\partial_t w + (u_\ell \cdot \nabla) w)\rangle (s)\, ds \label{e:define_up_7}  \\
\Upsilon_6 (t) &:= \int_0^t \langle (\vr_\ell w \cdot \nabla) u_\ell\rangle (s)\, ds\,. \label{e:define_up_8}
\end{align}
Finally, we can write our new errors $R_{q+1}$ and $S_{q+1}$ as
\begin{align}
    R_{q+1} := R_{q+1}^{\text{pre}} + \sum_{i=1}^3 \Upsilon_i + \sum_{i=4}^6 \Upsilon_i := R_{q+1}^{\text{pre}} + \Upsilon^1 + \Upsilon^2\,,\qquad
    S_{q+1} := S_{q+1}^{\text{pre}} - \frac1{\varrho_{q+1}} \sum_{i=1}^6 \Upsilon_i \otimes u_{q+1}
\end{align}

\section{Estimates of new errors} 
\label{est}
In this section, we estimate the new error terms introduced earlier. Our main technical tools are a microlocal lemma from Appendix~\ref{sec.ml} and Proposition~\ref{prop:intermittent:inverse:div}, which provides estimates for the inverse divergence of various quantities. 

\subsection{Estimates on the Reynolds stress}

With the help of the Proposition~\ref{prop:intermittent:inverse:div}, we now obtain the relevant estimates for the new Reynolds stress $R_{q+1}$ and its new advective derivative $ \as D_t R_{q+1} = \partial_t R_{q+1} + u_{q+1}\cdot \nabla R_{q+1}$, summarized in the following proposition. Indeed, we first derive estimates on $ R_{q+1}^{\text{pre}}, \as D_t R_{q+1}^{\text{pre}} $ and, in view of last estimate in \eqref{est.asS} and \eqref{est.asph}, the estimates on $ R_{q+1}, \as D_t R_{q+1} $ follows. As before, for the remaining sections, we set $\norm{\cdot}_N = \norm{\cdot}_{C^0([0,T]+\tau_q; C^N(\T^3))}$.

\begin{prop}\label{p:Reynolds}
{There exists  $\bar{b}(\al)>1$ with the following property. For any $1<b<\bar{b}(\al)$ we can find 
$\Lambda_0 = \Lambda_0 (\al,b,M)$} such that the following estimates hold for every  $\lambda_0 \geq \Lambda_0$: 
\begin{equation}\label{est.asR}\begin{split}
\norm{R_{q+1}^{\text{pre}}}_N &\leq C_M\la_{q+1}^N \cdot
 {\la_q^\frac12}\la_{q+1}^{-\frac12} \de_q^\frac14\de_{q+1}^\frac34 
 \leq \la_{q+1}^{N-2\ga} \de_{q+2}, 
 \ \ \qquad\qquad \forall N=0,1,2 \\
\norm{\as D_t R_{q+1}^{\text{pre}} }_{N-1} &\leq C_M  \la_{q+1}^N \de_{q+1}^\frac12\cdot
 {\la_q^\frac12}\la_{q+1}^{-\frac12} \de_q^\frac14\de_{q+1}^\frac34  \leq \la_{q+1}^{N-2\ga}\de_{q+1}^\frac12 \de_{q+2}, \quad \forall N=1,2\, .
\end{split}\end{equation}
{where $C_M$ depends only upon the $M>1$.}
\end{prop}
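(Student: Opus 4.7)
My plan is to exploit the explicit decomposition $R_{q+1}^{\text{pre}} = \mathcal{R}\as R_O + \mathcal{R}\as R_T + \mathcal{R}\as R_N + \as R_M$ given right before the proposition, estimate each term separately, and verify that every piece is bounded by the common amplitude $A := \lambda_q^{1/2}\lambda_{q+1}^{-1/2}\delta_q^{1/4}\delta_{q+1}^{3/4}$ times the natural power of $\lambda_{q+1}$. Once this is done, the comparison $A \le \lambda_{q+1}^{-2\gamma}\delta_{q+2}$ reduces to an algebraic inequality in the parameters $\alpha,\beta,b$ that is satisfied for $b$ sufficiently close to $1$ (using $\gamma=(b-1)^2$, $\delta_{q+2}\approx \lambda_{q+1}^{-2\beta b}$, and $\beta<\frac17$), which determines $\bar b(\alpha)$ and $\Lambda_0(\alpha,b,M)$.

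For the oscillation error $\as R_O = \theta_0 w_0 - R_\ell$ (modulo small contributions from $\theta_c$ and $w_c$) I would use the Fourier representation \eqref{e:rep3}, writing $\theta_0 w_0 - R_\ell = \sum_{p,m\neq 0}\delta_{q+1} d_{p,m}\,e^{i\lambda_{q+1}m\cdot\xi_I}$. Applying the microlocal/stationary-phase lemma (Corollary~\ref{cor.mic2}) to each summand gains a factor $\lambda_{q+1}^{-1}$ on the inverse divergence $\mathcal{R}$, the amplitude $d_{p,m}$ is controlled by Lemma~\ref{lem:est.coe} (with spatial cost $\mu_q^{-N}$ and advective cost $\tau_q^{-s}$), and the phase gradient $\nabla\xi_I$ is non-degenerate by \eqref{est.flow2}. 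Summation over $m$ uses the decay \eqref{est.k} of the Fourier coefficients. The resulting bound is $\delta_{q+1}\lambda_{q+1}^{-1}\mu_q^{-1}\lambda_{q+1}^{N}\lesssim \lambda_{q+1}^{N}A$ by the definition \eqref{mu.tau} of $\mu_q$. The Nash error $\as R_N = w\cdot\nabla\vr_\ell$ is analogous, plugging in the representation \eqref{def.w} of $w$, using $\|\vr_\ell\|_{N+1}\lec \ell^{-N}\lambda_q\delta_q^{1/2}$, and exploiting $h_I\cdot m\neq 0$ automatic in the phase because $\nabla\vr_\ell$ is a low-frequency factor; again the gain $\lambda_{q+1}^{-1}$ from $\mathcal{R}$ beats the cost $\lambda_q\delta_q^{1/2}$ to produce the same amplitude $A$. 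For the transport error $\as R_T = D_{t,\ell}\theta_0$, the crucial point is that $D_{t,\ell} e^{i\lambda_{q+1}m\cdot\xi_I}=0$ thanks to the defining equation \eqref{def.bflow} for $\xi_I$, so $D_{t,\ell}\theta_0 = \sum_{p,m}\delta_{q+1}^{1/2}(D_{t,\ell}a_{p,m})e^{i\lambda_{q+1}m\cdot\xi_I}$, and Lemma~\ref{lem:est.coe} gives a factor $\tau_q^{-1}\lesssim \lambda_q^{1/2}\lambda_{q+1}^{1/2}\delta_q^{1/4}\delta_{q+1}^{1/4}$; the $\lambda_{q+1}^{-1}$ from $\mathcal{R}$ then produces exactly $A$. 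The mollification error $\as R_M$ is handled directly, without needing $\mathcal{R}$: combine \eqref{est.R.dif} with $\|(u_q-u_\ell)\theta\|_0\lesssim \ell^2\lambda_q^2\delta_q^{1/2}\cdot\delta_{q+1}^{1/2}$ from \eqref{est.v.dif} and analogous for $(\vr_q-\vr_\ell)w$; the definition of $\ell$ makes these $\lesssim A$.

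For the advective-derivative bounds $\|\as D_t R_{q+1}^{\text{pre}}\|_{N-1}$, I would split $\as D_t = D_{t,\ell} + (u_{q+1}-u_\ell)\cdot\nabla$. The transported term benefits from the identity $D_{t,\ell}\mathcal{R} = \mathcal{R} D_{t,\ell} + [D_{t,\ell},\mathcal{R}]$; the commutator with the nonlocal $\mathcal{R}$ is where one must be careful, but the loss is by a factor of $\|\nabla u_\ell\|_0\lesssim \lambda_q\delta_q^{1/2}\lesssim\tau_q^{-1}$, matching the amplitude $\delta_{q+1}^{1/2}\lambda_{q+1}$ required. Inside $D_{t,\ell}$ on each summand, the phase $e^{i\lambda_{q+1}m\cdot\xi_I}$ is annihilated, so only amplitudes $D_{t,\ell}a_{p,m}, D_{t,\ell}b_{p,m}$, etc., appear, bounded again by Lemma~\ref{lem:est.coe} with an extra $\tau_q^{-1}$. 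The correction term $(u_{q+1}-u_\ell)\cdot\nabla R_{q+1}^{\text{pre}}$ picks up a factor $\|w\|_0 + \|u_q-u_\ell\|_0\lesssim \delta_{q+1}^{1/2}$ and one spatial derivative $\lambda_{q+1}$, again giving the correct combined factor. I expect the main obstacle to be keeping track of the numerous commutators with $\nabla\xi_I$ and with the nonlocal $\mathcal{R}$ while verifying that everywhere the amplitude reduces to $A$ times the correct power of $\lambda_{q+1}$; this is essentially a bookkeeping task, but the presence of higher-order material derivative estimates \eqref{est.new} and \eqref{est.flow.indM} is precisely what makes it go through with the microlocal lemma applied iteratively.
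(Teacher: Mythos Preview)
Your overall plan---decompose $R_{q+1}^{\text{pre}}$ into transport, Nash, oscillation, and mediation pieces and bound each by the common amplitude $A=\lambda_q^{1/2}\lambda_{q+1}^{-1/2}\delta_q^{1/4}\delta_{q+1}^{3/4}$---is exactly the paper's structure, and the parameter algebra you sketch is correct. The route differs in the tool used for the inverse-divergence pieces. You propose the classical operator $\mathcal{R}$ of Definition~\ref{idv.defn} together with the microlocal Corollary~\ref{cor.mic2}; the paper instead \emph{defines} $\as R_T$, $\as R_N$, $\as R_{O1}$ via the \emph{local} inverse divergence $\mathcal{R}^{\mathrm{loc}}$ of Proposition~\ref{prop:intermittent:inverse:div} and reads the bounds off \eqref{eq:inverse:div:stress:1}--\eqref{eq:inverse:div:error:stress:bound}. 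Either tool gives \eqref{est.asR}, so your argument would close this proposition. The paper's choice is strategic rather than necessary here: these very objects reappear in the transport current error (Section~\ref{transport error}), where one must apply $D_{t,\ell}$ to $\as R_\triangle$ and invert divergence again; the explicit local formula \eqref{eq:divH:formula} makes $D_{t,\ell}\mathcal{R}^{\mathrm{loc}}$ computable directly, avoiding the commutator $[D_{t,\ell},\mathcal{R}]$ you flag as the delicate point. So your approach buys simplicity for Proposition~\ref{p:Reynolds} alone, while the paper's buys compatibility with the later current estimates. Two small slips: in the paper $\as R_T$, $\as R_N$, $\as R_O$ already denote the stresses (the inverse divergence has been applied), so writing $\mathcal{R}\as R_T$ is redundant; and for the Nash term there is no phase-cancellation issue---$\nabla\vr_\ell$ is simply a slowly varying amplitude multiplying the representation \eqref{def.w}, and the relevant identity \eqref{coe.van} reads $h_I\cdot m=0$, not $\neq 0$.
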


We will estimate the terms $\as R_T$, $\as R_N$, $\as R_{O}$ and $\as R_M$ separately. For the errors $\as R_M$, and a part of $\as R_{O}$, we use a direct estimate, while the other errors, including the inverse divergence operator, are estimated by Proposition~\ref{prop:intermittent:inverse:div}. In the following subsections, we fix $n_0 = \ceil{\frac{2b(2+\al)}{(b-1)(1-\al)}}$ so that $\la_{q+1}^2({\la_{q+1}\mu_q})^{-(n_0+1)}\lec_M \de_{q+1}^\frac12$ for any $q$. Also, we remark that 
\begin{align}\label{rel.par}
\frac {1}{\la_{q+1}\tau_q} +\frac{\de_{q+1}^\frac12}{\la_{q+1}\mu_q} \lec_M  {\la_q^\frac12}{\la_{q+1}^{-\frac12}} \de_q^\frac14\de_{q+1}^\frac14.
\end{align}
For the convenience, we restrict the range of $N$ as in \eqref{est.asR} in this section, without mentioning it further.

Another important remark which will be used in this section is that
\begin{equation}\label{e:average-free}
\mathcal{R} (g (t, \cdot)+ h(t)) = \mathcal{R} (g (t, \cdot))\,,\qquad \mathcal{R}^{\mathrm{loc}} (g (t, \cdot)+ h(t)) = \mathcal{R}^{loc} (g (t, \cdot))
\end{equation}
for every smooth periodic time-dependent vector field $g$ and for every $h$ which depends only on time. 

\subsubsection{\bf Transport stress error}
Recall that, thanks to \eqref{e:average-free}
\begin{align}
\label{upsilon1}
\as R_T = \idvl{D_{t,\ell} \theta_0}
\end{align}
Since $D_{t,\ell} \xi_I =0$, thanks to \eqref{eqnew11}, we have $D_{t,\ell} \theta_0 = \sum_{I} (D_{t,\ell} G_I) \zeta_I \circ \xi_I$, where
$\zeta_I = \varphi_I (\la_{q+1} \cdot)$ and
\begin{align*}
H:= D_{t,\ell} G_I= D_{t,\ell}( b_I \theta_I(t) ) \chi_{I} (\xi_{I}(t,x)).
\end{align*}
Notice that, in view of the estimates of $b_I$ given in Lemma~\ref{lem:est.coe}, low frequency assumption \eqref{eq:inverse:div:DN:G} is satisfied for the function $H$ with $\MM{M,M_{t},\nu,\nu'} = \tau_q^{-M}$ and $\const_{G_I,p} = \tau_q^{-1} \delta_{q+1}^{1/2} $. Therefore, choosing various parameters involved in Proposition~\ref{prop:intermittent:inverse:div}, in accordance with Remark~\ref{rmk01}, and making use of the estimates  \eqref{eq:inverse:div:stress:1} and \eqref{eq:inverse:div:error:stress:bound}, we obtain the following bounds
\begin{align}\label{est.RT}
\norm{\as R_T}_N
\lec \la_{q+1}^N\frac {\de_{q+1}^\frac 12}{\la_{q+1} \tau_q} , \quad
\norm{\as D_t \as R_T}_{N-1}  \lec \la_{q+1}^N\de_{q+1}^\frac12 \frac {\de_{q+1}^\frac 12}{\la_{q+1} \tau_q}.
\end{align}
 
\subsubsection{\bf Nash stress error}
Set $\as R_N = \idvl{w \cdot \na \vr_\ell}$ and observe that $w \cdot \na \vr_\ell = w_0 \cdot \na \vr_\ell + w_c \cdot \na \vr_\ell$, where the second terms arises due to the addition of the velocity corrector. Since the bounds for the corrector are stronger than that of the principal part of the velocity perturbation, we only give estimates for the first term and briefly mention the set-up for the other term. Note that we can write
$w_0 \cdot \na \vr_\ell = \sum_{I} G_I \zeta_I \circ \xi_I$, where $\zeta_I = \psi_I (\la_{q+1} \cdot)$ and
\begin{align*}
G_I= a_I \theta_I(t) \chi_{I} (\xi_{I}(t,x)) \widetilde{h_I} \cdot \na \vr_\ell.
\end{align*}
Therefore, choosing various parameters involved in Proposition~\ref{prop:intermittent:inverse:div}, in accordance with Remark~\ref{rmk01}, and making use of the estimates \eqref{est.vp}, \eqref{est.v.dif}, \eqref{est.dtu.new}, \eqref{est.r}, \eqref{eq:inverse:div:stress:1} and \eqref{eq:inverse:div:error:stress:bound}, we obtain using $\const_{G_I,p} = \la_q \de_q^{1/2} \delta_{q+1}^{1/2} $, the following bounds
\begin{align}\label{est.RN}
\norm{\as R_N}_N 
\lec \la_{q+1}^N \frac {\de_{q+1}^\frac 12}{\la_{q+1}\tau_q} , \quad 
\norm{\as D_t\as R_N}_{N-1}  \lec \la_{q+1}^N\de_{q+1}^\frac12 \frac {\de_{q+1}^\frac 12}{\la_{q+1}\tau_q}.
\end{align}
For the second term, we make use of \cite[Proposition 4.5]{Giri03}, to rewrite the second term in the form of $G_I \zeta_I \circ \xi_I$ and obtain the result using the estimates \eqref{eq:inverse:div:stress:1} and \eqref{eq:inverse:div:error:stress:bound}.

\subsubsection{\bf Oscillation stress error}
\label{OS}
Recall that $\as R_O = \as R_{O1} + \as R_{O2}$ where
\begin{align*}
\as R_{O1} 
= \idvl{\na\cdot ( \theta_0 w_0 -  R_\ell) }, \quad 
\na\cdot \as R_{O2} 
=\na\cdot (\theta_0 w_c) + \na\cdot (\theta_c (w_0 + w_c)) := \na\cdot \as R^1_{O2} + \as R^2_{O2}.
\end{align*}
First note that since $\theta_c$ is constant in space and $\na \cdot w =0$, so we conclude $\as R^2_{O2}=0$.
Next, we write $\na\cdot ( \theta_0 w_0 - R_\ell) = \sum_{I} G_I \zeta_I \circ \xi_I$, where $\zeta_I = \varphi_I\psi_I (\la_{q+1} \cdot)$ or $(\varphi_I\psi_I -1)(\la_{q+1} \cdot)$ and
\begin{align*}
G_I = \na\cdot [a_I b_I \theta^2_I(t) \chi^2_{I} (\xi_{I}(t,x)) \widetilde{h_I} ]
\end{align*}
Therefore, choosing various parameters involved in Proposition~\ref{prop:intermittent:inverse:div}, in accordance with Remark~\ref{rmk01}, and making use of the estimates \eqref{est.v.dif}, \eqref{est.r}, \eqref{eq:inverse:div:stress:1} and \eqref{eq:inverse:div:error:stress:bound}, we obtain using $\const_{G_I,p} = \mu_q^{-1}\delta_{q+1}$, the following bounds
\begin{equation}
\begin{aligned}
\norm{\as R_{O1}}_N
&\lec \la_{q+1}^N\cdot \frac {\de_{q+1}}{\la_{q+1}\mu_q}, \quad
\norm{\as D_t \as R_{O1}}_{N-1}\lec\la_{q+1}^N\de_{q+1}^\frac12\cdot \frac {\de_{q+1}}{\la_{q+1}\mu_q}.
\end{aligned} 
\end{equation}
On the other hand, we estimate $\as R^1_{O2}$ as follows
\begin{align*}
\norm{\as R^1_{O2}}_N
&\lec  \sum_{N_1+N_2=N} \norm{\theta_0}_{N_1}\norm{w_c}_{N_2} 
\lec \la_{q+1}^N\cdot\frac{\de_{q+1}}{\la_{q+1}\mu_q} ,\\
\norm{\as D_t\as R^1_{O2}}_{N-1}
&\leq \norm{D_{t,\ell} \as R^1_{O2}}_{N-1} + \norm{(w+u_q-u_\ell)\cdot\na \as R^1_{O2}}_{N-1}\\
&\lec  \sum_{N_1+N_2=N-1} 
\norm{D_{t,\ell} \theta_0}_{N_1}\norm{w_c}_{N_2} 
+ \norm{ \theta_o}_{N_1}\norm{D_{t,\ell} w_c}_{N_2}\\
&\quad+ \sum_{N_1+N_2=N-1} 
(\norm{w}_{N_1} + \norm{u_q-u_\ell}_{N_1})\norm{\as R^1_{O2}}_{N_2+1}
\lec \la_{q+1}^{N}\de_{q+1}^\frac12 \cdot\frac{ \de_{q+1}}{\la_{q+1}\mu_q}.
\end{align*}
Therefore, we have 
\begin{equation}\label{est.RO}
\norm{\as R_O}_N \lec \la_{q+1}^N \frac{\de_{q+1}}{\la_{q+1}\mu_q}, \quad
\norm{\as D_t \as R_O}_{N-1}   
\lec \la_{q+1}\de_{q+1}^{\frac12}
\cdot \la_{q+1}^N \frac{\de_{q+1}}{\la_{q+1}\mu_q}.
\end{equation} 
\subsubsection{\bf Mediation stress error}
Recall that 
\[
\as R_M = R_\ell -R_q +(u_q-u_\ell) \theta +  (\vr_q-\vr_\ell) w.
\]
Using \eqref{est.R.dif}, \eqref{est.v.dif}, and \eqref{est.r}, we have
\begin{align*}
\norm{\as R_M}_{N} 
&\lec \norm{R_q-R_\ell}_N+\sum_{N_1+N_2=N}\norm{u_q-u_\ell}_{N_1}\norm{\theta}_{N_2}  +\sum_{N_1+N_2=N}\norm{\vr_q-\vr_\ell}_{N_1}\norm{w}_{N_2}\\
&\lec \la_{q+1}^N \cdot ( {\la_q^\frac12}\la_{q+1}^{-\frac12} \de_q^\frac14\de_{q+1}^\frac34  + (\ell\la_q)^2\de_q^\frac12\de_{q+1}^\frac12)
\lec \la_{q+1}^N \cdot  {\la_q^\frac12}\la_{q+1}^{-\frac12} \de_q^\frac14\de_{q+1}^\frac34.
\end{align*}
To estimate $\as D_t  \as R_M$, we additionally use the
decomposition $\as D_t  \as R_M  = D_{t,\ell} \as R_M + ((u_q-u_\ell)+w)\cdot\na \as R_M $ to obtain
\begin{align*}
\norm{\as D_t \as R_M}_{N-1}
&\lec \norm{\as D_t (R_q-R_\ell)}_{N-1} 
+ \sum_{N_1+N_2=N-1} \norm{\as D_t (u_q-u_\ell)}_{N_1} \norm{\theta}_{N_2} + \norm{u_q-u_\ell}_{N_1} \norm{\as D_t \theta}_{N_2} 
\\
&\qquad + \sum_{N_1+N_2=N-1} \norm{\as D_t (\vr_q-\vr_\ell)}_{N_1} \norm{w}_{N_2} + \norm{\vr_q-\vr_\ell}_{N_1} \norm{\as D_t w}_{N_2}
\nonumber\\
&\lec \la_{q+1}^N \de_{q+1}^\frac12\cdot  {\la_q^\frac12}\la_{q+1}^{-\frac12} \de_q^\frac14\de_{q+1}^\frac34.
\end{align*} 
To summarize, we obtain 
\begin{align}\label{est.RM}
\norm{\as R_M}_{N}
\lec  \la_{q+1}^N  {\la_q^\frac12}\la_{q+1}^{-\frac12} \de_q^\frac14\de_{q+1}^\frac34, \quad
\norm{\as D_t \as R_M}_{N-1}
\lec \la_{q+1}^N \de_{q+1}^\frac12  {\la_q^\frac12}\la_{q+1}^{-\frac12} \de_q^\frac14\de_{q+1}^\frac34.
\end{align}

\subsection{Estimates on the quadratic stress}

In this section, we obtain the relevant estimates for the new Reynolds stress and its new advective derivative $\as D_t S_{q+1} = \partial_t S_{q+1} + u_{q+1}\cdot \nabla S_{q+1}$, summarized in the following proposition. 

\begin{prop}\label{p:qReynolds}
	{There exists  $\bar{b}(\al)>1$ with the following property. For any $1<b<\bar{b}(\al)$ we can find 
		$\Lambda_0 = \Lambda_0 (\al,b,M)$} such that the following estimates hold for every  $\lambda_0 \geq \Lambda_0$: 
	\begin{equation}\label{est.asS}\begin{split}
	\norm{S_{q+1}^{\text{pre}}}_N &\leq C_M\la_{q+1}^N \cdot
	{\la_q^\frac12}\la_{q+1}^{-\frac12} \de_q^\frac14\de_{q+1}^\frac34 
	\leq \la_{q+1}^{N-2\ga} \de_{q+2}, 
	\ \ \qquad\qquad \forall N=0,1,2 \\
	\norm{\as D_t S_{q+1}^{\text{pre}} }_{N-1} &\leq C_M  \la_{q+1}^N \de_{q+1}^\frac12\cdot
	{\la_q^\frac12}\la_{q+1}^{-\frac12} \de_q^\frac14\de_{q+1}^\frac34  \leq \la_{q+1}^{N-2\ga}\de_{q+1}^\frac12 \de_{q+2}, \quad \forall N=1,2, \\
	\norm{\Upsilon^2}_0 + \norm{(\Upsilon^2)'}_0 &\leq \la_{q+1}^{-2\ga} \de_{q+2}.
	\end{split}\end{equation}
	{where $C_M$ depends only upon the $M>1$.}
\end{prop}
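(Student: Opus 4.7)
The plan is to estimate each of the five constituents $\as S_O$, $\as S_M$, $\as S_{T1}$, $\as S_{T2}$, $\as S_N$ of $S_{q+1}^{\text{pre}}$ separately, in complete parallel with the treatment of $R_{q+1}^{\text{pre}}$ in Proposition \ref{p:Reynolds}, and then combine them with $\Upsilon_4,\dots,\Upsilon_7$. Throughout, I will use the identity $\mathcal{R}(g(t,\cdot)+h(t)) = \mathcal{R}(g(t,\cdot))$ for purely temporal $h$, so that $\mathcal{R}(\cdot -\partial_t \Upsilon_i)=\mathcal{R}(\cdot)$ when the argument is already a spatial divergence, and the subtraction of $\partial_t\Upsilon_i$ only enforces zero spatial mean.

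First I would treat the oscillation term $\as S_O = \mathcal{R}^{\mathrm{loc}}(\vr_q(w\otimes w - S_\ell+\de_{q+1}\I))$. By \eqref{alg.eq} the bracket equals a sum of high-frequency waves $\de_{q+1}c_{p,m}e^{i\la_{q+1}m\cdot\xi_I}$, so after multiplying by $\vr_q$ we may write $\div\as S_O^{\text{bulk}}$ in the form $\sum_I G_I\zeta_I(\xi_I)$ with $\zeta_I\in\{\psi_I^2,\psi_I^2-\langle\psi_I^2\rangle\}$ and $G_I$ carrying a factor $a_I^2\vr_q\tilde h_I\otimes\tilde h_I$. Lemma \ref{lem:est.coe} and the estimates on $\vr_q$ give $\const_{G_I,p}\lec \mu_q^{-1}\de_{q+1}$, and then \eqref{eq:inverse:div:stress:1}--\eqref{eq:inverse:div:error:stress:bound} yield the desired $\la_{q+1}^{N}\de_{q+1}/(\la_{q+1}\mu_q)$ and $\la_{q+1}^N\de_{q+1}^{1/2}\de_{q+1}/(\la_{q+1}\mu_q)$ bounds, exactly as for $\as R_{O1}$ in Subsection \ref{OS}. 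The contribution of the corrector $w_c$ is absorbed by a direct product estimate as for $\as R_{O2}^1$.

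Next, for the mediation term $\as S_M$ I use \eqref{est.v.dif}, \eqref{est.S.dif}, \eqref{est.w.indM} and the analogue of the product-rule computation done for $\as R_M$, with the $\mathcal{R}$-operator applied to a pure divergence (so the $\partial_t\Upsilon_4$ correction is irrelevant by \eqref{e:average-free}). For the two transport-type errors, the crucial simplification is $D_{t,\ell}\xi_I=0$, so $D_{t,\ell}w_0=\sum_I G_I\psi_I\circ\xi_I$ with $G_I=a_I\theta_I'\chi_I(\xi_I)\tilde h_I$, and similarly $D_{t,\ell}w_c$ enjoys a comparable structural decomposition via \eqref{def.Wc}. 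Plugging these into $\as S_{T2}$ and applying Proposition \ref{prop:intermittent:inverse:div} with $\const_{G_I,p}=\tau_q^{-1}\de_{q+1}^{1/2}\|\vr_\ell\|_0$ produces the bound $\la_{q+1}^N\de_{q+1}^{1/2}/(\la_{q+1}\tau_q)$, matching \eqref{est.RT} up to a constant. The error $\as S_{T1}$ is handled the same way using $\theta=\theta_0+\theta_c$ together with the bound $\|\partial_t u_\ell+(u_\ell\cdot\na)u_\ell\|_{N}\lec \de_q^{1/2}\la_q^{N+1}\de_q^{1/2}$ coming from \eqref{e:ad-pressure} and \eqref{est.Dtvl}. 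The Nash stress $\as S_N$ is identical in structure to $\as R_N$: write $(\vr_\ell w_0\cdot\na)u_\ell=\sum_I G_I\psi_I\circ\xi_I$ with $G_I=a_I\theta_I\chi_I(\xi_I)(\tilde h_I\cdot\na)u_\ell\cdot\vr_\ell$, so $\const_{G_I,p}\lec \la_q\de_q^{1/2}\de_{q+1}^{1/2}$, giving \eqref{est.RN} verbatim.

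Combining the five estimates and invoking \eqref{rel.par}, each piece is bounded by $C_M\la_{q+1}^N\la_q^{1/2}\la_{q+1}^{-1/2}\de_q^{1/4}\de_{q+1}^{3/4}$ for $N=0,1,2$ and its advective derivative by $C_M\la_{q+1}^N\de_{q+1}^{1/2}\la_q^{1/2}\la_{q+1}^{-1/2}\de_q^{1/4}\de_{q+1}^{3/4}$ for $N=1,2$; the parameter inequalities in Proposition \ref{ind.hyp} then convert these into $\la_{q+1}^{N-2\ga}\de_{q+2}$ and $\la_{q+1}^{N-2\ga}\de_{q+1}^{1/2}\de_{q+2}$, respectively, by choosing $\bar b(\al)$ and $\La_0$ suitably. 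The $S_{q+1}$ estimate then follows from the identity $S_{q+1}=S_{q+1}^{\text{pre}}-\vr_{q+1}^{-1}\sum_{i=1}^7\Upsilon_i\otimes u_{q+1}$ together with the lower bound on $\vr_{q+1}$ and the $\Upsilon$ estimates. For the latter, $\Upsilon_2'=\sum_{i=4}^7\langle\cdot\rangle(t)$ is a sum of spatial averages; the key observation is that each integrand is of the form (smooth coefficient)$\cdot w$ or (smooth coefficient)$\cdot\theta$ or a quadratic in $w$ minus its low-frequency part, hence is high-frequency. Integration by parts in $\xi_I$-coordinates (using \eqref{rep.W}, \eqref{alg.eq} and \eqref{coe.van}) gains the factor $\la_{q+1}^{-1}$ needed to reach $\la_{q+1}^{-2\ga}\de_{q+2}$, after which the $\|\Upsilon_2\|_0$ bound is immediate from the finite time horizon.

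The main obstacle I anticipate is $\as S_{T2}$: controlling $D_{t,\ell}w$ requires carefully distinguishing the temporal derivative of $\theta_I$ from derivatives falling on $\chi_I(\xi_I)$ (which vanish under $D_{t,\ell}$) and then passing to $\as D_t=D_{t,q+1}=D_{t,\ell}+(u_q-u_\ell+w)\cdot\na$, where the transport-from-$w$ piece generates an extra factor of $\la_{q+1}\de_{q+1}^{1/2}$ that must be absorbed into $\tau_q^{-1}$ via \eqref{mu.tau}. A secondary difficulty is that $\as S_O$ contains the factor $\vr_q$ rather than $\vr_\ell$, so one must split $\vr_q=\vr_\ell+(\vr_q-\vr_\ell)$ and absorb the difference through \eqref{est.r.dif}, which costs $(\ell\la_q)^2$ but is compatible with the target estimate for $b-1$ small enough.
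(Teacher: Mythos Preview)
Your proposal is correct and follows essentially the same decomposition and term-by-term strategy as the paper: estimate $\as S_O,\as S_M,\as S_N,\as S_{T1},\as S_{T2}$ separately, then handle $\Upsilon_4,\dots,\Upsilon_7$ via Lemma~\ref{phase}, and combine using \eqref{rel.par}. One minor difference worth noting: for the highly oscillatory pieces ($\as S_{O1}$, $\as S_N$, $\as S_{T1}$, $\as S_{T2}$, $\as S_{M3}$) the paper works with the Fourier representations \eqref{alg.eq}, \eqref{rep.W}, \eqref{rep.theta} and applies the \emph{nonlocal} microlocal Corollary~\ref{cor.mic2}, not the local operator $\mathcal{R}^{\mathrm{loc}}$ of Proposition~\ref{prop:intermittent:inverse:div} that you invoke. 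Both routes give the same bounds here since no further material derivative of $\as S_\Delta$ is needed downstream (unlike for the $R$-errors, which reappear inside $\as\ph_T$ and therefore require the local operator), so Corollary~\ref{cor.mic2} suffices and is lighter. Also, your ``secondary difficulty'' of splitting $\vr_q=\vr_\ell+(\vr_q-\vr_\ell)$ in $\as S_O$ is not needed in the paper's treatment: it uses the inductive bounds \eqref{est.vp22}--\eqref{e:ad-pressure} on $\vr_q$ and $D_{t,\ell}\vr_q$ directly, absorbing $\|\vr_q\|_0\le 5$ and $\|\na\vr_q\|_0\le M\la_q\de_q^{1/2}$ into the coefficient estimates.
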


\begin{rem}
We remark that we first derive the estimate $\norm{\vr_{q+1}S_{q+1}^{\text{pre}}}_N
\leq \la_{q+1}^{N-2\ga} \de_{q+2}$, and this implies the first estimate of \eqref{est.asS}, thanks to the lower bound of $\vr_{q+1}$ and upper bound of $\norm{\vr_{q+1}}_N$. In the same manner, we can deal with material derivative estimates in \eqref{est.asS}. 
\end{rem}

\begin{rem}
To derive estimates for $S_{q+1}$, first recall that 
$$
S_{q+1} = S_{q+1}^{\text{pre}} - \frac1{\varrho_{q+1}} \sum_{i=1}^6 \Upsilon_i \otimes u_{q+1}
:= S_{q+1}^{\text{pre}} + S_{q+1}^{\text{error}}
$$
Moreover, in view of \eqref{est.asS}, \eqref{est.asph} note that
\begin{align*}
\norm{S_{q+1}^{\text{error}}}_N \leq \la_{q+1}^{N-2\ga} \de_{q+2}, \quad \text{and}\,\, 
\norm{\as D_t S_{q+1}^{\text{error}} }_{N-1} \leq \la_{q+1}^{N-2\ga}\de_{q+1}^\frac12 \de_{q+2}.
\end{align*}
Therefore, combining the above estimates with the estimates in \eqref{est.asS}, we obtain desired estimates for the quadratic term $S_{q+1}$. 
\end{rem}

\subsubsection{\bf Quadratic stress error}
Recall that $\as S_O = \as S_{O1} + \as S_{O2}$ where
\begin{align*}
\as S_{O1} 
&= \idv{\div(\vr_q (w_0 \otimes w_0 + S_\ell - \de_{q+1} \text{Id}))} := \as S_{O11} + \as S_{O12}, \\
\as S_{O2} & =\vr_q (w_0 \otimes w_c) + \vr_q  (w_c \otimes w_0 ) +  \vr_q  (w_c \otimes w_c).
\end{align*}
We first deal with the term $\as S_{O11}:= \idv{ \vr_q \div((w_0 \otimes w_0 + S_\ell - \de_{q+1} \text{Id}))}$. We compute
\begin{align*}
\na\cdot ( w_0 \otimes w_0 + S_\ell - \de_{q+1} \text{Id})
= \div \left[ \sum_{\substack{p\in \Z\\m\in \Z^3\setminus \{0\}}} \de_{q+1} c_{p,m} e^{ i\la_{q+1} m\cdot \xi_I} \right]
=\sum_{p,m} \de_{q+1} \div(c_{p,m}) e^{ i\la_{q+1} m\cdot \xi_I},
\end{align*}
because of $\dot{c}_{I,m} (h_I\cdot m) =0$. Also, since we have
\begin{align*}
D_{t,\ell} \div c_{p,m} & = \div( D_{t,\ell} c_{p,m}) - (\na u)_{ij}\pa_i (c_{p,m})_{jl}, \\
D_{t,\ell} (\vr_q \div c_{p,m}) & = \vr_q ( D_{t,\ell} \div c_{p,m}) + (\div c_{p,m}) D_{t,\ell} \vr_q
\end{align*}
it follows from \eqref{est.d} that $\norm{\vr _q \div c_{p,m}}_{\bar{N}}+(\la_{q+1}\de_{q+1}^\frac12)^{-1}\norm{\vr_q D_{t,\ell} ( \div c_{p,m})}_{\bar{N}} \lec_{\bar{N},M}  \mu_q^{-\bar{N}} \frac{|\dot{c}_{I,m}|}{\mu_q}$ for any $\bar{N}\geq 0$.  For the other term, we first observe that $\norm{D_{t,\ell} \vr_q}_{N-1} \le \la_q^N \de_q$. Hence,
$$
\norm{(\div c_{p,m}) D_{t,\ell} \vr_q}_N \le \norm{c_{p,m}}_{N+1} \norm{D_{t,\ell} \vr_q}_0 \le \mu_q^{-N-1} |\dot{c}_{I,m}| \la_q \de_q \le \mu_q^{-\bar{N}} \frac{|\dot{c}_{I,m}|}{\mu_q} \tau_q^{-1}.
$$
Therefore 
$(\la_{q+1}\de_{q+1}^\frac12)^{-1} \norm{(\div c_{p,m}) D_{t,\ell} \vr_q}_N \le \mu_q^{-\bar{N}} \frac{|\dot{c}_{I,m}|}{\mu_q}.$
Finally using $\supp(c_{p,m})\subset (t_p-\frac12\tau_q, t_p +\frac32\tau_q)\times \R^3$, we apply Corollary \ref{cor.mic2} to get
\begin{equation}\label{est.SO1}
\begin{split}
\norm{\as S_{O11}}_N
&\lec \la_{q+1}^N\cdot \frac {\de_{q+1}}{\la_{q+1}\mu_q}, \quad
\norm{\as D_t \as S_{O11}}_{N-1}\lec\la_{q+1}^N\de_{q+1}^\frac12\cdot \frac {\de_{q+1}}{\la_{q+1}\mu_q} . 
\end{split}
\end{equation}
Next, we deal with the term $\as S_{O12} := \idv{ \na \vr_q ( w_0 \otimes w_0 - S_\ell + \de_{q+1} \text{Id})}$ :
\begin{align*}
\na \vr_q ( w_0 \otimes w_0 - S_\ell + \de_{q+1} \text{Id})
= \sum_{\substack{p\in \Z\\m\in \Z^3\setminus \{0\}}} \de_{q+1} c_{p,m} e^{ i\la_{q+1} m\cdot \xi_I} \na \vr_q.
\end{align*}
Now we have the estimate $\norm{\na \vr _q  c_{p,m}}_{\bar{N}}+(\la_{q+1}\de_{q+1}^\frac12)^{-1}\norm{D_{t,\ell} (\na \vr _q c_{p,m})}_{\bar{N}} \lec_{\bar{N},M}  \mu_q^{-\bar{N}} |\dot{c}_{I,m}| \la_q \de_q^{1/2}$ for any $\bar{N}\geq 0$.
We can apply Corollary \ref{cor.mic2} to obtain
\begin{align}\label{est.SN}
\norm{\as S_{O1}}_N 
\lec \la_{q+1}^N \frac {\de_{q+1}}{\la_{q+1}\tau_q} , \quad 
\norm{\as D_t\as S_{O1}}_{N-1}  \lec \la_{q+1}^N\de_{q+1}^\frac12 \frac {\de_{q+1}}{\la_{q+1}\tau_q}.
\end{align}
On the other hand, we estimate $\as S_{O2}$ as follows,
\begin{align*}
\norm{\as S_{O2}}_N
&\lec  \sum_{N_1+N_2 + N_3=N} (\norm{w_o}_{N_1} + \norm{w_c}_{N_1}) \norm{w_c}_{N_2} \norm{\vr_q}_{N_3}
\lec \la_{q+1}^N\cdot\frac{\de_{q+1}}{\la_{q+1}\mu_q} ,\\
\norm{\as D_t\as S_{O2}}_{N-1}
&\leq \norm{D_{t,\ell} \as S_{O2}}_{N-1} + \norm{(w+u-u_\ell)\cdot\na \as S_{O2}}_{N-1}\\[2pt]
\lec  \sum_{N_1+N_2 +N_3=N-1} 
& (\norm{D_{t,\ell} w_o}_{N_1} + \norm{D_{t,\ell} w_c}_{N_1}) \norm{w_c}_{N_2} \norm{\vr_q}_{N_3}
+ ( \norm{ w_o}_{N_1} + \norm{ w_c}_{N_1} )\norm{D_{t,\ell} w_c}_{N_2} \norm{\vr_q}_{N_3} \\
& \quad + \sum_{N_1+N_2 +N_3=N-1} \norm{D_{t,\ell} \vr_q}_{N_1} (\norm{w_o}_{N_2}+ \norm{w_c}_{N_2}) \norm{w_c}_{N_3}\\
&\qquad+ \sum_{N_1+N_2=N-1} 
(\norm{w}_{N_1} + \norm{u-u_\ell}_{N_1})\norm{\as S_{O2}}_{N_2+1}
\lec \la_{q+1}^{N}\de_{q+1}^\frac12 \cdot\frac{ \de_{q+1}}{\la_{q+1}\mu_q}.
\end{align*}
Therefore, we have 
\begin{equation}\label{est.SO}
\norm{\as S_O}_N \lec \la_{q+1}^N \frac{\de_{q+1}}{\la_{q+1}\mu_q}, \quad
\norm{\as D_t \as S_O}_{N-1}   
\lec \la^N_{q+1}\de_{q+1}^{\frac12}
\cdot \frac{\de_{q+1}}{\la_{q+1}\mu_q}.
\end{equation}

\subsubsection{\bf Quadratic Mediation error} 
Recall that
\begin{align*}
\na \cdot \as S_{M}=\div(\vr_q w \otimes_s (u_q - u_\ell) )  +  \div(\theta (u_q - u_\ell) \otimes (u_q - u_\ell )) + \div (\vr_q (S_q - S_\ell )).
\end{align*}
Therefore,
\begin{align*}
\as S_{M} & = \underbrace{(\vr_q w \otimes_s (u_q - u_\ell))}_{\as S_{M11}}
+ \underbrace{ (\theta (u_q - u_\ell) \otimes (u_q - u_\ell))}_{\as S_{M21}} 
+ \underbrace{\vr_q (S_q - S_\ell )}_{\as S_{M31}} 
\end{align*}
For the first term, we estimate as follows:
\begin{align*}
\norm{\as S_{M11} }_N & \le \sum_{N_1+N_2 + N_3=N} \norm{\vr_q}_{N_1} \norm{w}_{N_2} \norm{u_q - u_\ell}_{N_3} \lesssim \la_{q+1}^N \de_{q+1}^\frac12 \ell^2 \la_q^2 \de_q^{\frac12}
\lec \la_{q+1}^N\la_q^{1/2}\la_{q+1}^{-1/2} \de_q^\frac14\de_{q+1}^\frac34,\\
\norm{\as D_t\as S_{M11}}_{N-1}
&\leq \norm{D_{t,\ell} \as S_{M11}}_{N-1} + \norm{(w+u-u_\ell)\cdot\na \as S_{M11}}_{N-1}\\[2pt]
\lec  \sum_{N_1+N_2 +N_3=N-1} 
&\norm{D_{t,\ell} w}_{N_1}\norm{u_q -u_\ell}_{N_2} \norm{\vr_q}_{N_3}
+ \norm{ w}_{N_1}\norm{D_{t,\ell} (u_q -u_\ell)}_{N_2} \norm{\vr_q}_{N_3} \\
&\quad+ \norm{D_{t,\ell} \vr_q}_{N_1}\norm{w}_{N_2} \norm{u_q-u_\ell}_{N_3}
 +\sum_{N_1+N_2=N-1} 
(\norm{w}_{N_1} + \norm{u-u_\ell}_{N_1})\norm{\as S_{M11}}_{N_2+1} \\
&\lec \la_{q+1}^N \de_{q+1}^{1/2} \la_q^{1/2}\la_{q+1}^{-1/2} \de_q^\frac14\de_{q+1}^\frac34.
\end{align*}
Similarly, we can estimate the term $S_{M21}$. Finally, in view of estimate \eqref{est.vp}, \eqref{e:ad-pressure}, \eqref{est.v.dif}, and \eqref{est.S.dif}, we bound the last term as follows:
\begin{align*}
\norm{\as S_{M31} }_N & \lesssim \sum_{N_1+N_2=N}\norm{\vr_q}_{N_1} \norm{S_q - S_\ell}_{N_2} 
\lec \la_{q+1}^N \la_q^{1/2}\la_{q+1}^{-1/2} \de_q^\frac14\de_{q+1}^\frac34, \\
\norm{\as D_t\as S_{M31}}_{N-1} & 
\leq \norm{D_{t,\ell} \as S_{M31}}_{N-1} + \norm{(w+u-u_\ell)\cdot\na \as S_{M31}}_{N-1} \\
& \lec  \sum_{N_1+N_2=N-1} 
\norm{D_{t,\ell} \vr_q}_{N_1}\norm{S_q -S_\ell}_{N_2} + \norm{\vr_q}_{N_1}\norm{D_{t,\ell}(S_q -S_\ell)}_{N_2} \\
&+\sum_{N_1+N_2=N-1} 
(\norm{w}_{N_1} + \norm{u-u_\ell}_{N_1})\norm{\as S_{M31}}_{N_2+1} 
\lec \la_{q+1}^N \de_{q+1}^{1/2} \la_q^{1/2}\la_{q+1}^{-1/2} \de_q^\frac14\de_{q+1}^\frac34.
\end{align*}
Hence, combining above estimates, we get the desired estimates for $\as S_{M}$.

\subsubsection{\bf Quadratic Nash error}
Set $\as S_N = \idv{(\vr_\ell w \cdot \na) u_\ell}$ and observe that
\begin{align*}
(\vr_\ell w \cdot \na) u_\ell
&=  \sum_{p} \sum_{m\in \Z^3\setminus \{0\}} \vr_\ell \de_{q+1}^\frac 12((b_{p,m} + (\la_{q+1}\mu_q)^{-1}g_{p,m})\cdot \na)u_\ell e^{i\la_{q+1} m \cdot \xi_I}.
\end{align*}
Since $b_{p,m}$ and $g_{p,m}$ satisfy
$\supp(b_{p,m}), \supp(g_{p,m})\subset (t_p-\frac 12\tau_q, t_p + \frac 32\tau_q) \times \R^3$ and
\begin{align*}
\norm{(\vr_\ell (b_{p,m} + (\la_{q+1}\mu_q)^{-1}g_{p,m})\cdot \na)u_\ell}_{\bar{N}}
&\lec_{\bar N} \mu_q^{-{\bar{N}}} |\dot{b}_{I,m}|\la_q\de_{q}^\frac12\\
\norm{D_{t,\ell}[(\vr_\ell(b_{p,m} + (\la_{q+1}\mu_q)^{-1}g_{p,m})\cdot \na)u_\ell]}_{\bar{N}}
&\lec_{\bar N} \la_{q+1}\de_{q+1}^\frac12 \mu_q^{-{\bar{N}}} |\dot{b}_{I,m}|\la_q\de_{q}^\frac12
\end{align*}
for any ${\bar{N}}\geq0$ by \eqref{est.vp}, \eqref{est.Dtvl}, \eqref{est.b}, and \eqref{est.g}, we apply Corollary \ref{cor.mic2} and obtain
\begin{align}\label{est.phiN}
\norm{\as S_N}_N 
\lec \la_{q+1}^N \frac {\de_{q+1}^\frac 12}{\la_{q+1}\tau_q} , \quad 
\norm{\as D_t\as S_N}_{N-1}  \lec \la_{q+1}^N\de_{q+1}^\frac12 \frac {\de_{q+1}^\frac 12}{\la_{q+1}\tau_q}.
\end{align}

%
%

\subsubsection{\bf Quadratic transport  error I}
Set $\as S_{T_1} = \idv{\theta (\partial_t u_\ell + (u_\ell \cdot \nabla) u_\ell)}$. Note that 
$$
\theta (\partial_t u_\ell + (u_\ell \cdot \nabla) u_\ell) = \theta_0 (\partial_t u_\ell + (u_\ell \cdot \nabla) u_\ell) + \theta_c (\partial_t u_\ell + (u_\ell \cdot \nabla) u_\ell) := \as S_{T^1_1} + \as S_{T^2_1}.
$$ 
To deal with the first term, first observe that
\begin{align*}
\theta (\partial_t u_\ell + (u_\ell \cdot \nabla) u_\ell)
&=  \sum_{p} \sum_{m\in \Z^3\setminus \{0\}} \de_{q+1}^\frac 12 a_{p,m} D_{t, \ell} u_\ell  e^{i\la_{q+1} m\cdot \xi_I}.
\end{align*}
Thanks to Remark~\ref{rem.new}, we estimate using \eqref{est.b}, and $\la_q \de_q^{1/2} \le \tau_q^{-1} \le \la_{q+1}\de_{q+1}^\frac12$,
\begin{align*}
\norm{a_{p,m} D_{t, \ell} u_\ell}_{\bar{N}} 
&\lec_{\bar N} \mu_q^{-{\bar{N}}} |\dot{a}_{I,m}|\la_q\de_{q}^{1/2}
\lec \mu_q^{-{\bar{N}}} |\dot{a}_{I,m}|\tau_q^{-1}\\
\norm{D_{t,\ell}[a_{p,m} D_{t, \ell} u_\ell]}_{\bar{N}}
&\lec_{\bar N} \la_{q+1}\de_{q+1}^\frac12 \mu_q^{-{\bar{N}}} |\dot{a}_{I,m}|\tau_q^{-1}
\end{align*}
Then, we can apply Corollary \ref{cor.mic2} and obtain
\begin{align}\label{est.SN1}
\norm{\as S_{T^1_1}}_N 
\lec \la_{q+1}^N \frac {\de_{q+1}^\frac 12}{\la_{q+1}\tau_q} , \quad 
\norm{\as D_t\as S_{T^1_1}}_{N-1}  \lec \la_{q+1}^N\de_{q+1}^\frac12 \frac {\de_{q+1}^\frac 12}{\la_{q+1}\tau_q}.
\end{align}
To deal with the other term $\as S_{T^2_1}$, we simply take advantage of Lemma~\ref{phase} to obtain the same result. Therefore, we get the desired result for $\as S_{T_1}$.

\subsubsection{\bf Quadratic transport  error II}
Set $\as S_{T_2} = \idv{\vr_\ell (\partial_t w + (u_\ell \cdot \nabla) w)}$ and observe that
\begin{align*}
\vr_\ell (\partial_t w + (u_\ell \cdot \nabla) w)
=\sum_{p} \sum_{m} \de_{q+1}^\frac 12  \vr_\ell D_{t,\ell} (b_{p,m} + (\la_{q+1} \mu_q)^{-1} g_{p,m}) e^{i\la_{q+1} m\cdot \xi_I}.
\end{align*}
Since $b_{p,m}$ and $g_{p,m}$ satisfy
$\supp(b_{p,m}), \supp(g_{p,m}) \subset (t_p-\frac 12\tau_q, t_p + \frac 32\tau_q) \times \R^3$
and 
\begin{align*}
&\norm{\vr_\ell D_{t,\ell} (b_{p,m} + (\la_{q+1} \mu_q)^{-1} g_{p,m}) }_{\bar{N}} \lec_{\bar{N},M} \mu_q^{-\bar{N}} \frac {|\dot{a}_{I,m}|}{\tau_q} \\
&\norm{D_{t,\ell}(\vr_\ell  D_{t,\ell} (b_{p,m} + (\la_{q+1} \mu_q)^{-1} g_{p,m}) )}_{\bar{N}}
\lec_{\bar{N},M} \la_{q+1}\de_{q+1}^\frac 12 \mu_q^{-\bar{N}} \frac {|\dot{a}_{I,m}|}{\tau_q},
\end{align*}
for any $\bar{N}\geq 0$ by \eqref{est.b}, we can apply Corollary \ref{cor.mic2} to get
\begin{align}\label{est.RT1}
\norm{\as S_{T_1}}_N
\lec \la_{q+1}^N\frac {\de_{q+1}^\frac 12}{\la_{q+1} \tau_q} , \quad
\norm{\as D_t \as S_{T_2}}_{N-1}  \lec \la_{q+1}^N\de_{q+1}^\frac12 \frac {\de_{q+1}^\frac 12}{\la_{q+1} \tau_q}.
\end{align}

\subsubsection{\bf Estimates on $\Upsilon_4, \Upsilon_5, \Upsilon_6$} We will in fact show the stronger estimates
\begin{align}
\norm{\Upsilon_i'}_0 &\leq \frac{1}{5{(T+ \tau_0)}} \la_{q+1}^{-2\ga} \de_{q+2}, \quad \text{for}\,\, i = 4,5,6.
 \label{e:est_rho_5}
\end{align}
from which the estimates follow by integration
\begin{align}
\norm{\Upsilon_i}_0 &\leq \frac{1}{5} \la_{q+1}^{-2\ga} \de_{q+2}. 
\end{align}
This can be achieved by the similar argument, as we used in the previous subsections, taking advantage of Lemma \ref{phase} and the representations \eqref{rep.W}-\eqref{e:rep4}.

%

\subsection{Estimates for the new current}\label{sec.cur} 

In this section, we obtain the last needed estimates, on the new unsolved current $\Phi_{q+1}$, which we
summarize in the following proposition.

\begin{prop}\label{p:current}
{There exists  $\bar{b}(\al)>1$ with the following property. For any $1<b<\bar{b}(\al)$ there is 
$\Lambda_0 = \Lambda_0 (\al,b,M)$ such that} the following estimates hold for $\lambda_0 \geq \Lambda_0 $:
\begin{align}\begin{split}
\label{est.asph}
\norm{\Phi_{q+1}}_N
&\leq \la_{q+1}^{N-4\ga} \de_{q+2}^\frac32 , \qquad\quad \forall N=0,1,2,\\
\norm{\as D_t \Phi_{q+1}}_{N-1} 
&\leq \la_{q+1}^{N-4\ga}\de_{q+1}^\frac12 \de_{q+2}^\frac32, \quad \forall N=1,2,\\
\norm{\Upsilon^1}_0 + \norm{(\Upsilon^1)'}_0 &\leq \la_{q+1}^{-4\ga} \de_{q+2}^\frac32.
\end{split}
\end{align}
\end{prop}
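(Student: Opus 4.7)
The plan is to decompose $\Phi_{q+1}$ into its seven constituent pieces $\as\ph_T, \as\ph_O, \as\ph_R, \as\ph_{H1}, \as\ph_{H2}, \as\ph_{M1}, \as\ph_{M2}$ introduced in Section~\ref{errors}, and estimate each separately, mirroring the structure of the proofs of Propositions~\ref{p:Reynolds} and \ref{p:qReynolds}. For each piece the target is a bound of the form $\la_{q+1}^N \la_q^{1/2}\la_{q+1}^{-1/2}\de_q^{1/4}\de_{q+1}^{5/4}$ in $C^N$ and the analogous material-derivative bound with an extra $\de_{q+1}^{1/2}$; the same parameter argument that closes Propositions~\ref{p:Reynolds}--\ref{p:qReynolds} then upgrades these into $\la_{q+1}^{N-2\ga}\de_{q+2}^{3/2}$ once $b$ is close to $1$ and $\la_0$ large. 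Observe that the current naturally carries the cubic scale $\de_{q+1}^{3/2}$ inherited from $\Phi_\ell$ and from the triple product $\theta_0 w_0 \otimes w_0$, rather than the quadratic scale $\de_{q+1}$ of $S_{q+1}$.

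For the four oscillatory pieces $\as\ph_O, \as\ph_R, \as\ph_{H1}, \as\ph_{H2}$, I would use the Fourier-series representations \eqref{rep.theta}--\eqref{e:rep4}. The oscillation current is cleanest: the cancellation built into Step-I of Section~\ref{ss:weights} gives $(\theta_0 w_0\otimes w_0 + \Phi_\ell - \de_{q+1}^{3/2}\mathrm{Id})_L \equiv 0$, so after taking divergence the argument of $\as\ph_O$ reduces to a purely oscillatory sum $\sum_{p,m\neq 0} \de_{q+1}^{3/2}\, \div(e_{p,m})\,e^{i\la_{q+1}m\cdot\xi_I}$; Corollary~\ref{cor.mic2} then yields $\|\as\ph_O\|_N\lec \la_{q+1}^N \de_{q+1}^{3/2}/(\la_{q+1}\mu_q)$, which by \eqref{rel.par} is of the desired form. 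The analogous treatment of $\as\ph_R$ (using the already-small $R_{q+1}^{\mathrm{pre}}$ from Proposition~\ref{p:Reynolds}) and of $\as\ph_{H1}, \as\ph_{H2}$ (whose integrands pair $w$ with $\na u_\ell$, $\na\vr_\ell$, $\div R_q$, and $Q(\vr_q,u_q)$, inheriting an extra $\tau_q^{-1}$- or $\la_q\de_q^{1/2}$-type factor that is absorbed by the gain in Corollary~\ref{cor.mic2}) is identical in spirit; in each case the subtracted $\partial_t\Upsilon_i$ ensures the argument is mean-free. The mediation pieces $\as\ph_{M1}, \as\ph_{M2}$ require no inverse-divergence gain and are estimated directly by triangle inequalities using \eqref{est.R.dif}, \eqref{est.v.dif}, \eqref{est.ph.dif}, Lemma~\ref{p:velocity_correction_estimates}, and Proposition~\ref{p:Reynolds}.

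The main obstacle is the transport current $\as\ph_T$: as flagged in the introduction, the non-local $\mathcal{R}$ does not commute with $D_{t,\ell}$, so I would use the local operator $\mathcal{R}^{\mathrm{loc}}$ of Proposition~\ref{prop:intermittent:inverse:div}. The critical structural simplification is $D_{t,\ell}\xi_I=0$ from \eqref{def.bflow}, which implies that, after writing $\theta_0 w_0\otimes w_0 - \de_{q+1}^{3/2}\mathrm{Id} + \Phi_\ell = \sum_{p,m\neq 0}\de_{q+1}^{3/2}\, e_{p,m}\,e^{i\la_{q+1}m\cdot\xi_I}$, the material derivative falls only on the slow coefficients, yielding $D_{t,\ell}(\theta_0 w_0\otimes w_0+\Phi_\ell) - \partial_t\Upsilon_1^{(O)} = \sum_{p,m\neq 0}\de_{q+1}^{3/2}(D_{t,\ell}e_{p,m})\,e^{i\la_{q+1}m\cdot\xi_I}$. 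The advective-derivative bounds $\tau_q^s\|D_{t,\ell}^s e_{p,m}\|_N\lec_{N,M}\mu_q^{-N}|\dot e_{I,m}|$ from Lemma~\ref{lem:est.coe}, together with the higher-order advective-derivative hypothesis \eqref{est.new} on $u_q$ (the very reason it is iterated), are precisely what feed Proposition~\ref{prop:intermittent:inverse:div} and let Corollary~\ref{cor.mic2} deliver $\|\as\ph_T\|_N\lec \la_{q+1}^N\de_{q+1}^{3/2}/(\la_{q+1}\tau_q)$. The remaining summands inside $D_{t,\ell}(\cdots)$ --- namely $D_{t,\ell}(R_q-R_\ell)$, $D_{t,\ell}((u_q-u_\ell)\theta)$, $D_{t,\ell}((\vr_q-\vr_\ell) w)$, and $D_{t,\ell} R_{q+1}^{\mathrm{pre}}$ --- are handled by direct bounds drawing on \eqref{est.R.dif}, \eqref{est.v.dif}, \eqref{est.r.dif}, Lemma~\ref{p:velocity_correction_estimates}, and Proposition~\ref{p:Reynolds}.

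Finally, the estimate on $\Upsilon_1=\sum_{i=1}^3\Upsilon_i$ follows from applying Lemma~\ref{phase} to the spatial averages of the same oscillatory integrands: each $\langle e^{i\la_{q+1}m\cdot\xi_I}(\cdots)\rangle$ gains a factor $(\la_{q+1}\tau_q)^{-1}$ or $(\la_{q+1}\mu_q)^{-1}$ beyond the pointwise bound, producing $\|\Upsilon_1'\|_0 \lec (T+\tau_0)^{-1}\la_{q+1}^{-2\ga}\de_{q+2}^{3/2}$ and, after integration over $[0,T+\tau_0]$, $\|\Upsilon_1\|_0 \leq \la_{q+1}^{-2\ga}\de_{q+2}^{3/2}$. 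Summing the seven current estimates together with this $\Upsilon_1$ bound closes \eqref{est.asph}.
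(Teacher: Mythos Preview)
Your overall plan is right and matches the paper's decomposition, but there is a genuine gap in your handling of the transport current $\as\ph_T$.

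First, a misidentification: the argument of $\as\ph_T$ is the \emph{vector} $D_{t,\ell}\bigl(\theta w - R_q + R_{q+1}^{\mathrm{pre}} + (u_q-u_\ell)\theta + (\vr_q-\vr_\ell)w\bigr)$, not the tensor $D_{t,\ell}(\theta_0 w_0\otimes w_0 + \Phi_\ell)$. The paper rewrites this vector via the algebraic identity \eqref{rep:low.freq.app1} as $(\theta_0 w_0 - R_\ell) - \as R_T - \as R_N - \as R_{O1}$; the ``main'' oscillatory part is therefore $\theta_0 w_0 - R_\ell = \sum_{p,m\neq 0}\de_{q+1}\,d_{p,m}\,e^{i\la_{q+1}m\cdot\xi_I}$, with the $d_{p,m}$ coefficients, not $e_{p,m}$.

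Second, and more seriously, the claim that the ``remaining summands'' --- in particular $D_{t,\ell}R_{q+1}^{\mathrm{pre}}$ --- can be ``handled by direct bounds'' is where the argument breaks. After \eqref{rep:low.freq.app1} the remaining summands are not $D_{t,\ell}R_{q+1}^{\mathrm{pre}}$ etc.\ but rather $D_{t,\ell}\as R_T$, $D_{t,\ell}\as R_N$, $D_{t,\ell}\as R_{O1}$, and each $\as R_\triangle$ is itself an inverse divergence $\cRl H_\triangle$. Thus $\as\ph_T$ contains terms of the form $\cRl\bigl(D_{t,\ell}(\cRl H_\triangle)\bigr)$: two inverse divergences with a material derivative sandwiched between them. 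A direct $C^0$ bound on $D_{t,\ell}\as R_\triangle$ via Proposition~\ref{p:Reynolds} gives only $\la_{q+1}\de_{q+1}^{1/2}\cdot\la_q^{1/2}\la_{q+1}^{-1/2}\de_q^{1/4}\de_{q+1}^{3/4}$, and the outer $\cR$ is merely $C^0\to C^0$ bounded with no $\la_{q+1}^{-1}$ gain, because $\as R_\triangle$ carries a \emph{nonlocal} low-frequency piece $\RR_{\mathrm{nonlocal}}=\cR H_\triangle$. The paper resolves this precisely by splitting $\cRl = \divH + \cR$: the local part $\divH$ has explicit structure so that $D_{t,\ell}\divH$ can be computed and fed back into Proposition~\ref{prop:intermittent:inverse:div}, while the nonlocal part $\RR_{\mathrm{nonlocal}}$ is made arbitrarily small by choosing the order $\dpot$ large (this is where the higher-order inductive bounds \eqref{est.new} are genuinely needed). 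Your sketch invokes Proposition~\ref{prop:intermittent:inverse:div} only once, for the main oscillatory part, and does not address this double-inverse-divergence commutator issue; that is the missing idea.

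Everything else in your proposal (oscillation, Reynolds, high-frequency, mediation current errors, and the $\Upsilon_i$ via Lemma~\ref{phase}) is essentially the same as the paper's argument.
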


Without mentioning, we assume that $N$ is in the range above and allow the dependence on $M$ of the implicit constants in $\lec$ in this section.
For convenience, we single out the following fact, which will be repeatedly used: note that there exists $\bar{b}(\al)>1$ such that for any $1<b<\bar{b}(\al)$ and a constant $C_M$ depending only on $M$, we can find $\La_0=\La_0(\al, b,M)$ which gives
\[
C_M\left[ \frac {\de_{q+1}}{\la_{q+1}\tau_q} +\frac{\de_{q+1}^\frac32}{\la_{q+1}\mu_q} + \frac {\la_q^\frac12}{\la_{q+1}^\frac12} \de_q^\frac14\de_{q+1}^\frac54\right]
\leq  \la_{q+1}^{-4\ga} \de_{q+2}^\frac32,
\] 
for any $\lambda_0 \geq \La_0$.

\subsubsection{\bf High frequency current error - I}
We start by observing that $\as\ph_{H1}$ is
 \begin{equation}\label{e:formula_H1}
 \as\ph_{H1}= \idv{(-\div P_{\le \ell^{-1}}R_q + Q(\vr_q,u_q) ) w}
 \end{equation}
 by \eqref{e:average-free}. We thus can apply Corollary \ref{cor.mic2} to
\begin{align*}
(-\div P_{\le \ell^{-1}}&R_q + Q(\vr_q,u_q)) w\\
&= \sum_{p,m} (-\div P_{\le \ell^{-1}}R_q + Q(\vr_q,u_q)) \de_{q+1}^\frac12(b_{p,m} + (\la_{q+1}\mu_q)^{-1}g_{p,m})e^{i\la_{q+1}m\cdot \xi_I}.
\end{align*}
Indeed, using \eqref{est.R}, \eqref{est.Qvv}, \eqref{est.b}, \eqref{est.g}, we obtain
\begin{align*}
&\norm{\as\ph_{H1} }_N
\lec \la_{q+1}^{N-1} \de_{q+1}^\frac12 (\la_q^{1-2\ga}\de_{q+1} + (\ell \la_q) \la_q\de_q) 
\le \la_{q+1}^{N- 4 \gamma} \de_{q+2}^\frac32.
\end{align*}
Furthemore, \eqref{est.vp}, \eqref{est.R}, \eqref{est.v.dif}, and \eqref{est.com1} imply
\begin{align*}
\norm{D_{t,\ell} \div P_{\le \ell^{-1}} R_q}_{N-1}
&\leq \norm{\div P_{\le \ell^{-1}}  D_{t,\ell} R_q}_{N-1}
+ \norm{\div [u_\ell\cdot\na, P_{\le \ell^{-1}}] R_q}_{N-1}\\
&\quad+ \norm{(\na u_\ell)_{ki} \pa_k P_{\le l^{-1}} (R_q)_{i}}_{N-1}\\
&\lec \la_{q+1}^{N-1} (\norm{D_{t,\ell} R_q}_{1}
+ \norm{ [u_\ell\cdot\na, P_{\le \ell^{-1}}] R_q}_{1}
+\norm{\na u_q}_0 \norm{ \na R_q}_0)\\
&\lec \la_{q+1}^{N}\de_{q+1}^\frac12 \la_q^{1-2\ga} \de_{q+1}.
\end{align*}
Then, it follows that
\begin{align*}
\norm{\as D_t \ph_{H1}}_{N-1} \le 
\la_{q+1}^{N- 4 \gamma} \de_{q+1}^\frac12 \de_{q+2}^\frac32.
\end{align*}

\subsubsection{\bf Transport current error $\&$ high frequency current error -II}
\label{transport error}
Note that the transport term and the high-frequency term are similar in nature; we therefore combine them to facilitate the estimation. To proceed, first note that
\begin{align*}
\theta w -R_q + R_{q+1}^{\text{pre}} + (u_q -u_\ell) \theta + (\vr_q -\vr_\ell) w=
(\theta_0 w_0-R_\ell )-\as R_T -\as R_N-\as R_{O1},
\end{align*}
For convenience, we define the operator $\mathcal{A}: = D_{t, \ell} + (. \cdot \nabla)u_{\ell}$. Then 
using \eqref{est.phiN} and \eqref{e:average-free}, we can write
\begin{align}
\div(\as \ph_{TH}):= \div(\as\ph_{H2} + \as\ph_{T})
&= \mathcal{A} (R_{q+1}^{\text{pre}} -R_q + \theta w + (u_q -u_\ell) \theta + (\vr_q -\vr_\ell) w) 
\label{e:formula_H2}\\
&= \mathcal{A}((\theta_0 w_0-R_\ell ) - \as R_{O1}  -\as R_T -\as R_N).\nonumber
\end{align}
For the first term, we note that we can estimate $\as\ph_{TH1}:=\idvl{\mathcal{A} (\theta_0 w_0 - R_\ell)}$ using similar arguments as presented in subsection~\ref{OS}. Indeed, using the estimates \eqref{est.v.dif}, \eqref{est.r}, \eqref{eq:inverse:div:stress:1} and \eqref{eq:inverse:div:error:stress:bound}, we obtain 
\begin{align*}
\norm{\as\ph_{TH1}}_N
\lec  \la_{q+1}^N\cdot
\Big( \frac {\de_{q+1}}{\la_{q+1} \tau_q} + \la_q \de_q^\frac12 \frac {\de_{q+1}}{\la_{q+1}}\Big) ,\quad
\norm{\as D_t\as\ph_{TH1}}_{N-1} \lec \la_{q+1}^N\de_{q+1}^\frac12 \cdot
\Big(\frac {\de_{q+1}}{\la_{q+1} \tau_q} + \la_q \de_q^\frac12 \frac {\de_{q+1}}{\la_{q+1}} \Big). 
\end{align*}

Next, to deal with other terms, recall that the Reynolds stress errors $\as R_\tri$, which represents either $\as R_{O1}$, $\as R_T$, or $\as R_N$, can be written as $\as R_\tri  = \idvl {H_\tri}$. 
Furthermore, such $H_\tri$ has the form $G \zeta \circ \xi_{I}$, where $\zeta$ satisfies additional assumption as described in Appendix~\ref{apenC}. Indeed, using the mean zero property of $\zeta$ and making use of the inverse divergence formula (cf. Proposition~\ref{prop:intermittent:inverse:div}), we have
\begin{align*}
G \; \zeta\circ \xi_I  &=  \div R + \div \RR_{nonlocal}
=: \div \underbrace{\left( \divH \left( G\zeta \circ \xi_I \right) \right)}_{:=R} + \div \underbrace{\left(\cR(G \zeta \circ \xi_I) \right)}_{:=\RR_{nonlocal}}
\end{align*}
Therefore
$$
\cRl H_{\Delta} := \cRl (G \; \zeta\circ \xi_I) = R + \RR_{nonlocal} := \divH (H_{\Delta}) + \divR (H_{\Delta}).
$$
We now estimate both ($R$ and $\RR_{nonlocal}$) terms separately. Indeed, for a typical term, we have
$$
\div(\as\ph_{TH}) = \mathcal{A} (\divH (H_{\Delta}) + \divR (H_{\Delta})).
$$
To obtain $\as\ph_{TH}$, we use
$$
\as\ph_{TH} = \cRl ( \mathcal{A} (\divH (H_{\Delta}))) + \cR ( \mathcal{A} (\divR (H_{\Delta}))
$$
Firstly, we can now make use of specific structure of $R$ and estimate the terms involving $\divH(H_{\Delta})$ as follows. To deal with the Nash term, i.e., $\as R_N$, we can apply the inverse-divergence lemma (cf. Proposition~\ref{prop:intermittent:inverse:div}) to obtain
\begin{align*}
\norm{\divH \mathcal{A} (\divH (H_N))}_N & \lesssim \lambda_{q+1}^N \ta_q^{-1} \frac{\la_{q} \de_{q}^{1/2} \de_{q+1}^{1/2}}{\la_{q+1}^2} =\lambda_{q+1}^N  \cdot \frac{\de_{q+1}^{1/2}}{\la_{q+1}\ta_q } \,\,\underbrace{\frac{\la_{q} \de_{q}^{1/2}}{\la_{q+1}}}_{\lesssim \de_{q+1}^{1/2}} \lesssim \lambda_{q+1}^N \cdot \frac{\de_{q+1}}{\la_{q+1}\ta_q }, \\
\norm{D_{t,\ell} \divH [\mathcal{A} (\divH(H_N))]}_{N-1} & \lesssim 
\frac{\de_{q+1}^{1/2}}{\la_{q+1}\ta_q } \,\,\frac{\la_{q} \de_{q}^{1/2}}{\la_{q+1}}\cdot \lambda_{q+1}^{N-1} \ta_q^{-1}
\lesssim \la_{q+1}^N\de_{q+1}^\frac12 \cdot
\frac {\de_{q+1}}{\la_{q+1} \tau_q}.
\end{align*}
Therefore, we can derive estimate for 
$$
\as D_t \divH [\mathcal{A} (\divH (H_N))] = D_{t,\ell} \divH [\mathcal{A} (\divH (H_N))] + ((u_q - u_\ell) + w) \cdot \nabla [\divH \mathcal{A} (\divH (H_N))]
$$
Indeed, we already have the estimate for the first term. For the second term, we make use of $\ell^2 \la_q^2 \de_q^{1/2} \le \de_{q+1}^{1/2}$ to conclude 
$$
\norm{((u_q - u_\ell) + w) \cdot \nabla [\divH \mathcal{A} (\divH (H_N))]}_{N-1} 
\lesssim (\norm{u_q -u_\ell}_0 + \|w\|_0) \norm{\divH \mathcal{A} (\divH (H_N))}_N
\lesssim \la_{q+1}^N\de_{q+1}^\frac12 \cdot
\frac {\de_{q+1}}{\la_{q+1} \tau_q}.
$$
Similarly, to deal with the transport term, i.e., $\as R_T$, we can apply the inverse-divergence lemma (cf. Proposition~\ref{prop:intermittent:inverse:div}) to obtain
\begin{align*}
\norm{\divH \mathcal{A} (\divH (H_T))}_N & \lesssim \lambda_{q+1}^N \ta_q^{-1} \frac{\ta_q^{-1} \de_{q+1}^{1/2}}{\la_{q+1}^2} =\lambda_{q+1}^N  \cdot \frac{\de_{q+1}^{1/2}}{\la_{q+1}\ta_q } \,\,\underbrace{\frac{\ta_q^{-1}}{\la_{q+1}}}_{\lesssim \de_{q+1}^{1/2}} \lesssim \lambda_{q+1}^N \cdot \frac{\de_{q+1}}{\la_{q+1}\ta_q }, \\
\norm{\as D_t \divH [\mathcal{A} (\divH (H_T))]}_{N-1} & \lesssim 
\frac{\de_{q+1}^{1/2}}{\la_{q+1}\ta_q } \,\,\frac{\ta_q^{-1}}{\la_{q+1}}\cdot \lambda_{q+1}^{N-1} \ta_q^{-1}
\lesssim \la_{q+1}^N\de_{q+1}^\frac12 \cdot
\frac {\de_{q+1}}{\la_{q+1} \tau_q}.
\end{align*}
Finally, to deal with the oscillation term, i.e., $\as R_{O1}$, we can apply the inverse-divergence lemma (cf. Proposition~\ref{prop:intermittent:inverse:div}) to obtain
\begin{align*}
\norm{\divH \mathcal{A} (\divH (H_{O1}))}_N & \lesssim \lambda_{q+1}^N \ta_q^{-1} \frac{\mu_q^{-1} \de_{q+1}}{\la_{q+1}^2} =\lambda_{q+1}^N  \cdot \frac{\de_{q+1}}{\la_{q+1}\mu_q } \,\,\underbrace{\frac{\ta_q^{-1}}{\la_{q+1}}}_{\lesssim \de_{q+1}^{1/2}} \lesssim \lambda_{q+1}^N \cdot \frac{\de_{q+1}^{3/2}}{\la_{q+1}\mu_q }, \\
\norm{\as D_t \divH [\mathcal{A} (\divH (H_{O1}))]}_{N-1} & \lesssim 
\frac{\de_{q+1}}{\la_{q+1}\mu_q } \,\,\frac{\ta_q^{-1}}{\la_{q+1}}\cdot \lambda_{q+1}^{N-1} \ta_q^{-1}
\lesssim \la_{q+1}^N\de_{q+1}^\frac12 \cdot
\frac {\de_{q+1}^{3/2}}{\la_{q+1} \mu_q}.
\end{align*}

Again, we can use again the inverse divergence lemma (cf. Proposition~\ref{prop:intermittent:inverse:div}), and choose the parameter $\mathrm{d}$ very large (in accordance with Remark~\ref{rmk01}) to conclude that 
\begin{align*}
&\norm{\divR \mathcal{A} (\divH (H_{N}))}_N \lesssim \lambda_{q+1}^N \cdot \frac{\de_{q+1}}{\la_{q+1}\ta_q }, \quad
\norm{\as D_t [\divR \mathcal{A} (\divH (H_{N}))]}_{N-1} \lesssim \lambda_{q+1}^N \de_{q+1}^{1/2} \cdot \frac{\de_{q+1}}{\la_{q+1}\ta_q }, \\
&\norm{\divR \mathcal{A} (\divH (H_{T}))}_N \lesssim \lambda_{q+1}^N \cdot \frac{\de_{q+1}}{\la_{q+1}\ta_q }, \quad 
\norm{\as D_t [\divR \mathcal{A} (\divH (H_{T}))]}_{N-1} \lesssim \lambda_{q+1}^N \de_{q+1}^{1/2} \cdot \frac{\de_{q+1}}{\la_{q+1}\ta_q }, \\
& \norm{\divR \mathcal{A} (\divH (H_{O1}))}_N \lesssim \lambda_{q+1}^N \cdot \frac{\de_{q+1}^{3/2}}{\la_{q+1}\mu_q}, \quad 
\norm{\as D_t [\divR \mathcal{A} (\divH (H_{O1}))]}_{N-1} \lesssim \lambda_{q+1}^N \de_{q+1}^{1/2} \cdot \frac{\de_{q+1}^{3/2}}{\la_{q+1}\mu_q}.
\end{align*}

Next, to estimate various terms involving the non-local term $\cR H_{\Delta}$, we may proceed slightly differently. Indeed, we can estimate a typical term involving the non-local term as 
$$
\norm{\cR \mathcal{A} (\divR H_{\Delta})}_N = \norm{\cR \mathcal{A} (\RR_{nonlocal})}_N {\le}\norm{\mathcal{A} (\RR_{nonlocal})}_{N}
$$
We can bound the above term appropriately, by choosing the parameter $\mathrm{d}$ very large. Since the operator $\mathcal{A}$ involves a material derivative and a multiplication by $\nabla u_{\ell}$, we can use directly \eqref{eq:inverse:div:error:stress:bound} to deal with the term involving material derivative, and \eqref{eq:inverse:div:error:stress:bound} and the product rule for the other term. Indeed, by our choice of 
$\mathrm{d}$ in Remark~\ref{rmk01}, we have
\begin{align*}
&\norm{\cR \mathcal{A} (\divR H_{N})}_N \lesssim \lambda_{q+1}^N \cdot \frac{\de_{q+1}}{\la_{q+1}\ta_q }, \quad
\norm{\cR \mathcal{A} (\divR H_{T})}_N \lesssim \lambda_{q+1}^N \cdot \frac{\de_{q+1}}{\la_{q+1}\ta_q }, \\
& \norm{\cR \mathcal{A} (\divR H_{O1})}_N \lesssim \lambda_{q+1}^N \cdot \frac{\de_{q+1}^{3/2}}{\la_{q+1}\mu_q}.
\end{align*}
Next, we also need material derivative estimates of typical terms like
\begin{align}
\label{neww}
\as D_t [\cR \mathcal{A}( \RR_{nonlocal}) ] = D_{t, \ell} [\cR \mathcal{A} (\RR_{nonlocal}) ] + (u_{q+1} - u_\ell) \cdot \nabla [\cR \mathcal{A} (\RR_{nonlocal}) ],
\end{align}
where we can estimate the last term as follows:
\begin{align*}
&\| (u_{q+1} - u_\ell) \cdot \nabla [\cR \mathcal{A} (\RR_{nonlocal})] \|_{N-1} \\
&\le \quad \sum_{N_1 + N_2 = N-1} \| (u_{q+1} - u_\ell) \|_{N_1} \norm{\cR \mathcal{A} (\RR_{nonlocal})}_{N_2 +1} \\
&\le \sum_{N_1 + N_2 = N-1} \| (u_{q+1} - u_\ell) \|_{N_1}  \norm{\mathcal{A} (\RR_{nonlocal})}_{N_2 +1}.
\end{align*}
Again, by our choice of large parameter $\mathrm{d}$ in Remark~\ref{rmk01}, we have
\begin{align*}
&\norm{(u_{q+1} - u_\ell) \cdot \nabla [\cR \mathcal{A} (\divR H_{N})]}_{N-1} \lesssim \lambda_{q+1}^N \de_{q+1}^{1/2} \cdot \frac{\de_{q+1}}{\la_{q+1}\ta_q }, \\
&\norm{(u_{q+1} - u_\ell) \cdot \nabla [\cR \mathcal{A} (\divR H_{T})]}_{N-1} \lesssim \lambda_{q+1}^N \de_{q+1}^{1/2} \cdot \frac{\de_{q+1}}{\la_{q+1}\ta_q }, \\
&\norm{(u_{q+1} - u_\ell) \cdot \nabla [\cR \mathcal{A} (\divR H_{O1})]}_{N-1} \lesssim \lambda_{q+1}^N \de_{q+1}^{1/2} \cdot \frac{\de_{q+1}^{3/2}}{\la_{q+1}\mu_q}.
\end{align*}
To deal with the first term in \eqref{neww}, we write
$$
D_{t, \ell} \cR \mathcal{A} (\RR_{nonlocal}) =  \cR D_{t,\ell} \mathcal{A}( \RR_{nonlocal}) + [u_\ell \cdot \nabla, \cR] \mathcal{A} (\RR_{nonlocal}),
$$
where we can estimate the first term, by making direct use of the estimate obtained in the Proposition~\ref{prop:intermittent:inverse:div}. Indeed, notice that 
$$
\cR D_{t,\ell} \mathcal{A}( \RR_{nonlocal}) = \cR D^2_{t,\ell} ( \RR_{nonlocal}) + \cR  D_{t, \ell}\Big( (\RR_{nonlocal} \cdot \nabla) u_{\ell} \Big),
$$
where, again, we can use directly \eqref{eq:inverse:div:error:stress:bound} to deal with the first term, and \eqref{eq:inverse:div:error:stress:bound} and the product rule for the other term. 
For the commutator term, we may proceed as follows:
$$
[u_\ell \cdot \nabla, \cR] \mathcal{A} (\RR_{nonlocal}) = (u_\ell \cdot \nabla) \cR \mathcal{A} (\RR_{nonlocal}) - \cR (u_\ell \cdot \nabla) \mathcal{A} (\RR_{nonlocal}),
$$
where
\begin{align*}
\| (u_\ell \cdot \nabla) \cR \mathcal{A} (\RR_{nonlocal}) \|_{N-1} & \le \sum_{N_1 + N_2 =N-1}
\| u_\ell \|_{N_1}  \norm{\mathcal{A}(\RR_{nonlocal})}_{N_2 +1}, \\
\| \cR (u_\ell \cdot \nabla) \mathcal{A} (\RR_{nonlocal}) \|_{N-1} & \le \sum_{N_1 + N_2 =N-1}
\| u_\ell \|_{N_1}  \norm{\mathcal{A}(\RR_{nonlocal})}_{N_2 +1}.
\end{align*}
Therefore, by our choice of $\mathrm{d}$ in Remark~\ref{rmk01}, we have
\begin{align*}
&\norm{D_{t,\ell} [\cR \mathcal{A} (\divR H_{N})]}_{N-1} \lesssim \lambda_{q+1}^N \de_{q+1}^{1/2} \cdot \frac{\de_{q+1}}{\la_{q+1}\ta_q }, \quad
\norm{D_{t,\ell} [\cR \mathcal{A} (\divR H_{T})]}_{N-1} \lesssim \lambda_{q+1}^N \de_{q+1}^{1/2} \cdot \frac{\de_{q+1}}{\la_{q+1}\ta_q }, \\
&\norm{D_{t,\ell} [\cR \mathcal{A} (\divR H_{O1})]}_{N-1} \lesssim \lambda_{q+1}^N \de_{q+1}^{1/2} \cdot \frac{\de_{q+1}^{3/2}}{\la_{q+1}\mu_q}.
\end{align*}
This concludes the estimates for both the transport current error and the high-frequency current error terms.

\subsubsection{\bf Oscillation current error} 
Let us first write $\as\ph_{O} := \as\ph_{O1} + \as\ph_{O2}$, where
\begin{align*}
\as\ph_{O1} & =\idv{\div\left(\theta_0 w_0 \otimes w_0 + \Phi_\ell \right)}, \\
\as\ph_{O2} & =\theta w \otimes w - \theta_0 w_0 \otimes w_0.
\end{align*} 
To deal with the first term, notice that \eqref{e:rep4} gives
\begin{align*}
\div\left(\theta_0 w_0 \otimes w_0 + \Phi_\ell \right)
&= \div\left(\sum_{m\in \Z} \sum_{k\in \Z^3\setminus \{0\}} \de_{q+1}^\frac 32e_{m,k} e^{ i\la_{q+1} k\cdot \xi_I}\right)
= \sum_{m,k}  \de_{q+1}^\frac 32 \div(e_{m,k})e^{ i\la_{q+1} k\cdot \xi_I},
\end{align*}
because of $\dot{e}_{I,k} (h_I\cdot k) =0$. Also, we have
\begin{align*}
\norm{D_{t,\ell} \div e_{m,k}}_{\bar N}
\lec \norm{D_{t,\ell} e_{m,k}}_{\bar N+1} + \norm{(\na u_l)^{\top}:\na e_{m,k}}_{\bar N }
{\lec_{M,\bar N}\mu_{q}^{-\bar N} \frac{\la_{q+1}\de_{q+1}^\frac12}{\mu_q}}, \quad \forall \bar N\geq 0.
\end{align*}
Therefore, using $\supp(e_{m,k}) \subset (t_m -\frac 12\tau_q, t_m + \frac 32\tau_q) \times \R^3$, it follows from Corollary \ref{cor.mic2} with \eqref{est.d} that
\begin{align*}
\norm{\as\ph_{O1}}_N
\lec \la_{q+1}^N \cdot \frac {\de_{q+1}^\frac 32}{\la_{q+1}\mu_q}, \quad
\norm{\as D_t\as\ph_{O1}}_{N-1} 
\lec \la_{q+1}^N \de_{q+1}^\frac12\cdot \frac {\de_{q+1}^\frac 32}{\la_{q+1}\mu_q} .
\end{align*}
Next recall that $\as\ph_{O2} = (\theta w \otimes w - \theta_0 w_0 \otimes w_0) $. Then, \eqref{est.Wc}-\eqref{est.w.indM} imply
\begin{align*}
\norm{\as\ph_{O2}}_N
&\lec 
\norm{\theta_0 (w_c\otimes w_0)}_N 
+ \norm{\theta_0 (w_0\otimes w_c)}_N + \norm{\theta_0 (w_c\otimes w_c)}_N + \norm{\theta_c (w_0\otimes w_0)}_N\\
& \quad + \norm{\theta_c (w_c\otimes w_0)}_N 
+ \norm{\theta_c (w_0\otimes w_c)}_N + \norm{\theta_c (w_c\otimes w_c)}_N
\lec \la_{q+1}^N \cdot \frac {\de_{q+1}^\frac32}{\la_{q+1}\mu_q}\\
\norm{\as D_t\as\ph_{O2}}_{N-1} 
&\lec \norm{\as D_t (\theta (w_0\otimes w_c) + \theta (w_c\otimes w_0))}_N \\
& \qquad + \norm{\as D_t [(\theta (w_c\otimes w_c) +\theta_c (w_0\otimes w_0))]}_N
 \lec \la_{q+1}^N\de_{q+1}^\frac12 \cdot \frac {\de_{q+1}^\frac32}{\la_{q+1}\mu_q}.
\end{align*}
Therefore, combining the estimates, we get
\begin{align*}
\norm{\as\ph_{O}}_N\leq \frac 1{5} \la_{q+1}^{N-4\ga} \de_{q+2}^\frac32, \quad 
\norm{\as D_t\as\ph_{O}}_{N-1}\leq \frac 1{5} \la_{q+1}^{N-4\ga} \de_{q+1}^\frac12\de_{q+2}^\frac32,
\end{align*}
for sufficiently small $b-1>0$ and large $\la_0$. 

\subsubsection{\bf Reynolds current error} 
Recall that $\as\ph_R = R_{q+1}^{\text{pre}} \otimes_s w$. Using the estimates for the Reynolds stress, we have
\begin{align*}
\norm{\as\ph_R}_N 
&\lec \sum_{N_1+N_2=N} \norm{R_{q+1}^{\text{pre}}}_{N_1}\norm{w}_{N_2} \lec \la_{q+1}^N 
\la_q^\frac12\la_{q+1}^{-\frac12} \de_q^\frac14\de_{q+1}^\frac54
\leq \frac 15 \la_{q+1}^{N-4\ga} \de_{q+2}^\frac32,\\
\norm{\as D_t \as\ph_R}_{N-1}
&\lec \norm{D_{t,\ell} \as\ph_R}_{N-1} + \norm{(w+u_q-u_\ell)\cdot\na \as\ph_R}_{N-1}\\
&\lec  \sum_{N_1+N_2=N-1} 
\norm{D_{t,\ell} R_{q+1}^{\text{pre}}}_{N_1}\norm{w}_{N_2} 
 + \norm{ R_{q+1}^{\text{pre}}}_{N_1}\norm{D_{t,\ell} w}_{N_2} \\
& + \sum_{N_1+N_2=N-1} 
(\norm{w}_{N_1} + \norm{u_q-u_\ell}_{N_1})\norm{\as\ph_R}_{N_2+1} \\
&\lec \la_{q+1}^N\de_{q+1}^\frac12 \la_q^\frac12\la_{q+1}^{-\frac12} \de_q^\frac14\de_{q+1}^\frac54
\leq \frac 15 \la_{q+1}^{N-4\ga}\de_{q+1}^\frac12 \de_{q+2}^\frac32
\end{align*}
for sufficiently small $b-1>0$ and large $\la_0$.

\subsubsection{\bf Mediation current error} 
Recall that
\begin{align*}
\as\ph_{M}= ( (R_{q+1}^{\text{pre}} -R_q + \theta w) \otimes_s (u_q - u_\ell))+ (\Phi_q-\Phi_\ell)
\end{align*}
Then we recall that
\begin{align}\label{rep:low.freq.app1}
\theta w -R_q + R_{q+1}^{\text{pre}} + (u_q -u_\ell) \theta + (\vr_q -\vr_\ell) w=
(\theta_0 w_0-R_\ell )-\as R_T -\as R_N-\as R_{O1},
\end{align}
Also, a straightforward calculation, in view of \eqref{e:rep3}, reveals that
\begin{align*}
\norm{\theta_0 w_0-R_\ell}_N \lesssim \la_{q+1}^N \de_{q+1}, \quad \norm{\as D_t(\theta_0 w_0-R_\ell)}_{N-1} \lesssim \la_{q+1}^N \de_{q+1} \de_{q+1}^{\frac12}
\end{align*}
Therefore, using the Reynolds stress estimates, \eqref{est.r.dif}, \eqref{est.v.dif}, and \eqref{est.ph.dif} we conclude
\begin{align*}
\norm{((u_q - u_\ell) \otimes_s (\theta w -R_q + R_{q+1}^{\text{pre}})}_{N} + \norm{\Phi_q-\Phi_\ell}_N
&\leq \la_{q+1}^N\la_q^\frac12\la_{q+1}^{-\frac12} \de_q^\frac14\de_{q+1}^\frac54,\\
\norm{\as D_t[((u_q - u_\ell) \otimes_s (\theta w -R_q + R_{q+1}^{\text{pre}} )]}_{N-1}
+ \norm{\as D_t (\Phi_q-\Phi_\ell)}_{N-1}
&\leq \la_{q+1}^N\de_{q+1}^\frac12\la_q^\frac12\la_{q+1}^{-\frac12} \de_q^\frac14\de_{q+1}^\frac54.
\end{align*}
Combining all the above estimates, we conclude 
\begin{align*}
\norm{\as\ph_{M}}_N \leq \frac 15 \la_{q+1}^{N-4\ga} \de_{q+2}^\frac32, \quad 
\norm{\as D_t  \as\ph_{M}}_{N-1} \leq \frac 15 \la_{q+1}^{N-4\ga}\de_{q+1}^\frac12 \de_{q+2}^\frac32.
\end{align*}

\subsubsection{\bf Estimates on $\Upsilon_1, \Upsilon_2 $ and $\Upsilon_3$} As before, we have the stronger estimate
\begin{align}
\norm{\Upsilon_i'}_0 &\leq \frac{1}{5{(T+ \tau_0)}} \la_{q+1}^{-4\ga} \de_{q+2}^\frac32, \quad \text{for} \,\, i = 1,2,3. \label{e:est_rho_4}
\end{align}
from which the estimates follow by integration
\begin{align}
\norm{\Upsilon_i}_0 &\leq \frac{1}{5} \la_{q+1}^{-4\ga} \de_{q+2}^\frac32.
\end{align}
Again, we use usual strategies to achieve this by taking advantage of Lemma \ref{phase} and the representations \eqref{rep.W}-\eqref{e:rep4}.

\section{Proofs of the key inductive propositions}
\label{induc}

\subsection{Proof of Proposition \ref{ind.hyp}}
For a given inhomogeneous Euler-Reynolds flow $(\vr_q, u_q, p_q,R_q,\Phi_q, S_q)$ on the time interval $[0,T]+\tau_{q-1}$, we can construct the corrected density, velocity and pressure as $\vr_{q+1} = \vr_q + \theta_{q+1}$, $u_{q+1} = u_q + w_{q+1}$ and $p_{q+1} = p_q - \de_{q+1} \vr_{q}$, where $\theta_{q+1}$ is defined by \eqref{rep.theta} and $w_{q+1}$ is defined by \eqref{def.w} on $[0,T]+\tau_{q}$. Moreover,  $(\vr_{q+1},u_{q+1},p_{q+1},R_{q+1},\Phi_{q+1},S_{q+1})$ is a inhomogeneous Euler-Reynolds flow and the error $(R_{q+1}, \Phi_{q+1}, S_{q+1})$ satisfies \eqref{est.R}-\eqref{est.S} at the $(q+1)$th level as desired. Finally, let us denote the absolute implicit constant in the estimate {\eqref{est.w.indM}} for $\theta$ and $w$ by $M_0$ and define $M = 2M_0$. Then, one can easily see that
\begin{align*}
\norm{\vr_{q+1}- \vr_q}_0
+ \frac 1{\la_{q+1}} \norm{\vr_{q+1} - \vr_q}_1 
&= \norm{\theta_{q+1}}_0 
+\frac 1{\la_{q+1}} \norm{\theta_{q+1}}_1 
\leq 2M_0 \de_{q+1}^\frac12  = M \de_{q+1}^\frac12, \\
\norm{u_{q+1}- u_q}_0
+ \frac 1{\la_{q+1}} \norm{u_{q+1} - u_q}_1 
&= \norm{w_{q+1}}_0 
+\frac 1{\la_{q+1}} \norm{w_{q+1}}_1 
\leq 2M_0 \de_{q+1}^\frac12  = M \de_{q+1}^\frac12. 
\end{align*}
Moreover, since we know that $\vr_{q+1} = \vr_{{q}} + \theta$, and $\inf \theta > -M_0 \de_{q+1}^{1/2} $, we see that \eqref{cauchy_01} holds. 
Also, using \eqref{est.vp} and \eqref{est.r}, we have 
\begin{align*}
\norm{\vr_{q+1}}_0
&\leq \norm{\vr_q}_0 + \norm{\theta_{q+1}}_0
\leq 5-\de_q^\frac12 + M_0\de_{q+1}^\frac12 
\leq 5- \de_{q+1}^\frac12,\\
\norm{u_{q+1}}_0
&\leq \norm{u_q}_0 + \norm{w_{q+1}}_0
\leq 5-\de_q^\frac12 + M_0\de_{q+1}^\frac12 
\leq 5- \de_{q+1}^\frac12,\\
\norm{\vr_{q+1}}_N
&\leq \norm{\vr_q}_N + \norm{\theta_{q+1}}_N
\leq M \la_q^N\de_q^\frac12 + \frac12 M \la_{q+1}^N \de_{q+1}^\frac12
\leq M\la_{q+1}^N \de_{q+1}^\frac12,\\
\norm{u_{q+1}}_N
&\leq \norm{u_q}_N + \norm{w_{q+1}}_N
\leq M \la_q^N\de_q^\frac12 + \frac12 M \la_{q+1}^N \de_{q+1}^\frac12
\leq M\la_{q+1}^N \de_{q+1}^\frac12,\\
\norm{(\partial_t +u_{q+1}\cdot \nabla) \vr_{q+1}}_{N-1}& 
\leq \norm{(\partial_t +u_q\cdot \nabla) \vr_q}_{N-1} + \|w \cdot \nabla \vr_q\|_{N-1}\nonumber\\
& \quad + \norm{(\partial_t +u_\ell\cdot \nabla) \theta}_{N-1} + \|(u_{q+1} - u_\ell)\cdot \nabla \theta\|_{N-1}\nonumber\\
& \lesssim \tau_q^{-1} \lambda_q^{N-1}  \delta^{1/2}_q + \tau_q^{-1} \lambda_{q+1}^{N-1}  \delta^{1/2}_{q+1} + \lambda_{q+1}^{N} \delta_{q+1} \le M \tau_{q+1}^{-1} \lambda_{q+1}^{N-1}  \delta^{1/2}_{q+1}, \\
\norm{(\partial_t +u_{q+1}\cdot \nabla) u_{q+1}}_{N-1}& 
\leq \norm{(\partial_t +u_q\cdot \nabla) u_q}_{N-1} + \| w \cdot \nabla u_q\|_{N-1}\nonumber\\
& \quad + \norm{(\partial_t +u_\ell\cdot \nabla) w}_{N-1} + \|(u_{q+1} - u_\ell)\cdot \nabla w \|_{N-1}\nonumber\\
& \lesssim \tau_q^{-1} \lambda_q^{N-1}  \delta^{1/2}_q + \tau_q^{-1} \lambda_{q+1}^{N-1}  \delta^{1/2}_{q+1} + \lambda_{q+1}^{N} \delta_{q+1} \le M \tau_{q+1}^{-1} \lambda_{q+1}^{N-1}  \delta^{1/2}_{q+1}.
\end{align*}
for $N=1,2$, provided that $\la_0$ is sufficiently large. This proves that we have constructed a corrected flow $(\vr_{q+1},u_{q+1},p_{q+1},R_{q+1},\Phi_{q+1},S_{q+1})$ at $(q+1)$th level. 


\subsection{Verification of additional induction estimates}

First note that, for any two differential operators $A$ and $B$ and any $n,m > 0$, the Leibniz rule implies that
\begin{subequations}
\begin{align}
(A+B)^m
&= \sum_{k=1}^{m}\sum_{\substack{\alpha, \beta \in {\mathbb N}^k \\ |\alpha| + |\beta| =m}} \left(\prod_{i=1}^{k} A^{\alpha_i}\, B^{\beta_i}\right),
\label{eq:RNC1} \\
A^n := ((w + (u_q -u_\ell)) \cdot \nabla)^n
&= \sum_{j= 1}^{n} f_{j, n} D^{j}, \quad \text{where}\,\, f_{j,n}:= \sum_{\substack{\xi \in {\mathbb N}^n \\ |\xi| =n-j}} c_{n, j, \xi} 
\prod_{\ell=1}^{n} (D^{\xi_\ell} (w +(u_q -u_\ell))),
\label{eq:DNC1}
\end{align}
\end{subequations}
where $c_{n, j,\xi}$ are certain explicitly computable combinatorial coefficient  which depends only on the factors inside the parentheses and $D^a$ represents $\partial^{\alpha}$ for some multi-index $\alpha$ with $|\alpha| =a$. The proof is based on induction on $n$ and $m$.

We begin by estimating $f_{j,n}$. We have, by~\eqref{est.r} and ~\eqref{est.u.new}, that
\begin{align*}
    \|D_{t,\ell}^M f_{j,n}\|_N \lec \la_{q+1}^{N+n-j} \de_{q+1}^{n/2} \tau_q^{-M}.
\end{align*}
Note that we need to verify estimates \eqref{est.new} at $(q+1)$-th level, i.e., need estimates for $\| \as D_{t}^M \varrho_{q+1}\|_N$, $\| \as D_{t}^M u_{q+1}\|_N$ and $\| \partial_{t}^{M} u_{q+1}\|_N$ for all $0 \neq N \le N_{*}$ and $2 \le M\le M_{*}$. Recall that $ \as D_t u_{q+1} = \partial_t u_{q+1} + (u_{q+1}\cdot \nabla) u_{q+1}$. We verify only the velocity estimates, as the density estimates can be treated analogously. To verify the estimates, we first write 
$$
\as D_{t}^M u_{q+1}= (D_{t,\ell} + (w + (u_{q} -u_\ell)) \cdot \nabla)^M u_q + (D_{t,\ell} + (u_{q+1} -u_\ell) \cdot \nabla)^M w.
$$
Let us choose $A=((w + (u_{q} -u_\ell))\cdot \nabla)$, $B=D_{t,\ell}$, and apply the Leibniz rule \eqref{eq:RNC1}- \eqref{eq:DNC1} to calculate 

\begin{align}
&\|(D_{t,\ell} + (w + (u_{q} -u_\ell)) \cdot \nabla)^M u_q \|_N
\lesssim \sum_{k=1}^{M}\sum_{\substack{\alpha, \beta \in {\mathbb N}^k \\ |\alpha| + |\beta| =M}} \Big\|\big( \prod_{i=1}^{k} ((w + (u_{q} -u_\ell)) \cdot \nabla)^{\alpha_i}\, D_{t,\ell}^{\beta_i}\big) u_q \Big\|_N \label{ONE} \\
&\lesssim \sum_{k=1}^{M}\sum_{\substack{\alpha, \beta \in {\mathbb N}^k \\ |\alpha| + |\beta| =M}} \Big\|\Big( \prod_{i=1}^{k} \big(\sum_{j=1}^{\alpha_i} f_{j,\alpha_i} D^j \big)\, D_{t,\ell}^{\beta_i}\Big) u_q \Big\|_N \nonumber \\
 & \lesssim \sum_{\substack{\alpha, \beta \in {\mathbb N}^k \\ |\alpha| + |\beta| =M}} 
\lambda_{q+1}^{N} (\lambda_{q+1})^{\sum (\alpha_i-1)} (\delta_{q+1}^{1/2})^{\sum \alpha_i} \la_q \delta_q^{1/2} ({\tau_q^{-1}})^{\sum \beta_i} + \lambda_{q+1}^{N} (\delta_{q+1}^{1/2})^{\sum \alpha_i} (\la_{q+1})^{\sum \alpha_i} ({\tau_q^{-1}})^{\sum \beta_i} \delta_q^{1/2} \nonumber \\
& \lesssim \sum_{\substack{\alpha, \beta \in {\mathbb N}^k \\ |\alpha| + |\beta| =M}} 
\lambda_{q+1}^N \la_q\delta_q^{1/2} (\lambda_{q+1} \delta_{q+1}^{1/2})^{\sum \beta_i}  \delta_{q+1}^{1/2} (\la_{q+1} \de_{q+1}^{1/2})^{\sum \alpha_i-1}   + \lambda_{q+1}^{N} (\la_{q+1} \delta_{q+1}^{1/2})^{\sum (\alpha_i + \beta_i)} \delta_q^{1/2} \nonumber \\ 
& \le \lambda_{q+1}^N (\tau_{q+1}^{-1})^M \delta_{q+1}^{1/2}. \nonumber
\end{align}
Here we have used the fact that $\ell \la_q \de_q^{1/2} \le \de_{q+1}^{1/2}$, and $\| D^M_{t,\ell}  u_{q}\|_N \le \ell^{1-N} \lambda_q \delta_q^{1/2} (\tau_q^{-1})^M$. Indeed, this can be verified using the mollification estimates \eqref{est.final01}, and \eqref{est.new}:
\begin{align*}
&\| D^M_{t,\ell}  u_{q}\|_N = \| (D_{t} + (u_\ell -u_q) \cdot \nabla)^M u_q \|_N \\
& \le \sum_{\substack{\alpha, \beta \in {\mathbb N}^k \\ |\alpha| + |\beta| =M}} 
\ell^{-N} (\ell \la_q \de_q^{1/2})^{\sum \alpha_i}
(\ell)^{(1-\sum \alpha_i)} \la_q \delta_q^{1/2} (\tau_q^{-1})^{\sum \beta_i} + \sum_{\substack{\alpha, \beta \in {\mathbb N}^k \\ |\alpha| + |\beta| =M}} 
\ell^{-N} (\ell \la_q \de_q^{1/2})^{\sum \alpha_i}
(\la_q)^{(\sum \alpha_i)} \delta_q^{1/2} (\tau_q^{-1})^{\sum \beta_i}\\
& \le \ell^{1-N} \lambda_q \delta_q^{1/2} (\tau_q^{-1})^M.
\end{align*}
Similarly, we can estimate the other term $(D_{t,\ell} + (u_{q+1} -u_\ell) \cdot \nabla)^M w$ to conclude
$$
\|(D_{t,\ell} + (u_{q+1} -u_\ell) \cdot \nabla)^M w\|_N \le \lambda_{q+1}^N (\tau_{q+1}^{-1})^M \delta_{q+1}^{1/2}.
$$
To verify the other estimate, we write $\partial_{t}^{M} u_{q+1} =  \partial_{t}^{M} u_{q} +  \partial_{t}^{M} w$ and conclude
\begin{align*}
\|\partial_{t}^M u_{q+1} \|_N & \le \|\partial_{t}^M u_{q} \|_N + \|\partial_{t}^M w \|_N \\
&\le \|\partial_{t}^M u_{q} \|_N + \| (D_{t, \ell} -u_\ell \cdot \nabla )^M w\|_N 
\le \lambda_{q+1}^N (\tau_{q+1}^{-1})^M \delta_{q+1}^{1/2},
\end{align*}
thanks to Remark~\ref{rem.new} below.

\begin{rem}
\label{rem.new}
We can also use the Leibniz rule \eqref{eq:RNC1}- \eqref{eq:DNC1} to verify the following estimates for $N\ge1$:
$$
\| D^M_{t, \ell}  \varrho_{\ell}\|_N \le \ell^{1-N} \lambda_q \delta_q^{1/2} (\tau_q^{-1})^M, \quad 
\| D^M_{t, \ell}  u_{\ell}\|_N \le \ell^{1-N} \lambda_q \delta_q^{1/2} (\tau_q^{-1})^M.
$$
In order to do so, we first observe that
$$
D^M_{t, \ell} u_{\ell} = D^M_{t, \ell} (u_{\ell} - u_q) + D^M_{t, \ell} u_{q} = D^M_{t, \ell} (u_{\ell} - u_q) + (D_{t} + (u_\ell -u_q) \cdot \nabla)^M u_q.
$$
Recall that, thanks to mollification estimates \eqref{est.final01} in Appendix~\ref{apenC}, we have the required estimates $\|D^M_{t, \ell} (u_{\ell} - u_q)\|_N \le \ell^{1-N} \lambda_q \delta_q^{1/2} (\tau_q^{-1})^M$. For the remaining term, the desired bound is already available.  Hence, combining the two estimates yields the desired result. Moreover, since $u_\ell$ is is obtained by convolution only in the spatial variable, we have
 $\| \partial_t^M u_{\ell}\|_N \le \| \partial_t^M u_{q}\|_N \le  \lambda^N_q \delta_q^{1/2} (\tau_q^{-1})^M$. The density estimates can be treated analogously.
\end{rem}

\subsection{Proof of Proposition \ref{p:ind_technical}}
We start with a given time interval $\cal I \subset (0,T)$ with $|\cal I| \geq 3\tau_q$. Then, it is possible to find a $p_0$ such that $\supp(\th_{p_0}(\tau_q^{-1}\cdot))\subset  \cal I$. In case $I = (p_0, k, h)$ belongs to $\mathscr{I}_S$, then we simply replace $a_{I}$ in $w_{q+1}$ by $\td a_{I} =-a_{I}$ so that $\Ga_{I}$ by $\td \Ga_{I} = -\Ga_{I}$. Otherwise, we just keep the same $a_{I}$. It is easy to see that $\td\Ga_{I}$ still solves \eqref{eq.Ga1} and therefore, $\td a_{I}$ satisfies \eqref{eq.Ga}. Let us also set $\td p_{q+1} = p_{q+1}$ and observe that above replacements does not alter the estimates for $\Ga_{I}$ used in the proof of Lemma \ref{lem:est.coe}, the corresponding coefficients $\td a_{p,m}$, $\td b_{p,m}$, $\td c_{p,m}$, $\td d_{p,m}$, $\td e_{p,m}$ and $\td g_{p,m}$ satisfy \eqref{est.b}-\eqref{est.g}, and $\td w = \td w_o$, $\td w_c$, and $\td w_{q+1}$, generated by them, also satisfy \eqref{est.Wc}-\eqref{est.r}. Therefore, the inhomogeneous Euler-Reynolds flow $(\td \vr_{q+1}, \td u_{q+1}, \td p_{q+1}, \td R_{q+1}, \td \Phi_{q+1}, \td S_{q+1})$ at the $(q+1)$th level satisfies \eqref{est.vp22}-\eqref{est.new}, and \eqref{cauchy_01}-\eqref{cauchy} as desired. Notice also that, by construction, the perturbation $\td w_{q+1}$ differs from $w_{q+1}$ on the support of $\th_{p_0}(\tau_q^{-1} \cdot)$. Therefore, we conclude that 
\[
\supp_t(u_{q+1} - \td u_{q+1})
=\supp_t(w_{q+1} - \td w_{q+1})
\subset \cal I. 
\] 
Furthermore, by \eqref{eq.Ga} and \eqref{eq.Ga1}, we have 
\begin{align*}
\sum_{I\in \mathscr{I}_S} a_I^2 |(\na \xi_I)^{-1}h_I|^2
&=\tr\left(
(\na \xi_I)^{-1}  \sum_{f\in \cH_{I,R}} a_I^2 h_I\otimes h_I [(\na \xi_I)^{-1}]^\top
\right)\\
&= \tr (\de_{q+1}\I - S_\ell - \widetilde{M})
\end{align*} 
where 
\begin{align*}
\widetilde{M} & = \sum_{p',k'}\th_{I'}^2\chi_{I'}^2(\xi_{I'})\sum_{h'\in \mathscr{I}_R} a_{I'}^2\left(\dint_{\T^3}\psi_{I'}^2dx \right) (\na \xi_{I'})^{-1} h' \otimes  (\na \xi_{I'})^{-1} h' \\
& \quad +  \sum_{p',k'}\th_{I'}^2\chi_{I'}^2(\xi_{I'})\sum_{h'\in \mathscr{I}_\Phi} a_{I'}^2\left(\dint_{\T^3}\psi_{I'}^2dx \right) (\na \xi_{I'})^{-1} h' \otimes  (\na \xi_{I'})^{-1} h' 
\end{align*}
Therefore, we simply repeat the proof in section \ref{subsec:R} to conclude
\[
\norm{\widetilde{M}}_0 \lec \la_q^{-2\ga} \de_{q+1}.
\] 
Then, it follows that
\begin{align*}
|w_0 - \td w_0 |^2
&=  \sum_{I\in \mathscr{I}_S: p_I = p_0}
4\th_I^2(t) \chi_I^2(\xi_I) a_I^2 |(\na \xi_I)^{-1} h_I|^2 (1+(\psi_I^2(\la_{q+1} \xi_I)-1))\\
&= \sum_{I\in \mathscr{I}_S: p_I = p_0}
4\th_I^2\chi_I^2(\xi_I)  (3\de_{q+1} - \tr(S_\ell) - \tr(\widetilde{M}))\\&\quad+
\sum_{m\in \Z^3\setminus\{0\}}\sum_{I\in \mathscr{I}_S: p_I = p_0}
4\th_I^2\chi_I^2(\xi_I) a_I^2 |(\na \xi_I)^{-1} h_I|^2
\dot{c}_{I,k} e^{i\la_{q+1} m\cdot \xi_I}\\
&= 4\th_{p_0}^6(\tau_q^{-1}t) (3\de_{q+1}- \tr S_\ell - \tr {\widetilde{M} }) + \sum_{m\in \Z^3\setminus\{0\}} 4\de_{q+1}\tr(\td c_{p_0,m}^S)e^{i\la_{q+1}m\cdot \xi_I},
\end{align*}
where 
\[
\tr(\td c_{p_0,m}^S) = \sum_{I\in \mathscr{I}_S: p_I = p_0}
\th_I^2(t) \chi_I^2(\xi_I) \de_{q+1}^{-1} a_I^2\dot{c}_{I,m} |(\na \xi_I)^{-1} h_I|^2.
\]
Following the steps leading to the estimate \eqref{est.d} for $c_{p_0,m}$, we can conclude the estimate $\norm{\tr(\td c_{p_0,m}^S)}_N \lec \mu_q^{-N} |\dot{c}_{I,m}|$ holds for $N=0,1,2$. Then
\begin{align*}
\norm{w_0 - \td  w_0}_{C^0([0,T]; L^2(\T^3))}^2
&\geq 4(2\pi)^3(3\de_{q+1} - \norm{S_\ell}_0 - \norm{\tr(\widetilde{M})}_0) \\
&\quad- \sum_{m\in \Z^3}4 
\left| 
\de_{q+1} \int\tr(\td c_{p_0,m}^S)e^{i\la_{q+1}m\cdot \xi_I} dx
\right|\\
&\geq 12\de_{q+1} - C\de_{q+1}(\la_q^{-2\ga} + \la_q^{-2\ga} + (\la_{q+1} \mu_q)^{-2})\\
&\geq 4\de_{q+1}
\end{align*}
for sufficiently large $\la_0$. Indeed, we can use Lemma \ref{phase} to get
\begin{align*}
\sum_m\left| \int \tr(\td c_{p_0,m}^S) e^{\la_{q+1} m\cdot \xi_I} dx\right|
&\lec \sum_m\frac{\norm{\tr(\td c_{p_0,m}^S)}_2 + \norm{\tr(\td c_{p_0,m}^S)}_0\norm{{\na} \xi_I}_{C^0([t_{p_0}-\frac12\tau_q, t_{p_0}+\frac32\tau_q];C^2( \T^3))}}{\la_{q+1}^2 |m|^2}
\\
&\lec (\la_{q+1}\mu_q)^{-2} \sum_m\frac{|\dot{c}_{I,m}|}{|m|^2}
\lec (\la_{q+1}\mu_q)^{-2} 
\left(\sum_m |\dot{c}_{I,m}|^2\right)^\frac12
\left(\sum_m \frac{1}{|m|^4}\right)^\frac12.
\end{align*}
Therefore, we obtain
\begin{align*}
\norm{u_{q+1} - \td u_{q+1} }_{C^0([0,T]; L^2(\T^3))}
&=\norm{w_{q+1}- \td w_{q+1}}_{C^0([0,T]; L^2(\T^3))}\\
&\geq \norm{w_o - \td  w_o}_{C^0([0,T]; L^2(\T^3))}
-(2\pi)^\frac32(\norm{w_c}_0 + \norm{\td w_c}_0)\\
&\geq  2\de_{q+1}^\frac12 -\frac{(2\pi)^\frac32 2M_0}{\la_{q+1}\mu_q} \de_{q+1}^\frac12 \geq \de_{q+1}^\frac12
\end{align*}
for sufficiently large $\la_0$. 

Finally, let us assume that a inhomogeneous Euler-Reynolds flow $(\td \vr_q, \td u_q, \td p_q, \td R_q, \td \Phi_q, \td S_q)$ satisfies \eqref{est.vp22}-\eqref{est.new} and
\[
\supp_t(\vr_q- \td \vr_q, u_q-\td u_q, p_q-\td p_q, R_q-\td R_q, \Phi_q-\td \Phi_q, S_q-\td S_q)\subset \cal{J} 
\]
for some time interval $\cal{J}$. We may now construct the regularized flow, $\td R_\ell$, $\td \Phi_\ell$, and $\td S_\ell$ as we constructed for $R_\ell$, $\Phi_\ell$, and $S_\ell$ and notice that they differ only in $\cal J + l_t\subset\cal J + (\la_q\de_q^\frac12)^{-1}$. Consequently, $w_{q+1}$ and $\td w_{q+1}$ differ only in $\cal J + (\la_q\de_q^\frac12)^{-1}$ and hence the inhomogeneous Euler-Reynolds flows $(\vr_{q+1}, {u}_{q+1},  p_{q+1},  R_{q+1}, \Phi_{q+1}, S_{q+1})$ and $(\td \vr_{q+1}, \td u_{q+1},  \td p_{q+1}, \td R_{q+1}, \td \Phi_{q+1}, \td S_{q+1})$ satisfy 
\[
\supp_t(\vr_{q+1}-\td \vr_{q+1}, u_{q+1}-\td u_{q+1}, p_{q+1}-\td p_{q+1}, R_{q+1}-\td R_{q+1}, \Phi_{q+1}-\td \Phi_{q+1}, S_{q+1}-\td S _{q+1})\subset \cal{J} + (\la_q\de_q^\frac12)^{-1}.
\] 

 \appendix
 \section{Some technical lemmas}
This appendix contains a collection of auxiliary lemmas which are used throughout the paper. The proof of the following lemma can be found in \cite[Appendix]{BDLSV2020}.
 \begin{lem}[H\"{o}lder norm of compositions] \label{lem:est.com} Let $H:\Omega \to \R$ and $\Psi: \R^n \to \Omega$ be two smooth functions for some $\Omega\subset \R^d$. Then, for each $N\in \N$, we have
\begin{align}
&\norm{\na^N (H\circ \Psi)}_0
\lec \norm{\na H}_0 \norm{\na\Psi}_{N-1} + \norm{\na H}_{N-1} \norm{\Psi}_0^{N-1}\norm{\Psi}_N \nonumber\\
&\norm{\na^N (H\circ \Psi)}_0
\lec \norm{\na H}_0 \norm{\na\Psi}_{N-1} + \norm{\na H}_{N-1} \norm{\na\Psi}_0^{N} \label{chain},
\end{align}
where the implicit constant in the inequalities depends only on $n$, $d$, and $N$.
 \end{lem}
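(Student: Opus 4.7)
The plan is to prove both inequalities by combining the Faà di Bruno formula with a standard interpolation estimate for intermediate derivatives. Applying Faà di Bruno componentwise, one obtains
\[
\nabla^N(H\circ \Psi)(x) = \sum_{k=1}^{N} \sum_{\substack{\alpha\in\mathbb{N}^k\\ |\alpha|=N}} C_{N,k,\alpha}\, (\nabla^k H)(\Psi(x)) \cdot \bigl(\nabla^{\alpha_1}\Psi(x) \otimes \cdots \otimes \nabla^{\alpha_k}\Psi(x)\bigr),
\]
so that $\|\nabla^N(H\circ \Psi)\|_0 \lesssim \sum_{k,\alpha} \|\nabla^k H\|_0 \prod_{i=1}^k \|\nabla^{\alpha_i}\Psi\|_0$, with each $\alpha_i\ge 1$ and $\alpha_1+\cdots+\alpha_k=N$.

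The $k=1$ term of this sum is exactly $\|\nabla H\|_0\,\|\nabla^N \Psi\|_0$, which is already controlled by the first term on the right hand side of either desired inequality. For $k\ge 2$ one uses the crude bound $\|\nabla^k H\|_0 \le \|\nabla H\|_{k-1}\le \|\nabla H\|_{N-1}$, and what remains is to estimate the $\Psi$-factors $\prod_{i=1}^k \|\nabla^{\alpha_i}\Psi\|_0$ in terms of quantities involving only the two endpoint Hölder norms.

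The key input is the classical convexity (Gagliardo–Nirenberg/Kolmogorov-type) interpolation
\[
\|\nabla^{\alpha_i}\Psi\|_0 \lesssim \|\Psi\|_0^{\,1-\alpha_i/N}\,\|\nabla^N\Psi\|_0^{\,\alpha_i/N},
\qquad
\|\nabla^{\alpha_i}\Psi\|_0 \lesssim \|\nabla\Psi\|_0^{\,1-(\alpha_i-1)/(N-1)}\,\|\nabla^N\Psi\|_0^{\,(\alpha_i-1)/(N-1)},
\]
valid for $1\le \alpha_i\le N$. Taking the product over $i=1,\dots,k$ and using $\sum_i \alpha_i=N$ respectively $\sum_i(\alpha_i-1)=N-k$, the exponents on $\|\nabla^N \Psi\|_0$ collapse to $1$ in the first case (leaving $\|\Psi\|_0^{k-1}\le \|\Psi\|_0^{N-1}$ up to absorbing into the norm $\|\Psi\|_N\ge \|\Psi\|_0$) and $(N-k)/(N-1)\le 1$ in the second (so that the $\|\nabla\Psi\|_{N-1}$ factor appears to a power at most $1$, the remaining factors being bounded by $\|\nabla \Psi\|_0^{k-1}\le \|\nabla\Psi\|_{N-1}^{k-1}$). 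Summing over the finite family of multi-indices yields the two claimed estimates.

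The main technical point—and the only one that needs a bit of care—is this interpolation bookkeeping: one must verify that the weighted product of interpolation inequalities collapses exactly to the factors $\|\Psi\|_0^{N-1}\|\Psi\|_N$ in the first estimate and $\|\nabla\Psi\|_{N-1}$ to power at most $N$ in the second (which is the source of the power $N$ rather than $1$ on $\|\nabla\Psi\|_0$ on the right-hand side). Once the elementary interpolation inequalities above are in hand (which can be proved by a Landau–Kolmogorov argument on $\mathbb{R}^n$), the rest of the argument is combinatorial and yields a constant depending only on $n$, $d$, and $N$. A clean reference for this computation is the appendix of \cite{BDLSV2020}, so the final writeup can be made essentially word-for-word along those lines.
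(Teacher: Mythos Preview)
Your overall strategy---Fa\`a di Bruno followed by interpolation of intermediate derivatives---is the right one, and indeed the paper itself simply cites \cite{BDLSV2020} for the details. However, the interpolation bookkeeping as you outline it does not close. The problem is the crude bound $\|\nabla^k H\|_0 \le \|\nabla H\|_{N-1}$ for $k\ge 2$: once you discard all information on the $H$-side, the remaining $\Psi$-product $\prod_i \|\nabla^{\alpha_i}\Psi\|_0$ cannot in general be controlled by $\|\nabla\Psi\|_0^{N}$ (for the second estimate) or by $\|\Psi\|_0^{N-1}\|\Psi\|_N$ (for the first). A concrete one-dimensional counterexample: take $\Psi(x)=\lambda^{-1}\sin(\lambda x)$ with $\lambda$ large, $N=3$, $k=2$, $\alpha=(1,2)$. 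Then $[\Psi]_1=1$ and $[\Psi]_2=\lambda$, so $[\Psi]_1[\Psi]_2=\lambda$, whereas $\|\nabla\Psi\|_0^{3}=1$; your argument would require $\lambda\lesssim 1$. The hand-wave ``up to absorbing into the norm $\|\Psi\|_N\ge\|\Psi\|_0$'' for the first estimate fails for the same reason when $\|\Psi\|_0<1$.

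The standard fix---and this is what is actually done in \cite{BDLSV2020}---is to interpolate \emph{both} factors simultaneously. Writing
\[
[H]_k \lesssim [H]_1^{(N-k)/(N-1)}[H]_N^{(k-1)/(N-1)},\qquad
[\Psi]_{\alpha_i}\lesssim[\Psi]_1^{(N-\alpha_i)/(N-1)}[\Psi]_N^{(\alpha_i-1)/(N-1)},
\]
and multiplying everything out, each Fa\`a di Bruno term factors exactly as
\[
\bigl([H]_1[\Psi]_N\bigr)^{(N-k)/(N-1)}\bigl([H]_N[\Psi]_1^{\,N}\bigr)^{(k-1)/(N-1)},
\]
and now Young's inequality $A^{\theta}B^{1-\theta}\le A+B$ yields the second claimed estimate directly. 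The first estimate follows by the same scheme, interpolating $[\Psi]_{\alpha_i}$ between $\|\Psi\|_0$ and $[\Psi]_N$ instead of between $[\Psi]_1$ and $[\Psi]_N$. The point is that the $\theta$-dependence on the $H$-side is exactly what compensates for the $\Psi$-side, and throwing it away breaks the argument.
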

To define the new triple $(R_q, \Phi_q, S_q)$ we need to ``invert the divergence'' of vector fields and tensors. For this purpose, we recall the inverse divergence operator introduced in \cite{DLSz2013}.
\begin{defn}[Inverse divergence operator]\label{idv.defn}
For any $g\in C^\infty(\T^3;\R^3)$, the inverse divergence operator is defined by
\begin{align}
\label{oneone}
	(\cal{R}g)_{ij}= \cal{R}_{ijk} g_k 
	= -\frac 12 \De^{-2} \pa_{ijk} g_k + \frac 12\De^{-1} \pa_k g_k \de_{ij} - \De^{-1}\pa_i g_j - \De^{-1}\pa_jg_i. 
\end{align}
\end{defn}
\begin{rem} The image of the divergence free operator $\cal{R}g(x)$ is designed to be a trace-free symmetric matrix at each point $x$ and to solve 
	\[
	\div(\cal{R}g) = g - \langle g \rangle\, .
	\]
\end{rem}
In our analysis we also need an inverse divergence operator which maps a mean-zero scalar function to a mean-zero vector-valued one. To this end, we abuse the notation and define 
\[
(\cR f)_i = \De^{-1} \pa_i f. 
\]
Indeed, $\div \cR f =f - \langle f \rangle$.

Moreover, we state the following version of the stationary phase lemma, who proof, using standard Schauder estimates, can be found in \cite{DaSz2016}.
 \begin{lem}\label{phase}
 Let $\alpha \in (0,1)$, and $N\geq 1$. Suppose that $b\in C^\infty(\T^3)$ and $\xi\in C^\infty(\T^3;\R^3)$ satisfies
 \[
 \frac 1{C} \leq |\na \xi| \leq C,
 \]
 for some constant $C>1$. Then
 \[
 \left| \int_{\T^3} b(x) e^{ik\cdot \xi} dx \right|
 \lec \frac{\norm{b}_N + \norm{b}_0\norm{{\na} \xi}_{N}}{|k|^N} ,
 \]
and for the operator $\mathcal{R}$ defined in \eqref{oneone}, we have
\[
\| \mathcal{R} \left(b(x) e^{ik\cdot \xi} \right) \|_{\alpha}
\lec \frac{\|b\|_{0}}{|k|^{1-\alpha}} + \frac{\norm{b}_{N+\alpha} + \norm{b}_0\norm{{\na} \xi}_{N +\alpha}}{|k|^{N-\alpha}} ,
\]
where the implicit constants depend on $C$, $\alpha$ and $N$, but not on $k$. 
 \end{lem}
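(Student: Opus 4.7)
The plan is to prove both bounds by iterated integration by parts, exploiting the assumption $1/C \le |\na \xi| \le C$, which (in the context of this paper, where $\xi$ is a near-identity flow map) makes the Jacobian $\na \xi$ uniformly invertible. I would set
\[
a(x) := \frac{(\na \xi(x))^{-1} k}{|k|^2},
\]
so that $\norm{a}_0 \lec |k|^{-1}$, the pointwise identity $(a \cdot \na) e^{ik \cdot \xi} = i\, e^{ik \cdot \xi}$ holds, and (via the formula $\pa (\na \xi)^{-1} = -(\na\xi)^{-1}(\pa \na\xi)(\na\xi)^{-1}$ and Faà di Bruno) the bound $\norm{\na^j a}_0 \lec |k|^{-1}\norm{\na \xi}_{j}$ is available for every $j \ge 0$.

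For the first estimate, integration by parts gives
\[
\int_{\T^3} b\, e^{ik\cdot \xi}\, dx = -\frac{1}{i}\int_{\T^3} \na\cdot(b\,a)\, e^{ik\cdot\xi}\, dx,
\]
and iterating this $N$ times produces a factor $|k|^{-N}$ in front of an integrand that is a sum of terms of the form $(\na^{j_0} b)(\na^{j_1} a)\cdots(\na^{j_N} a)$ with total derivative count $\le N$. The Leibniz rule combined with the bound on derivatives of $a$ above, followed by standard Hölder interpolation between $C^0$ and $C^N$, yields the stated bound $|k|^{-N}(\norm{b}_N + \norm{b}_0 \norm{\na \xi}_N)$.

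For the $\cR$-estimate I would use the same integration-by-parts idea inside $\cR$. Writing
\[
b\,e^{ik\cdot \xi} \;=\; -\tfrac{1}{i}\,\na\cdot(b\,a\,e^{ik\cdot \xi}) \;+\; \tfrac{1}{i}\,\na\cdot(b\,a)\,e^{ik\cdot \xi},
\]
and using that $\cR$ acting on a divergence returns the vector field modulo a bounded Leray-type projection, the first summand produces a principal contribution morally of the form $b\,a\,e^{ik\cdot \xi}$, which has $L^\infty$ norm $\lec |k|^{-1}\norm{b}_0$ and $C^1$ norm $\lec \norm{b}_0$; interpolating between these two gives the leading summand $\norm{b}_0 / |k|^{1-\alpha}$. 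For the remainder $\cR(\tfrac{1}{i}\,\na\cdot(b\,a)\,e^{ik\cdot \xi})$ one iterates the procedure $N-1$ more times to transfer derivatives onto $b$ and $\na\xi$, and then applies the standard $C^\alpha$ Schauder estimate for the order-$(-1)$ singular integral operator $\cR$ to generate the second summand $|k|^{-(N-\alpha)}(\norm{b}_{N+\alpha} + \norm{b}_0\norm{\na\xi}_{N+\alpha})$.

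The main obstacle will be extracting the sharp $|k|^{\alpha-1}$ power in the principal term: because $\cR$ is a nonlocal pseudodifferential operator of order $-1$ rather than a literal inverse divergence, the identity $\cR(\na\cdot F) = F$ only holds modulo a bounded projection, so the $C^\alpha$ estimate on $\cR(b\,a\,e^{ik\cdot \xi})$ must be obtained either by interpolating an $L^\infty$ bound (one derivative gained from $\cR$) with a $C^1$ bound (from the full Schauder estimate), or more transparently via a Littlewood--Paley decomposition of $e^{ik\cdot \xi}$ combined with Bernstein's inequality. Once this interpolation is set up cleanly, the remaining bookkeeping of how derivatives distribute between $b$ and $\na \xi$ after $N$ integrations by parts is a routine, if tedious, application of Faà di Bruno and Leibniz that mirrors the first part of the proof.
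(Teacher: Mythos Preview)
The paper does not actually give its own proof of this lemma; it simply states that the result, ``using standard Schauder estimates, can be found in \cite{DaSz2016}.'' Your proposal is precisely a sketch of that standard argument: iterated integration by parts against the phase via the auxiliary vector field $a = (\na\xi)^{-1}k/|k|^2$ for the first estimate, and for the second estimate the same integration-by-parts identity fed into $\cR$ together with the Schauder bound for the order $-1$ operator $\cR$, followed by interpolation to extract the $|k|^{\alpha-1}$ factor in the principal term. This is correct and matches the route taken in the cited reference.
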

 
\begin{lem}[Commutator estimate] Let $g$ and $h$ be in $C^\infty([0,T]\times \T^3)$ and set $g_\ell = P_{\leq \ell^{-1} } g$, $h_\ell = P_{\leq \ell^{-1}} h$ and $(gh)_\ell =
 P_{\leq \ell^{-1}} (gh)$. Then, for each $N \geq 0$, the following holds,
\begin{align}
\norm{g_\ell h_\ell  - (gh)_\ell }_N \lec_N  \ell^{2-N} \norm{g}_1\norm{h}_1. \label{est.com} 
\end{align}
 \end{lem}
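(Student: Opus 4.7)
The plan is to use the classical Constantin--E--Titi commutator identity together with Bernstein's inequality, both of which are already available in the paper. Writing $P_{\le \ell^{-1}} f(x) = \int_{\R^3} f(x-y) \widecheck{m}_\ell(y)\, dy$ for the low-pass filter, a direct computation yields the decomposition
\begin{equation*}
g_\ell h_\ell - (gh)_\ell = -\,(g-g_\ell)(h-h_\ell) + r_\ell(g,h),
\end{equation*}
where
\begin{equation*}
r_\ell(g,h)(x) := \int_{\R^3} \big(g(x-y)-g(x)\big)\big(h(x-y)-h(x)\big)\,\widecheck{m}_\ell(y)\,dy.
\end{equation*}
First I would establish this identity by expanding both sides (using $\int \widecheck{m}_\ell = 1$), and then estimate each of the two pieces separately in the $C^0$ norm.

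For the first piece, Bernstein's inequality (as recorded in the paper) gives $\|g-g_\ell\|_0 \lec \ell\,\|g\|_1$ and analogously for $h$, so that $\|(g-g_\ell)(h-h_\ell)\|_0 \lec \ell^2 \|g\|_1 \|h\|_1$. For the second piece I would use the pointwise bounds $|g(x-y)-g(x)|\le |y|\,\|g\|_1$ and $|h(x-y)-h(x)|\le |y|\,\|h\|_1$ together with the moment bound $\||y|^2 \widecheck{m}_\ell\|_{L^1(\R^3)} \lec \ell^2$ recorded after the definition of $\widecheck{m}_\ell$, yielding $\|r_\ell(g,h)\|_0 \lec \ell^2 \|g\|_1 \|h\|_1$. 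Combining the two bounds establishes the $N=0$ case
\begin{equation*}
\|g_\ell h_\ell - (gh)_\ell\|_0 \lec \ell^2 \|g\|_1 \|h\|_1.
\end{equation*}

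For $N\ge 1$, the key observation is that the commutator is frequency-localized: the Fourier supports of $g_\ell$ and $h_\ell$ are both contained in $\{|\xi|\le 2\ell^{-1}\}$, and so is that of $(gh)_\ell$. Hence $g_\ell h_\ell - (gh)_\ell$ has Fourier support in $\{|\xi|\le 4\ell^{-1}\}$. Applying Bernstein's inequality in the form $\|\nabla^N F\|_0 \lec \ell^{-N}\|F\|_0$ for such $F$ then yields
\begin{equation*}
\|g_\ell h_\ell - (gh)_\ell\|_N \lec \ell^{-N}\|g_\ell h_\ell - (gh)_\ell\|_0 \lec \ell^{2-N}\|g\|_1\|h\|_1,
\end{equation*}
which is the claim. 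I do not anticipate a serious obstacle here; the only minor point to be careful about is the decomposition identity and the verification of the moment bound for $|y|^2\widecheck{m}_\ell$, both of which are routine.
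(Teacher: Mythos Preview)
Your proof is correct and follows essentially the same strategy as the paper: reduce to $N=0$ via Bernstein's inequality using the frequency localization of the commutator, then use an algebraic identity to extract the $\ell^2$ factor. Your Constantin--E--Titi decomposition (the sign in front of $(g-g_\ell)(h-h_\ell)$ is actually flipped, but this is harmless for the norm estimate) is in fact the careful version of what the paper only sketches---the displayed identity in the paper's proof is not literally correct as written, and your argument is the standard way to make it precise.
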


\begin{lem}[Commutator estimate]
\label{lem:com2} 
For any $N\geq 0$, we have
\begin{align}
&\norm{ [u_\ell\cdot \na, {P}_{\le \ell^{-1}}]H}_N \lec \ell^{1-N}\norm{\na u}_0\norm{\na H}_0\label{est.com1}\\
&\norm{[u_\ell\cdot\na, {P}_{> \ell^{-1}}] H}_{N}\lec \ell^{1-N}\norm{\na u}_0 \norm{\na H}_0.  \label{est.com2}
\end{align} 
\end{lem}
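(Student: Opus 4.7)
The plan is to reduce both inequalities to the kernel representation of the Littlewood--Paley projector and then use Bernstein to absorb all the $N$ derivatives. Since ${P}_{> \ell^{-1}} = I - {P}_{\le \ell^{-1}}$ and $[u_\ell \cdot \nabla, I]=0$, we have the identity
\[
[u_\ell\cdot\na, {P}_{> \ell^{-1}}]H = -[u_\ell\cdot\na, {P}_{\le \ell^{-1}}]H,
\]
so \eqref{est.com2} will follow immediately from \eqref{est.com1}. The task therefore reduces to proving \eqref{est.com1}.

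First I would write out ${P}_{\leq \ell^{-1}} f(x) = \int_{\R^3} f(x-y)\widecheck{m}_\ell(y)\,dy$, which is exactly the formula already used in the proof of Lemma \ref{lem:est.Qvv}. Computing the commutator directly, the term $u_\ell(x)$ can be slid inside the integral, giving
\[
[u_\ell\cdot\na,{P}_{\leq \ell^{-1}}]H(x) = \int_{\R^3} \bigl(u_\ell(x)-u_\ell(x-y)\bigr)\cdot \na H(x-y)\,\widecheck{m}_\ell(y)\,dy.
\]
The pointwise bound $|u_\ell(x)-u_\ell(x-y)| \le |y|\,\|\na u_\ell\|_0 \le |y|\,\|\na u\|_0$ together with the moment estimate $\norm{|y|\widecheck{m}_\ell}_{L^1(\R^3)}\lec \ell$ (recalled in Section \ref{pre}) immediately yields
\[
\norm{[u_\ell\cdot\na,{P}_{\leq \ell^{-1}}]H}_0 \lec \ell\,\norm{\na u}_0\norm{\na H}_0,
\]
which is \eqref{est.com1} for $N=0$.

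For $N\geq 1$, rather than differentiating under the integral sign (which would produce several messy terms requiring the moment estimates $\norm{|y|\,\na^k\widecheck{m}_\ell}_{L^1}\lec \ell^{1-k}$), I would observe that the commutator is frequency--localized: both $u_\ell \cdot \na {P}_{\leq \ell^{-1}} H$ and ${P}_{\leq \ell^{-1}}(u_\ell\cdot \na H)$ are spectrally supported in a ball of radius $\lec \ell^{-1}$ (since $u_\ell$ and $P_{\leq \ell^{-1}}$ are). Bernstein's inequality then gives
\[
\norm{[u_\ell\cdot\na,{P}_{\leq \ell^{-1}}]H}_N \lec \ell^{-N}\norm{[u_\ell\cdot\na,{P}_{\leq \ell^{-1}}]H}_0 \lec \ell^{1-N}\norm{\na u}_0\norm{\na H}_0,
\]
completing \eqref{est.com1} and hence both estimates. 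The only delicate point to verify is the frequency localization claim---this is straightforward but I would spell it out carefully, since it is what allows a single zeroth--order kernel computation to propagate to all $N$.
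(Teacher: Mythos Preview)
Your proof is correct and follows essentially the same route as the paper: reduce \eqref{est.com2} to \eqref{est.com1} via $P_{>\ell^{-1}}=I-P_{\le\ell^{-1}}$, use the kernel representation to get the $N=0$ bound from the moment estimate $\norm{|y|\widecheck{m}_\ell}_{L^1}\lec\ell$, and then invoke Bernstein after observing that the commutator is frequency-localized to $\lec\ell^{-1}$. The only difference is expository---you flag the frequency-localization step explicitly, whereas the paper simply asserts that Bernstein applies.
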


\section{A microlocal lemma}\label{sec.ml}
In order to estimate different types of new errors, we shall make use of the following microlocal lemma. For a proof of the lemma, we refer to \cite[Lemma 8.1]{Kwon1}.

\begin{lem}[Microlocal Lemma]
\label{mic} 
Let $Z$ be a Fourier multiplier defined on $C^\infty(\T^3)$ by
\[
\crF[Zh](\zeta) = \mathfrak{m}(\zeta)\crF[h](\zeta), \quad \forall \zeta\in \Z^3
\]
for some $\mathfrak{m}$ which has an extension in $\cal{S}(\R^3)$ (which we continue to denote by $\mathfrak{m}$). Then, for any $n_0\in \N$, $\la>0$, and any scalar functions $a$ and $\xi$ in $C^\infty(\T^3)$, 
$Z(a e^{i\la \xi})$ can be decomposed as
\begin{equation*}
\begin{split}
Z(a e^{i\la \xi}) 
=&\   \left[a \mathfrak{m}(\la\na \xi)
+\sum_{k=1}^{2{n_0}} C_{k}^\la(\xi,a): (\na^k \mathfrak{m})(\la \na\xi)
+\e_{n_0}(\xi,a)\right]e^{i\la \xi}
\end{split}
\end{equation*}
for some tensor-valued coefficient $C_{k}^\la(\xi,a)$ and a remainder $\e_{n_0}(\xi, a)$ which is specified in the following formula: 
\begin{equation}\label{def.eN}\begin{split}
\e_{n_0}(\xi, a)(x)
&= 
\sum_{\substack{n_1+n_2\\=n_0+1}} \frac{(-1)^{n_1}c_{n_1,n_2}}{n_0!} \\
&\cdot\int_0^1\int_{\R^3} \widecheck{\mathfrak{m}}(y) e^{-i\la \na\xi(x)\cdot y} ((y\cdot\na)^{n_1}a)(x-ry) e^{i\la Z[\xi](r)}\be_{n_2} [\xi](r) (1-r)^{n_0} dydr,
\end{split}
\end{equation}
where $c_{n_1,n_2}$ is a constant depending only on $n_1$ and $n_2$, and the function $\be_n[\xi]$ is
\begin{align*}
\be_{n} [\xi](r)
&=B_{n} (i\la Z'(r), i\la Z''(r), \cdots, i\la Z^{(n)}(r)),\\
Z{[\xi]}(r) &=Z{[\xi]}_{x,y}(r) = r\int_0^1(1-s) (y\cdot\na )^2 \xi(x-rsy) ds,
\end{align*}
with $B_n$ denoting the $n$th complete exponential Bell polynomial.
\end{lem}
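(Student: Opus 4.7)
The plan is to realize $Z$ as convolution with the Schwartz kernel $\widecheck{\mathfrak{m}}$, peel off the base-point phase by Taylor expansion, and then perform a one-parameter Taylor expansion along a scaling line to extract the stated asymptotic decomposition. Since $\mathfrak{m}\in \mathcal{S}(\R^3)$ and $a\,e^{i\lambda \xi}$ is spatially periodic, Poisson summation lets us write
\[
Z(a e^{i\lambda \xi})(x) = c\int_{\R^3} \widecheck{\mathfrak{m}}(y)\, a(x-y)\, e^{i\lambda \xi(x-y)}\, dy,
\]
for an explicit normalizing constant $c$. To isolate the leading oscillation I would Taylor expand the phase to first order,
\[
\xi(x-y) = \xi(x) - y\cdot \nabla \xi(x) + Z[\xi]_{x,y}(1), \qquad Z[\xi]_{x,y}(r) := r\int_0^1 (1-s)(y\cdot\nabla)^2\xi(x-rsy)\,ds,
\]
noting $Z[\xi]_{x,y}(0)=0$, which factors the exponential as $e^{i\lambda\xi(x-y)} = e^{i\lambda \xi(x)}\,e^{-i\lambda y\cdot \nabla\xi(x)}\, e^{i\lambda Z[\xi]_{x,y}(1)}$ and pulls out the overall phase $e^{i\lambda \xi(x)}$ from the integral.

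Next, I introduce the one-parameter family $h_{x,y}(r) := a(x-ry)\,e^{i\lambda Z[\xi]_{x,y}(r)}$, so that $h_{x,y}(0) = a(x)$ and $h_{x,y}(1) = a(x-y)\,e^{i\lambda Z[\xi](1)}$, and apply Taylor's theorem at $r=0$ with integral remainder:
\[
h_{x,y}(1) = \sum_{k=0}^{n_0} \frac{h_{x,y}^{(k)}(0)}{k!} + \frac{1}{n_0!}\int_0^1 (1-r)^{n_0}\, h_{x,y}^{(n_0+1)}(r)\, dr.
\]
A combined use of Leibniz and Fa\`a di Bruno's identity $\frac{d^n}{dr^n}\,e^{f(r)} = e^{f(r)}\,B_n(f'(r),\ldots,f^{(n)}(r))$ applied to $f(r) = i\lambda Z[\xi]_{x,y}(r)$ yields
\[
h_{x,y}^{(k)}(r) = \sum_{n_1+n_2=k}\binom{k}{n_1} (-1)^{n_1}\,(y\cdot \nabla)^{n_1} a(x-ry)\cdot e^{i\lambda Z[\xi](r)}\,\be_{n_2}[\xi](r).
\]
Substituting $k = n_0 + 1$ into the integral remainder and multiplying through by $\widecheck{\mathfrak{m}}(y)\,e^{-i\lambda y\cdot \nabla \xi(x)}$ reproduces verbatim the definition of $\e_{n_0}(\xi,a)$ in \eqref{def.eN}.

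For each polynomial (non-remainder) term I would then use the elementary identity
\[
\int_{\R^3}\widecheck{\mathfrak{m}}(y)\, y^{\alpha}\, e^{-i\lambda y\cdot \nabla\xi(x)}\,dy = i^{|\alpha|}(2\pi)^3\, (\partial^\alpha \mathfrak{m})(\lambda\nabla\xi(x)),
\]
obtained by differentiating the Fourier inversion formula in the parameter $\eta = \lambda\nabla\xi(x)$. Since $Z[\xi]_{x,y}(0)=0$, each summand of $h_{x,y}^{(k)}(0)$ is a polynomial in $y$ whose total degree lies between $k$ (when $n_2=0$) and $2k$ (when $n_1=0$ and the Bell polynomial $\be_{n_2}[\xi](0)$ produces its maximum $y$-power of $2n_2$), multiplied by factors depending only on derivatives of $a$ and $\xi$ evaluated at the base point $x$. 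Grouping the resulting contributions by the order $k'\in\{0,1,\dots,2n_0\}$ of the derivative of $\mathfrak{m}$ that emerges, the $k'=0$ term (from $k=n_1=n_2=0$) produces the asserted leading asymptotic $a\,\mathfrak{m}(\lambda\nabla\xi)$, while the remaining contributions for $k'=1,\dots,2n_0$ assemble into tensor contractions $C_{k'}^\lambda(\xi,a):(\nabla^{k'}\mathfrak{m})(\lambda\nabla\xi)$, which implicitly defines the coefficients $C_{k'}^\lambda(\xi,a)$ as explicit expressions in $\nabla^{\leq k'}a$ and $\nabla^{\leq k'+1}\xi$ at $x$.

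The principal subtlety I expect is purely combinatorial: tracking how the Leibniz/Bell expansion distributes factors of $y$ between the amplitude and phase derivatives, and then repackaging all terms of a given total $y$-degree $k'$ into a single tensor contraction with $(\nabla^{k'}\mathfrak{m})(\lambda\nabla\xi)$. The Bell-polynomial formalism is essential here because $e^{i\lambda Z[\xi](r)}$ admits no finite Taylor polynomial in $r$ — only its finite-order derivatives at $r=0$ do — and the identity $(e^f)^{(n)} = e^f\,B_n(f',\ldots,f^{(n)})$ is precisely what packages these derivatives, without expanding the exponential, into the closed-form remainder \eqref{def.eN}.
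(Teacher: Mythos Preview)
Your argument is correct and is essentially the only route compatible with the stated remainder: the paper does not give its own proof but refers to \cite[Lemma~8.1]{Kwon1}, and since your one-parameter Taylor expansion of $h_{x,y}(r)=a(x-ry)e^{i\lambda Z[\xi]_{x,y}(r)}$ combined with Leibniz and Fa\`a di Bruno reproduces verbatim the formula \eqref{def.eN} (with $c_{n_1,n_2}=\binom{n_0+1}{n_1}$), your approach necessarily coincides with the original argument there.
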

In fact, this lemma has an important consequence on the anti-divergence operator $\mathcal{R}$.
\begin{cor}\cite[Corollary 8.2]{Kwon1}\label{cor.mic2} Let $N=0,1,2$ and $F = \sum_{m\in \Z^3\setminus\{0\}}\sum_{p\in \Z} a_{p,m}e^{i\la_{q+1}m\cdot\xi_m}$. Assume that a function $a_{p,m}$ fullfills the following requirements. 
\begin{enumerate}[(i)]
\item The support of $a_{p,m}$ satisfies $\supp(a_{p,m}) \subset (t_p - \frac 12\tau_q, t_p + \frac 32\tau_q)\times \R^3$. In particular, 
for $p$ and $p'$ neither same nor adjacent, we have
\begin{equation}\label{dis.amk}
\supp(a_{p,m}) \cap \supp(a_{p', m'}) = \emptyset, \quad \forall m, m' \in \Z^3\setminus\{0\}.
\end{equation}
\item For any $j\geq 0$ and $(p,m)\in \Z\times \Z^3$,  
\[
\norm{a_{p,m}}_j+ (\la_{q+1}\de_{q+1}^\frac 12)^{-1}\norm{D_{t,\ell} a_{p,m}}_j\lec_j  \mu_q^{-j} |\dot{a}_m|, \quad
\sum_m |m|^{n_0+2} |\dot{a}_m| \leq  a_F, 
\]
for some sequence $\{\dot{a}_m\}$ and some $a_F>0$, where $ n_0 = {\ceil{\frac{2b(2+\al)}{(b-1)(1-\al)}}}$ and {$\norm{\cdot}_{j} = \norm{\cdot}_{C(\mathcal{I}; C^j(\T^3))}$ on some time interval $\mathcal{I}\subset \R$. }
\end{enumerate}
Then, {for any $b>1$, we can find $\La_0(b)$ such that for any $\la_0\geq \La_0(b)$,} $\cR F$ satisfies the following inequalities: 
\begin{align*}
\norm{\cR F}_{N} \lec \la_{q+1}^{N-1} a_F, \quad
\norm{\as D_t \cR F}_{N-1} \lec \la_{q+1}^{N-1}\de_{q+1}^\frac12 a_F\,
\end{align*}
upon setting $\as D_t = (\pa_t + u_{q+1} \cdot \na )$.
\end{cor}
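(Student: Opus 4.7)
The plan is to apply the microlocal Lemma~\ref{mic} termwise to the Fourier series representing $F$, using $\mathcal{R}$ as the Fourier multiplier $Z$. The symbol $\mathfrak{m}(\zeta)$ of $\mathcal{R}$ is of order $-1$, so that $|\mathfrak{m}(\la_{q+1} m \cdot \nabla \xi_p)| \lec (\la_{q+1}|m|)^{-1}$ and $|(\nabla^k \mathfrak{m})(\la_{q+1} m \cdot \nabla \xi_p)| \lec (\la_{q+1}|m|)^{-1-k}$, once $\la_0$ is chosen large enough that $\la_{q+1} \na \xi_p \cdot m$ stays away from $0$ (this uses \eqref{est.flow2}). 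For each $(p,m)$, the Lemma decomposes
\[
\mathcal{R}\bigl( a_{p,m} e^{i\la_{q+1} m\cdot \xi_p}\bigr) = \Bigl[a_{p,m}\mathfrak{m}(\la_{q+1}m\cdot\nabla\xi_p) + \sum_{k=1}^{2n_0} C_k^{\la_{q+1}}(\xi_p, a_{p,m})\mathbin{:} (\nabla^k\mathfrak{m})(\la_{q+1}m\cdot\nabla\xi_p) + \e_{n_0}(\xi_p, a_{p,m})\Bigr] e^{i\la_{q+1} m\cdot \xi_p}.
\]

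First I will bound the leading and intermediate terms. Each coefficient $C_k^{\la_{q+1}}(\xi_p, a_{p,m})$ involves up to $k$ derivatives of $a_{p,m}$ and $\xi_p$, so by hypothesis (ii), Lemma~\ref{lem:est.flow}, and the choice $\ell^{-1}\le \mu_q^{-1}$, I obtain $\norm{C_k^{\la_{q+1}}(\xi_p, a_{p,m})}_0 \lec_k \mu_q^{-k}|\dot a_m|$. Combined with the symbol bound this gives that the $k$-th term is controlled by $(\la_{q+1}\mu_q)^{-k}(\la_{q+1})^{-1}|\dot a_m|$, which is summable in $k$ since $\la_{q+1}\mu_q \gg 1$. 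For higher spatial derivatives, each $\nabla$ falling on the exponential contributes $\la_{q+1}|m|$ (the dominant cost), while derivatives on amplitudes cost only $\mu_q^{-1}$; since $|m|$-powers are absorbed by hypothesis (ii) using $n_0 = \lceil\tfrac{2b(2+\al)}{(b-1)(1-\al)}\rceil$. The disjoint-support condition \eqref{dis.amk} ensures that at any point at most two $p$-indices contribute, so summing in $p$ is harmless and summing in $m$ produces the factor $a_F$. This yields $\norm{\mathcal R F}_N \lec \la_{q+1}^{N-1} a_F$ for $N=0,1,2$.

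Next, for the material-derivative estimate, I decompose $\as D_t \mathcal R F = D_{t,\ell} \mathcal R F + ((u_{q+1}-u_\ell)\cdot \na)\mathcal R F$. The transport correction is handled by the spatial bound just proved together with $\norm{u_{q+1}-u_\ell}_0 \lec \de_{q+1}^{1/2}$ (from \eqref{est.v.dif} and Lemma~\ref{p:velocity_correction_estimates}). For $D_{t,\ell}\mathcal R F$, I exploit the crucial cancellation $D_{t,\ell}\xi_p = 0$ from \eqref{def.bflow}, so $D_{t,\ell}$ acting on the expansion only hits amplitudes and derivatives of $\xi_p$ appearing in the coefficients $C_k^{\la_{q+1}}$. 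The bound on $D_{t,\ell}a_{p,m}$ from hypothesis (ii) gives a gain of $\la_{q+1}\de_{q+1}^{1/2}$ per $D_{t,\ell}$, while the flow-derivative estimates \eqref{est.flow.indM} handle the remaining factors, yielding the claimed bound $\norm{\as D_t \mathcal R F}_{N-1} \lec \la_{q+1}^{N-1}\de_{q+1}^{1/2} a_F$.

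The main obstacle is the remainder $\e_{n_0}$, whose explicit form \eqref{def.eN} involves oscillatory integrals with Bell-polynomial coefficients growing in $\la_{q+1}$. The saving is that each such factor of $\la_{q+1}$ is compensated by a $\mu_q^{-1}|m|$ from a derivative of the amplitude transported along $y$; the net gain per step is $(\la_{q+1}\mu_q)^{-1}$, and after $n_0+1$ steps the sum $\sum_m |m|^{n_0+2}|\dot a_m|$ converges thanks to the precise choice of $n_0$, producing a $\la_{q+1}^2(\la_{q+1}\mu_q)^{-(n_0+1)}\lec_M \de_{q+1}^{1/2}$ gain that dominates in the regime $b>1$ close to $1$. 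This choice of $\La_0(b)$ absorbing the combinatorial loss is precisely the parameter constraint recorded in the statement of the corollary.
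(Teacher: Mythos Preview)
The paper does not give its own proof of this corollary; it is quoted verbatim from \cite[Corollary~8.2]{Kwon1}. Your outline follows the same route as that reference: expand $\mathcal R F$ termwise via the microlocal Lemma~\ref{mic}, read off the gain $\la_{q+1}^{-1}$ from the order $-1$ symbol, absorb the intermediate coefficients $C_k^{\la_{q+1}}$ using the $\mu_q^{-k}$ cost and the geometric decay $(\la_{q+1}\mu_q)^{-k}$, and handle $\as D_t$ by splitting off $(u_{q+1}-u_\ell)\cdot\nabla$ and using $D_{t,\ell}\xi_p=0$ on the remaining piece. That is the right architecture.

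There is, however, one genuine gap. Lemma~\ref{mic} requires the symbol $\mathfrak m$ to extend to a function in $\mathcal S(\R^3)$, and the symbol of $\mathcal R$ (built from $\zeta_i/|\zeta|^2$ and $\zeta_i\zeta_j\zeta_k/|\zeta|^4$) is singular at the origin, hence \emph{not} Schwartz. Your remark that ``$\la_{q+1}\nabla\xi_p\cdot m$ stays away from $0$'' addresses only the pointwise evaluation $\mathfrak m(\la_{q+1}\nabla\xi_p\cdot m)$; it does not rescue the remainder formula \eqref{def.eN}, which integrates $\widecheck{\mathfrak m}$ against translated amplitudes and therefore needs rapid decay of $\widecheck{\mathfrak m}$. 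The standard repair, carried out in \cite{Kwon1}, is to insert a Littlewood--Paley splitting $\mathcal R = \mathcal R P_{\gtrsim\la_{q+1}} + \mathcal R P_{\lesssim\la_{q+1}}$: the first piece has a smooth compactly-supported-away-from-zero symbol to which Lemma~\ref{mic} applies verbatim, while the second piece is controlled by the stationary phase Lemma~\ref{phase} (since $a_{p,m}e^{i\la_{q+1}m\cdot\xi_p}$ has spectrum concentrated near $\la_{q+1}|m|$, its low-frequency projection is tiny). This same decomposition is also what makes the material-derivative commutators $[D_{t,\ell},\mathcal R P_{\gtrsim\la_{q+1}}]$ tractable; you can see the mechanism worked out explicitly around \eqref{est.H} in Section~\ref{sec.cur}. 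Once you insert this splitting, the rest of your argument goes through as written.
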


\section{Material derivatives estimates}
\label{apenC}
In this section, we briefly recall several lemmas which in turn help to verify material derivative estimates for the velocity field. For the details, we refer to \cite{Giri01, Giri02, Giri03}.

\subsection{Mollification estimates}
We first recall the following algebraic identity.
Let $v$ be a sufficiently smooth divergence-free vector field and let $D_t = \pa_t + v\cdot \na$ be the material derivative operator associated to $v$. For any sufficiently smooth function $F = F(x,t)$ and any $n,m\geq 0$, the Leibniz rule implies that
\begin{subequations}
\begin{align}
D^n D_t^m F
&=D^n (\partial_t + v \cdot \nabla_x)^m F
= \sum_{\substack{m'\leq m \\ n'+m' \leq n+m}} d_{n,m,n',m'}(v)(x,t) D^{n'} \partial_t^{m'} F \, ,
\label{eq:RNC} \\
d_{n,m,n',m'}(v)
&= \sum_{k = 0}^{m-m'} \sum_{\substack{\{ \gamma \in {\mathbb N}^k \colon |\gamma| = n-n'+k,\\ \beta \in {\mathbb N}^k \colon |\beta| = m-m'-k\}}} c(m,n, k, \gamma, \beta)  \prod_{\ell=1}^{k} \left( D^{\gamma_\ell} \partial_t^{\beta_\ell}  v(x,t) \right) \, ,
\label{eq:DNC}
\end{align}
\end{subequations}
where $c(m,n, k, \gamma, \beta)$ denotes an explicitly computable combinatorial coefficient  which depends only on the factors inside the parentheses. Identities \eqref{eq:RNC}--\eqref{eq:DNC} hold because $D$ and $\partial_t$ commute; the proof is based on induction on $n$ and $m$.

Let us now assume that a function $f$ satisfies the estimates \eqref{est.new}. Recall that $f_\ell (\theta):= P_{\le \ell^{-1}} f(\theta) = P_{\le 2^J} f(\theta) = \int_{\R^3} f(\theta + 2^{-J} \kappa) \,\widecheck{m}(\kappa)d\kappa$, where $J \in \N$ is the maximum number satisfying $2^J \le \ell^{-1}$, and $\int_{\R^3} \widecheck{m}(\kappa)d\kappa = m(0)=1$. Assume that $v$ also satisfies the estimates \eqref{est.new}. 

Expanding $f$ in a Taylor series in space around $\theta$ yields the formula
\begin{align*}
    f(\theta + 2^{-J} \kappa) = f(\theta) + \sum_{|\alpha|=1} \frac{1}{\alpha!} (2^{-J} \kappa)^{(\alpha)} \int_0^1  D^\alpha f (\theta+\eta 2^{-J} \kappa) \,d\eta \, .
\end{align*}
Therefore,
\begin{align*}
f_\ell(\theta) - f(\theta)
&= \sum_{|\alpha|= 1} \frac{1}{\alpha!}  \int_{\R^3} \widecheck{m}(\kappa) (2^{-J}\kappa)^{(\alpha)} \int_0^1  D^\alpha  f (\theta+\eta 2^{-J} \kappa) \,d\eta \, d\kappa \, .
\end{align*}
Now we appeal to the identity \eqref{eq:RNC} with $F = f_\ell - f$ to obtain
\begin{align*}
\norm{D^n D_t^m (f_\ell - f)}_{L^\infty(\T^3)}                                  \les \sum_{\substack{m'\leq m \\ n'+m' \leq n+m}} \norm{d_{n,m,n',m'}(v)}_{L^\infty(\T^3)} \norm{ D^{n'} \partial_t^{m'} (f_\ell - f)}_{L^\infty(\T^3)} \, . 
\end{align*}
From given assumptions on $f$, $v$ and the formula \eqref{eq:DNC}, we have that (with $\lambda=\la_q, \Tau^{-1} = \tau_q^{-1}$)
\begin{align*}
\norm{d_{n,m,n',m'}(v)}_{L^\infty(\T^3)} \lesssim \sum_{k=0}^{m-m'}  \lambda^{ n-n' +k} \de_q^{1/2}(\Tau^{-1})^{m-m' -k} \lec \lambda^{ n-n'} (\Tau^{-1})^{m-m'}.
\end{align*} 
Combining this estimate, we deduce that
\begin{align}
&\left\|D^n D_{t}^m (f_\ell - f)\right\|_{L^\infty(\T^3)} \notag\\
&\qquad\les \sum_{\substack{m'\leq m \\ n'+ m' \leq n+m}} \lambda^{(n-n')} (\Tau^{-1} )^{(m-m')} \de_q^{1/2}\norm{ D^{n'} \partial_t^{m'} (f_\ell - f)}_{L^\infty((a,b) \times\T^3)} 
\notag\\
&\qquad\les \sum_{\substack{m'\leq m \\ n'+m' \leq n+m}} \sum_{|\alpha| = 1} \la^{(n-n')} (\Tau^{-1})^{(m-m')}  \de_q^{1/2} \times \lambda_q^{n' + |\alpha| } \de_q^{1/2} (\Tau^{-1})^{m'}                                
\underbrace{\int_{\R^3} \abs{(2^{-J} \kappa)^{(\alpha)}} |\widecheck{m}(\kappa)      | d\kappa}_{\approx \ell^{|\alpha|}}
\notag\\
&\qquad\les  \sum_{|\alpha| = 1} \la^{n}  \lambda_q^{ |\alpha|} \de_q (\Tau^{-1})^{m}  \ell^{|\alpha|} 
\lesssim \la_q^n \de^{1/2}_q (\ell \la_q \de_q^{1/2}) (\tau_q^{-1})^{m} \lesssim \ell^{1-n} \la_q \de_q^{1/2} (\tau_q^{-1})^m. \label{est.final01}
\end{align}

\subsection{Commutator with material derivatives}
\label{comm01}
Here we recall a useful ingredient for bounding commutators of Eulerian and material derivatives in the following lemma.

\begin{lem}
\label{lem:Komatsu}
Let $m, n \geq 0$. Then we have that the commutator of $D_t^m$ and $D^n$ is given by
\begin{align}
 \left[ D_t^m, D^n \right] = \sum_{\{ \alpha \in \N^n \colon 1 \leq |\alpha|\leq m \} } \frac{m!}{\alpha! (m-|\alpha|)!} \left( \prod_{\ell = 1}^{n} (\ad D_t)^{\alpha_\ell}(D) \right) D_t^{m-|\alpha|}.
 \label{eq:Komatsu}
\end{align}
By the product in \eqref{eq:Komatsu} we mean the product/composition of operators 
\[
\prod_{\ell = 1}^{n} (\ad D_t)^{\alpha_\ell}(D) = (\ad D_t)^{\alpha_n}(D) (\ad D_t)^{\alpha_{n-1}}(D) \ldots (\ad D_t)^{\alpha_1}(D)
\,,
\]
so that on the right side of \eqref{eq:Komatsu} we have a sum of differential operators of order at most $n$.
\end{lem}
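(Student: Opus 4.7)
The plan is to establish the identity \eqref{eq:Komatsu} by induction on $n$, using as the workhorse the scalar case $n=1$ — the standard Leibniz expansion for the adjoint action — which is the only nontrivial ingredient. The statement is purely algebraic and holds in any associative algebra generated by two formal derivations, so no analytic input is needed beyond $f$ being smooth enough for all the derivatives to make sense.

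I will first prove the base case $n=1$: namely,
\begin{equation*}
D_t^m D \;=\; \sum_{k=0}^{m} \binom{m}{k}\,(\ad D_t)^k(D)\, D_t^{m-k},
\end{equation*}
from which subtracting the trivial $k=0$ term yields $[D_t^m, D]$. This is itself proved by induction on $m$: writing $D_t^{m+1}D = D_t\cdot(D_t^m D)$, inserting the hypothesis, and commuting the outer $D_t$ past each $(\ad D_t)^k(D)$ by means of the identity $D_t X = X D_t + (\ad D_t)(X)$, the two resulting sums recombine via Pascal's identity $\binom{m}{k} + \binom{m}{k-1} = \binom{m+1}{k}$.

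For the inductive step in $n$, I will factor $D^n = D^{n-1}\cdot D$ and apply the inductive hypothesis to the first factor, producing
\begin{equation*}
D_t^m D^n \;=\; \sum_{\substack{\beta\in \N^{n-1}\\ |\beta|\le m}} \frac{m!}{\beta!(m-|\beta|)!}\, \biggl(\prod_{\ell=1}^{n-1} (\ad D_t)^{\beta_\ell}(D)\biggr)\, D_t^{m-|\beta|}\,D.
\end{equation*}
Applying the $n=1$ formula to each $D_t^{m-|\beta|}D$ introduces a new rightmost $(\ad D_t)^\gamma(D)$-factor with weight $\binom{m-|\beta|}{\gamma}$ and $\gamma \in \{0,\dots,m-|\beta|\}$. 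Setting $\alpha = (\gamma,\beta_1,\dots,\beta_{n-1})\in \N^n$ — so that, by the convention $\prod_{\ell=1}^n A_\ell = A_n\cdots A_1$, the newly generated factor lands in slot $\alpha_1$ — the product of the two binomial weights telescopes to $\tfrac{m!}{\alpha!(m-|\alpha|)!}$, and the double sum becomes a single sum over $\alpha\in\N^n$ with $|\alpha|\le m$. Subtracting off the $\alpha=0$ summand, which reproduces $D^n D_t^m$ with coefficient $1$, leaves precisely the range $1\le|\alpha|\le m$ and proves \eqref{eq:Komatsu}.

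The one real place for error is the ordering bookkeeping: because the $(\ad D_t)^k(D)$ operators do not commute with each other or with $D_t$, one must be scrupulous that at each inductive step the freshly produced factor is placed into the $\alpha_1$ slot of the new multi-index (matching the right-to-left product convention). Once this convention is tracked consistently, the factorial cancellation $\tfrac{m!}{\beta!(m-|\beta|)!}\binom{m-|\beta|}{\gamma} = \tfrac{m!}{\alpha!(m-|\alpha|)!}$ is immediate and there is no further analytic obstacle.
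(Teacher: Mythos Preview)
Your argument is correct. The paper does not actually supply a proof of this lemma: it is stated in Appendix~\ref{comm01} as a recalled result, with the reader referred to \cite{Giri01,Giri02,Giri03} for details. Your proof by double induction --- first establishing the $n=1$ Leibniz-type expansion $D_t^m D = \sum_{k=0}^m \binom{m}{k}(\ad D_t)^k(D)\,D_t^{m-k}$ via Pascal's identity, then inducting on $n$ by factoring $D^n = D^{n-1}D$ and reindexing with $\alpha=(\gamma,\beta_1,\dots,\beta_{n-1})$ --- is the standard derivation of this Komatsu-type identity, and your bookkeeping of the ordering convention (new factor into slot $\alpha_1$, matching the right-to-left product) and the multinomial telescoping $\tfrac{m!}{\beta!(m-|\beta|)!}\binom{m-|\beta|}{\gamma}=\tfrac{m!}{\alpha!(m-|\alpha|)!}$ are both handled correctly.
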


For the above lemma to be useful, we need to be able to characterize the operator $(\ad D_t)^a (D)$. 

\begin{lem}
\label{lem:ad:Dt:a:D}
Let $a \in \N$. Then the order $1$ differential operator $(\ad D_t)^a (D)$ may be expressed as
\begin{align}
(\ad D_t)^a (D) =  \sum_{k=1}^{a}  \sum_{\{ \beta \in \N^k \colon |\beta| = a-k \} } c_{a,k,\beta}   \prod_{j=1}^{k} (D_t^{\beta_j} D v) \cdot \nabla
\label{eq:ad:Dt:a:D}
\end{align}
where the $\prod$ in \eqref{eq:ad:Dt:a:D} denotes the product of matrices, $c_{a,k,\beta}$ are coefficients which depend only on $a,k$, $\beta$.
\end{lem}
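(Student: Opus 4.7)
\textbf{Proof plan for Lemma \ref{lem:ad:Dt:a:D}.} The natural approach is induction on $a$. The base case $a=1$ is a direct computation: since $D_t = \pa_t + v\cdot\na$ and $D = \na$, one has $[D_t,D]f = (\pa_t + v\cdot\na)\na f - \na((\pa_t+v\cdot\na)f)$, and the $\pa_t \na$ and $v\cdot\na\na$ terms cancel against $\na \pa_t$ and $v_j \pa_i\pa_j f$ respectively, leaving $[D_t,D]f = -(\na v)^\top\na f$, i.e., $(\ad D_t)(D) = c_{1,1,(0)}(Dv)\cdot\na$. This matches the claimed form \eqref{eq:ad:Dt:a:D} with $k=1$ and $\beta=(0)$.

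For the inductive step, assume \eqref{eq:ad:Dt:a:D} holds for some $a\geq 1$; then $(\ad D_t)^{a+1}(D) = [D_t,(\ad D_t)^a(D)]$ reduces, by linearity, to computing $[D_t, M_1 M_2\cdots M_k \cdot\na]$ where $M_j := D_t^{\beta_j}Dv$ is matrix-valued and the product is the usual matrix product followed by contraction with $\na$. Writing this commutator out, the Leibniz rule produces two types of contributions:
\begin{enumerate}
\item[(i)] Terms where $D_t$ differentiates a single factor $M_j$, producing $D_t M_j = D_t^{\beta_j+1}Dv$. These terms preserve $k$ but raise $|\beta|$ by one, so the new multi-index $\beta'$ still lies in $\N^k$ with $|\beta'| = (a-k)+1 = (a+1)-k$.
\item[(ii)] The residual term $M_1\cdots M_k[D_t,\na]$, where the base case gives $[D_t,\na] = -(Dv)\cdot\na$. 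This appends a new factor $D_t^0 Dv$ at the end of the product, so $k$ increases by $1$, $|\beta|$ is unchanged, and the new $\beta'\in\N^{k+1}$ satisfies $|\beta'| = a-k = (a+1)-(k+1)$.
\end{enumerate}
In either case, the resulting terms match the structure of \eqref{eq:ad:Dt:a:D} at level $a+1$: a sum over $k'\in\{1,\dots,a+1\}$ and $\beta'\in\N^{k'}$ with $|\beta'|=(a+1)-k'$, with new combinatorial coefficients $c_{a+1,k',\beta'}$ obtained by collecting contributions from (i) and (ii).

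The main subtlety (rather than obstacle) is notational bookkeeping: one must be careful that the $\prod$ in \eqref{eq:ad:Dt:a:D} is an \emph{ordered} matrix product, so that when $D_t$ differentiates the $j$th factor in case (i) no reordering is required, and in case (ii) the new factor is inserted specifically at the rightmost slot. Since the statement only asserts existence of the coefficients $c_{a,k,\beta}$ rather than their explicit values, no attempt to track them is needed; the induction immediately yields the claimed structural form.
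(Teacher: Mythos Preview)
Your induction argument is correct and is exactly the natural approach. The paper itself does not supply a proof of this lemma: it is merely stated in Appendix~\ref{comm01} as a recalled fact, with the details deferred to the references \cite{Giri01,Giri02,Giri03}. The argument in those references is precisely the induction you outline --- base case $[D_t,D]=-(Dv)\cdot\nabla$, then for the step write $[D_t, M_1\cdots M_k\cdot\nabla] = (D_t(M_1\cdots M_k))\cdot\nabla + M_1\cdots M_k[D_t,\nabla]$, use Leibniz on the first term and the base case on the second --- so there is nothing to compare and nothing missing.
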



\subsection{Application of Fa\`a di Bruno formula}
\label{Faadi}

Let us recall the following version of the multivariable Fa\`a di Bruno formula \cite[Theorem 2.1]{ConstantineSavits96}. Let $g= g(x_1,\ldots,x_d) = f ( h(x_1,\ldots,x_d))$, where $f \colon \R^m \to \R$, and $h \colon \R^d \to \R^m$ are $C^n$ smooth functions of their respective variables. Let $\alpha \in {\mathbb N}_0^d$ be s.t. $|\alpha|=n$, and let $\beta \in \N_0^m$ be such that $1\leq |\beta|\leq n$. We then define  
\begin{align} 
p(\alpha,\beta) 
&= \Bigg\{ (k_1,\ldots,k_n; \ell_1,\ldots,\ell_n) \in (\N_0^m)^n \times (\N_0^d)^n \colon \exists s \mbox{ with }1\leq s \leq n \mbox{ s.t. } \notag \\
&\qquad   \qquad  |k_j|, |\ell_j| > 0 \Leftrightarrow 1 \leq j \leq s, \, 0 \prec \ell_{1} \prec \ldots \prec \ell_s,  \sum_{j=1}^s k_j =  \beta, \sum_{j=1}^s |k_j| \ell_j = \alpha \Bigg\}.
\label{eq:Faa:di:Bruno:1}
\end{align} 
Then the multivariable Fa\`a di Bruno formula states that we have the equality
\begin{align}
\partial^\alpha g (x) = \alpha! \sum_{|\beta|=1}^n (\partial^\beta f)(h(x)) \sum_{p(\alpha,\beta)} \prod_{j=1}^n \frac{(\partial^{\ell_j} h(x))^{k_j}}{k_j! (\ell_j!)^{k_j}}.
\label{eq:Faa:di:Bruno:2}
\end{align}

\medskip

Let $X(a,t)$ be the flow induced by the vector field $v$, with initial condition $X(a,t) = x$. Denote by $a = X^{-1}(x,t)$ the inverse of the map $X$. We then note that 
\begin{align*}
D_t^M g(x,t) = \left(\partial_t^M ( (g \circ X)(a,t)) \right)\vert_{a = X^{-1}(x,t)}.
\end{align*}
We wish to apply the above with the function $g(x,t) = \psi(\Gamma^{-2}h(x,t))$. 
We apply the Fa\`a di Bruno formula \eqref{eq:Faa:di:Bruno:1}--\eqref{eq:Faa:di:Bruno:2} with the one dimensional differential operator $\partial_t^M$ to the composition $g\circ X$, note that $\partial_t^{\beta_i} (h(X(a,t),t)) = (D_t^{\beta_i} h)(X(a,t),t)$, and then evaluate the resulting expression at $a = X^{-1}(x,t)$, to obtain
\begin{align}
\label{Faadi_01}
D_t^M g(x,t) = M! \sum_{B=1}^{M} \Gamma^{-2B} \psi^{(B)}(\Gamma^{-2} h(x,t)) \sum_{\substack{\{\kappa , \beta \in \N^M \colon \\ |\kappa|= B, \kappa \cdot \beta = M \} }} \prod_{i=1}^{M} \frac{ \left( (D_t^{\beta_i} h)(x,t) \right)^{\kappa_i}}{\kappa_i! (\beta_i !)^{\kappa_i}}.
\end{align}

\section{A localized inverse divergence}
\label{apenD}
This appendix contains a special inverse divergence operator, which is used to deal with the transport current error term in subsection~\ref{transport error}. We simply state the main result and a complete proof can be found in \cite[appendix]{Giri03}.

\begin{proposition}[\bf Inverse divergence iteration step]
	Let $n\geq 2$ be given. Fix a zero-mean $\T^n$-periodic function $\zeta$ and a zero-mean $\T^n$-periodic symmetric tensor field $\vartheta^{(i,j)}$ which are related by {$\zeta  =  \partial_{ij} \vartheta^{(i,j)}$}. Let $\xi_I$ be a volume preserving diffeomorphism of $\T^n$. Define the matrix $A = (\nabla \xi_I)^{-1}$. Given a vector field $G^k$, we have
	\begin{align}
	G^k (\zeta \circ \xi_I)
	= \partial_\ell R^{k \ell} + E^k
	\end{align}
	where the symmetric stress $R^{k\ell}$ is given by
	\begin{align}
	R^{k\ell}
	&= 
	G^k A_i^\ell(\partial_j\vartheta^{(i,j)} \circ \xi_I)
	+G^\ell A_i^k (\partial_j\vartheta^{(i,j)} \circ \xi_I)
	- G^n\pa_n\Phi^m A_i^k   A_j^\ell(\pa_m\vartheta^{(i,j)} \circ \xi_I)
	\, ,
	\end{align}
	and the error term $E^k$ is given by 
	\begin{align}
	E^k
	&= -\pa_\ell (G^\ell A_i^k) (\partial_j\vartheta^{(i,j)} \circ \xi_I)
	- (\pa_\ell G^k)A_i^\ell(\partial_j\vartheta^{(i,j)} \circ \xi_I)
	+ \pa_n (G^\ell A_i^k \pa_\ell\Phi^m) A_j^n(\pa_m\vartheta^{(i,j)} \circ\xi_I)
	\, .
	\end{align}
\end{proposition}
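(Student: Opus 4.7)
The plan is to prove the identity by direct computation, exploiting the chain rule for composition with $\xi_I$, the Piola identity $\partial_\ell A^\ell_i \equiv 0$ (which is the algebraic manifestation of $\det \nabla \xi_I \equiv 1$), and a carefully engineered correction term that enforces symmetry of $R^{k\ell}$ in $(k,\ell)$.

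First I would rewrite $\zeta\circ\xi_I$ using the factorization $\zeta=\partial_{ij}\vartheta^{(i,j)}$ and the chain-rule identity $(\partial_i f)\circ \xi_I = A^\ell_i\,\partial_\ell(f\circ\xi_I)$, which yields
\[
\zeta\circ\xi_I \;=\; A^\ell_i\,\partial_\ell\!\left[(\partial_j\vartheta^{(i,j)})\circ\xi_I\right].
\]
Multiplying by $G^k$ and integrating by parts produces a natural but non-symmetric stress $T^{k\ell}:=G^k A^\ell_i (\partial_j\vartheta^{(i,j)}\circ\xi_I)$, together with the error $-(\partial_\ell G^k)A^\ell_i(\partial_j\vartheta^{(i,j)}\circ\xi_I)$, plus a term proportional to $\partial_\ell A^\ell_i$ which vanishes by Piola. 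This already identifies the first summand of $R^{k\ell}$ in the statement and the second summand of $E^k$.

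Second, to enforce symmetry in $(k,\ell)$, I would add and subtract the transposed stress $S^{k\ell}:=G^\ell A^k_i(\partial_j\vartheta^{(i,j)}\circ\xi_I)$ together with the correction $U^{k\ell}:=G^n\partial_n\Phi^m A^k_i A^\ell_j(\partial_m\vartheta^{(i,j)}\circ\xi_I)$, so that $R^{k\ell}=T^{k\ell}+S^{k\ell}-U^{k\ell}$. I then must check that $\partial_\ell(S^{k\ell}-U^{k\ell})$ is exactly cancelled by the remaining two pieces of $E^k$. Expanding $\partial_\ell S^{k\ell}$ via Leibniz, the derivative that falls on $(\partial_j\vartheta^{(i,j)})\circ\xi_I$ produces $G^\ell A^k_i(\partial_\ell\xi_I^m)(\partial_m\partial_j\vartheta^{(i,j)})\circ\xi_I$; identifying $\partial_\ell\Phi^m$ with $\partial_\ell\xi_I^m$, this matches the analogous second-derivative term appearing in $\partial_\ell U^{k\ell}$ after a further application of Piola and of the chain-rule relation $A^\ell_j\,\partial_\ell\xi_I^m=\delta^m_j$. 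What survives are only coefficient-type contributions, and these align precisely with the first and third summands of $E^k$, using the symmetry $\vartheta^{(i,j)}=\vartheta^{(j,i)}$ when convenient.

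The principal difficulty is organizational rather than conceptual: there are four concurrent index contractions ($i,j,k,\ell$ plus summation dummies $m,n$) and one must track both the symmetrization and the chain-rule derivatives simultaneously. The formula for $U^{k\ell}$ is tuned so that the spurious second-derivative contribution coming from symmetrization is cancelled, and the symmetry of $\vartheta^{(i,j)}$ in $(i,j)$ then ensures the resulting $R^{k\ell}$ is itself symmetric in $(k,\ell)$, as claimed.
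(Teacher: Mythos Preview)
Your proposal is correct and is exactly the direct computation one expects: chain rule $(\partial_i f)\circ\xi_I = A^\ell_i\,\partial_\ell(f\circ\xi_I)$, the Piola identity $\partial_\ell A^\ell_i=0$, and the symmetrization correction $U^{k\ell}$ whose second-derivative contribution cancels against that of $S^{k\ell}$. The paper itself does not give a proof of this proposition but defers to \cite[Appendix]{Giri03}; your argument matches that computation, so there is nothing further to compare.
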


\begin{proposition}[\bf Main inverse divergence operator]
	\label{prop:intermittent:inverse:div}
	Let dimension $n\geq 2$ and Lebesgue exponent $p\in[1,\infty]$ be free parameters. The remainder of the proposition is composed first of \emph{low and high-frequency assumptions}, which then produce a \emph{localized output} satisfying a number of properties.  Finally, the proposition concludes with \emph{nonlocal assumptions and output}. 
	\smallskip
	
	\noindent\textbf{Part 1: Low-frequency assumptions}
	\begin{enumerate}[(i)]
		\item\label{item:cond.G.inverse.div} Let $G$ be a vector field and assume there exist a constant $\const_{G,p} > 0$ and parameters 
		\begin{equation}\label{eq:inv:div:NM}
		N_*\geq M_*\geq 1 \, ,
		\end{equation}
		$M_t$, and $\lambda, \nu,\nu' \geq 1$ such that 
		\begin{align}
		\norm{D^N D_{t}^M G}_{L^p}&\lesssim \const_{G,p} \lambda^N\MM{M,M_{t},\nu,\nu'}
		\label{eq:inverse:div:DN:G}
		\end{align}
		for all $N \leq N_*$ and $M \leq M_*$.
		\item Fix an incompressible vector field $u(t,x):\R\times\T^n\rightarrow \R^n$ and denote its material derivative by $D_t = \partial_t + u\cdot\nabla$. Let $\xi_I$ be a volume preserving diffeomorphism of $\T^n$ such that 
		\begin{align}
		D_t \xi_I= 0 \,
		\qquad \mbox{and} \qquad
		\norm{\nabla \xi_I - \I}_{L^\infty(\supp G)} \leq \frac 12 \,. \label{eq:DDpsi2}
		\end{align}
		Denote by $\xi_I^{-1}$ the inverse of the flow $\xi_I$,  which is the identity at a time slice which intersects the support of $G$.
		Assume that  the velocity field $v$ and the flow functions $\xi_I$ and $\xi_I^{-1}$ satisfy the bounds 
		\begin{subequations}
			\begin{align}
			\norm{D^{N+1}   \xi_I}_{L^{\infty}(\supp G)} + \norm{D^{N+1}   \xi_I^{-1}}_{L^{\infty}(\supp G)} 
			&\les \lambda'^{N}
			\label{eq:DDpsi}\\
			\norm{D^ND_t^M D u}_{L^{\infty}(\supp G)}
			&\les \nu \lambda'^{N}\MM{M,M_{t},\nu,\nu'}
			\label{eq:DDv}
			\,,
			\end{align}
		\end{subequations}
		for all $N \leq N_*$, $M\leq M_*$, and some $\lambda'>0$. 
	\end{enumerate}
	\smallskip
	
	\noindent\textbf{Part 2: High-frequency assumptions}
	\begin{enumerate}[(i)]
		\item\label{item:inverse:i} Let $\zeta \colon \T^n \to \R$ be a zero mean scalar function such that there exists a large positive even integer $\dpot \gg 1$ and a smooth, mean-zero, adjacent-pairwise symmetric tensor potential\footnote{We use $i_j$ for $1\leq j \leq \dpot$ to denote any number in the set $\{1,\dots,n\}$.} $\vartheta^{(i_1,\dots, i_\dpot)}:\T^n \rightarrow \R^{\left(n^\dpot\right)}$ such that $\zeta(x) = \partial_{i_1}\dots\partial_{i_\dpot} \vartheta^{(i_1 \dots i_\dpot)}(x)$.
		\item \label{item:inverse:ii} There exists a parameter $\mu\geq 1$ such that $\zeta$ and $\vartheta$ are $(\frac{\T}{\mu})^n$-periodic.
		\item \label{item:inverse:iii} There exist parameters $1 \ll \Upsilon \leq \Upsilon' \leq \Lambda$, $\const_{*,p}>0$ such that for all $0\leq N \leq {N_*}$ and all $0\leq k \leq \dpot$,
		\begin{align}
		\norm{D^N \partial_{i_1}\dots \partial_{i_k} \vartheta^{(i_1,\dots, i_\dpot)}}_{L^p} \les \const_{*,p} \Upsilon^{k-\dpot} \MM{N, \dpot - k , \Upsilon', \Lambda} \, .
		\label{eq:DN:Mikado:density}
		\end{align} 
		\item\label{item:inverse:iv} There exists $\Ndec$ such that the above parameters satisfy
		\begin{align}
		\lambda', \lambda \ll \mu \leq \Upsilon \leq \Upsilon' \leq \Lambda  \,, \qquad \max(\lambda,\lambda') \Upsilon^{-2} \Upsilon' \leq 1 \, , \qquad N_*-\dpot \geq 2\Ndec + n+1 \, , 
		\label{eq:inverse:div:parameters:0}
		\end{align}
		where by in the first inequality in \eqref{eq:inverse:div:parameters:0} we mean that 
		\begin{align}
		\Lambda^{n+1} \left(\frac{\mu}{2\pi \sqrt{3} \max(\lambda,\lambda')}\right)^{-\Ndec} \leq 1
		\,.
		\label{eq:inverse:div:parameters:1}
		\end{align}
	\end{enumerate}
	\smallskip
	
	\noindent\textbf{Part 3: Localized output}
	\begin{enumerate}[(i)]
		\item\label{item:div:local:0} There exists a symmetric tensor $R$ and a vector field $E$ such that\index{$\divH$}
		\begin{align}
		G \; \zeta\circ \xi_I  &=  \div R + E  
		=: \div\left( \divH \left( G\zeta \circ \xi_I \right) \right) + E \, . \label{eq:inverse:div}
		\end{align}
		We use the notation $R=\divH( G \zeta \circ \xi_I)$ for the symmetric stress.
		\item\label{item:div:local:i} The support of $R$ is a subset of $\supp G \cap \supp \vartheta$.
		\item\label{item:div:local:ii} There exists an explicitly computable positive integer $\const_\divH$, an explicitly computable function $r(j):\{0,1,\dots,\const_\divH\}\rightarrow \mathbb{N}$ and explicitly computable tensors
		\begin{align*}
		&\rho^{\beta(j)} \, , \qquad \beta(j)=(\beta_1,\beta_2,\dots,\beta_{r(j)})\in  \{1,\dots,n\}^{r(j)} \, , \\
		&H^{\alpha(j)} \, , \qquad \alpha(j)=(\alpha_1,\alpha_2,\dots,\alpha_{r(j)},k,\ell) \in \{1,\dots,n\}^{r(j)+2} \, 
		\end{align*}
		of rank $r(j)$ and $r(j)+2$, respectively, all of which depend only on $G$, $\zeta$, $\xi_I$, $n$, $\dpot$, such that the following holds.  The symmetric, localized stress $R$ can be decomposed into a sum of symmetric, localized stresses as\footnote{The contraction is on the first $r(j)$ indices, and the resulting rank two tensor is symmetric.}
		\begin{align}\label{eq:divH:formula}
		\divH^{k\ell} (G \zeta \circ \xi_I) = R^{k\ell} = \sum_{j=0}^{\const_\divH}  H^{\alpha(j)} \rho^{\beta(j)} \circ \xi_I \, .
		\end{align}
		Furthermore, we have that
		\begin{align}
		\supp H^{\alpha(j)} \subseteq \supp G  \, , \qquad \supp \rho^{\beta(j)} \subseteq \supp \vartheta \,  . \label{eq:inverse:div:linear}
		\end{align}
		\item\label{item:div:local:iii} For all $N \leq N_* - \frac \dpot 2$, $M\leq M_*$, and $j\leq \const_\divH$, we have the subsidiary estimates\footnote{In fact it is clear from the algorithm that as $j$ increases, the estimates become much stronger.  For simplicity, we simply record identical estimates for each term which are sufficient for our aims.}
		\begin{subequations}\label{eq:inverse:div:sub:main}
			\begin{align}
			\left\| D^N \rho^{\beta{(j)}} \right\|_{L^p} &\les \const_{*,p} \Upsilon^{-2} \Upsilon' \MM{N,1,\Upsilon',\Lambda} \label{eq:inverse:div:sub:1} \\
			\left\|D^N D_{t}^M H^{\alpha{(j)}}\right\|_{L^p} &\lesssim \const_{G,p} \left(\max(\lambda,\lambda')\right)^N \MM{M,M_{t},\nu,\nu'} \, . \label{eq:inverse:div:sub:2} 
			\end{align}
		\end{subequations}
		\item\label{item:div:local:iv} For all $N \leq N_* - \frac \dpot 2$ and $M\leq M_*$, we have the main estimate
		\begin{align}
		\norm{D^N D_{t}^M R}_{L^p}
		&\les  \const_{G,p} \const_{*,p}  \Upsilon' \Upsilon^{-2} \MM{N,1,\Upsilon',\Lambda} \MM{M,M_{t},\nu,\nu'} 
		\label{eq:inverse:div:stress:1}
		\end{align}
		\item\label{item:div:nonlocal} For $N \leq N_* - \frac \dpot 2 $ and $M\leq M_*$ the error term $E$  in \eqref{eq:inverse:div} satisfies
		\begin{align}
		\norm{D^N D_{t}^M E}_{L^p}  
		\les \const_{G,p} \const_{*,p}   \max(\lambda,\lambda')^{\frac \dpot 2} \left( \Upsilon' \Upsilon^{-2}\right)^{\frac \dpot 2} \Lambda^{N} \MM{M,M_{t},\nu,\nu'} 
		\,.
		\label{eq:inverse:div:error:1}
		\end{align}
	\end{enumerate}
	\smallskip
	
	\noindent\textbf{Part 4: Nonlocal assumptions and output}
	\begin{enumerate}[(i)]
		\item 
		\label{item:nonlocal:v}
		Let $N_\circ, M_\circ$ be integers such that 
		\begin{equation}\label{eq:inv:div:wut}
		1 \leq M_\circ \leq N_\circ \leq \frac{M_*}{2} \, ,
		\end{equation}
		and let $K_\circ$ be a positive integer.\footnote{{$K_\circ$ serves as an extra amplitude gain which will be used later to eat some material derivative losses.}}  Assume that in addition to the bound \eqref{eq:DDv} we have the following global lossy estimates
		\begin{align}
		\norm{D^N \partial_t^M u}_{L^\infty}\les  \const_u \lambda'^N \nu'^M
		\label{eq:inverse:div:v:global}
		\end{align}
		for all  $M \leq M_\circ$ and $N+M \leq N_\circ + M_\circ$, where 
		\begin{align}
		\const_u \lambda' \les \nu' 
		\,.
		\label{eq:inverse:div:v:global:parameters}
		\end{align}
		\item Assume that $\dpot $ is large enough so that\index{$\dpot$}\index{$\divR$}
		\begin{align}
		\const_{G,p} \const_{*,p} \max(\lambda,\lambda')^{\frac \dpot 4} (\Upsilon' \Upsilon^{-2})^{\frac \dpot 4}  \Lambda^{{n}+2+K_\circ} \left(1 + \frac{\max\{ \nu', \const_u \Lambda \}}{\nu 
		}\right)^{M_\circ}
		\leq 1
		\, .
		\label{eq:riots:4}
		\end{align}
	\end{enumerate}
	
	Then we may write  
	\begin{align}
	E = \div \RR_{nonlocal} + \int_{\T^3} G \zeta \circ \xi_I\, dx =: \div \left(\divR(G \zeta \circ \xi_I)\right) + \int_{\T^3} G \zeta \circ\xi_I  \,  dx \, ,
	\label{eq:inverse:div:error:stress}
	\end{align}
	where $\RR_{nonlocal} = \divR(G \zeta \circ \xi_I)$ is a traceless symmetric stress which satisfies
	\begin{align}
	\norm{D^N D_{t}^M \RR_{ nonlocal} }_{L^\infty}  
	\leq  \frac{1}{\Lambda^{K_\circ}} \max(\lambda,\lambda')^{\frac \dpot 4} (\Upsilon' \Upsilon^{-2})^{\frac \dpot 4} \Lambda^N \nu^M
	\label{eq:inverse:div:error:stress:bound}
	\end{align}
	for  $N \leq N_\circ$ and $M\leq M_\circ$.
\end{proposition}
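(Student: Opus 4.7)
The plan is to iterate the basic inverse-divergence identity stated just before the proposition (the "Inverse divergence iteration step"). I would write
$$\zeta = \partial_{i_1}\partial_{i_2}\bigl[\partial_{i_3}\cdots\partial_{i_\dpot}\vartheta^{(i_1\cdots i_\dpot)}\bigr]$$
and apply the iteration step to $G^k(\zeta \circ \xi_I)$ with the "effective potential" $\tilde\vartheta^{(i_1,i_2)} := \partial_{i_3}\cdots\partial_{i_\dpot}\vartheta^{(i_1\cdots i_\dpot)}$, producing a stress $\partial_\ell R^{k\ell}$ and an error $E^k$, both of which now involve only $\dpot-1$ derivatives of the primordial potential. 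Iterating this reduction on the successive error terms — each application transferring one more derivative off of $\vartheta$ onto external factors, namely derivatives of $G$, products of the Jacobian $A = (\nabla\xi_I)^{-1}$, and derivatives of $\nabla\xi_I$ — yields after roughly $\dpot/2$ steps a decomposition $G^k(\zeta\circ\xi_I) = \partial_\ell R^{k\ell} + E^k$ where $R^{k\ell}$ is the sum of the stresses produced at each level and $E^k$ is the accumulated residual.

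Next I would organise the output into the structural form \eqref{eq:divH:formula}. At each level the explicit formula for the iteration step naturally separates into an intrinsic piece $\rho^{\beta(j)}$ — a partial derivative of $\vartheta$ of appropriate order which, by \eqref{eq:DN:Mikado:density}, obeys the subsidiary bound \eqref{eq:inverse:div:sub:1} with the one-derivative effective frequency $\Upsilon'$ — and an external piece $H^{\alpha(j)}$ collecting the corresponding products of derivatives of $G$ and of $A$. Its bound \eqref{eq:inverse:div:sub:2} then follows from \eqref{eq:inverse:div:DN:G} together with the flow estimates \eqref{eq:DDpsi}. Support properties \eqref{eq:inverse:div:linear} are immediate since the iteration step preserves the supports of both $G$ and the potential. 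Multiplying the subsidiary estimates and applying H\"older yields the main stress bound \eqref{eq:inverse:div:stress:1}; the error bound \eqref{eq:inverse:div:error:1} accumulates a loss of $\max(\lambda,\lambda')$ per iteration (from derivatives falling on external factors) against a gain of $\Upsilon'\Upsilon^{-2}$ per iteration (from one integration of the potential), with the net improvement guaranteed by $\max(\lambda,\lambda')\Upsilon^{-2}\Upsilon'\le 1$ in \eqref{eq:inverse:div:parameters:0}.

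Throughout the iteration the material-derivative bounds are preserved by exploiting the crucial cancellation $D_t\xi_I = 0$: consequently $D_t$ commutes transparently with composition by $\xi_I$ and never lands on $\rho^{\beta(j)}$. Thus $D_t^M$ only hits the external factors $G$, $\nabla\xi_I$, and $Du$, which are controlled inductively using the commutator identity of Lemma~\ref{lem:Komatsu} together with the adjoint expansion of Lemma~\ref{lem:ad:Dt:a:D} and the hypotheses \eqref{eq:inverse:div:DN:G}, \eqref{eq:DDv}, so as to propagate the weighted time bound $\MM{M,M_t,\nu,\nu'}$ through the recursion. The principal bookkeeping obstacle will be verifying that the combinatorial explosion of summands produced by the iteration (each step roughly triples the number of terms through the three-piece structure of the stress and two-piece structure of the error) is absorbed into the universal constant $\const_\divH$ and does not disturb the declared powers of $\max(\lambda,\lambda')$ and $\Upsilon'\Upsilon^{-2}$ in the final estimates.

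For the non-local output, the residual $E$ still has frequency concentrated near $\mu$ (inherited from the periodicity hypothesis \eqref{item:inverse:ii}), whereas $G$ and $\nabla\xi_I$ oscillate at the much lower scale $\max(\lambda,\lambda')\ll \mu$. After subtracting the spatial mean, I would apply the classical inverse divergence $\cR$ and exploit this frequency separation via the stationary-phase Lemma~\ref{phase}, integrating by parts $\Ndec$ times to harvest a factor $(\max(\lambda,\lambda')/\mu)^{\Ndec}$; combined with \eqref{eq:inverse:div:parameters:1} this beats the $\Lambda^{n+1}$ loss from spatial derivatives and Sobolev embedding, while condition \eqref{eq:riots:4} supplies the extra amplitude gain $\Lambda^{-K_\circ}$ appearing in \eqref{eq:inverse:div:error:stress:bound}. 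The hardest step, in my view, is preserving $M_\circ$ levels of advective cancellation while passing from the local $L^p$-bounds to the $L^\infty$-bound on $\cR(E)$: this is precisely where the strong global velocity estimate \eqref{eq:inverse:div:v:global} together with the balance \eqref{eq:inverse:div:v:global:parameters} is required, and where the commutators between $D_t$, the non-local operator $\cR$, and the flow composition demand the most delicate handling.
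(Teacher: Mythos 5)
The paper does not prove this proposition: immediately before stating it, the appendix says the operator ``is used to deal with the transport current error term in subsection~\ref{transport error}. We simply state the main result and a complete proof can be found in \cite[appendix]{Giri03}.'' So there is no internal argument to compare against. That said, the outline you give is the correct one and matches what is done in \cite{Giri03} (and in the closely related \cite{Giri01,Giri02}): one applies the preceding iteration-step proposition repeatedly, at each stage peeling two derivatives off the $\dpot$-fold adjacent-pairwise symmetric potential $\vartheta$ (the resulting rank-two ``effective potential'' is symmetric precisely because of the adjacent-pairwise condition) and feeding the error $E^k$ back in as the new low-frequency vector field; material derivatives pass through transparently because every high-frequency factor appears as $\rho\circ\xi_I$ and $D_t\xi_I=0$, so $D_t$ lands only on the low-frequency cofactors, which are then controlled by \eqref{eq:inverse:div:DN:G}, \eqref{eq:DDv} and the commutator machinery of Lemmas~\ref{lem:Komatsu}--\ref{lem:ad:Dt:a:D}; the net gain per step is $\max(\lambda,\lambda')\Upsilon^{-2}\Upsilon'\leq 1$, accumulating to the $\dpot/2$ power in \eqref{eq:inverse:div:error:1}; and the nonlocal remainder is handled by subtracting the mean, applying the standard $\cR$, and cashing in the scale separation $\max(\lambda,\lambda')\ll\mu$ via a decoupling argument whose gain is quantified by $\Ndec$ in \eqref{eq:inverse:div:parameters:1}, with the $\Lambda^{-K_\circ}$ budget for $L^\infty$/material-derivative losses paid by \eqref{eq:riots:4}. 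The one place your sketch is slightly off in emphasis is the nonlocal step: the reference does not invoke the stationary-phase Lemma~\ref{phase} but rather an $L^p$ decoupling lemma tailored to the $(\T/\mu)^n$-periodicity of $\vartheta$, and the index $\Ndec$ counts decoupling iterations rather than integrations by parts; the effect is the same but the mechanism for producing $(\mu/\max(\lambda,\lambda'))^{-\Ndec}$ is different.
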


\begin{rem}
\label{rmk01}
Note that we need to specify various parameters involved in the above Proposition~\ref{prop:intermittent:inverse:div} for our setup. Indeed, in order to successfully apply the Proposition~\ref{prop:intermittent:inverse:div}, we choose
$\lambda' = \ell^{-1}, \lambda = \mu_q^{-1}, \nu= \tau_q^{-1}, \nu'= \mu = \Upsilon = \Upsilon' = \Lambda = \lambda_{q+1}$, $\Ndec = 2 n_0 +2$, $M_\circ = 2$, $N_\circ =3$, $M_* = 6$, $\dpot= \ceil{\frac{8(8b+1)}{(b-1)(1-\beta)}}$, $N_* = \dpot + 4 n_0 +8$, and $K_\circ =1$. Moreover, in our setup, we choose $M_t =M_*$, $\const_{*,p}=\const_u= 1$, $\const_{G,p} = \lambda_q \delta_q^{1/2} \delta_{q+1}^{1/2}$ for the Nash term $\as R_{N}$, $\const_{G,p} = \tau_q^{-1} \delta_{q+1}^{1/2} $ for the transport term $\as R_T$, and $\const_{G,p} = \mu_q^{-1}\delta_{q+1}$ for the oscillation term $\as R_{O}$.
\end{rem}

\subsection*{Acknowledgments}
We express our gratitude to the anonymous referee for the careful reading
and valuable suggestions and comments that have improved this manuscript significantly. The second author acknowledges the support of the Department of Atomic Energy,  Government of India, under project no.$12$-R$\&$D-TFR-$5.01$-$0520$, and DST-SERB Swarnajayanti Fellowships grant DST/SJF/MS/$2021$/$44$.

\bibliographystyle{abbrv}

\begin{thebibliography}{10}

\bibitem{Bu2014}
T.~Buckmaster.
\newblock {\em {O}nsager's {C}onjecture}.
\newblock PhD thesis, Universit{\"a}t Leipzig, 2014.

\bibitem{Bu2015}
T.~Buckmaster.
\newblock Onsager's conjecture almost everywhere in time.
\newblock {\em Communications in Mathematical Physics}, 333(3):1175--1198,
  2015.

\bibitem{BuDLeIsSz2015}
T.~Buckmaster, C.~De~Lellis, P.~Isett, and L.~Sz{\'e}kelyhidi~Jr.
\newblock Anomalous dissipation for {1/5}-holder {E}uler flows.
\newblock {\em Annals of Mathematics}, 182(1):127--172, 2015.

\bibitem{BuDLeSz2013}
T.~Buckmaster, C.~De~Lellis, and L.~Sz{{\'e}}kelyhidi, Jr.
\newblock Transporting microstructure and dissipative {E}uler flows.
\newblock {\em arXiv:1302.2815}, 02 2013.

\bibitem{BuDLeSz2016}
T.~Buckmaster, C.~De~Lellis, and L.~Sz{{\'e}}kelyhidi, Jr.
\newblock Dissipative {E}uler flows with {O}nsager-critical spatial regularity.
\newblock {\em Comm. Pure Appl. Math.}, 69(9):1613--1670, 2016.

\bibitem{BDLSV2020}
T.~Buckmaster, C.~De~Lellis, L.~Sz{\'e}kelyhidi~Jr., and V.~Vicol.
\newblock Onsager's conjecture for admissible weak solutions.
\newblock {\em Communications on Pure and Applied Mathematics}, 72(2):229--274,
  2020/05/19 2019.
  
\bibitem{Giri01}
T.~Buckmaster, N.~Masmoudi, M.~Novack, and V.~Vicol.
\newblock Intermittent convex integration for the 3D Euler equations.
\newblock {\em Princeton University Press}, Princeton, NJ, 2023.

\bibitem{chen1}
R.~M.~Chen, and C.~Yu.
\newblock Onsager's energy conservation for inhomogeneous Euler equations.
\newblock {\em J. Math. Pures Appl.}, 131(1):1--16, 2019.

\bibitem{ConstantineSavits96}
G.~Constantine and T.~Savits.
\newblock A multivariate {F}a{\`a} di {B}runo formula with applications.
\newblock {\em Trans. Amer. Math. Soc.}, 348(2):503--520, 1996.

\bibitem{05}
R.~Danchin.
\newblock On the well-posedness of the incompressible density-dependent Euler equations in the $L^p$ framework.
\newblock {\em J. Differ. Equ.}, 248:2130--2170, 2010.

\bibitem{06}
R.~Danchin and F.~Fanelli.
\newblock The well-posedness issue for the density-dependent Euler equations in endpoint Besov spaces.
\newblock {\em J. Math. Pures Appl.}, 96:253--278, 2011.

\bibitem{DaSz2016}
S.~Daneri and L.~Sz{\'e}kelyhidi.
\newblock Non-uniqueness and h-principle for {H}{\"o}lder-continuous weak
  solutions of the {E}uler equations.
\newblock {\em Archive for Rational Mechanics and Analysis}, 224(2):471--514,
  2017.
  
\bibitem{Kwon1}
C.~De~Lellis and H.~Kwon.
\newblock On non-uniqueness of H\"older continuous globally dissipative Euler flows,
\newblock {\em Anal. PDE.}, 15(8):2003--2059, 2022.

\bibitem{DLSz2013}
C.~De~Lellis and L.~Sz{{\'e}}kelyhidi, Jr.
\newblock Dissipative continuous {E}uler flows.
\newblock {\em Invent. Math.}, 193(2):377--407, 2013.

\bibitem{DlSzJEMS}
C.~De~Lellis and L.~Sz\'ekelyhidi, Jr.
\newblock Dissipative {E}uler flows and {O}nsager's conjecture.
\newblock {\em J. Eur. Math. Soc. (JEMS)}, 16(7):1467--1505, 2014.

\bibitem{Fei}
M.~Fei.
\newblock On the existence of the weak solution with local energy inequality to the $3$-D inhomogeneous incompressible Navier-Stokes equations.
\newblock {\em Nonlinear Analysis.}, 85:248--252, 2013.

\bibitem{gebhard}
B.~Gebhard, J.~J.~Kolumb\'an and L.~Sz{{\'e}}kelyhidi, Jr.
\newblock A New Approach to the Rayleigh–Taylor Instability,
\newblock {\em Arch. Ration. Mech. Anal.}, 241:1243--1280, 2021.

\bibitem{Kwon2}
V.~Giri and H.~Kwon.
\newblock On non-uniqueness of continuous entropy solutions to the isentropic compressible Euler equations,
\newblock {\em Arch. Ration. Mech. Anal.}, 245(2):1213--1283, 2022.

\bibitem{Giri02}
V.~Giri, H.~Kwon, and M.~Novack.
\newblock The $L^3$-based strong Onsager theorem.
\newblock {\em https://arxiv.org/abs/2305.18509}, 2023.

\bibitem{Giri03}
V.~Giri, H.~Kwon, and M.~Novack.
\newblock A wavelet-inspired $L^3$-based convex integration framework for the Euler equations.
\newblock {Ann. PDE 10}, 19, 2024.

\bibitem{Gra}
L.~Grafakos.
\newblock Classical Fourier analysis.
\newblock {\em Vol. 3, Springer}, New York, 2014.

\bibitem{Is2013}
P.~Isett.
\newblock {\em Holder continuous {E}uler flows with compact support in time}.
\newblock ProQuest LLC, Ann Arbor, MI, 2013.
\newblock Thesis (Ph.D.)--Princeton University.

\bibitem{Is2016}
P.~Isett.
\newblock A proof of {O}nsager's conjecture.
\newblock {\em Annals of Mathematics}, 188(3):871--963, 2018.

\bibitem{Is2022}
P.~Isett.
\newblock Non-uniqueness and existence of continuous, globally dissipative Euler flows.
\newblock {\em Archive for Rational Mechanics and Analysis}, 244(3):1223--1309,
  2022.
  
\bibitem{IsOh2016}
P.~Isett and S.-J. Oh.
\newblock On nonperiodic {E}uler flows with {H}{\"o}lder regularity.
\newblock {\em Archive for Rational Mechanics and Analysis}, 221(2):725--804,
  2016.

\bibitem{22}
J.~E.~Marsden.
\newblock Well-posedness of the equations of a non-homogeneous perfect fluid.
\newblock {\em Commun. Partial Differ. Equ.}, 1:215--230, 1976.




















\end{thebibliography}

\end{document}